\documentclass[letterpaper]{amsart}
\pdfoutput=1

\usepackage{amssymb}
\usepackage{amsfonts}
\usepackage{amsmath}
\usepackage{amsthm}
\usepackage{mathtools}
\usepackage{graphicx}
\usepackage{mathrsfs}
\usepackage{dsfont}
\usepackage{amscd}
\usepackage{multirow}

\usepackage[all]{xy}
\normalfont
\usepackage[T1]{fontenc}
\usepackage{calligra}
\usepackage{verbatim}
\usepackage{fourier}
\usepackage[usenames,dvipsnames]{color}
\usepackage[colorlinks=true,linkcolor=Blue,citecolor=Violet]{hyperref}
\usepackage{enumerate}
\usepackage{slashed}
\usepackage{nicematrix}
\usepackage{microtype}
\usepackage{tikz}\usetikzlibrary{snakes,math}
\usepackage{caption}
\usepackage{subcaption}
\usepackage{faktor}

\newcommand{\N}{{\mathds{N}}}
\newcommand{\Z}{{\mathds{Z}}}
\newcommand{\Q}{{\mathds{Q}}}
\newcommand{\R}{{\mathds{R}}}
\newcommand{\C}{{\mathds{C}}}
\newcommand{\T}{{\mathds{T}}}

\newcommand{\D}{{\mathfrak{D}}}
\newcommand{\A}{{\mathfrak{A}}}
\newcommand{\B}{{\mathfrak{B}}}

\newcommand{\Nbar}{\overline{\N}}
\newcommand{\Lip}{{\mathsf{L}}}
\newcommand{\TLip}{{\mathsf{T}}}
\newcommand{\SLip}{{\mathsf{S}}}
\newcommand{\QLip}{{\mathsf{Q}}}
\newcommand{\Hilbert}{{\mathscr{H}}}

\newcommand{\dpropinquity}[1]{{\mathsf{\Lambda}^\ast_{#1}}}

\newcommand{\dmetpropinquity}[1]{{\mathsf{\Lambda}^{\ast\mathsf{met}}_{#1}}}

\newcommand{\spectralpropinquity}[1]{{\mathsf{\Lambda}^{\mathsf{spec}}_{#1}}}

\newcommand{\LipschitzD}{{\mathsf{LipD}}}

\newcommand{\Kantorovich}[1]{{\mathsf{mk}_{#1}}}

\newcommand{\Haus}[1]{{\mathsf{Haus}\!\left[{#1}\right]\,}}

\newcommand{\StateSpace}{{\mathscr{S}}}

\newcommand{\MongeKant}{{Mon\-ge-Kan\-to\-ro\-vich metric}}

\newcommand{\gMVB}{metrical C*-correspondence}

\newcommand{\mcc}[3]{{\mathrm{metCor}\left({#1},{#2},{#3}\right)}}

\newcommand{\qcms}{quantum compact metric space}

\newcommand{\unit}{1}

\newcommand{\sa}[1]{{\mathfrak{sa}\left({#1}\right)}}

\newcommand{\inner}[3]{{\left<{#1},{#2}\right>_{#3}}}

\newcommand{\dom}[1]{{\operatorname*{dom}\left({#1}\right)}}
\newcommand{\codom}[1]{{\operatorname*{codom}\left({#1}\right)}}
\newcommand{\diam}[2]{{\mathrm{diam}\left({#1},{#2}\right)}}
\newcommand{\qdiam}[2]{{\,\mathrm{qdiam}\left({#1},{#2}\right)}}

\newcommand{\norm}[2]{\left\|{#1}\right\|_{#2}}

\newcommand{\targetsettunnel}[3]{{\mathfrak{t}_{#1}\left({#2}\middle\vert{#3}\right)}}

\newcommand{\CDN}{{\mathsf{DN}}}
\newcommand{\TDN}{{\mathsf{TN}}}

\newcommand{\worknote}[1]{}
\newcommand{\opnorm}[3]{{\left|\mkern-1.5mu\left|\mkern-1.5mu\left| {#1} \right|\mkern-1.5mu\right|\mkern-1.5mu\right|_{#3}^{#2}}}

\newcommand{\tunnellength}[1]{{\lambda\left({#1}\right)}}
\newcommand{\tunnelmagnitude}[2]{{\mu\left({#1}\middle\vert{#2}\right)}}

\newcommand{\tunnelextent}[1]{{\chi\left({#1}\right)}}

\newcommand{\alg}[1]{{\mathfrak{#1}}}

\newcommand{\module}[1]{{\mathscr{#1}}}

\newcommand{\ModState}[1]{\widehat{\StateSpace}}

\newcommand{\closure}[1]{\mathrm{cl}\left({#1}\right)}

\renewcommand{\geq}{\geqslant}
\renewcommand{\leq}{\leqslant}

\newcommand{\Siep}{{Sierpi{\'n}ski}}

\newcommand{\Dirac}{{\slashed{D}}}

\newcommand{\dualZmod}[1]{\widehat{\faktor{\Z}{#1}}}

\theoremstyle{plain}
\newtheorem{theorem}{Theorem}[section]
\newtheorem*{theorem*}{Theorem}

\newtheorem{corollary}[theorem]{Corollary}

\newtheorem{lemma}[theorem]{Lemma}
\newtheorem{proposition}[theorem]{Proposition}

\newtheorem{theorem-definition}[theorem]{Theorem-Definition}

\theoremstyle{definition}
\newtheorem{definition}[theorem]{Definition}
\newtheorem*{definition*}{Definition}

\newtheorem{notation}[theorem]{Notation}

\newtheorem{convention}[theorem]{Convention}

\theoremstyle{remark}
\newtheorem*{acknowledgement}{Acknowledgements}

\newtheorem{example}[theorem]{Example}

\newtheorem{remark}[theorem]{Remark}

\newtheorem{claim}[theorem]{Claim}

\numberwithin{equation}{section}

\hyphenation{Gro-mov Haus-dorff} 

\newenvironment{claimproof}[1]{\noindent \emph{Proof of Claim (\ref{#1}).}}{This proves our claim. \hfill {\sc Q.E.D.}}

\begin{document}

\title[]{Convergence of inductive sequences of spectral triples for the spectral propinquity}

\author{Carla Farsi}
\email{carla.farsi@colorado.edu}
\address{Department of Mathematics \\ University of Colorado at Boulder \\ Boulder CO 80309-0395}

\author{Fr\'{e}d\'{e}ric Latr\'{e}moli\`{e}re}
\email{frederic@math.du.edu}
\urladdr{http://www.math.du.edu/\symbol{126}frederic}
\address{Department of Mathematics \\ University of Denver \\ Denver CO 80208}

\author{Judith Packer}
\email{judith.jesudason@colorado.edu}
\address{Department of Mathematics \\ University of Colorado at Boulder \\ Boulder CO 80309-0395}

\date{\today}
\subjclass[2000]{Primary:  46L89, 46L30, 58B34.}
\keywords{Spectral triples, Noncommutative metric geometry, quantum Gromov-Hausdorff distance, Monge-Kantorovich distance, Quantum Metric Spaces, Quantum Tori, Noncommutative solenoids, Bunce-Deddens algebras.}

\begin{abstract}
  In the context of metric geometry, we introduce a new necessary and sufficient condition for the convergence of an inductive sequence of quantum compact metric spaces for the Gromov-Hausdorff propinquity, which is a noncommutative analogue of the Gromov-Hausdorff distance for compact metric spaces. This condition is easy to verify in many examples, such as quantum compact metric spaces associated to AF algebras or certain twisted convolution C*-algebras of discrete inductive limit groups. Our condition also implies the convergence of an inductive sequence of spectral triples in the sense of the spectral propinquity, a generalization of the Gromov-Hausdorff propinquity on quantum compact metric spaces to the space of metric spectral triples. In particular we show the convergence of the state spaces of the underlying C*-algebras as quantum compact metric spaces, and also the convergence of the quantum dynamics induced by the Dirac operators in the spectral triples. We apply these results to new classes of inductive limit of even spectral triples on noncommutative solenoids and Bunce-Deddens C*-algebras. Our construction, which involves length functions with bounded doubling, adds geometric information and highlights the structure of these twisted C*-algebras as inductive limits.
\end{abstract}
\maketitle
\tableofcontents

%%%%%%%%%%%%%%%%%%%%%%%%%%%%%%%%%%%%%%%%%%%%%%%%%%%%%%%%%%%%%%%%%%%%%%%%%%%%%%%%%%%%%%%%%%%%%%%%%%%%%%%%%

\section{Introduction}

Spectral triples, introduced by Connes in 1985 as a noncommutative generalization of Dirac operators acting on bundles over manifolds \cite{Connes89,Connes}, have emerged as a powerful means to encode geometric information over noncommutative operator algebras. Motivated in part by ideas from mathematical physics, and by the recurrent usefulness of various notions of limits of C*-algebras, the second author introduced in \cite{Latremoliere18g} a distance on metric spectral triples, up to an obvious notion of unitary equivalence, thus enabling the discussion of approximations of certain spectral triples by others, in a geometric sense. This distance is named the spectral propinquity, and is built from a noncommutative analogue of the Gromov-Hausdorff distance for noncommutative geometry, called the Gromov-Hausdorff propinquity \cite{Latremoliere13b,Latremoliere13,Latremoliere15,Latremoliere14}. Thus, convergence of spectral triples is defined as part of a larger framework for convergence of {\qcms s}, which are noncommutative analogues of algebras of Lipschitz functions over compact metric spaces. Within this framework, the propinquity was extended to certain modules over {\qcms s} \cite{Latremoliere16}, and even C*-correspondences \cite{Latremoliere18d} with additional metric data inspired by metric connections. The propinquity also was extended to various dynamical systems \cite{Latremoliere18c,Latremoliere18b}. These extensions have been used by the second author to define the spectral propinquity over metric spectral triples.

The spectral propinquity $\spectralpropinquity{}$ has been applied to approximations of spectral triples on fractals \cite{Latremoliere20a} and on quantum tori \cite{Latremoliere21a}, with the latter example rooted in matrix models in physics and the problem of their convergence. Indeed, the spectral propinquity endows the space of all metric spectral triples with its own geometry, and it allows to capture some geometric intuition within the well understood framework of a topology. For instance, while quantum tori are not inductive limits of finite dimensional C*-algebras, spectral triples over quantum tori can now be approximated by spectral triples over full matrix algebras to arbitrary precision using the spectral propinquity --- a common heuristics in mathematical physics, now formalized. Convergence for the spectral propinquity implies convergence of the state spaces of the underlying algebras for a form of Gromov-Hausdorff distance, convergence of the quantum dynamics obtained by exponentiating the Dirac operators, and implies convergence of the spectra and the bounded continuous functional calculus for the Dirac operators, with implications for the convergence of physically important quantities such as the spectral actions \cite{Latremoliere22}.

\medskip

In this paper, we consider the question of when an \emph{inductive sequence of metric spectral triples} \cite{Floricel19} converges, in the sense of the spectral propinquity, to its inductive limit. To illustrate the power of our result, besides the class of AF algebras, we construct even metric spectral triples on noncommutative solenoids \cite{Latremoliere11c} and on some Bunce-Deddens algebras \cite{BunceDeddens75,Davidson} and show that they are limits of metric spectral triples on, respectively, quantum tori and bundles of full matrix algebras over the circle, in the sense of the spectral propinquity $\spectralpropinquity{}$. In this way, we provide a noncommutative geometric version of the fact that solenoid groups can be seen as metric limits of tori, and Bunce-Deddens algebras are metric limits of algebras of matrix valued functions over the circle.

\medskip

A spectral triple $(\A,\Hilbert,\Dirac)$ is given by a unital C*-algebra $\A$ acting on a Hilbert space $\Hilbert$ and a (usually unbounded) self-adjoint operator $\Dirac$ on $\Hilbert$, which has bounded commutator with the elements of a dense $\ast$-subalgebra of $\A$, and has compact resolvent (see Definition (\ref{spectral-triples-def})). Spectral triples contain much geometric information, including metric data. Indeed, Connes noted in \cite{Connes} that spectral triples define a canonical extended pseudo-distance on the state space of their underlying C*-algebras, which, in particular, recovers the geodesic distance when working with the usual spectral triple given by the Dirac operator acting on the square integrable sections of the spinor bundle of a compact connected Riemannian spin manifold without boundary.

\medskip

Rieffel in \cite{Rieffel98a,Rieffel99} then cast this metric aspect of noncommutative geometry under a new light, starting from the observation that Connes' distance induced by a spectral triple is a noncommutative analogue of the {\MongeKant} \cite{Kantorovich40,Kantorovich58}; it was thus natural to define a {\qcms} as an ordered pair $(\A,\Lip)$ of a unital C*-algebra $\A$ and a noncommutative analogue of a Lipschitz seminorm $\Lip$ such that, in particular, if we set, for any two states $\varphi,\psi$ of $\A$,
\begin{equation*}
  \Kantorovich{\Lip}(\varphi,\psi)\coloneqq\sup\left\{ |\varphi(a)-\psi(a)| : \Lip(a) \leq 1 \right\}
\end{equation*}
then $\Kantorovich{\Lip}$ is a distance inducing the weak-$^\ast$ topology on the state space of $\A$. The exact list of requirements on the seminorm $\Lip$ have evolved as the study of noncommutative metric geometry matured, and we will use the definition of a \emph{\qcms} given in \cite{Latremoliere13,Latremoliere15} and recalled in Definition (\ref{qcms-def}). Indeed, a spectral triple whose Connes' metric induces the weak-$^\ast$ topology on the state space of its underlying C*-algebra then automatically gives a {\qcms}; such a spectral triple is called a \emph{metric spectral triple}.

\medskip

Metric spectral triples may thus be studied within the context of noncommutative metric geometry. As a result, the second author introduced a distance on the space of metric spectral triples. The first step in defining this distance, called the spectral propinquity, is the construction of a noncommutative geometric analogue of the \emph{Gromov-Hausdorff distance} \cite{Edwards75,Gromov81,Gromov} between {\qcms s}, which we will recall in subsection (\ref{GH-subsection}). The first such analogue was introduced by Rieffel \cite{Rieffel00}, motivated by the possibility of formalizing certain convergence results found in the mathematical physics literature. While several such analogues have been offered, we will work with the \emph{Gromov-Hausdorff propinquity} $\dpropinquity{}$, introduced by the second author in \cite{Latremoliere13b,Latremoliere13,Latremoliere15,Latremoliere14} precisely to be well adapted to C*-algebras theory and the type of seminorms given by spectral triples. The propinquity in general is designed precisely to enable distance computations between {\qcms s} defined on unrelated C*-algebras, such as between matrix algebra and quantum tori. However, in this work, we investigate what additional properties of the propinquity we can derive when we work with inductive limits of C*-algebras.

\medskip

We begin this work by establishing a characterization of convergence of inductive limits of {\qcms s} to their inductive limit, in terms of \emph{bridge builders}, a type of $\ast$-automorphism with a natural relation to quantum metrics.

\begin{definition*}[Definition (\ref{bridge-builder-def})]
  For each $n\in\N\cup\{\infty\}$, let $(\A_n,\Lip_n)$ be a {\qcms}, such that $\A_\infty = \closure{\bigcup_{n\in\N} \A_n}$, where $(\A_n)_{n\in\N}$ is an increasing (for $\subseteq$) sequence of C*-subalgebras of $\A_\infty$, with the unit of $\A_\infty$ in $\A_0$. 
  
  A $\ast$-automorphism $\pi :\A_\infty \rightarrow\A_\infty$ is a \emph{bridge builder} for $((\A_n,\Lip_n)_{n\in\N},(\A_\infty,\Lip_\infty))$ when, for all $\varepsilon > 0$, there exists $N\in\N$ such that if $n\geq N$, then
  \begin{equation*}
    \forall a \in \dom{\Lip_\infty} \quad \exists b \in \dom{\Lip_n}: \quad \Lip_n(b) \leq \Lip_\infty(a) \text{ and }\norm{\pi(a) - b}{\A_\infty} < \varepsilon\Lip_\infty(a) \\
  \end{equation*}
  and
  \begin{equation*}
    \forall b \in \dom{\Lip_n} \quad \exists a \in \dom{\Lip_\infty}: \quad \Lip_\infty(a)\leq\Lip_n(b) \text{ and }\norm{\pi(a)-b}{\A_\infty}<\varepsilon\Lip_n(b) \text,
  \end{equation*}
  where $\norm{\cdot}{\A_\infty}$ is the C*-norm on $\A_\infty$.
\end{definition*}

Bridge builders are powerful means to prove metric convergence for the propinquity and notable because it is usually very difficult to find necessary conditions for metric convergence in the sense of the propinquity (besides the trivial convergence for the diameters). Thus, this theorem is of independent interest from our study of spectral triples, and addresses the relationship between inductive limits and limits in a metric sense as in \cite{Latremoliere18g, Latremoliere13b}. Our first main result is therefore the following theorem about convergence for the propinquity $\dpropinquity{}$ of certain inductive sequences.

\begin{theorem*}[Theorem (\ref{main-thm})]
  For each $n\in\N\cup\{\infty\}$, let $(\A_n,\Lip_n)$ be a {\qcms}, where $(\A_n)_{n\in\N}$ is an increasing (for $\subseteq$) sequence of C*-subalgebras of $\A_\infty$ such that $\A_\infty = \closure{\bigcup_{n\in\N} \A_n}$, with the unit of $\A_\infty$ in $\A_0$. We assume that there exists  $\exists M > 0 $ such that for all $n \in \N$:
  \begin{equation*}
    \quad \frac{1}{M} \Lip_n \leq \Lip_\infty \leq M \cdot \Lip_n \text{ on $\dom{\Lip_n}$.}
  \end{equation*}
  Then 
  \begin{equation*}
    \lim_{n\rightarrow\infty} \dpropinquity{}\left((\A_n,\Lip_n),(\A_\infty,\Lip_\infty)\right) = 0\text,
  \end{equation*}
  if, and only if, for any subsequence $(\A_{g(n)},\Lip_{g(n)})_{n\in\N}$ of $(\A_n,\Lip_n)_{n\in\N}$, there exists a strictly increasing function $f : \N \rightarrow \N$ and a \emph{bridge builder} $\pi$ for $((\A_{g\circ f(n)},\Lip_{g\circ f(n)})_{n\in\N},(\A_\infty,\Lip_\infty))$.
\end{theorem*}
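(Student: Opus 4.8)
I would prove the two implications separately, using the standing hypothesis $\frac1M\Lip_n\leq\Lip_\infty\leq M\Lip_n$ only for the reverse one.

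\emph{The ``if'' direction.} I would reduce to one auxiliary fact via the elementary remark that a sequence in $[0,\infty)$ converges to $0$ exactly when every subsequence has a further subsequence converging to $0$. Since a strictly increasing $h:\N\to\N$ satisfies $h(k)\geq k$, any composite $g\circ f$ occurring in the statement is cofinal, so $(\A_{g\circ f(k)})_{k\in\N}$ is again an increasing sequence of unital C*-subalgebras of $\A_\infty$ with dense union and the unit of $\A_\infty$ in its first term. Hence it suffices to show: if such a sequence carries {\qcms} structures and admits a bridge builder $\rho$ to $(\A_\infty,\Lip_\infty)$, then the dual propinquity along it to $(\A_\infty,\Lip_\infty)$ tends to $0$. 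Given $\varepsilon>0$, let $N$ be as in the definition of $\rho$; for $n\geq N$ I would form the bridge $\gamma_n$ with ambient C*-algebra $\A_\infty$, pivot the unit $1_{\A_\infty}$, first $\ast$-monomorphism the inclusion of the $n$-th subalgebra into $\A_\infty$, and second $\ast$-monomorphism $\rho$. Because the pivot is the unit, the set of states for which it acts as the identity is all of the state space of $\A_\infty$, and restriction of states along the inclusion (respectively along the automorphism $\rho$) is onto the respective state spaces by Hahn--Banach (respectively by bijectivity); so the height of $\gamma_n$ is $0$. The two defining inequalities of a bridge builder say exactly that inside $\A_\infty$ every self-adjoint element of the unit ball of the $n$-th Lipschitz seminorm lies within $\varepsilon$ of $\rho$ applied to the unit ball of $\Lip_\infty$, and conversely --- replacing approximants by their real parts, which decreases neither the relevant norm nor the Lipschitz seminorm since $\rho$ is a $\ast$-automorphism, and disposing of scalars directly --- so the reach of $\gamma_n$ is at most $\varepsilon$. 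Thus $\gamma_n$ has length at most $\varepsilon$, and since the dual propinquity of two {\qcms s} does not exceed the length of any bridge between them, $\dpropinquity{}\big((\A_{g\circ f(n)},\Lip_{g\circ f(n)}),(\A_\infty,\Lip_\infty)\big)\leq\varepsilon$ for all $n\geq N$. This gives the auxiliary fact, and with the subsequence remark, the ``if'' direction.

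\emph{The ``only if'' direction.} Assume $\dpropinquity{}\big((\A_n,\Lip_n),(\A_\infty,\Lip_\infty)\big)\to0$ and fix a subsequence $(\A_{g(n)})_{n\in\N}$. As above it is again an increasing sequence of unital C*-subalgebras with dense union and unit in its first term, its seminorms inherit the $M$-comparability, and the dual propinquity still tends to $0$ along it; relabelling, I may assume $g=\mathrm{id}$ and must exhibit a strictly increasing $f$ and a bridge builder for $\big((\A_{f(k)},\Lip_{f(k)})_{k},(\A_\infty,\Lip_\infty)\big)$ (the general case follows by applying this to $(\A_{g(n)})_n$). I would build $f$ recursively, choosing $f(k)$ at each stage large enough that a tunnel $\tau_k$ from $(\A_{f(k)},\Lip_{f(k)})$ to $(\A_\infty,\Lip_\infty)$ exists with extent $\tunnelextent{\tau_k}$ summable in $k$. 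Since the two quotient maps of $\tau_k$ are quantum isometric surjections, every element of $\dom{\Lip_{f(k)}}$ lifts to the tunnel algebra without increasing its Lipschitz seminorm and then projects into $\dom{\Lip_\infty}$ with Lipschitz seminorm no larger, and symmetrically; and the (summably small) extent bounds the Monge--Kantorovich discrepancy inside the tunnel, hence the norm distortion of this ``lift-and-project'' correspondence between Lipschitz balls, up to scalar summands.

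It then remains to synthesize these data into a single $\ast$-automorphism $\pi$ of $\A_\infty$ satisfying the two bridge-builder inequalities, and I expect this to be the main obstacle. The plan is a limiting (back-and-forth) construction: along an exhausting sequence of finite subsets of $\bigcup_m\dom{\Lip_m}$ and of $\dom{\Lip_\infty}$, use the correspondences of the tunnels $\tau_k$ to produce progressively finer approximate partial identifications between Lipschitz elements of the $f(k)$-th algebra and of $\A_\infty$, mutually compatible because the extents are summable; as $\bigcup_m\A_m$ is dense in $\A_\infty$, these partial identifications converge to an isometric $\ast$-endomorphism of $\A_\infty$ with dense range, i.e. a $\ast$-automorphism $\pi$. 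Using norm-compactness of Lipschitz balls modulo scalars, the estimates pass from the dense test sets to all of $\dom{\Lip_\infty}$ and $\dom{\Lip_n}$; and in the limit the $M$-comparability, together with the \emph{exactness} of the tunnel quotient maps as quantum isometries (which confines all distortion to the vanishing extents), upgrades the tunnel inequalities into the \emph{sharp} bounds $\Lip_n(b)\leq\Lip_\infty(a)$ and $\Lip_\infty(a)\leq\Lip_n(b)$ required of a bridge builder. The delicate point is precisely this passage: converting the multivalued, merely approximate correspondences that tunnels natively provide into one honest $\ast$-automorphism that realizes both estimates with undistorted constants --- the summable choice of $f$ makes the recursion converge, the density of $\bigcup_m\A_m$ makes the limit globally defined, and the $M$-comparability is what is spent to remove the distortion.
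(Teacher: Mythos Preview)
Your ``if'' direction is correct and essentially equivalent to the paper's Proposition~(\ref{bridge-builder-prop}), though you route it through bridges with unit pivot rather than the explicit tunnel $(\A_\infty\oplus\A_n,\TLip_n)$ the paper builds. Both work; the bridge formulation is arguably cleaner here since the pivot being the unit immediately kills the height.

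Your ``only if'' direction has the right architecture --- tunnels with vanishing extent, lift-and-project, compactness of Lipschitz balls modulo scalars, $M$-comparability to close the gap --- but the central mechanism is left unspecified, and the ``back-and-forth with summable extents'' sketch does not obviously produce a $\ast$-automorphism. The paper's device is the \emph{target set} $\targetsettunnel{n}{a}{l}=\theta_n(\psi_n^{-1}(a)\cap\{\TLip_n\leq l\})$ associated to each tunnel. The point is that these sets (i) have diameter controlled by $l\cdot\tunnelextent{\tau_n}\to 0$, (ii) lie, thanks to the $M$-comparability, inside the fixed compact $\{b\in\dom{\Lip_\infty}:\Lip_\infty(b)\leq Ml,\ \|b\|\leq\text{const}\}$, and (iii) behave well under sums and Jordan--Lie products. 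From (i) and (ii) one extracts, by a diagonal argument over a countable dense subset of $\dom{\Lip_\infty}$, a subsequence along which $\targetsettunnel{f(n)}{a}{l}$ converges in Hausdorff distance to a singleton $\{\pi(a)\}$; from (iii) one reads off that $\pi$ is a Jordan--Lie morphism, hence extends to a $\ast$-endomorphism of $\A_\infty$; and surjectivity comes from the $M$-comparability combined with density of $\bigcup_n\dom{\Lip_n}$. No summability is needed --- compactness does the work your summable choice of $f$ was meant to do, and it does so in a way that automatically delivers the algebraic structure of $\pi$.

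The gap in your sketch is precisely here: your ``approximate partial identifications'' are multivalued and live in different algebras $\A_{f(k)}$, and you do not say how they cohere into a single map $\A_\infty\to\A_\infty$, nor why the limit respects multiplication. Target sets resolve both issues at once: they are subsets of $\A_\infty$ (so one fixed ambient space), and their known morphism-like estimates under products are what makes the limit a $\ast$-map. Replacing your back-and-forth by the target-set construction would complete the argument along the paper's lines.
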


\medskip

The second step in the construction of the spectral propinquity $\spectralpropinquity{}$ on the space of metric spectral triples is the extension of the Gromov-Hausdorff propinquity to a distance on the class of C*-correspondences over {\qcms s} endowed with a form of quantum metric, and with a compatible action of some monoid. The C*-correspondence associated with a metric spectral triple $(\A,\Hilbert,\Dirac)$ is the Hilbert space $\Hilbert$, seen as a $\A$-$\C$-C*-correspondence, with the quantum metric given by the graph norm of $\Dirac$, and with the action of $[0,\infty)$ on $\Hilbert$ given by $t \in [0,\infty) \mapsto \exp(i t \Dirac)$. Convergence for the spectral propinquity, by design, implies the convergence of the underlying {\qcms s}, but the converse does not hold in general. These matters will be recalled in detail in Subsection (\ref{SpProp-sec}).

\medskip

We then turn to the more specific context of inductive sequences of metric spectral triples. Inductive sequences of spectral triples were introduced in \cite{Floricel19}, and are a natural source of spectral triples; our interest is in the convergence of such sequences for the spectral propinquity, i.e. in the sense of an actual metric. We establish in the present work, as our second main result, that an inductive sequence of metric spectral triples converges for the spectral propinquity when there exists a fully quantum isometric bridge builder for the underlying sequence of {\qcms s}. Again, it is a surprising result that a mild strengthening of convergence for the Gromov-Hausdorff propinquity implies the much stronger convergence for the spectral propinquity, a fact which does not hold for arbitrary sequences of metric spectral triples, but holds thanks to the structure of inductive limits. Our second main theorem is given as follows.

\begin{theorem*}[Theorem (\ref{main-spec-thm})]
  Let $(\A_\infty,\Hilbert_\infty,\Dirac_\infty)$ be a metric spectral triple which is the inductive limit of a sequence $(\A_n,\Hilbert_n,\Dirac_n)_{n\in\N}$ of metric spectral triples, in the sense of Definition (\ref{ind-limit-spectral-triple-def}). For each $n\in\N\cup\{\infty\}$, let
  \begin{equation*}
    \dom{\Lip_n} \coloneqq \left\{a \in \A_n : a=a^\ast, a\,\dom{\Dirac_n}\subseteq\dom{\Dirac_n} \text{ and }[\Dirac_n,a]\text{ is bounded} \right\}\text,
  \end{equation*}
  and, for all $a \in \dom{\Lip_n}$, let $\Lip_n(a)$ be the operator norm of $[\Dirac_n,a]$.

  If there exists a bridge builder $\pi : (\A_\infty,\Lip_\infty) \rightarrow (\A_\infty,\Lip_\infty)$ for $((\A_n,\Lip_n)_{n\in\N},(\A_\infty,\Lip_\infty))$  which is a full quantum isometry of $(\A_\infty,\Lip_\infty)$, i.e. such that $\pi(\dom{\Lip_\infty})\subseteq\dom{\Lip_\infty}$ and $\Lip_\infty\circ\pi=\Lip_\infty$ on $\dom{\Lip_\infty}$, then
  \begin{equation*}
    \lim_{n\rightarrow\infty} \spectralpropinquity{}((\A_n,\Hilbert_n,\Dirac_n),(\A_\infty,\Hilbert_\infty,\Dirac_\infty)) = 0 \text.
  \end{equation*}
\end{theorem*}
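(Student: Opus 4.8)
The plan is to fix $\varepsilon > 0$, let $N$ be as in the bridge builder condition for $\pi$, and for each $n \geq N$ exhibit a single covariant modular bridge from the metrical C*-correspondence of $(\A_n,\Hilbert_n,\Dirac_n)$ to that of $(\A_\infty,\Hilbert_\infty,\Dirac_\infty)$ whose length is at most a universal multiple of $\varepsilon$; since $\spectralpropinquity{}$ is dominated by the length of any such bridge, letting $\varepsilon\to 0$ then gives the theorem. Such a bridge consists of an algebra bridge between the underlying {\qcms s} $(\A_n,\Lip_n)$ and $(\A_\infty,\Lip_\infty)$, together with a \emph{deck}: a metrical C*-correspondence over the bridge's ambient algebra, carrying a compatible $[0,\infty)$-action, that connects $\Hilbert_n$ to $\Hilbert_\infty$. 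The organizing idea is to force $\pi$ to become inner by working over the crossed product $\D \coloneqq \A_\infty \rtimes_\pi \Z$, and to build the deck out of $\Hilbert_\infty$ using the inductive structure.

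For the algebra layer I would take $\gamma_n \coloneqq (\D, u, \iota_n, \iota_\infty)$, where $u \in \D$ is the canonical unitary with $u a u^\ast = \pi(a)$, the pivot is $u$, and $\iota_n \colon \A_n \hookrightarrow \A_\infty \hookrightarrow \D$, $\iota_\infty \colon \A_\infty \hookrightarrow \D$ are the canonical unital $\ast$-monomorphisms. Since $u$ is unitary, every state of $\D$ qualifies for the pivot (as $u^\ast u = u u^\ast = 1$), and such states restrict along $\iota_n$ and $\iota_\infty$ onto all of $\StateSpace(\A_n)$ and $\StateSpace(\A_\infty)$, so the height of $\gamma_n$ is zero. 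The bridge seminorm of $(a,b)$ is $\norm{\iota_n(a)u - u\,\iota_\infty(b)}{\D} = \norm{a - \pi(b)}{\A_\infty}$ because $u\,b = \pi(b)\,u$; so the two clauses of Definition (\ref{bridge-builder-def}) say exactly that, for $n\geq N$, every $b$ with $\Lip_\infty(b)\leq 1$ is matched within $\varepsilon$ by some $a$ with $\Lip_n(a)\leq 1$ and conversely, hence the reach, and so the length, of $\gamma_n$ is at most $\varepsilon$. (In particular this recovers the convergence of the {\qcms s} without any uniform comparability of $\Lip_n$ with $\Lip_\infty$ — in this generality only $\Lip_n \uparrow \Lip_\infty$ pointwise need hold — so Theorem (\ref{main-thm}) is not directly available here.)

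For the deck I would use the induced correspondence $\mathscr{M}_n \coloneqq \ell^2(\Z) \otimes \Hilbert_\infty$, on which $a \in \A_\infty$ acts by $\delta_k \otimes \xi \mapsto \delta_k \otimes \pi^{-k}(a)\xi$ and $u$ by the bilateral shift, equipped with the $[0,\infty)$-action $t \mapsto \mathrm{id} \otimes \exp(it\Dirac_\infty)$ and with a D-norm derived from the graph norm of $\mathrm{id}\otimes\Dirac_\infty$ twisted by a proper length function on $\Z$ — the length is what forces the D-norm's unit ball to be totally bounded, given that $\Dirac_\infty$ has compact resolvent. The hypothesis that $\pi$ is a full quantum isometry is essential here: it gives $\Lip_\infty(\pi^{-k}(a)) = \Lip_\infty(a)$ for all $k\in\Z$, which is precisely what makes the quasi-Leibniz estimates for this D-norm hold with constants uniform in $k$, so that $\mathscr{M}_n$ is a genuine metrical C*-correspondence over a crossed-product quantum metric on $\D$ assembled from $\Lip_\infty$ and the chosen length. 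The inductive limit enters through the subspace $\Hilbert_n \subseteq \Hilbert_\infty$: by Definition (\ref{ind-limit-spectral-triple-def}) one has $\Dirac_n = \Dirac_\infty|_{\Hilbert_n}$, so $\Hilbert_n$ reduces $\Dirac_\infty$ — hence is invariant under each $\exp(it\Dirac_\infty)$ — and, being invariant under the $\ast$-subalgebra $\A_n$, is reduced by the $\A_n$-action as well. Taking the deck maps to be $\mathscr{M}_n \to \Hilbert_\infty$, $\sum_k \delta_k \otimes \xi_k \mapsto \xi_0$, and $\mathscr{M}_n \to \Hilbert_n$, $\sum_k \delta_k \otimes \xi_k \mapsto P_{\Hilbert_n}\xi_0$, one checks that on the zeroth summand the $\D$-action restricts to the untwisted representation of $\A_\infty$, so these maps are module morphisms over $\iota_\infty$ and $\iota_n$ and are quotient maps for the D-norms, while — because $P_{\Hilbert_n}$ commutes with $\Dirac_\infty$ and with $\A_n$ — they intertwine the $[0,\infty)$-actions \emph{exactly}. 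As the vector $\delta_0 \otimes \xi$ realizes $\xi\in\Hilbert_n$ against $\xi\in\Hilbert_\infty$ with no D-norm penalty, the modular reach and the covariant part of the bridge's length do not exceed the algebra reach, so the whole bridge has length at most a universal multiple of $\varepsilon$.

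The hard part will be the bookkeeping needed to certify that this pair is an admissible covariant modular bridge for the exact definition underpinning $\spectralpropinquity{}$: one must verify that $\mathscr{M}_n$ meets every axiom of a metrical C*-correspondence over the crossed-product quantum metric (total boundedness and the quasi-Leibniz inequality, where the full-quantum-isometry hypothesis and the choice of length on $\Z$ are consumed), identify the correct modular pivot, check that the two deck maps are the required quantum-isometric morphisms of correspondences, and invoke the precise inequality bounding $\spectralpropinquity{}$ by the bridge length. A more delicate point is to extract cleanly from Definition (\ref{ind-limit-spectral-triple-def}) that $\Hilbert_n$ is a reducing subspace for $\Dirac_\infty$; should the definition yield this only up to a controlled defect, that defect must be absorbed into the covariant part of the length and shown to vanish as $n\to\infty$, and it is at this step that the bridge builder together with $\Lip_n \uparrow \Lip_\infty$ would do the remaining work — which is also where the inductive-limit structure is genuinely used, since for arbitrary sequences of metric spectral triples no such reducing relation between the Hilbert spaces is available.
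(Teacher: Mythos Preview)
Your approach is genuinely different from the paper's, and it has a real gap at the point you identify as easy. The paper does not use bridges or crossed products at all: it builds an explicit metrical \emph{tunnel} on the direct sums $\A_\infty\oplus\A_n$ and $\Hilbert_\infty\oplus\Hilbert_n$, with the D-norm $\TDN_n(\xi,\eta)=\max\{\CDN_\infty(\xi),\CDN_n(\eta),\tfrac{1}{\tilde\varepsilon}\norm{\xi-\eta}{\Hilbert_\infty}\}$, and the full-quantum-isometry hypothesis is consumed simply by replacing $\psi_n$ with $\pi\circ\psi_n$ so that the module action $(a,b)\cdot(\xi,\eta)=(\pi(a)\xi,b\eta)$ satisfies the modular Leibniz inequality. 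The $\varepsilon$-reach is then bounded by a separate analytic argument: one covers the compact set $X_\infty=\{\xi:\CDN_\infty(\xi)\leq 1\}$ by a finite net, approximates each net vector by an element of some $\Hilbert_n$ (using that $\Dirac_\infty$ is the closure of the union of the $\Dirac_n$), and controls the dynamics $U_n^t$ versus $U_\infty^t$ via Theorem~(\ref{Floricel-thm}) applied to $u^t v_k\in C_0(\R)$ together with the equi-Lipschitz estimate $\norm{U_n^t\xi-U_n^s\xi}{}\leq|t-s|$ on the unit D-ball.

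The gap in your argument is the sentence ``the modular reach and the covariant part of the bridge's length do not exceed the algebra reach.'' This is false as stated and misidentifies the mechanism. Matching a vector $\xi\in\Hilbert_\infty$ with $\CDN_\infty(\xi)\leq 1$ by some $\eta\in\Hilbert_n$ with $\CDN_n(\eta)\leq 1$ and small $\norm{\xi-\eta}{}$, uniformly over $X_\infty$, has nothing to do with the bridge builder or with $\Lip_n\leq\Lip_\infty$; it follows from the compactness of $X_\infty$ (compact resolvent of $\Dirac_\infty$) together with $\overline{\bigcup_n\Hilbert_n}=\Hilbert_\infty$ and the fact that $\Hilbert_n$ reduces $\Dirac_\infty$ (so that $\CDN_n(P_n\xi)\leq\CDN_\infty(\xi)$ and $P_n$ commutes with $U_\infty^t$). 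You flag the reducing property in your last paragraph but then attribute the vanishing defect to ``the bridge builder together with $\Lip_n\uparrow\Lip_\infty$''---the algebra data does no work at this step, and this approximation is exactly where the paper spends its effort. A secondary issue is that the spectral propinquity here is \emph{defined} through tunnels (Definitions~(\ref{mcc-tunnel-def}), (\ref{magnitude-def}), (\ref{spectral-propinquity-def})), so your bridge would have to be converted; once you do, the $k=0$ summand of $\ell^2(\Z)\otimes\Hilbert_\infty$ is all that survives and your crossed-product machinery collapses to the paper's direct-sum tunnel, with the Hilbert-space approximation still outstanding.
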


\medskip

We conclude our paper with the construction of new even spectral triples on certain twisted group C*-algebras $C^\ast(G,\sigma)$ where the discrete group $G=\bigcup_{n\in\N}G_n$ is the union of a strictly increasing sequence of subgroups $G_n$ of $G$. These examples include noncommutative solenoids \cite{Latremoliere11c} and certain Bunce-Deddens algebras \cite{BunceDeddens75}. Our construction is motivated by the desire to see our new spectral triples over $C^\ast(G,\sigma)$ as limits, for the spectral propinquity, of an inductive sequence of metric spectral triples constructed over the inductive sequence $\left(C^\ast(G_n,\sigma)\right)_{n\in\N}$. This metric aspect distinguishes our spectral triples from other spectral triples on noncommutative solenoids \cite{Guido17,Luef21} or Bunce-Deddens algebras \cite{Hawkins13-2}, and is applicable, in principle, to many other examples. Moreover, noncommutative solenoids were shown in \cite{Latremoliere16} to be limits, for the propinquity, of quantum tori, for a different family of quantum metrics which did not come from a spectral triple.

\medskip

In general, it is difficult to prove that a given spectral triple is metric. Examples of metric spectral triples can be found over certain manifolds, quantum tori \cite{Connes,Sitarz13,Sitarz15,Latremoliere15c,Latremoliere21a}, or more generally, over unital C*-algebras endowed with ergodic actions of compact Lie groups \cite{Gabriel16,Rieffel98a}, over certain C*-crossed-products \cite{Hawkins13-2}, over quantum groups \cite{Dabrowski05}, over Podle\'s spheres \cite{Kaad18}, over AF algebras \cite{Antonescu04}, over certain fractals \cite{Lapidus08,Lapidus14}, and more. We note that there are known examples of spectral triples which are not metric \cite{Putnam16}.

\medskip

It is therefore quite interesting to obtain new examples of metric spectral triples, and moreover, to prove that they are interesting limits of spectral triples for the spectral propinquity. We thus establish the following third main result of this paper, which draws on the first two in its proof.
\begin{theorem*}[Simplified form of Theorem (\ref{main-Abelian-group-thm})]
  Let $G = \bigcup_{n\in\N} G_n$ be an Abelian discrete group, with $(G_n)_{n\in\N}$ a strictly increasing sequence of subgroups of $G$. Let $\sigma$ be a $2$-cocycle of $G$, with values in $\T\coloneqq\{z\in\C:|z|=1\}$.
  
  Let $\mathds{L}_H$ be a length function over $G$ whose restriction to $G_n$ is proper for all $n\in\N$, such that the sequence $(G_n)_{n\in\N}$ converges to $G$ for the Hausdorff  distance induced on the closed subsets of $G$ by $\mathds{L}_H$. Let
  \begin{equation*}
    \mathds{F} : g \in G \longmapsto \mathrm{scale}(\min\{n\in\N : g \in G_n\})\text,
  \end{equation*}
  where $\mathrm{scale} : \N \rightarrow [0,\infty)$ is a strictly increasing function.
  
  If the proper length function $\mathds{L} \coloneqq \max\{ \mathds{L}_H, \mathds{F} \}$ satisfies that, for some $\theta>1$, there exists $c>0$ such that  for all $ r \geq 1$:
  \begin{equation*}
    \quad \left|\left\{g\in G : \mathds{L}(g) \leq \theta \cdot r\right\}\right| \leq c \left| \left\{ g \in G : \mathds{L}(g)\leq r \right\} \right| \text,
  \end{equation*}
  then
  \begin{equation*}
    \lim_{n\rightarrow\infty} \spectralpropinquity{}((C^\ast(G,\sigma),\ell^2(G)\otimes \C^2,\Dirac),(C^\ast(G_n,\sigma),\ell^2(G_n)\otimes \C^2,\Dirac_n)) = 0 \text,
  \end{equation*}
  where for all $n\in\N\cup\{\infty\}$ and for all $(\xi_1,\xi_2)$ in
  \begin{equation*}
    \left\{ \xi \in \ell^2(G_n)\otimes\C^2 : \sum_{g\in G_n} (\mathds{L}_H(g)^2 + \mathds{F}(g)^2)\norm{\xi(g)}{\C^2}^2 < \infty \right\}\text,
  \end{equation*}
  we set
  \begin{equation*}
    \Dirac\xi : g \in G \longmapsto \begin{pmatrix} \mathds{F}(g)\xi_2(g) + \mathds{L}_H(g) \xi_1(g) \\ \mathds{F}(g)\xi_2(g) - \mathds{L}_H(g)\xi_1(g) \end{pmatrix}\text.
  \end{equation*}
  In the above spectral triples, $C^\ast(G,\sigma)$ and $C^\ast(G_n,\sigma)$ act via their left regular $\sigma$-projective representations.
\end{theorem*}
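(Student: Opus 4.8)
The plan is to deduce the statement from Theorem (\ref{main-spec-thm}): it suffices to check that each $(C^\ast(G_n,\sigma),\ell^2(G_n)\otimes\C^2,\Dirac_n)$ and $(C^\ast(G,\sigma),\ell^2(G)\otimes\C^2,\Dirac)$ is a metric spectral triple, that the latter is the inductive limit of the former in the sense of Definition (\ref{ind-limit-spectral-triple-def}), and that the identity automorphism $\pi=\mathrm{id}_{C^\ast(G,\sigma)}$ --- which is trivially a full quantum isometry of $(C^\ast(G,\sigma),\Lip_\infty)$ --- is a bridge builder for $((C^\ast(G_n,\sigma),\Lip_n)_{n\in\N},(C^\ast(G,\sigma),\Lip_\infty))$. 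The spectral triple axioms are routine: since $\mathds{L}_H$ and $\mathds{F}$ are length functions on $G$, the commutator of $\Dirac$ (resp. $\Dirac_n$) with each $U_g$ in the $\sigma$-regular representation is bounded by a constant multiple of $\mathds{L}(g)$, so commutators are bounded on the dense $\ast$-algebras $\C[G,\sigma]$ and $\C[G_n,\sigma]$; and $\Dirac^2$ (resp. $\Dirac_n^2$) is multiplication by $\mathds{L}_H^2+\mathds{F}^2$, which tends to infinity precisely because $\mathds{L}$ (resp. $\mathds{L}\restriction_{G_n}$, using that $\mathds{L}_H\restriction_{G_n}$ is proper and $\mathds{F}\restriction_{G_n}$ is bounded) is proper, giving compact resolvent. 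That $(C^\ast(G,\sigma),\Lip_\infty)$ is a {\qcms}, hence that $(C^\ast(G,\sigma),\ell^2(G)\otimes\C^2,\Dirac)$ is metric, is where the bounded doubling hypothesis enters, through the criterion for the {\qcms} property of twisted group C*-algebras attached to bounded-doubling length functions that we establish separately. The inductive-limit structure of the spectral triples is purely structural: $\ell^2(G_n)\otimes\C^2$ is a $\Dirac$-reducing subspace of $\ell^2(G)\otimes\C^2$ on which $\Dirac$ restricts to $\Dirac_n$, the inclusions intertwine the $\sigma$-regular representations, and $\bigcup_n C^\ast(G_n,\sigma)$ is dense in $C^\ast(G,\sigma)$.

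The technical core is the two-sided estimate
\begin{equation*}
  \Lip_n(b) \leq \Lip_\infty(b) \leq \Lip_n(b) + C\delta_n \norm{b-\tau(b)\unit}{C^\ast(G,\sigma)}\quad\text{for all }b\in\dom{\Lip_n},
\end{equation*}
where $\tau$ is the canonical trace, $C>0$ depends only on the matrix part of $\Dirac$, and $\delta_n\coloneqq\sup_{g\in G}\inf_{h\in G_n}\mathds{L}_H(g^{-1}h)\to 0$ by the Hausdorff convergence $G_n\to G$. The lower bound is immediate, $\ell^2(G_n)\otimes\C^2$ being reducing for $\Dirac$ with restriction $\Dirac_n$. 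For the upper bound, decompose $\ell^2(G)\otimes\C^2=\bigoplus_c \ell^2(c)\otimes\C^2$ over the cosets $c$ of $G_n$; each summand is $b$-invariant and $\Dirac$-reducing, so $\Lip_\infty(b)=\sup_c\norm{[\Dirac,b]\restriction_{\ell^2(c)\otimes\C^2}}{}$, the $c=G_n$ term being $\Lip_n(b)$. For a nontrivial coset $c$: an elementary argument shows $\mathds{F}$ is constant on $c$ (equal to $\mathrm{scale}$ of the common level of the elements of $c$), so the $\mathds{F}$-part of $\Dirac$ contributes nothing to $[\Dirac,b]\restriction_{\ell^2(c)\otimes\C^2}$; and since $\mathds{L}_H$ is proper on $c$ we may choose a representative $z$ of $c$ with $\mathds{L}_H(z)\leq\delta_n$, so that $\mathds{L}_H$ on $c$ differs uniformly by at most $\delta_n$ from $\mathds{L}_H\restriction_{G_n}$ transported along $z$. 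A standard twisted-group-algebra computation then identifies the representation of $C^\ast(G_n,\sigma)$ on $\ell^2(c)$ with its $\sigma$-regular representation on $\ell^2(G_n)$, up to conjugation by a \emph{diagonal} (multiplication) unitary --- which commutes with multiplication by $\mathds{L}_H\restriction_{G_n}$ and with the matrix part of $\Dirac_n$ --- and possibly up to replacing $\sigma\restriction_{G_n}$ by a cohomologous cocycle, a change again implemented by a diagonal unitary. Hence $\norm{[\Dirac,b]\restriction_{\ell^2(c)\otimes\C^2}}{}\leq \norm{[\mathds{L}_H\restriction_{G_n}\otimes A,b]}{}+2\delta_n\norm{A}{}\norm{b-\tau(b)\unit}{}\leq\Lip_n(b)+C\delta_n\norm{b-\tau(b)\unit}{}$, the first commutator being $\leq\Lip_n(b)$ by the usual compression by a Pauli matrix. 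From this estimate and the finiteness of $D_\infty\coloneqq\diam{C^\ast(G,\sigma)}{\Lip_\infty}$ one deduces first that $D\coloneqq\sup_n\diam{C^\ast(G_n,\sigma)}{\Lip_n}<\infty$ (the apparent circularity is resolved by applying the estimate only for $n$ so large that $C\delta_n D_\infty<\tfrac12$, the finitely many smaller $n$ contributing finite diameters), and then that each set $\{b\in\dom{\Lip_n}:\Lip_n(b)\leq 1,\ \tau(b)=0\}$, being contained up to a bounded rescaling in the norm-compact set $\{a\in\dom{\Lip_\infty}:\Lip_\infty(a)\leq 1,\ \tau(a)=0\}$, is totally bounded; so each $(C^\ast(G_n,\sigma),\Lip_n)$ is a {\qcms} and each finite-level spectral triple is metric.

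It remains to verify the two clauses of Definition (\ref{bridge-builder-def}) for $\pi=\mathrm{id}$. For the first clause, given $a\in\dom{\Lip_\infty}$ take $b\coloneqq\Exp_n(a)\in C^\ast(G_n,\sigma)$ for the canonical conditional expectation $\Exp_n$, realized as $\Exp_n=\int_{\widehat{G/G_n}}\beta_\chi\,d\chi$ with $\beta_\chi=\mathrm{Ad}_{V_\chi}$, $V_\chi$ the multiplication operator $\xi\mapsto(g\mapsto\chi(gG_n)\xi(g))$ on $\ell^2(G)\otimes\C^2$. Each $V_\chi$ commutes with $\Dirac$, so each $\beta_\chi$ is a full quantum isometry of $(C^\ast(G,\sigma),\Lip_\infty)$, whence $\Lip_\infty(\Exp_n(a))\leq\int\Lip_\infty(\beta_\chi(a))\,d\chi=\Lip_\infty(a)$ by lower semicontinuity and convexity of $\Lip_\infty$; moreover $[\Dirac_n,\Exp_n(a)]$ is the restriction of $[\Dirac,\Exp_n(a)]$ to the reducing subspace $\ell^2(G_n)\otimes\C^2$, so $\Lip_n(\Exp_n(a))\leq\Lip_\infty(\Exp_n(a))\leq\Lip_\infty(a)$. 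Since $\{a:\Lip_\infty(a)\leq1,\ \tau(a)=0\}$ is norm-compact and $\Exp_n\to\mathrm{id}$ pointwise on the dense set $\bigcup_m C^\ast(G_m,\sigma)$ with $\norm{\Exp_n}{}=1$, we have $\Exp_n\to\mathrm{id}$ uniformly on that compact set; as $a-\Exp_n(a)=(a-\tau(a)\unit)-\Exp_n(a-\tau(a)\unit)$, this gives $\norm{a-\Exp_n(a)}{}<\varepsilon\Lip_\infty(a)$ for all such $a$ and $n$ large. For the second clause, given $b\in\dom{\Lip_n}$ set $b'\coloneqq b-\tau(b)\unit$ and $a\coloneqq(1+C\delta_n D)^{-1}b'+\tau(b)\unit$; then $\Lip_\infty(a)=(1+C\delta_n D)^{-1}\Lip_\infty(b')\leq(1+C\delta_n D)^{-1}\bigl(\Lip_n(b)+C\delta_n\norm{b'}{}\bigr)\leq\Lip_n(b)$ using $\norm{b'}{}\leq D\Lip_n(b)$, while $\norm{a-b}{}=\tfrac{C\delta_n D}{1+C\delta_n D}\norm{b'}{}\leq CD^2\delta_n\Lip_n(b)<\varepsilon\Lip_n(b)$ for $n$ large. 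Thus $\pi=\mathrm{id}$ is a bridge builder that is a full quantum isometry, and Theorem (\ref{main-spec-thm}) yields $\lim_n\spectralpropinquity{}((C^\ast(G,\sigma),\ell^2(G)\otimes\C^2,\Dirac),(C^\ast(G_n,\sigma),\ell^2(G_n)\otimes\C^2,\Dirac_n))=0$.

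The step I expect to be the \textbf{main obstacle} is the twisted-group-algebra bookkeeping underlying the upper estimate: keeping careful track of the $2$-cocycle $\sigma$, one must show that the compression of the $\sigma$-regular representation of $C^\ast(G_n,\sigma)$ to a coset of $G_n$ is unitarily equivalent, \emph{through a multiplication (diagonal) unitary}, to the $\sigma$-regular representation of $C^\ast(G_n,\sigma)$ on $\ell^2(G_n)$ up to a coboundary, so that norms of commutators with the multiplication operator $\mathds{L}_H\restriction_{G_n}$ are exactly preserved; the diagonality of the intertwiner is the essential and delicate point. A secondary point needing care is the bootstrap promoting each $(C^\ast(G_n,\sigma),\Lip_n)$ to a {\qcms} of uniformly bounded diameter, and the verification that the data fit the precise hypotheses of Definition (\ref{ind-limit-spectral-triple-def}) and Theorem (\ref{main-spec-thm}); these are straightforward once the estimate above is available.
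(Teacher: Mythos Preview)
Your proposal is correct and follows the paper's strategy: apply Theorem (\ref{main-spec-thm}) with $\pi=\mathrm{id}$, establish the metric property via bounded doubling (the paper's Lemma (\ref{metric-spectral-triples-lemma})), and obtain the key upper estimate $\Lip_\infty(b)\leq\Lip_n(b)+O(\delta_n)\norm{b-\tau(b)1}{}$ by decomposing $\ell^2(G)$ over cosets of $G_n$, using that $\mathds{F}$ is constant on nontrivial cosets while $\mathds{L}_H$ on a coset differs from its transport to $G_n$ by at most $\delta_n$.

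Two execution differences are worth noting. For the first bridge-builder clause you use the conditional expectation $\Exp_n=\int_{\widehat{G/G_n}}\beta_\chi\,d\chi$ and uniform convergence on the compact $\Lip_\infty$-unit ball; the paper instead chooses a finite $\varepsilon$-dense subset of that ball inside $C_c(G)$, which automatically lies in $C_c(G_n)$ for $n$ large. Both work. More interestingly, the point you flag as the \textbf{main obstacle} --- the diagonality of the coset intertwiner and the attendant cocycle bookkeeping --- is handled in the paper more cleanly by using the \emph{right} regular $\breve\sigma$-projective representation $\rho(k)$: a single cocycle identity gives $[\lambda(g),\rho(k)]=0$ for all $g,k$, so $\rho(k)$ is a unitary $\ell^2(G_n)\to\ell^2(G_nk)$ commuting with all of $C^\ast(G_n,\sigma)$, and one computes directly $\opnorm{[M_{\mathds{L}_H},\rho(k)]}{}{}=\mathds{L}_H(k)$. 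This yields the same estimate without having to decompose the intertwiner as shift-plus-diagonal or to mention cohomologous cocycles (indeed your computation, once carried out, shows the cocycle on the coset is the \emph{same} $\sigma$, not merely cohomologous: $\phi(h)=\sigma(h,k)$ does the job). The paper also records the Lipschitz-distance convergence $(C^\ast(G_k,\sigma),\Lip_n)\to(C^\ast(G_k,\sigma),\Lip_\infty)$ as a free byproduct of the two-sided estimate.
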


We then apply this theorem to construct metric spectral triples on noncommutative solenoids, i.e. the twisted group C*-algebras $C^\ast\left(\left(\Z\left[\frac{1}{p}\right]\right)^2,\sigma\right)$ where
\begin{equation*}
  \Z\left[\frac{1}{p}\right]\coloneqq\left\{\frac{k}{p^n}:k\in\Z,n\in\N \right\}\text,
  \end{equation*}
  with $p$ a prime natural number, and where $\sigma$ is a $2$-cocycle of $\left(\Z\left[\frac{1}{p}\right]\right)^2$. In this case, using the notation of the above theorem, we choose $\mathds{L}_H$ to be the restriction to $\left(\Z\left[\frac{1}{p}\right]\right)^2$ of any norm on $\R^2$, while $\mathds{F}$ can be chosen by setting $\mathds{F}(g)\coloneqq p^{\min\left\{n\in\N : g\in\left(\frac{1}{p^n}\Z\right)^2\right\}}$ for all $g = (g_1,g_2) \in \left(\Z\left[\frac{1}{p}\right]\right)^2$. Alternatively, following the ideas of \cite{FarsiPacker22}, which motivated the present work, we can choose $\mathds{F}(g_1,g_2)\coloneqq \max\{|g_1|_p , |g_2|_p\}$ for all $g_1,g_2 \in \Z\left[\frac{1}{p}\right]$, where $|\cdot|_p$ is the $p$-adic absolute value.

\medskip
  
Similarly, we can apply \cite{Packer89} to see that the Bunce-Deddens algebras are given as the twisted group C*-algebra $C^\ast\left(\Z(\alpha)\times \Z,\sigma\right)$ for an appropriate choice of a $2$-cocycle $\sigma$ and a sequence $\alpha=(\alpha_n)_{n\in\N}$ of nonzero natural numbers such that $\frac{\alpha_{n+1}}{\alpha_n}$ is a prime number for all $n\in\N$, where the group $\Z(\alpha)$ is the subgroup of the circle group $\T$ given by all roots of unity of order $\alpha_n$ for $n$ ranging over $\N$. We endow $\Z(\alpha)$ with the discrete topology. The supernatural number number describing the $\ast$-isomorphism class of the Bunce-Deddens algebra thus obtained is $\left(p^{|\{n\in\N:\frac{\alpha_{n+1}}{\alpha_n}=p\}|}\right)_{p \text{ prime }}$. For our purpose, we will work with sequences $\alpha$ for which $\left(\frac{\alpha_{n+1}}{\alpha_n}\right)_{n\in\N}$ is bounded. In this case, we will choose $\mathds{L}_H$ to be the sum or the max (or one of many other choices) of the restriction of a length function over $\T$ to $\Z(\alpha)$, and the absolute value on $\Z$. Observing that
\begin{equation*}
  \Z(\alpha) = \bigcup_{n\in\N} \dualZmod{\alpha_n}\text,
\end{equation*}
where $\dualZmod{m}$ is the group of all $m$-th roots of unity, we then set $\mathds{F}(\zeta,z)\coloneqq \min\{ \alpha_n : \zeta \in \dualZmod{\alpha_n}\}$ for all $(\zeta,z)\in \Z(\alpha)\times\Z$. This provides a new way to look at Bunce-Deddens algebras as limits of algebras of continuous sections of bundles of matrix algebras over circles in a geometric sense, as an echo of the topological fact that they are A$\T$ algebras. This work thus provides an approach to endowing Bunce-Deddens algebras with a different quantum metric from \cite{Latremoliere20a}, with the advantage that our quantum metrics are induced by spectral triples --- solving the main difficulty in \cite{Latremoliere20a}, at least for these Bunce-Deddens algebras to which our present work applies.

\begin{acknowledgement}
	This work was partially supported by the Simons Foundation (Simons Foundation collaboration grant \#523991 [C. Farsi] and \# 31698 [J. Packer].)
\end{acknowledgement}

\section{A Characterization of Convergence in the Propinquity for Inductive Sequences}

We introduce in this section the notion of \emph{bridge builders} associated with inductive sequences of {\qcms s}, which can be used to characterize the convergence of such sequences to their inductive limits in the sense of the Gromov-Hausdorff propinquity. We begin with a review of the notions of {\qcms s} and propinquity, and then we prove our main theorem, which underlies all the rest of our work.

\subsection{Preliminaries: the Gromov-Hausdorff Propinquity}\label{GH-subsection}

Our work is concerned with {\qcms s}, which are noncommutative analogues of the algebras of Lipschitz functions over a compact metric space. Our definition is the result of a natural evolution from the notion of compact quantum metric spaces introduced in \cite{Rieffel98a} by Rieffel, designed as the natural context for the construction of the propinquity. This subsection will also set some of the basic notation which we will use throughout this paper.

\begin{notation}
  By default, we denote the norm of a normed vector space $E$ by $\norm{\cdot}{E},$ and for us, the set $\N$ of natural numbers always contains zero.
\end{notation}

\begin{notation}
  If $\A$ is a unital C*-algebra, then the unit of $\A$ will simply be denoted by $1$. The state space of the C*-algebra $\A$ is denoted by $\StateSpace(\A)$. For any $a\in \A$, we write $\Re a = \frac{a+a^\ast}{2}$ and $\Im a = \frac{a-a^\ast}{2i}$. The space $\{ a \in \A : a=a^\ast \}$ is denoted by $\sa{\A}$ and is closed under the Jordan product $a,b\in \sa{\A} \mapsto \Re(ab)$ and the Lie product $a,b\in\sa{\A}\mapsto \Im(ab)$, making $\sa{\A}$ a Jordan-Lie algebra.
\end{notation}

\begin{definition}[{\cite{Connes89,Latremoliere13,Latremoliere15,Rieffel98a,Rieffel00,Rieffel10c}}]\label{qcms-def}
  Fix $\Omega\geq 1$ and $\Omega'\geq 0$. An {$(\Omega,\Omega')$-\qcms} $(\A,\Lip)$ is given by a unital C*-algebra $\A$ and a seminorm $\Lip$ defined on a dense Jordan-Lie subalgebra $\dom{\Lip}$ of $\sa{\A}$ such that:
  \begin{enumerate}
  \item $\{ a \in \dom{\Lip} : \Lip(a) = 0 \} = \R \unit$,
  \item the {\MongeKant} $\Kantorovich{\Lip}$, defined on the state space $\StateSpace(\A)$ of $\A$, by, for all
   $\varphi,\psi \in \StateSpace(\A) $:
    \begin{equation*}
     \quad \Kantorovich{\Lip}(\varphi,\psi) \coloneqq \sup\left\{ |\varphi(a) - \psi(a)| : a\in\dom{\Lip}, \Lip(a) \leq 1 \right\} 
    \end{equation*}
    is a metric which induces the weak-$^\ast$ topology on $\StateSpace(\A)$,
  \item for all $a,b \in \sa{\A}$,
    \begin{equation*}
      \max\left\{\Lip(\Re(ab)),\Lip(\Im(ab))\right\} \leq \Omega\left( \norm{a}{\A}\Lip(b) + \Lip(a)\norm{b}{\A} \right) + \Omega' \Lip(a)\Lip(b)\text;
    \end{equation*}
    this inequality being referred to as the $(\Omega,\Omega')$-Leibniz inequality,
  \item the set $\{ a \in \dom{\Lip} : \Lip(a) \leq 1 \}$ is closed in $\A$.
  \end{enumerate}

  Any such a seminorm $\Lip$ is called a \emph{Lipschitz seminorm} on $\A$.
\end{definition}

\begin{convention}
  By convention, if $\Lip$ is a Lipschitz seminorm on some unital C*-algebra $\A$, we will write $\Lip(a) = \infty$ whenever $a \notin \dom{\Lip}$, with the convention that $0\infty=0$ and $\infty+x=x+\infty=\infty$ for all $x\in [0,\infty]$. With this convention, $\Lip$ is lower semicontinuous over $\sa{\A}$ as a $[0,\infty]$-valued function (not just on $\dom{\Lip}$ but on the entire space $\sa{\A}$).
\end{convention}

\begin{convention}
  Throughout this paper, we fix $\Omega\geq 1$ and $\Omega' \geq 0$. These parameters will be implicit in our notation; when working with spectral triples, one may always assume $\Omega=1$ and $\Omega'=0$.
\end{convention}

\begin{remark}\label{qdiam-remark}
  If $(\A,\Lip)$ is a {\qcms}, then we record the following fact which we shall use repeatedly: if $a\in\dom{\Lip}$, then $\Lip(a+t 1) = \Lip(a)$ for all $t\in\R$, since
  \begin{equation*}
    \Lip(a) = \Lip(a+t1 - t1) \leq \Lip(a+t1) +\Lip(t1) = \Lip(a+t1) + t\underbracket[1pt]{\Lip(1)}_{=0} = \Lip(a+t1) \leq \Lip(a) + t\Lip(1) = \Lip(a) \text.
  \end{equation*}
\end{remark}

Since the state space of a {\qcms} is a compact metric space for the {\MongeKant}, it has bounded diameter. Moreover, its diameter can used to obtain a natural bound on the norm of some self-adjoint elements, which is a simple but very useful result, which we now recall.

\begin{notation}
  The diameter of a metric space $(E,d)$ is denoted by $\diam{E}{d}$. If $(\A,\Lip)$ is a {\qcms}, then we will write $\qdiam{A}{\Lip}$ for $\diam{\StateSpace(A)}{\Kantorovich{\Lip}}$. If $E$ is actually a normed vector space, then we simply write $\diam{A}{E}$ for the diameter of any subset $A$ of $E$ for the norm $\norm{\cdot}{E}$ of $E$.
\end{notation}

We recall the following fact, which we will use repeatedly.
\begin{theorem}[{\cite[Propostion 1.6]{Rieffel98a}}]\label{norm-Lip-bound-thm}
  If $(\A,\Lip)$ is a {\qcms}, and if $\mu \in \StateSpace(\A)$, then $\norm{a-\mu(a)1}{\A} \leq \Lip(a) \, \qdiam{\A}{\Lip}$.
\end{theorem}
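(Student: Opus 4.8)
The plan is to normalize to $\Lip(a)=1$ and then use the elementary fact that, for a self-adjoint element $b$ of a unital C*-algebra $\A$, one has $\norm{b}{\A} = \sup\{|\varphi(b)| : \varphi\in\StateSpace(\A)\}$.

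First I would dispose of the degenerate cases. If $a\notin\dom{\Lip}$, then $\Lip(a)=\infty$ by our convention and there is nothing to prove; if $\Lip(a)=0$, then $a\in\R\unit$ by Axiom (1) of Definition (\ref{qcms-def}), whence $a=\mu(a)\unit$ and both sides of the inequality vanish. So assume $a\in\dom{\Lip}$ with $0<\Lip(a)<\infty$. Since $\dom{\Lip}$ is a linear subspace, $\Lip$ is a seminorm, and the desired inequality is homogeneous of degree one in $a$, we may replace $a$ by $a/\Lip(a)$ and assume $\Lip(a)=1$.

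Next, set $b\coloneqq a-\mu(a)\unit$. Then $b\in\dom{\Lip}\subseteq\sa{\A}$ (using $\unit\in\dom{\Lip}$), and by Remark (\ref{qdiam-remark}) we have $\Lip(b)=\Lip(a)=1$. Using the norm formula for self-adjoint elements recalled above, for each $\varepsilon>0$ choose a state $\varphi\in\StateSpace(\A)$ with $|\varphi(b)|\geq\norm{b}{\A}-\varepsilon$. Since $\varphi(\unit)=\mu(\unit)=1$, we get $\varphi(b)=\varphi(a)-\mu(a)$, and since $\Lip(a)\leq 1$, the very definition of $\Kantorovich{\Lip}$ gives $|\varphi(a)-\mu(a)|\leq\Kantorovich{\Lip}(\varphi,\mu)\leq\qdiam{\A}{\Lip}$. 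Letting $\varepsilon\to 0$ yields $\norm{a-\mu(a)\unit}{\A}\leq\qdiam{\A}{\Lip}$, and undoing the normalization gives the general bound $\norm{a-\mu(a)\unit}{\A}\leq\Lip(a)\,\qdiam{\A}{\Lip}$.

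The only point worth spelling out, and the one I would state as a short lemma or recall explicitly, is the identity $\norm{b}{\A}=\sup_{\varphi\in\StateSpace(\A)}|\varphi(b)|$ for $b=b^\ast$: as $b$ is self-adjoint, $\norm{b}{\A}$ equals its spectral radius, so $\norm{b}{\A}$ or $-\norm{b}{\A}$ lies in $\mathrm{Sp}(b)$; evaluating at that spectral value on the commutative C*-algebra generated by $b$ and $\unit$ and extending to a state of $\A$ (Hahn–Banach together with positivity) produces $\varphi$ with $|\varphi(b)|=\norm{b}{\A}$, while the reverse inequality is immediate from every state having norm one. This is entirely routine, so no genuine obstacle arises; the substance of the statement is simply that Monge–Kantorovich distances between states control differences of values of $\Lip$-contractive self-adjoint elements.
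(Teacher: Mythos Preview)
Your proof is correct and follows essentially the same route as the paper's: both arguments reduce to the identity $\norm{b}{\A}=\sup_{\varphi\in\StateSpace(\A)}|\varphi(b)|$ for self-adjoint $b$ and then bound $|\varphi(a)-\mu(a)|$ by $\Lip(a)\,\qdiam{\A}{\Lip}$ via the definition of the {\MongeKant}. The paper's version is simply terser, omitting the normalization step and the explicit treatment of the degenerate cases.
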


\begin{proof}
  For all $\varphi\in\StateSpace(\A)$, we note that $|\varphi(a-\mu(a)1)| = |\varphi(a)-\mu(a)| \leq \Lip(a) \qdiam{\A}{\Lip}$. Since $a-\mu(a)1$ is self-adjoint, we conclude that $\norm{a-\mu(a)1}{\A} \leq \Lip(a)\qdiam{\A}{\Lip}$.
\end{proof}

The property difficult to establish when working with {\qcms s} is, of course, that the {\MongeKant} induces the weak-$^\ast$ topology. Rieffel provided various characterizations; we will find the following helpful in this paper:
\begin{theorem}[{\cite{Ozawa05}}]\label{tt-thm}
  Let $\Lip$ be a seminorm defined on some dense subspace $\dom{\Lip}$ of $\sa{\A}$ for some unital C*-algebra $\A$ such that $\{a\in\dom{\Lip}:\Lip(a)=0\}=\R1$. If we set $\Kantorovich{\Lip}(\varphi,\psi) = \sup\left\{|\varphi(a)-\psi(a)| : a\in\dom{\Lip},\Lip(a) \leq 1\right\}$, for all $\varphi,\psi \in \StateSpace(\A)$, then the following assertions are equivalent:
  \begin{itemize}
  \item $\Kantorovich{\Lip}$ is a metric on the state space $\StateSpace(\A)$ of $\A$ inducing the weak-$^\ast$ topology,
  \item there exists a state $\mu \in \StateSpace(\A)$ such that $\{a\in\dom{\Lip}:\Lip(a)\leq 1,\mu(a) = 0\}$ is totally bounded in $\sa{\A}$,
  \item for all states $\mu \in \StateSpace(\A)$, the set $\{a\in\dom{\Lip}:\Lip(a)\leq 1,\mu(a) = 0\}$ is totally bounded in $\sa{\A}$.
  \end{itemize}
\end{theorem}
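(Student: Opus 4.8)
The plan is to establish the cycle of implications: the first assertion implies the third, the third implies the second, and the second implies the first. Throughout, abbreviate $\mathcal{L}\coloneqq\{a\in\dom{\Lip}:\Lip(a)\leq 1\}$ and, for a state $\mu\in\StateSpace(\A)$, set $\mathcal{L}_\mu\coloneqq\{a\in\mathcal{L}:\mu(a)=0\}$; note that $\mathcal{L}_\mu\subseteq\sa{\A}$, that $a-\mu(a)1\in\mathcal{L}_\mu$ for every $a\in\mathcal{L}$ (since $\Lip(a-\mu(a)1)=\Lip(a)$, as in Remark (\ref{qdiam-remark})), and that $\varphi(a)-\psi(a)=\varphi(a-\mu(a)1)-\psi(a-\mu(a)1)$ for all $\varphi,\psi\in\StateSpace(\A)$. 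For the implication (first)$\Rightarrow$(third), assume $\Kantorovich{\Lip}$ is a metric inducing the weak-$^\ast$ topology; then $(\StateSpace(\A),\Kantorovich{\Lip})$ is a compact metric space, of finite diameter $D$. Consider the isometric linear embedding $T\colon\sa{\A}\hookrightarrow C_\R(\StateSpace(\A))$, $T(a)\colon\varphi\mapsto\varphi(a)$, into the Banach space of real-valued continuous functions on $\StateSpace(\A)$ with the uniform norm; it is isometric because $\norm{a}{\A}=\sup_{\varphi\in\StateSpace(\A)}|\varphi(a)|$ for self-adjoint $a$. For a fixed state $\mu$ and any $a\in\mathcal{L}_\mu$, the function $T(a)$ is $1$-Lipschitz for $\Kantorovich{\Lip}$ --- indeed $|\varphi(a)-\psi(a)|\leq\Kantorovich{\Lip}(\varphi,\psi)$ whenever $\Lip(a)\leq 1$ --- and $|T(a)(\varphi)|=|\varphi(a)-\mu(a)|\leq\Kantorovich{\Lip}(\varphi,\mu)\leq D$. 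Hence $T(\mathcal{L}_\mu)$ is equicontinuous and uniformly bounded in $C_\R(\StateSpace(\A))$, so totally bounded by the Arzel\`a--Ascoli theorem, and since $T$ is isometric, $\mathcal{L}_\mu$ is totally bounded in $\sa{\A}$; as $\mu$ was arbitrary, the third assertion holds.

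The implication (third)$\Rightarrow$(second) is immediate since $\StateSpace(\A)\neq\emptyset$ ($\A$ being unital). For (second)$\Rightarrow$(first), fix $\mu$ with $\mathcal{L}_\mu$ totally bounded, hence norm-bounded by some $R$. First, $\Kantorovich{\Lip}$ is finite: for $\varphi,\psi\in\StateSpace(\A)$ and $a\in\mathcal{L}$ we have $|\varphi(a)-\psi(a)|=|\varphi(a-\mu(a)1)-\psi(a-\mu(a)1)|\leq 2R$. It is symmetric and satisfies the triangle inequality, and $\Kantorovich{\Lip}(\varphi,\psi)=0$ forces (by homogeneity) $\varphi=\psi$ on $\dom{\Lip}$, hence on all of $\A$ by density of $\dom{\Lip}$ in $\sa{\A}$ and norm continuity of states; so $\Kantorovich{\Lip}$ is a genuine metric. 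Since $(\StateSpace(\A),\text{weak-}^\ast)$ is compact (Banach--Alaoglu) while $(\StateSpace(\A),\Kantorovich{\Lip})$ is Hausdorff, to conclude that the two topologies coincide it suffices to show that the identity from the former to the latter is continuous, i.e. that each $\Kantorovich{\Lip}$-ball is a weak-$^\ast$ neighbourhood of its centre. Given $\varphi_0$ and $\varepsilon>0$, total boundedness provides $a_1,\dots,a_m\in\sa{\A}$ with $\overline{\mathcal{L}_\mu}\subseteq\bigcup_{i=1}^m\{b\in\sa{\A}:\norm{b-a_i}{\A}<\varepsilon/4\}$; let $U\coloneqq\{\varphi\in\StateSpace(\A):|\varphi(a_i)-\varphi_0(a_i)|<\varepsilon/4,\ i=1,\dots,m\}$, a weak-$^\ast$ open neighbourhood of $\varphi_0$. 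For $\varphi\in U$ and $a\in\mathcal{L}$, writing $b\coloneqq a-\mu(a)1\in\mathcal{L}_\mu$ and choosing $i$ with $\norm{b-a_i}{\A}<\varepsilon/4$, we get $|\varphi(a)-\varphi_0(a)|=|\varphi(b)-\varphi_0(b)|\leq\norm{b-a_i}{\A}+|\varphi(a_i)-\varphi_0(a_i)|+\norm{a_i-b}{\A}<\tfrac{3\varepsilon}{4}$; taking the supremum over $a\in\mathcal{L}$ gives $\Kantorovich{\Lip}(\varphi,\varphi_0)\leq\tfrac{3\varepsilon}{4}<\varepsilon$, so $U$ is contained in the open $\varepsilon$-ball about $\varphi_0$, as needed.

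The conceptual heart, and the step I expect to be the main obstacle, is (first)$\Rightarrow$(third): one must see that the purely topological hypothesis --- that $\Kantorovich{\Lip}$ metrizes the weak-$^\ast$ topology --- can be converted, via the isometric embedding into $C_\R(\StateSpace(\A))$ together with Arzel\`a--Ascoli, into the compactness statement for $\mathcal{L}_\mu$; the refinement that a single state $\mu$ suffices, rather than quotienting out the scalars, is precisely the convenience this criterion provides over Rieffel's original formulation. The reverse direction (second)$\Rightarrow$(first) is more routine but has two subtleties worth flagging: one must separately verify that $\Kantorovich{\Lip}$ is finite and point-separating before discussing its topology, and one should exploit the compact-to-Hausdorff principle rather than attempt both topological inclusions directly, since weak-$^\ast$ convergence of nets is awkward to handle when $\A$ is non-separable.
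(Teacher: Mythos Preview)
The paper does not actually prove this theorem: it is stated with a citation to \cite{Ozawa05} (Ozawa--Rieffel) and used as a black box throughout. So there is no ``paper's own proof'' to compare against; what you have written is essentially the standard argument behind the cited result, and it is correct. Your cycle of implications is clean: the use of the isometric embedding $\sa{\A}\hookrightarrow C_\R(\StateSpace(\A))$ together with Arzel\`a--Ascoli for (first)$\Rightarrow$(third), and the compact-to-Hausdorff principle for (second)$\Rightarrow$(first), are exactly the right tools, and the verification that $\Kantorovich{\Lip}$ is point-separating via density of $\dom{\Lip}$ in $\sa{\A}$ is handled correctly.

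One minor cosmetic point: in the (second)$\Rightarrow$(first) step you write $\overline{\mathcal{L}_\mu}$ when choosing the finite net, but since $\mathcal{L}_\mu$ itself is totally bounded you can cover $\mathcal{L}_\mu$ directly; the closure is harmless but unnecessary. Also, your concluding paragraph slightly overstates the novelty of the ``single state $\mu$ suffices'' observation --- this is already the form in which Rieffel stated the criterion in \cite{Rieffel98a}, with Ozawa--Rieffel \cite{Ozawa05} extending the result beyond the order-unit setting --- but this does not affect the mathematics.
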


We record the following helpful result, which we will also use often.
\begin{corollary}[{\cite{Rieffel98a}}]\label{boundedL-cor}
  If $(\A,\Lip)$ is a {\qcms}, $\mu \in \StateSpace(\A)$, and if $K > 0$, then the set
  \begin{equation*}
    \left\{ a \in \dom{\Lip} : \Lip(a)\leq 1, |\mu(a)|\leq K \right\}
  \end{equation*}
  is compact in $\A$.
\end{corollary}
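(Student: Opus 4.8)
The plan is to show that the set $S \coloneqq \left\{ a \in \dom{\Lip} : \Lip(a) \leq 1,\ |\mu(a)| \leq K \right\}$ is simultaneously closed in $\A$ and totally bounded; since $\A$ is a Banach space, this gives compactness. The only substantive ingredient will be Theorem (\ref{tt-thm}) (Ozawa's characterization); everything else is bookkeeping.

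First I would establish total boundedness by reducing to the normalized case. By Remark (\ref{qdiam-remark}), $\Lip(a - \mu(a)\unit) = \Lip(a)$ for every $a \in \dom{\Lip}$, and obviously $\mu(a - \mu(a)\unit) = 0$. Hence the affine shift $a \mapsto a - \mu(a)\unit$ maps $S$ into the set $S_0 \coloneqq \left\{ b \in \dom{\Lip} : \Lip(b) \leq 1,\ \mu(b) = 0 \right\}$, which is totally bounded in $\sa{\A}$ by Theorem (\ref{tt-thm}) applied with the state $\mu$ (the hypotheses of that theorem being part of the definition of a {\qcms}). Writing $a = (a - \mu(a)\unit) + \mu(a)\unit$ shows $S \subseteq S_0 + [-K,K]\unit$. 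Now $[-K,K]\unit$ is a bounded subset of the one-dimensional subspace $\R\unit$, hence totally bounded, and the sum of two totally bounded subsets of a normed space is totally bounded; therefore $S$ is totally bounded. (As a byproduct, Theorem (\ref{norm-Lip-bound-thm}) gives the explicit bound $\norm{a}{\A} \leq \qdiam{\A}{\Lip} + K$ for $a \in S$, though only total boundedness is needed here.)

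Next I would check that $S$ is closed in $\A$: it is the intersection of $\left\{ a \in \dom{\Lip} : \Lip(a) \leq 1 \right\}$, closed in $\A$ by the fourth axiom of Definition (\ref{qcms-def}), with $\left\{ a \in \A : |\mu(a)| \leq K \right\}$, closed because the state $\mu$ is norm-continuous. A closed, totally bounded subset of the complete metric space $(\A,\norm{\cdot}{\A})$ is compact, which completes the argument.

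I do not anticipate a genuine obstacle: the proof is essentially the reduction to $S_0$ plus a citation. The one point requiring a moment's care is that the translation-invariance of $\Lip$ (Remark (\ref{qdiam-remark})) is precisely what makes the shift $a \mapsto a - \mu(a)\unit$ preserve both the Lipschitz bound and membership in $\dom{\Lip}$, so that $S_0$ — and not some larger set — is the target into which $S$ is pushed.
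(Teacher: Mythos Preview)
Your proof is correct and uses essentially the same idea as the paper's: decompose each $a\in S$ as $(a-\mu(a)\unit)+\mu(a)\unit$, invoke Theorem~(\ref{tt-thm}) for the first summand, and use boundedness of the scalar part for the second. The only cosmetic difference is that the paper phrases this as a sequential-compactness argument (extract a convergent subsequence of the scalars $\mu(a_n)$, then of the normalized elements $a_n-\mu(a_n)\unit$), whereas you package it as the set inclusion $S\subseteq S_0+[-K,K]\unit$ and appeal directly to ``closed $+$ totally bounded $=$ compact''; your version is arguably a bit cleaner.
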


\begin{proof}
  We first note that the set $\left\{ a \in \dom{\Lip} : \Lip(a)\leq 1, |\mu(a)|\leq K \right\}$ is closed since $\Lip$ is lower semicontinuous and $\mu$ is continuous.
  
  Let $(a_n)_{n\in\N}$ be a sequence in $\dom{\Lip}$ such that $\Lip(a_n)\leq 1$ and $|\mu(a_n)|\leq K$ for all $n\in\N$. Since $(|\mu(a_n)|)_{n\in\N}$ is bounded in $\R$, it has a convergent subsequence $(|\mu(a_{f(n)})|)_{n\in\N}$.

  On the other hand, $(a_{f(n)}-\mu(a_{f(n)})1)_{n\in\N}$ has a convergent subsequence $(a_{f(g(n))}-\mu(a_{f(g(n))}))_{n\in\N}$ by Theorem (\ref{tt-thm}). It now follows that $(a_{f(g(n))})_{n\in\N}$ is a convergent sequence.
\end{proof}

\medskip

Quantum compact metric spaces are the points of a (pseudo-)metric space, where the metric is the Gromov-Hausdorff propinquity, an analogue of the Gromov-Hausdorff distance in noncommutative geometry. The construction of the propinquity thus relies on an appropriate notion of \emph{quantum isometries}.

\begin{definition}\label{quantum-iso-def}
  Let $(\A_1,\Lip_1)$ and $(\A_2,\Lip_2)$ be two {\qcms s}. A \emph{Lipschitz morphism} $\pi:(\A_1,\Lip_1)\rightarrow(\A_2,\Lip_2)$ from $(\A_1,\Lip_1)$ to $(\A_2,\Lip_2)$ is a surjective $\ast$-morphism $\pi$ from $\A_1$ to $\A_2$ such that $\pi(\dom{\Lip_1})\subseteq\dom{\Lip_2}$. Moreover, if, for all $b\in\dom{\Lip_2}$:
  \begin{equation*}
    \Lip_2(b) = \inf\left\{ \Lip_1(a) : \pi(a) = b \right\} \text,
  \end{equation*}
  then $\pi$ is called a \emph{quantum isometry}. If $\pi$ is a quantum isometry and a bijection whose inverse is also a quantum isometry, then $\pi$ is called a \emph{full quantum isometry}; in this case $\pi$ is a $\ast$-isomorphism such that for all $ a \in \sa{\A_1} $:
  \begin{equation*}
    \quad \Lip_2\circ\pi(a) = \Lip_1(a) \text.
  \end{equation*}
\end{definition}

The propinquity is a metric computed by isometrically ``embedding'' two {\qcms s} into an arbitrary third one, which in the contravariant picture of noncommutative geometry, leads us to the following definition for a \emph{tunnel}. Crucially, a non-negative number can be associated to a tunnel using the Hausdorff distance.

\begin{notation}
  The Hausdorff distance induced by the distance function of a metric space $(X,d)$ on the hyperspace of closed subsets of $X$ is denoted by $\Haus{d}$. If $N$ is a norm on a vector space, we denote by $\Haus{N}$ the Hausdorff distance induced by the metric given by the norm $N$. By default, if $E$ is a normed vector space, we simplify our notation and simply write $\Haus{E}$ for the Hausdorff distance induced by the distance defined by the norm $\norm{\cdot}{E}$ of $E$.
\end{notation}

\begin{notation}
  If $\pi : \A \rightarrow \B$ is a unital $\ast$-morphism, then we define
  \begin{equation*}
    \pi^\ast : \varphi \in \StateSpace(\B) \longmapsto \varphi\circ\pi \in \StateSpace(\A) \text.
  \end{equation*}
\end{notation}

\begin{definition}[{\cite[Definition 3.1]{Latremoliere13b},\cite[Definition 2.11,Definition 3.6]{Latremoliere14}}]\label{tunnel-def}
  Let $(\A_1,\Lip_1)$ and $(\A_2,\Lip_2)$ be two {\qcms s}. A \emph{tunnel} $\tau = (\D,\Lip_\D,\pi_1,\pi_2)$ is given by a {\qcms} $(\D,\Lip_\D)$ and two quantum isometries $\pi_1 : (\D,\Lip_\D) \rightarrow (\A_1,\Lip_1)$ and $\pi_2 : (\D,\Lip_\D) \rightarrow (\A_2,\Lip_2)$. The \emph{domain} $\dom{\tau}$ of $\tau$ is $(\A_1,\Lip_1)$ and the \emph{codomain} $\codom{\tau}$ of $\tau$ is $(\A_2,\Lip_2)$.

  The \emph{extent} $\tunnelextent{\tau}$ of $\tau$ is the non-negative number:
  \begin{equation*}
    \tunnelextent{\tau} \coloneqq \max_{j \in \{1,2\}} \Haus{\Kantorovich{\Lip_\D}}\left(\pi_j^\ast(\StateSpace(\A_j)), \StateSpace(\D)\right) \text.
  \end{equation*}
\end{definition}

\begin{remark}
  We emphasize that all {\qcms s} involved in our tunnels in this paper must satisfy the same $(\Omega,\Omega')$-Leibniz inequality for our \emph{fixed} $\Omega,\Omega'$.
\end{remark}

There always exists a tunnel between any two {\qcms s}, and the extent of a tunnel is always finite. We thus define:

\begin{definition}\label{propinquity-def}
  The \emph{(dual) Gromov-Hausdorff propinquity} $\dpropinquity{}((\A,\Lip_\A),(\B,\Lip_\B))$ between any two {\qcms s} $(\A,\Lip_\A)$ to $(\B,\Lip_\B)$ is defined by:
  \begin{equation*}
    \dpropinquity{}((\A,\Lip_\A),(\B,\Lip_\B)) \coloneqq 
    \inf\left\{ \tunnelextent{\tau} : \tau \text{ tunnel from $(\A,\Lip_\A)$ to $(\B,\Lip_\B)$ } \right\} \text.
  \end{equation*}
\end{definition}

The (dual) propinquity is well-behaved, as summarized in the following theorem:
\begin{theorem}[{\cite{Latremoliere13,Latremoliere13b}}]
  The dual propinquity is a complete metric up to full quantum isometry. Moreover, if $(X_n,d_n)_{n\in\N}$ is a sequence of compact metric spaces, then $(X_n,d_n)_{n\in\N}$ converges to a compact metric space $(X,d)$ for the Gromov-Hausdorff distance if, and only if $\lim_{n\rightarrow\infty}\dpropinquity{}((C(X_n),\Lip_{d_n}),(C(X),\Lip_{d})) = 0$, where $\Lip_d$ denotes the Lipschitz seminorm induced by any metric $d$.
\end{theorem}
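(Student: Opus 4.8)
The plan is to treat the two assertions separately: that $\dpropinquity{}$ is a metric up to full quantum isometry and is complete is obtained by manipulating tunnels, whereas the comparison with the Gromov--Hausdorff distance on classical compact metric spaces rests on the McShane extension theorem and Kantorovich duality. For the metric axioms: symmetry is immediate, since reversing a tunnel $(\D,\Lip_\D,\pi_1,\pi_2)\mapsto(\D,\Lip_\D,\pi_2,\pi_1)$ only swaps the two terms in the maximum defining the extent; finiteness follows by producing one tunnel of finite extent between any two {\qcms s}, for instance by faithfully representing both algebras on a common Hilbert space $\Hilbert$, using a bridge with pivot the unit, and bounding the extent in terms of the two quantum diameters through Theorem~(\ref{norm-Lip-bound-thm}). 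The triangle inequality is the first genuinely technical point: given tunnels $\tau=(\D,\Lip_\D,\pi,\rho)$ from $(\A_1,\Lip_1)$ to $(\A_2,\Lip_2)$ and $\tau'=(\D',\Lip_{\D'},\pi',\rho')$ from $(\A_2,\Lip_2)$ to $(\A_3,\Lip_3)$, one glues $\D$ and $\D'$ over $(\A_2,\Lip_2)$ --- concretely via the fibered product $\mathfrak{E}=\{(x,y)\in\D\oplus\D':\rho(x)=\pi'(y)\}$ with $\Lip_\mathfrak{E}=\max\{\Lip_\D\circ p_1,\Lip_{\D'}\circ p_2\}$ for the coordinate projections $p_1,p_2$ (perturbed by a small multiple of a norm term if needed to keep the Monge--Kantorovich property quantitative) --- verifies by a lifting argument that the coordinate compositions $\mathfrak{E}\to\A_1$ and $\mathfrak{E}\to\A_3$ are quantum isometries, bounds the extent of the resulting tunnel by $\tunnelextent{\tau}+\tunnelextent{\tau'}$, and takes infima over $\tau,\tau'$.

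For the identification of vanishing distance with full quantum isometry: if $\dpropinquity{}((\A,\Lip_\A),(\B,\Lip_\B))=0$, choose tunnels $\tau_n$ of extent tending to $0$; the extent bound lets one transport any element $a$ of the Lipschitz ball of $\A$ to an element of the Lipschitz ball of $\B$ up to an error controlled by the extent, and symmetrically. Since the Lipschitz balls modulo scalars are norm-compact by Corollary~(\ref{boundedL-cor}), a diagonal extraction produces, in the limit, a linear map between the dense domains $\dom{\Lip_\A}$ and $\dom{\Lip_\B}$ that does not increase the Lipschitz seminorm in either direction, hence preserves it, and that extends to a $\ast$-isomorphism $\A\to\B$; this is the desired full quantum isometry, and the converse implication is immediate. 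For completeness, given a $\dpropinquity{}$-Cauchy sequence, one passes to a fast subsequence joined by tunnels of summable extents, realizes all the algebras and tunnel domains inside a single ambient C*-algebra (an inverse-limit-type construction along the tunnels), takes the C*-subalgebra they generate as the candidate limit $\A_\infty$, defines $\Lip_\infty$ as the largest lower semicontinuous seminorm compatible with the transported Lipschitz seminorms, checks its Monge--Kantorovich property through the total-boundedness criterion of Theorem~(\ref{tt-thm}), and exhibits tunnels from the construction to conclude $\dpropinquity{}((\A_n,\Lip_n),(\A_\infty,\Lip_\infty))\to 0$.

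For the classical comparison, the forward direction is clean: if $(X_n,d_n)\to(X,d)$ for $\GH$, isometrically embed $X_n$ and $X$ in a common compact metric space $(Z,d_Z)$ with $\Haus{d_Z}(X_n,X)\to 0$; then $(C(Z),\Lip_{d_Z})$ together with the two restriction $\ast$-morphisms is a tunnel, since the McShane extension theorem extends any real Lipschitz function on a closed subset of $Z$ to all of $Z$ with the same Lipschitz constant (so restriction is a quantum isometry), and its extent is controlled by how densely $X_n$ and $X$ sit in $Z$, because composing point evaluations with the dual restriction maps embeds $(X_n,d_n)$ and $(X,d)$ isometrically into $(\StateSpace(C(Z)),\Kantorovich{\Lip_{d_Z}})$ by Kantorovich duality; hence $\dpropinquity{}\to 0$. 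For the converse a direct Hausdorff estimate is obstructed (the state of the tunnel's middle algebra nearest a Dirac mass need not itself be a Dirac mass), so instead one compares $\dpropinquity{}$ with the earlier quantum Gromov--Hausdorff distance of Rieffel, to which it is topologically equivalent and for which the equivalence with $\GH$ on classical spaces is already established; combining this with the completeness of $\GH$, the $\dpropinquity{}$-closedness of the class of classical {\qcms s}, and the recovery of $(X,d)$ from $(C(X),\Lip_d)$ as its extreme states under the Monge--Kantorovich metric, a subsequence-extraction argument forces $(X_n,d_n)\to(X,d)$ for $\GH$.

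The main obstacle is, first, performing the tunnel composition behind the triangle inequality while preserving \emph{every} clause of Definition~(\ref{qcms-def}) --- in particular the fixed $(\Omega,\Omega')$-Leibniz inequality and lower semicontinuity --- on the fibered product, and second, and more seriously, converting the purely metric extent data into algebraic structure in both the vanishing-distance statement and the completeness proof; these last two rest on coupling the geometric control of the extents with the norm-compactness of Lipschitz balls modulo scalars (Corollary~(\ref{boundedL-cor})), and this is where essentially all of the analytic work is concentrated. The classical comparison is routine in the forward direction and, in the converse, reduces to invoking the already-known comparisons between quantum and classical Gromov--Hausdorff distances.
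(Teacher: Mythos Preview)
The paper does not prove this theorem: it is stated without proof as a background result imported from \cite{Latremoliere13,Latremoliere13b} (with the triangle inequality handled in \cite{Latremoliere14} and completeness in \cite{Latremoliere13b,Latremoliere15}). There is therefore no ``paper's own proof'' to compare against; your outline is essentially a sketch of how those references proceed.

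Your sketch is broadly faithful to those references, but one point is understated. For the triangle inequality, the naive fibered product $\mathfrak{E}=\{(x,y):\rho(x)=\pi'(y)\}$ with $\Lip_{\mathfrak{E}}=\max\{\Lip_\D\circ p_1,\Lip_{\D'}\circ p_2\}$ does \emph{not} obviously yield a {\qcms} satisfying the same $(\Omega,\Omega')$-Leibniz inequality, and the ``small perturbation'' you allude to is not a cosmetic fix: controlling the Leibniz constants under composition is precisely the difficulty that motivated \cite{Latremoliere14}, where the triangle inequality is obtained not by composing tunnels into a single tunnel but by working with \emph{journeys} (finite sequences of tunnels) or, equivalently, by proving directly that the extent is subadditive under a specific gluing that respects the Leibniz class. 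Your parenthetical hides the actual content of that step. The remaining items --- distance zero via target sets and compactness of Lipschitz balls, completeness via a fast subsequence and an ambient construction, and the classical comparison via McShane extension in one direction and comparison with Rieffel's distance (or a direct pure-state argument) in the other --- are accurate summaries of the cited proofs.
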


There are several interesting known examples of convergence for the propinquity, including approximations of quantum tori by fuzzy tori \cite{Latremoliere13c}, approximations of spheres by matrix algebras \cite{Rieffel15b}, continuity of quantum tori in their cocycle parameter \cite{Latremoliere13c}, continuity of UHF algebras with respect to the Baire space seen as their natural parameter space, continuity of the Effros-Shen algebras in their irrational parameters \cite{Latremoliere15d}, and more.

\subsection{Main result}

We begin with a simple sufficient condition to ensure that a seminorm is indeed a Lipschitz seminorm on an inductive limit of unital C*-algebras, when each of the C*-subalgebra in the inductive sequence is already equipped with a Lipschitz seminorm. This condition is quite natural and generalizes, for instance, the idea behind the construction of Lipschitz seminorms on AF algebras in \cite{Latremoliere15d}.

\begin{proposition}\label{sufficient-cqms-prop}
  Let $\A_\infty$ be a unital C*-algebra. For each $n\in\N$, let $(\A_n,\Lip_n)$ be a {\qcms}, where $(\A_n)_{n\in\N}$ is an increasing sequence of C*-subalgebras of $\A_\infty$ with the unit of $\A_\infty$ in $\A_0$. Assume moreover that $\A_\infty = \closure{\bigcup_{n\in\N}\A_n}$. Let $\Lip_\infty$ be a seminorm defined on a dense Jordan-Lie subalgebra $\dom{\Lip_\infty}$ of $\sa{\A_\infty}$, such that:
  \begin{enumerate}
  \item $\{a\in\dom{\Lip_\infty} :\Lip_\infty(a)=0\} = \R\unit$,
  \item the unit ball of $\Lip_\infty$ is closed in $\A_\infty$,
  \item $\Lip_\infty$ is $(\Omega,\Omega')$-Leibniz.
  \end{enumerate}

  If there exists a unital isometric positive linear map $\pi : \A_\infty \rightarrow \A_\infty$ such that, for all $\varepsilon > 0$, there exists $N\in \N$ with the property that:
  \begin{equation*}
    \forall a \in \dom{\Lip_\infty} \quad \exists b \in \dom{\Lip_N}: \quad \Lip_N(b)\leq \Lip_\infty(a) \text{ and }\norm{\pi(a) - b}{\A_\infty} < \varepsilon \Lip_\infty(a)\text,
  \end{equation*}
  then $(\A_\infty,\Lip_\infty)$ is a {\qcms}.
\end{proposition}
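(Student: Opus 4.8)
The plan is to verify the four defining properties of a {\qcms} listed in Definition (\ref{qcms-def}). Properties (1), (3) and (4) are, respectively, hypotheses (1), (3) and (2) of the proposition, and $\dom{\Lip_\infty}$ is a dense Jordan--Lie subalgebra of $\sa{\A_\infty}$ by assumption; so the only substantive point is property (2), namely that $\Kantorovich{\Lip_\infty}$ is a metric on $\StateSpace(\A_\infty)$ inducing the weak-$^\ast$ topology. To obtain this I would invoke Ozawa's criterion, Theorem (\ref{tt-thm}): since $\{a\in\dom{\Lip_\infty}:\Lip_\infty(a)=0\}=\R\unit$, it is enough to exhibit \emph{one} state $\mu\in\StateSpace(\A_\infty)$ for which $\mathcal{L}_\mu\coloneqq\{a\in\dom{\Lip_\infty}:\Lip_\infty(a)\leq 1,\ \mu(a)=0\}$ is totally bounded in $\sa{\A_\infty}$.

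The choice of $\mu$ is the key construction. First I would apply the hypothesis with $\varepsilon=1$ to get an integer $N_0$, fix any state $\nu_0\in\StateSpace(\A_{N_0})$, extend it to a state $\tilde\nu_0\in\StateSpace(\A_\infty)$ (possible since $\A_{N_0}$ contains the unit of $\A_\infty$), and set $\mu\coloneqq\tilde\nu_0\circ\pi$, which is a state because $\pi$ is unital, positive and linear. Write $D_0\coloneqq\qdiam{\A_{N_0}}{\Lip_{N_0}}<\infty$. Then for $a\in\mathcal{L}_\mu$ the hypothesis (with $\varepsilon=1$) gives $b\in\dom{\Lip_{N_0}}$ with $\Lip_{N_0}(b)\leq\Lip_\infty(a)\leq 1$ and $\norm{\pi(a)-b}{\A_\infty}<\Lip_\infty(a)\leq 1$; since $\tilde\nu_0$ extends $\nu_0$ and $b\in\A_{N_0}$, we get $|\nu_0(b)|=|\tilde\nu_0(b)|\leq|\mu(a)|+\norm{\pi(a)-b}{\A_\infty}<1$, and Theorem (\ref{norm-Lip-bound-thm}) applied in $(\A_{N_0},\Lip_{N_0})$ yields $\norm{b}{\A_\infty}\leq|\nu_0(b)|+\Lip_{N_0}(b)\,D_0<1+D_0$. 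As $\pi$ is isometric, $\norm{a}{\A_\infty}=\norm{\pi(a)}{\A_\infty}<2+D_0=:R$, so $\mathcal{L}_\mu$ is norm bounded.

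For total boundedness I would then fix $\varepsilon>0$, apply the hypothesis with $\varepsilon/3$ to get an integer $N$, and for each $a\in\mathcal{L}_\mu$ choose $b=b(a)\in\dom{\Lip_N}$ with $\Lip_N(b)\leq 1$ and $\norm{\pi(a)-b}{\A_\infty}<\varepsilon/3$. Step~1 gives $\norm{b}{\A_N}=\norm{b}{\A_\infty}\leq\norm{a}{\A_\infty}+\varepsilon/3<R+1$, so, fixing any state $\nu_N$ of $\A_N$, the element $b$ lies in the set $\{c\in\dom{\Lip_N}:\Lip_N(c)\leq 1,\ |\nu_N(c)|\leq R+1\}$, which is compact in $\A_N\subseteq\A_\infty$ by Corollary (\ref{boundedL-cor}). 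Taking a finite $\varepsilon/3$-net $c_1,\dots,c_m$ of this compact set shows that every $\pi(a)$, $a\in\mathcal{L}_\mu$, is within $2\varepsilon/3$ of $\{c_1,\dots,c_m\}$; hence $\pi(\mathcal{L}_\mu)$ is totally bounded, and since $\pi$ restricts to an isometric bijection of $\mathcal{L}_\mu$ onto $\pi(\mathcal{L}_\mu)$, so is $\mathcal{L}_\mu$. Theorem (\ref{tt-thm}) then gives property (2), and the proof is complete.

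The main obstacle is Step~1: producing a norm bound on $\mathcal{L}_\mu$ that does not depend on $a$. The approximants $b$ only satisfy $\Lip_N(b)\leq 1$, and the $\Lip_N$-unit ball is unbounded in norm (it contains $\R\unit$), so a bare approximation estimate is not enough; the normalization $\mu(a)=0$ with $\mu$ chosen to \emph{factor through} $\pi$ is exactly what controls the scalar part of $b$ and makes the bound uniform. Once that is in place, the remainder is routine bookkeeping with the triangle inequality, the isometry of $\pi$, and the already-established compactness statement of Corollary (\ref{boundedL-cor}) for the $(\A_n,\Lip_n)$.
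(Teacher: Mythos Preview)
Your proof is correct and follows essentially the same route as the paper: reduce to Ozawa's criterion (Theorem~\ref{tt-thm}) applied to a state of the form $\mu=\nu\circ\pi$, push $\pi(a)$ close to an element of a compact subset of some $(\A_N,\Lip_N)$ via Corollary~\ref{boundedL-cor}, and use that $\pi$ is isometric to pull total boundedness back to $\mathcal{L}_\mu$. The only organizational difference is that the paper skips your Step~1 by noting directly that the approximant $b$ satisfies $|\mu(b)|\leq\|\pi(a)-b\|<\varepsilon/4$ (since $\mu(\pi(a))=0$), so $b$ lands in the compact set $\{c\in\dom{\Lip_N}:\Lip_N(c)\leq 1,\ |\mu(c)|\leq\varepsilon/4\}$ of Corollary~\ref{boundedL-cor} without a separate uniform norm bound; your two-step version is equally valid and perhaps more transparent about where the normalization $\mu(a)=0$ is used.
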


\begin{proof}
  Let $\mu \in\StateSpace(\A_\infty)$. By assumption, $\mu\in\StateSpace(\A_n)$ for all $n\in\N$ --- where we use the same symbol $\mu$ to denote the restriction of $\mu$ to $\A_n$. Let
  \begin{equation*}
    B_\infty \coloneqq \left\{ a \in \dom{\Lip_\infty} : \mu\circ\pi(a) = 0, \Lip_\infty(a) \leq 1 \right\}\text.
  \end{equation*}

  \medskip
  
  Now, let $\varepsilon > 0$ and let $n\in\N$. We set
  \begin{equation*}
    B_n \coloneqq \left\{ a \in \dom{\Lip_n} : |\mu(a)| < \frac{\varepsilon}{4}, \Lip_n(a) \leq 1 \right\} \text.
  \end{equation*}
  Let $a \in B_n$, and let $\varphi\in\StateSpace(\A_n)$. By Theorem (\ref{norm-Lip-bound-thm}), we have the following inclusion:
  \begin{equation*}
    B_n \subseteq \left\{ a \in \dom{\Lip_n} : \Lip_n(a)\leq 1,\norm{a}{\A_n}\leq \qdiam{\A_n}{\Lip_n} + \frac{\varepsilon}{4} \right\}
  \end{equation*}
  and the latter set is compact since $\Lip_n$ is a Lipschitz seminorm, by Corollary (\ref{boundedL-cor}). So $B_n$ is totally bounded. In fact, since $\Lip_n$ is lower semicontinuous and $\mu$ is continuous, the set $B_n$ is also closed in the complete space $\A_\infty$, so $B_n$ is compact.
  
  By assumption on $\pi$, there exists $N\in\N$ such that
  \begin{equation*}
    \forall a \in B_\infty \quad \exists b \in \dom{\Lip_N}: \quad \Lip_N(b)\leq 1 \text{ and }\norm{\pi(a) - b}{\A_\infty} < \frac{\varepsilon}{4}\text.
  \end{equation*}
  In particular, if $a\in B_\infty$ and $b \in \dom{\Lip_N}$ with $\Lip_N(b)\leq 1$ and $\norm{\pi(a)-b}{\A_\infty} < \frac{\varepsilon}{4}$, then $|\mu(b)| \leq \norm{b-\pi(a)}{\A_\infty}+|\mu(\pi(a))| < \frac{\varepsilon}{4}$, so $b \in B_N$.

  Since $B_N$ is compact in $\sa{\A_N}$ by Corollary (\ref{boundedL-cor}), there exists a $\frac{\varepsilon}{4}$-dense subset $F\subseteq B_N$ of $B_N$. So
  \begin{equation*}
    \Haus{\A_\infty}(\pi(B_\infty),F)\leq\Haus{\A_\infty}(\pi(B_\infty),B_N) + \Haus{\A_\infty}(B_N,F) <\frac{\varepsilon}{2}\text.
  \end{equation*}

  The domain $\dom{\Lip_\infty}$ is dense in $\sa{\A}$, so it is not empty and thus $\{ a\in\dom{\Lip_\infty} : \Lip_\infty(a)\leq 1\}$ is not empty, since $\Lip$ is a seminorm. Thus, by Remark (\ref{qdiam-remark}), the set $B_\infty$ is not empty as well. We thus obtain:
  \begin{equation*}
    \emptyset \neq B_\infty = \bigcup_{b \in F} \left\{ a \in B_\infty : \norm{\pi(a)-b}{\A_\infty} < \frac{\varepsilon}{2} \right\} \text.
  \end{equation*}
  Therefore, if we define
  \begin{equation*}
    G\coloneqq \left\{ b \in F : \left\{ a \in B_\infty : \norm{\pi(a)-b}{\A_\infty} < \frac{\varepsilon}{2} \right\} \neq \emptyset \right\} \text,
  \end{equation*}
  then $G\neq\emptyset$ and $B_\infty = \bigcup_{b \in G} \left\{ a \in B_\infty : \norm{\pi(a)-b}{\A_\infty} < \frac{\varepsilon}{2} \right\}$. For each $b \in G$, we pick $t(b) \in B_\infty$ such that $\norm{\pi(t(b))-b}{\A_\infty} < \frac{\varepsilon}{2}$. Let now $a \in B_\infty$. There exists $b \in G$ such that $\norm{\pi(a)-b}{\A_\infty} < \frac{\varepsilon}{2}$. Then
  \begin{align*}
    \norm{a-t(b)}{\A_\infty}
    &= \norm{\pi(a-t(b))}{\A_\infty} \\
    &\leq \norm{\pi(a)-b}{\A_\infty} + \norm{b-\pi(t(b))}{\A_\infty} \\
    &< \frac{\varepsilon}{2} + \frac{\varepsilon}{2} = \varepsilon \text.
  \end{align*}
  Thus, $t(G)$ is a $\varepsilon$-dense subset of $B_\infty$. So $B_\infty$ is totally bounded in $\A_\infty$. Therefore, noting that $\mu\circ\pi$ is a state of $\A_\infty$, we conclude by Theorem (\ref{tt-thm}) that $\Kantorovich{\Lip_\infty}$ induces the weak-$^\ast$ topology on $\StateSpace(\A_\infty)$. Since all other required properties are assumed, $\Lip_\infty$ is indeed a Lipschitz seminorm.
\end{proof}

The next natural question is to find a sufficient condition to strengthen Proposition (\ref{sufficient-cqms-prop}) and obtain convergence of the sequence $(\A_n,\Lip_n)_{n\in\N}$ to $(\A_\infty,\Lip_\infty)$ in the sense of the propinquity. To this end, we introduce the notion of a bridge builder --- a map which, among other things, satisfy the condition in Proposition (\ref{sufficient-cqms-prop}). In fact, we basically ``symmetrize'' the condition in Proposition (\ref{sufficient-cqms-prop}) and require that we work with $\ast$-morphism (which will allow us to construct seminorms with the Leibniz property), rather than just positive linear maps.

\begin{notation}
  We will write $\Nbar \coloneqq \N\cup\{\infty\}$ for the one point compactification of $\N$.
\end{notation}

\begin{definition}\label{bridge-builder-def}
  For each $n\in\N\cup\{\infty\}$, let $(\A_n,\Lip_n)$ be a {\qcms}, where $(\A_n)_{n\in\N}$ is an increasing (for $\subseteq$) sequence of C*-subalgebras of $\A_\infty$ such that $\A_\infty = \closure{\bigcup_{n\in\N} \A_n}$ and the unit of $\A_\infty$ is in $\A_0$.
  
  A $\ast$-automorphism $\pi :\A_\infty \rightarrow\A_\infty$ is a \emph{bridge builder} for $((\A_n,\Lip_n)_{n\in\N},(\A_\infty,\Lip_\infty))$ when, for all $\varepsilon > 0$, there exists $N\in\N$ such that if $n\geq N$, then
  \begin{equation*}
    \forall a \in \dom{\Lip_\infty} \quad \exists b \in \dom{\Lip_n}: \quad \Lip_n(b) \leq \Lip_\infty(a) \text{ and }\norm{\pi(a) - b}{\A_\infty} < \varepsilon\Lip_\infty(a) \\
  \end{equation*}
  and
  \begin{equation*}
    \forall b \in \dom{\Lip_n} \quad \exists a \in \dom{\Lip_\infty}: \quad \Lip_\infty(a)\leq\Lip_n(b) \text{ and }\norm{\pi(a)-b}{\A_\infty}<\varepsilon\Lip_n(b) \text.
  \end{equation*}
\end{definition}

\begin{proposition}\label{bridge-builder-prop}
  For each $n\in\N\cup\{\infty\}$, let $(\A_n,\Lip_n)$ be a {\qcms}, where $(\A_n)_{n\in\N}$ is an increasing (for $\subseteq$) sequence of C*-subalgebras of $\A_\infty$ such that $\A_\infty = \closure{\bigcup_{n\in\N} \A_n}$ and the unit of $\A_\infty$ is in $\A_0$.
  
  If there exists a bridge builder for $((\A_n,\Lip_n)_{n\in\N},(\A_\infty,\Lip_\infty))$, then
  \begin{equation*}
    \lim_{n\rightarrow\infty}\dpropinquity{}((\A_n,\Lip_n),(\A_\infty,\Lip_\infty)) = 0\text.
  \end{equation*}
\end{proposition}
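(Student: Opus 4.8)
The plan is to produce, for each large enough $n$, a single concrete tunnel from $(\A_n,\Lip_n)$ to $(\A_\infty,\Lip_\infty)$ of arbitrarily small extent built out of the bridge builder $\pi$; since $\dpropinquity{}$ is an infimum of extents of tunnels, this yields the claimed convergence. Fix $\varepsilon>0$ and let $N\in\N$ be given by Definition (\ref{bridge-builder-def}) for $\varepsilon$ and $\pi$. For $n\geq N$ put $\D_n\coloneqq\A_n\oplus\A_\infty$ with its direct-sum C*-structure (so $\sa{\D_n}=\sa{\A_n}\oplus\sa{\A_\infty}$), let $p_n\colon\D_n\to\A_n$ and $p_\infty\colon\D_n\to\A_\infty$ be the coordinate $\ast$-epimorphisms, and define on the dense Jordan-Lie subalgebra $\dom{\Lip_n}\oplus\dom{\Lip_\infty}$ of $\sa{\D_n}$ the seminorm
\begin{equation*}
  \Lip_{\D_n}(a,c)\coloneqq\max\left\{\Lip_n(a),\ \Lip_\infty(c),\ \tfrac{1}{\varepsilon}\norm{a-\pi(c)}{\A_\infty}\right\}\text.
\end{equation*}
Informally, this is the tunnel attached to the \emph{bridge} with carrier $\A_\infty$, pivot $1$, and the monomorphisms $\A_n\hookrightarrow\A_\infty$ and $\pi$.

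The first task is to check that $(\D_n,\Lip_{\D_n})$ is an $(\Omega,\Omega')$-{\qcms}. Its null space is $\R 1$ (if $\Lip_n(a)=\Lip_\infty(c)=0$ then $a=s1$, $c=t1$, and the third term forces $s=t$); its unit ball is an intersection of three closed sets, hence closed; and it is $(\Omega,\Omega')$-Leibniz because a maximum of $(\Omega,\Omega')$-Leibniz seminorms is again one, while the off-diagonal term is $(1,0)$-Leibniz — expand $aa'-\pi(c)\pi(c')=a(a'-\pi(c'))+(a-\pi(c))\pi(c')$ and use that $\pi$ is a unital isometric $\ast$-morphism together with $\norm{(a,c)}{\D_n}\geq\norm{a}{\A_\infty}$. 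That $\Kantorovich{\Lip_{\D_n}}$ metrizes the weak-$^\ast$ topology I would get from Theorem (\ref{tt-thm}) applied to the state $(a,c)\mapsto\mu_\infty(c)$ for a fixed $\mu_\infty\in\StateSpace(\A_\infty)$: on $\{(a,c):\Lip_{\D_n}(a,c)\leq 1,\ \mu_\infty(c)=0\}$ the second coordinate ranges in the totally bounded set $\{c\in\dom{\Lip_\infty}:\Lip_\infty(c)\leq 1,\ \mu_\infty(c)=0\}$ (Theorem (\ref{tt-thm}) for $(\A_\infty,\Lip_\infty)$), while $\norm{a}{\A_\infty}\leq\norm{c}{\A_\infty}+\varepsilon$ confines the first coordinate to a norm-bounded $\Lip_n$-ball, compact by Corollary (\ref{boundedL-cor}); so the whole set lies in a product of totally bounded sets.

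Next, $p_n$ and $p_\infty$ are quantum isometries. Since $\Lip_{\D_n}(a,c)\geq\Lip_n(a)$ and $\Lip_{\D_n}(a,c)\geq\Lip_\infty(c)$, the two quotient infima are at least $\Lip_n(a)$ and $\Lip_\infty(c)$ respectively; for the reverse, given $a\in\dom{\Lip_n}$ the second inequality in Definition (\ref{bridge-builder-def}) (with $a$ in the role of ``$b$'', using $n\geq N$) supplies $c\in\dom{\Lip_\infty}$ with $\Lip_\infty(c)\leq\Lip_n(a)$ and $\norm{a-\pi(c)}{\A_\infty}<\varepsilon\Lip_n(a)$, so $\Lip_{\D_n}(a,c)\leq\Lip_n(a)$; symmetrically, given $c\in\dom{\Lip_\infty}$ the first inequality supplies $a\in\dom{\Lip_n}$ with $\Lip_{\D_n}(a,c)\leq\Lip_\infty(c)$ (constants handled directly). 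Hence $\tau_n\coloneqq(\D_n,\Lip_{\D_n},p_n,p_\infty)$ is a tunnel from $(\A_n,\Lip_n)$ to $(\A_\infty,\Lip_\infty)$.

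It remains to bound $\tunnelextent{\tau_n}$. Since $p_n^\ast(\StateSpace(\A_n))$ and $p_\infty^\ast(\StateSpace(\A_\infty))$ are contained in $\StateSpace(\D_n)$, only one direction of each Hausdorff distance matters. Any $\Phi\in\StateSpace(\D_n)$ has the form $(a,c)\mapsto t\varphi_1(a)+(1-t)\varphi_2(c)$ with $t\in[0,1]$, $\varphi_1\in\StateSpace(\A_n)$, $\varphi_2\in\StateSpace(\A_\infty)$; extend $\varphi_1$ to a state $\widetilde\varphi_1$ of $\A_\infty$. When $\Lip_{\D_n}(a,c)\leq 1$ we have $\norm{a-\pi(c)}{\A_\infty}\leq\varepsilon$, and then $|\Phi(a,c)-p_\infty^\ast(\psi)(a,c)|=t\,|\widetilde\varphi_1(a-\pi(c))|\leq\varepsilon$ for $\psi\coloneqq t(\widetilde\varphi_1\circ\pi)+(1-t)\varphi_2\in\StateSpace(\A_\infty)$, while $|\Phi(a,c)-p_n^\ast(\psi')(a,c)|=(1-t)\,|\varphi_2(c-\pi^{-1}(a))|\leq\varepsilon$ for $\psi'\coloneqq t\varphi_1+(1-t)(\varphi_2\circ\pi^{-1})|_{\A_n}\in\StateSpace(\A_n)$, using $\norm{c-\pi^{-1}(a)}{\A_\infty}=\norm{\pi(c)-a}{\A_\infty}$. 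Hence $\tunnelextent{\tau_n}\leq\varepsilon$, so $\dpropinquity{}((\A_n,\Lip_n),(\A_\infty,\Lip_\infty))\leq\varepsilon$ for all $n\geq N$; letting $\varepsilon\to 0$ gives the result. The one genuinely delicate point is the second paragraph: confirming that the off-diagonal term $\tfrac1\varepsilon\norm{a-\pi(c)}{\A_\infty}$ spoils neither the Leibniz inequality nor the total-boundedness required by Theorem (\ref{tt-thm}); the rest is routine $\varepsilon$-bookkeeping.
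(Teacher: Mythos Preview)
Your proof is correct and follows essentially the same construction as the paper: the same direct-sum tunnel with the seminorm $\max\{\Lip_n,\Lip_\infty,\tfrac{1}{\varepsilon}\norm{\cdot-\pi(\cdot)}{\A_\infty}\}$, the same use of the two halves of the bridge-builder condition to verify the quantum-isometry property, and the same extent bound via the off-diagonal term. The only cosmetic difference is that you bound the extent using the explicit convex-combination form of states on a direct sum, whereas the paper invokes a Hahn--Banach extension of $\varphi$ to $\A_\infty\oplus\A_\infty$; both routes yield $\tunnelextent{\tau_n}\leq\varepsilon$ in one line.
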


\begin{proof}
  Let $\pi : \A_\infty\rightarrow\A_\infty$ be the given bridge builder. Let $\varepsilon > 0$. There exists $N\in\N$ such that if $n\geq N$, then
  \begin{itemize}
  \item $\forall a \in \dom{\Lip_\infty} \quad \exists b \in \dom{\Lip_n}: \quad \Lip_n(b) \leq \Lip_\infty(a) \wedge \norm{\pi(a)-b}{\A_\infty} < \varepsilon\Lip_\infty(a) \text,$
  \item $\forall b \in \dom{\Lip_n} \quad \exists a \in \dom{\Lip_\infty}: \quad \Lip_\infty(a) \leq \Lip_n(b) \wedge \norm{\pi(a)-b}{\A_\infty} < \varepsilon\Lip_n(b) \text.$
  \end{itemize}
  
  Fix $n\geq N$. We define, for all $a\in\dom{\Lip_\infty}$ and $b\in \dom{\Lip_n}$:
  \begin{equation*}
    \TLip_n(a,b) \coloneqq \max\left\{ \Lip_\infty(a), \Lip_n(b), \frac{1}{\varepsilon}\norm{\pi(a)-b}{\A_\infty} \right\} \text.
  \end{equation*}

  It is a standard argument that $\left(\A_\infty\oplus\A_n,\TLip_n\right)$ is a {\qcms}:
  \begin{enumerate}
  \item the domain $\dom{\TLip_n}  = \dom{\Lip_\infty}\oplus\dom{\Lip_n}$ of $\TLip_n$ is dense in $\sa{\A_\infty\oplus\A_n}$ since $\dom{\Lip_\infty}$ is dense in $\sa{\A_\infty}$ and $\dom{\Lip_n}$ is dense in $\sa{\A_n}$,
  \item if $\TLip_n(a,b) = 0$ for some $(a,b)\in\dom{\TLip_n}$, then $\Lip_\infty(a) = 0$ so $a=t \unit$ for some $t \in\R$, and $\Lip_n(b) = 0$ so $b = s \unit$ for some $s \in \R$ (it matters here that the unit is the same in $\A_\infty$ and $\A_n$), and $0 = \norm{\pi(a)-b}{\A_\infty} = |t-s|$ so $(a,b) = t(\unit,\unit)$;
  \item $\TLip_n$ is the maximum of two lower semicontinuous functions and one continuous function, so it is lower semicontinuous over $\sa{\A_\infty\oplus\A_n}$;
  \item a direct computation shows that $\TLip_n$ is $(\Omega,\Omega')$-Leibniz since $\Lip_\infty$ and $\Lip_n$ both are, and $\pi$ is a $\ast$-morphism;
  \item fixing any state $\mu$ of $\A_\infty$ and setting $\varphi :(a,b) \in \A_\infty\oplus\A_n\mapsto \mu(a)$, then $\varphi \in \StateSpace(\A_\infty\oplus\A_n)$, and
    \begin{multline*}
      \left\{ (a,b) \in \dom{\TLip_n} : \TLip_n(a,b) \leq 1, \varphi(a,b) = 0 \right\} \subseteq \\
      \left\{a\in\dom{\Lip_\infty}:\Lip_\infty(a),\mu(a) = 0 \right\} \times \left\{ b \in \dom{\Lip_n} : \Lip_n(b) \leq 1, |\mu\circ\pi^{-1}(b)| \leq \varepsilon \right\}
    \end{multline*}
    and, as seen in the proof of Proposition (\ref{sufficient-cqms-prop}), the set on the right hand side is a product of two compact set, and thus compact; thus the set on the left hand side is compact (closed in a compact set) and thus, $\TLip_n$ is indeed a Lipschitz seminorm, invoking Theorem (\ref{tt-thm}).
  \end{enumerate}
  
  We now check that $\tau_n \coloneqq (\A_\infty\oplus\A_n,\TLip_n,\psi_n,\theta_n)$, with $\psi_n : (a,b) \in \A_\infty\oplus\A_n \mapsto a \in \A_\infty$ and $\theta_n : (a,b)\in\A_\infty\oplus\A_n\mapsto b \in \A_n$, is a tunnel, in the sense of Definition (\ref{tunnel-def}).

    Let $a\in\dom{\Lip_\infty}$. By assumption, there exists $b \in \dom{\Lip_n}$ with $\Lip_n(b) \leq \Lip_\infty(a)$ and $\norm{\pi(a)-b}{\A_\infty} < \varepsilon\Lip_\infty(a)$. Therefore, $\TLip_n(a,b) = \Lip_\infty(a)$. Since by construction, $\TLip_n(a,c)\geq\Lip_\infty(a)$ for all $a\in\dom{\Lip_\infty}$ and $c\in\dom{\Lip_n}$, we have shown that $\psi_n$ is a quantum isometry by Definition (\ref{quantum-iso-def}).

    Let now $b \in \dom{\Lip_n}$.  Again by assumption on $\pi$, there exists $a\in \dom{\Lip_\infty}$ such that $\norm{\pi(a)-b}{\A_\infty}<\varepsilon\Lip_n(b)$ and $\Lip_\infty(a)\leq\Lip_n(b)$. Thus $\TLip_n(a,b)=\Lip_n(b)$. Once again, $\TLip_n(c,b)\geq\Lip_n(b)$ by construction for all $c\in\dom{\Lip_\infty}$, so $\theta_n$ is indeed a quantum isometry, so $\tau_n$ is a tunnel.

    \medskip
    
    We now compute the extent of $\tau_n$, in the sense of Definition (\ref{tunnel-def}). Let $\varphi \in \StateSpace(\A_\infty\oplus\A_n)$. Using Hahn-Banach theorem, we extend $\varphi$ to a state $\varphi'$ of $\A_\infty\oplus\A_\infty$. Let $\mu:a\in\A_\infty\mapsto \varphi'(a,\pi(a))$; since $\pi$ is a unital $\ast$-morphism, $\mu$ is a state of $\A_\infty$. By construction, if $\TLip_n(a,b)\leq 1$ then $\norm{\pi(a)-b}{\A_\infty} \leq \varepsilon$ and thus
    \begin{align*}
      |\varphi(a,b) - \mu\circ\psi_n(a,b)|
      &= |\varphi'(a,b) - \varphi'(a,\pi(a))| \\
      &\leq |\varphi'(0,b-\pi(a))| \\
      &\leq \norm{b-\pi(a)}{\A_\infty} \leq \varepsilon \text.
    \end{align*}
    Thus $\Haus{\Kantorovich{\TLip_n}}(\psi_n^\ast(\StateSpace(\A_\infty)),\StateSpace(\A_\infty\oplus\A_n)) \leq\varepsilon$.
    
    Let now $\mu' : b\in\A_n\mapsto\varphi(\pi^{-1}(b),b)$. Since $\pi$ is a $\ast$-automorphism of $\A_\infty$, the map $\mu'$ is a state of $\A_n$. Moreover:
    \begin{align*}
      |\varphi(a,b) - \mu'\circ\theta_n(a,b)|
      &= |\varphi(a,b) - \varphi(\pi^{-1}(b),b)|\\
      &= |\varphi(a-\pi^{-1}(b),0)| \\
      &\leq \norm{a-\pi^{-1}(b)}{\A_\infty} \\
      &= \norm{\pi(a)-b}{\A_\infty} \leq \varepsilon \text.
    \end{align*}
    
    Thus $\Haus{\Kantorovich{\TLip_n}}(\theta_n^\ast(\StateSpace(\A_n)),\StateSpace(\A_\infty)) \leq \varepsilon$.

    Hence, the extent $\tunnelextent{\tau_n}$ of $\tau_n$ is at most $\varepsilon$. By Definition (\ref{propinquity-def}), we thus have shown that for all $n\geq N$,
    \begin{equation}\label{bb-pf-eq}
      \dpropinquity{}((\A_n,\Lip_n),(\A_\infty,\Lip_\infty)) \leq \varepsilon \text,
    \end{equation}
    which concludes our proof.
\end{proof}

\medskip

Our main result in this section is the following theorem, which shows that the natural sufficient condition in Definition (\ref{bridge-builder-def}) and Proposition (\ref{bridge-builder-prop}) is, in fact, very close to necessary, under a mild and natural condition. This is notable because in general, it is difficult to exhibit nontrivial necessary conditions for convergence in the sense of the propinquity (besides, say, the fact that diameters must converge). It also shows that the existence of bridge builders is the natural setup for establishing convergence of inductive limits in the sense of the propinquity, thus providing a complete answer for the relationship between convergence of inductive sequences of {\qcms s} in the categorical sense and the propinquity sense, under a commonly met condition. 

\begin{theorem}\label{main-thm}
   For each $n\in\N\cup\{\infty\}$, let $(\A_n,\Lip_n)$ be a {\qcms}, where $(\A_n)_{n\in\N}$ is an increasing (for $\subseteq$) sequence of C*-subalgebras of $\A_\infty$ such that $\A_\infty = \closure{\bigcup_{n\in\N} \A_n}$ and the unit of $\A_\infty$ is in $\A_0$. We assume that there exists $ M > 0$ such that  for all  $n \in \N $:
  \begin{equation*}
    \quad \frac{1}{M} \Lip_n \leq \Lip_\infty \leq M \cdot \Lip_n \text{ on $\dom{\Lip_n}$.}
  \end{equation*}
  Then 
  \begin{equation*}
    \lim_{n\rightarrow\infty} \dpropinquity{}\left((\A_n,\Lip_n),(\A_\infty,\Lip_\infty)\right) = 0\text,
  \end{equation*}
  if, and only if, for any subsequence $(\A_{g(n)},\Lip_{g(n)})_{n\in\N}$ of $(\A_n,\Lip_n)_{n\in\N}$, there exists a strictly increasing function $f : \N \rightarrow \N$ and a \emph{bridge builder} $\pi$ for $((\A_{g\circ f(n)},\Lip_{g\circ f(n)})_{n\in\N},(\A_\infty,\Lip_\infty))$.
\end{theorem}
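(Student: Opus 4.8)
The plan is to prove the two implications separately; the ``if'' direction is a formal consequence of Proposition (\ref{bridge-builder-prop}), while the ``only if'' direction requires a genuine construction.

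\textbf{The ``if'' direction.} First observe that for any strictly increasing $h:\N\to\N$, the family $(\A_{h(n)})_{n\in\N}$ is again an increasing sequence of C*-subalgebras of $\A_\infty$ with dense union and with $1\in\A_{h(0)}$, so Proposition (\ref{bridge-builder-prop}) applies to it verbatim. Hence, given a subsequence $(\A_{g(n)})_{n\in\N}$ together with the associated strictly increasing $f$ and bridge builder $\pi$, Proposition (\ref{bridge-builder-prop}) yields $\lim_{n\to\infty}\dpropinquity{}((\A_{g\circ f(n)},\Lip_{g\circ f(n)}),(\A_\infty,\Lip_\infty))=0$. Thus every subsequence of the real sequence $\bigl(\dpropinquity{}((\A_n,\Lip_n),(\A_\infty,\Lip_\infty))\bigr)_{n\in\N}$ admits a further subsequence converging to $0$, so by the usual characterization of convergence in a metric space the whole sequence converges to $0$. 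The bi-Lipschitz hypothesis is not needed for this implication.

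\textbf{The ``only if'' direction: reductions.} Assume $\lim_n \dpropinquity{}((\A_n,\Lip_n),(\A_\infty,\Lip_\infty))=0$ and fix a subsequence $(\A_{g(n)})_{n\in\N}$. Since $\bigl(\dpropinquity{}((\A_{g(n)},\Lip_{g(n)}),(\A_\infty,\Lip_\infty))\bigr)_{n\in\N}$ still tends to $0$ and $(\A_{g(n)})_{n\in\N}$ is again an inductive sequence of the required type satisfying the same bi-Lipschitz bound with the same $M$, it suffices to treat $g=\mathrm{id}$ and produce a strictly increasing $f:\N\to\N$ and a bridge builder $\pi$ for $((\A_{f(n)},\Lip_{f(n)})_{n\in\N},(\A_\infty,\Lip_\infty))$. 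From convergence I would choose inductively a strictly increasing $f$ and tunnels $\tau_n=(\D_n,\Lip_{\D_n},\eta_n,\theta_n)$ from $(\A_{f(n)},\Lip_{f(n)})$ to $(\A_\infty,\Lip_\infty)$ with $\tunnelextent{\tau_n}<2^{-n}$, recording from the quantum isometry property that every $a\in\dom{\Lip_\infty}$ lifts through $\theta_n$ to some $d$ with $\Lip_{\D_n}(d)=\Lip_\infty(a)$, so that $\eta_n(d)\in\dom{\Lip_{f(n)}}$ has Lipschitz seminorm at most $\Lip_\infty(a)$, and symmetrically starting from $b\in\dom{\Lip_{f(n)}}$.

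\textbf{The ``only if'' direction: the automorphism.} The crux is to convert this metric data into one $\ast$-automorphism $\pi$ of $\A_\infty$. The guiding idea is that a tunnel of small extent between a subalgebra $\A_{f(n)}\subseteq\A_\infty$ and $\A_\infty$ itself forces the inclusion $\A_{f(n)}\hookrightarrow\A_\infty$ to agree, metrically and up to a small error, with the identity of $\A_\infty$ after a controlled repositioning of $\A_\infty$; the uniform bound $\frac{1}{M}\Lip_n\le\Lip_\infty\le M\Lip_n$ ensures these repositionings distort Lipschitz seminorms by at most a fixed factor, uniformly in $n$, so that they can be organized into an approximately coherent system along the tower $(\A_{f(n)})_n$ with incoherence errors summable thanks to $\tunnelextent{\tau_n}<2^{-n}$. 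I would then run an approximate-intertwining argument in the spirit of Elliott's intertwining, bookkeeping the quantum metrics, and pass to a point-norm limit to obtain a $\ast$-automorphism $\pi$ of $\A_\infty=\closure{\bigcup_n\A_{f(n)}}$. Finally I would verify Definition (\ref{bridge-builder-def}): given $\varepsilon>0$, choose $N$ with $2^{-N}$ small relative to $\varepsilon$ and $M$; for $n\ge N$ and $a\in\dom{\Lip_\infty}$, the lift of $a$ through $\theta_n$ and its $\eta_n$-image give $b\in\dom{\Lip_{f(n)}}$ with $\Lip_{f(n)}(b)\le\Lip_\infty(a)$ and, combining the extent bound with the closeness of $\pi$ to the transport maps, $\norm{\pi(a)-b}{\A_\infty}<\varepsilon\Lip_\infty(a)$; the symmetric inequality follows the same way using $\pi^{-1}$, with the comparisons $\Lip_\infty\le M\Lip_{f(n)}$ and $\Lip_{f(n)}\le M\Lip_\infty$ used only to keep the seminorm bounds in the stated one-sided form.

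\textbf{Main obstacle.} Everything except the construction of $\pi$ is routine: the subsequence reductions, the extraction of good tunnels from convergence, and the final $\varepsilon$--$N$ estimates. The real difficulty is that tunnels record only closeness of state spaces and comparability of Lipschitz seminorms, carrying no algebra homomorphism; upgrading this to an honest $\ast$-automorphism of $\A_\infty$, coherently over the whole inductive system, is exactly where the inductive-limit structure and the uniform bi-Lipschitz hypothesis (rather than mere convergence, which does not suffice) are essential.
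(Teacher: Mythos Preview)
Your ``if'' direction is correct and is exactly what the paper does.

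For the ``only if'' direction, you have correctly identified the crux --- producing the $\ast$-automorphism $\pi$ --- but your proposal to do this via an Elliott-style approximate intertwining is not a workable plan as stated, and you yourself flag it as unresolved. Elliott intertwining needs approximate $\ast$-homomorphisms to intertwine; as you note, a tunnel carries no such map, and nothing in your outline explains how to manufacture one. The ``transport maps'' you allude to (lift through $\theta_n$, push down through $\eta_n$) are multivalued and not multiplicative, so there is no obvious candidate to feed into an intertwining scheme.

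The paper takes a genuinely different route that avoids intertwining altogether, using the \emph{target set} machinery of the propinquity. For a tunnel $\tau_n$ and $a\in\dom{\Lip_\infty}$ with $l\ge\Lip_\infty(a)$, the target set $\targetsettunnel{n}{a}{l}=\{\theta_n(d):\psi_n(d)=a,\ \TLip_n(d)\le l\}$ is a compact subset of $\A_{f(n)}\subseteq\A_\infty$ whose diameter is controlled by $\tunnelextent{\tau_n}$. The bi-Lipschitz hypothesis $\Lip_\infty\le M\Lip_n$ confines all these sets to a single fixed compact set in $\sa{\A_\infty}$ (via Corollary~(\ref{boundedL-cor})), so the hyperspace of closed subsets is compact for $\Haus{\A_\infty}$. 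A diagonal argument over a countable dense subset of $\dom{\Lip_\infty}$ then produces a single subsequence along which every $\targetsettunnel{f(n)}{a}{l}$ converges in Hausdorff distance to a singleton $\{\pi(a)\}$. Known estimates for target sets (linearity up to enlarging $l$, compatibility with Jordan and Lie products, norm estimates) immediately make $\pi$ a Jordan--Lie isometry on $\sa{\A_\infty}$, hence a $\ast$-monomorphism of $\A_\infty$; surjectivity follows because every $b\in\bigcup_n\dom{\Lip_n}$ lies in the closure of the range. The bridge-builder estimates then drop out of a uniform-convergence argument on the compact set $\{a:\Lip_\infty(a)\le 1,\ \mu(a)=0\}$.

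So the missing idea is not an intertwining but rather: treat the multivalued tunnel transport as a sequence of compact sets, use the uniform bi-Lipschitz bound to trap them in one compact hyperspace, and extract $\pi$ as a Hausdorff limit of sets shrinking to points.
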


\begin{proof}
  First, assume that for any subsequence $(\A_{g(n)},\Lip_{g(n)})_{n\in\N}$, there exists a strictly increasing function $f : \N \rightarrow \N$ and a bridge builder $\pi$ for $((\A_{g\circ f(n)},\Lip_{g\circ f(n)})_{n\in\N},(\A_\infty,\Lip_\infty))$. By Proposition (\ref{bridge-builder-prop}), we conclude that every subsequence of $(\A_n,\Lip_n)_{n\in\N}$ has a subsequence converging to $(\A_\infty,\Lip_\infty)$. Therefore, $(\A_n,\Lip_n)_{n\in\N}$ converges to $(\A_\infty,\Lip_\infty)$ since the propinquity is, indeed, a metric (up to full quantum isometry).

  \medskip

    Let us now assume that $(\A_n,\Lip_n)_{n\in\N}$ converges to $(\A_\infty,\Lip_\infty)$ for the propinquity. Since any subsequence will converge as well, it is sufficient to prove our statement for $g$ being the identity, and this will simplify our notation.

    Since $(\A_n,\Lip_n)_{n\in\N}$ converges to $(\A_\infty,\Lip_\infty)$, there exists a sequence
    \begin{equation*}
      (\tau_n)_{n\in\N} \coloneqq (\D_n,\TLip_n, \psi_n,\theta_n)_{n\in\N}
    \end{equation*}
    of tunnels, as in Definition (\ref{tunnel-def}),  with $\lim_{n\rightarrow\infty}\tunnelextent{\tau_n} = 0$, while, for each $n\in\N$, we have $\dom{\tau_n} = (\A_\infty,\Lip_\infty)$ and $\codom{\tau_n} = (\A_n,\Lip_n)$. To ease notation, the target set of $a\in\dom{\Lip_\infty}$ with $l\geq \Lip_\infty(a)$ defined by $\tau_n$ will be denoted by $\targetsettunnel{n}{a}{l}$, rather than $\targetsettunnel{\tau_n}{a}{l}$; we recall from \cite{Latremoliere13b,Latremoliere13} that:
    \begin{equation*}
      \targetsettunnel{n}{a}{l} = \left\{ \theta_n(d) : d \in \psi_n^{-1}(\{a\}), \TLip_n(d) \leq l \right\} \text.
    \end{equation*}
    This proof heavily relies on the properties of target sets, as discussed in \cite{Latremoliere13b,Latremoliere13,Latremoliere15,Latremoliere14}. In \cite{Latremoliere13b}, various estimates which we will refer to in this proof are expressed using the \emph{length} $\tunnellength{\tau}$ of a tunnel $\tau$, rather than the extent $\tunnelextent{\tau}$; however as seen in \cite[Proposition 2.12]{Latremoliere14}, for any tunnel $\tau$, we have $\tunnellength{\tau}\leq\tunnelextent{\tau}\leq 2\tunnellength{\tau}$. We will use this inequality without further mention to express all our results here in terms of extents.

    \begin{claim}\label{singleton-claim}
      For all $a\in \dom{\Lip_\infty}$, there exists a strictly increasing function $f : \N\rightarrow \N$ and an element $\pi(a) \in \dom{\Lip_\infty}$ such that, for all $l\geq \Lip_\infty(a)$,
      \begin{equation*}
        \lim_{n\rightarrow\infty} \Haus{\A_\infty}\left( \targetsettunnel{f(n)}{a}{l}, \{\pi(a)\}  \right) = 0 \text,
      \end{equation*}
      and $\norm{\pi(a)}{\A_\infty} = \norm{a}{\A_\infty}$.
    \end{claim}
    
    \begin{claimproof}{singleton-claim}
    First, since the sequence $(\tunnelextent{\tau_n})_{n\in\N}$ converges (to $0$), it is bounded; let $K' > 0$ such that $\tunnelextent{\tau_n}\leq K'$ for all $n\in\N$.
    
    Let $a \in \dom{\Lip_\infty}$. Let $l = \Lip_\infty(a)$. For any $K > 0$, let
    \begin{equation*}
      \A_\infty[K] \coloneqq \left\{ b \in\dom{\Lip_\infty} : \Lip_\infty(b)\leq K, \norm{b}{\A_\infty}\leq \norm{a}{\A_\infty} + K K'  \right\}\text.
    \end{equation*}
    The set $\A_\infty[K]$ is compact in $\sa{\A_\infty}$ by Corollary (\ref{boundedL-cor}).    
    By \cite[Corollary 4.5]{Latremoliere13b} and since $\Lip_\infty\leq M \Lip_n$ on $\dom{\Lip_n}$, the sequence $(\targetsettunnel{n}{a}{l})_{n\in\N}$ is a sequence of compact subsets of $\A_\infty[M l]$, and
    \begin{equation*}
      \lim_{n\rightarrow\infty} \diam{\targetsettunnel{n}{a}{l}}{\A_\infty} = 0\text.
    \end{equation*}

    Since $\A_\infty[M l]$ is compact in $\A_\infty$, the Hausdorff distance $\Haus{\A_\infty}$ induced on the set of closed subsets of $\A_\infty[M l]$ by the norm $\norm{\cdot}{\A_\infty}$ of $\A_\infty$ gives a compact topology as well. Therefore, there exists a subsequence $(\targetsettunnel{f(n)}{a}{l})_{n\in\N}$ of $(\targetsettunnel{n}{a}{l})_{n\in\N}$ which converges, for $\Haus{\A_\infty}$, to a singleton $\{ \pi(a) \}$ of $\A_\infty[M l]$. In particular, $\Lip_\infty(\pi(a)) \leq M l = M \Lip_\infty(a)$.
  
  \medskip
  
  Let now $L \geq l$. By definition, $\targetsettunnel{f(n)}{a}{l} \subseteq \targetsettunnel{f(n)}{a}{L}$ for all $n\in\N$ and
  \begin{equation*}
    \lim_{n\rightarrow\infty} \diam{\targetsettunnel{f(n)}{a}{L}}{\A_\infty} = 0 \text,
  \end{equation*}
  so we conclude easily as well that
  \begin{equation*}
    \lim_{n\rightarrow\infty} \Haus{\A_\infty}\left(\targetsettunnel{f(n)}{a}{L},\{\pi(a)\} \right) = 0 \text.
  \end{equation*}
  
  By \cite[Proposition 4.4]{Latremoliere13b},  we also note that if $b_n \in \targetsettunnel{f(n)}{a}{l}$ for each $n\in\N$, then
  \begin{equation*}
      \norm{\pi(a)}{\A_\infty} = \lim_{n\rightarrow\infty}\norm{b_n}{\A_\infty} \leq \limsup_{n\rightarrow\infty} \left( \norm{a}{\A_\infty} + \tunnelextent{\tau_{f(n)}} l \right) = \norm{a}{\A_\infty} \text.
    \end{equation*}

    Similarly, since $a\in\targetsettunnel{\tau_{f(n)}^{-1}}{b_n}{l}$, we also have
    \begin{equation*}
      \norm{a}{\A_\infty} \leq \limsup_{n\rightarrow\infty} \left( \norm{b_n}{\A_\infty} + l \tunnelextent{\tau_{f(n)}} \right) = \norm{\pi(a)}{\A_\infty} \text.
    \end{equation*}

    So indeed, $\norm{\pi(a)}{\A_\infty} = \norm{a}{\A_\infty}$.
  \end{claimproof}
  
  \begin{claim}\label{JL-morphism-claim}
    There exists a unital $\ast$-endomorphism $\pi$ of $\A_\infty$ such that $\pi(\dom{\Lip_\infty})\subseteq \dom{\Lip_\infty}$, and a strictly increasing function $f : \N\rightarrow\N$ such that, for all $a\in\dom{\Lip_\infty}$, and for all $l\geq \Lip_\infty(a)$, 
    \begin{equation*}
      \lim_{n\rightarrow\infty} \Haus{\A_\infty}\left(\targetsettunnel{f(n)}{a}{l},\{\pi(a)\}\right) = 0 \text.
    \end{equation*}
    
  \end{claim}

  \begin{claimproof}{JL-morphism-claim}
    Since $\A_\infty$ is separable, there exists a countable dense subset $S_\infty$ of $\sa{\A_\infty}$ with $S_\infty\subseteq\dom{\Lip_\infty}$. Using Claim (\ref{singleton-claim}), a diagonal argument shows that there exists a strictly increasing sequence $f : \N \rightarrow \N$ such that, for all $a\in S_\infty$ and for all $l\geq \Lip_\infty(a)$, we have $\lim_{n\rightarrow\infty}\Haus{\A_\infty}\left(\targetsettunnel{f(n)}{a}{l},\{\pi(a)\}\right) = 0$.
    
    \medskip
    
    Let now $a\in\dom{\Lip_\infty}$, and let $l \geq \Lip_\infty(a)$. Let $\varepsilon > 0$. Since $S_\infty$ is dense in $\dom{\Lip_\infty}$, there exists $a_\varepsilon \in \dom{\Lip_\infty}$ such that $\norm{a-a_\varepsilon}{\A_\infty} < \frac{\varepsilon}{5}$. Note that $\Lip_\infty(a_\varepsilon) < \infty$ but in general, there is no relation between $\Lip_\infty(a_\varepsilon)$ and $\Lip_\infty(a)$.
    
    Let $l = \max\{\Lip_\infty(a),\Lip_\infty(a_\varepsilon)\}$. Since it is convergent for the Hausdorff distance $\Haus{\A_\infty}$, the sequence $\left(\targetsettunnel{f(n)}{a_\varepsilon}{l}\right)_{n\in\N}$ is Cauchy for $\Haus{\A_\infty}$.
    
    Therefore, there exists $N \in \N$ such that, for all $p,q \geq N$, we have
    \begin{equation*}
      \Haus{\A_\infty}\left(\targetsettunnel{f(p)}{a_\varepsilon}{l}, \targetsettunnel{f(q)}{a_\varepsilon}{l}\right) < \frac{\varepsilon}{5} \text.
    \end{equation*}
    
    Since $\lim_{n\rightarrow\infty} \tunnelextent{\tau_{f(n)}} = 0$, there exists $N' \in \N$ such that if $n\geq N'$ then $\tunnelextent{\tau_{f(n)}} < \frac{\varepsilon}{5(l+1)}$. Therefore, if $n\geq N'$, then by \cite[Corollary 4.5]{Latremoliere13b},
    \begin{equation*}
      \Haus{\A_\infty}\left(\targetsettunnel{f(n)}{a}{l},\targetsettunnel{f(n)}{a_\varepsilon}{l}\right) \leq \norm{a-a_\varepsilon}{\A_\infty} + l \tunnelextent{\tau_{f(n)}} < \frac{2 \varepsilon}{5} \text.
    \end{equation*}
    
    Let now $p,q \geq \max\{N,N'\}$. We compute:
    \begin{align*}
      \Haus{\A_\infty}\left(\targetsettunnel{f(p)}{a}{l},\targetsettunnel{f(q)}{a}{l}\right)
      &\leq \Haus{\A_\infty}\left(\targetsettunnel{f(p)}{a}{l},\targetsettunnel{f(p)}{a_\varepsilon}{l}\right) \\
      &\quad + \Haus{\A_\infty}\left(\targetsettunnel{f(p)}{a_\varepsilon}{l},\targetsettunnel{f(q)}{a_\varepsilon}{l}\right) \\
      &\quad + \Haus{\A_\infty}\left(\targetsettunnel{f(q)}{a_\varepsilon}{l},\targetsettunnel{f(q)}{a}{l}\right) \\
      &< \frac{2\varepsilon}{5} + \frac{\varepsilon}{5} + \frac{2\varepsilon}{5} = \varepsilon \text.
    \end{align*}
    
    Thus, $\left(\targetsettunnel{f(n)}{a}{l}\right)_{n\in\N}$ is Cauchy for $\Haus{\A_\infty}$. Since $\sa{\A_\infty}$ is complete, so is the set of all closed subsets of $\sa{\A_\infty}$ with the Hausdorff distance $\Haus{\A_\infty}$. Therefore, $\left(\targetsettunnel{f(n)}{a}{l}\right)_{n\in\N}$ converges to some compact subset in $\sa{\A_\infty}$. In fact, since
    \begin{equation*}
      \lim_{n\rightarrow\infty}\diam{\targetsettunnel{f(n)}{a}{l}}{\A_\infty} = 0
    \end{equation*}
    by \cite[Corollary 4.5]{Latremoliere13b}, the sequence $\left(\targetsettunnel{f(n)}{a}{l}\right)_{n\in\N}$ converges to some singleton. As observed in Claim (\ref{singleton-claim}), this limit does not depend on $l$; we denote it by $\{\pi(a)\}$. Again using the same argument, we also note that $\norm{\pi(a)}{\A_\infty} = \norm{a}{\A_\infty}$.
    
    Since $\Lip_\infty$ is lower semicontinuous over $\A_\infty$, and since by construction, $\pi(a)$ is the limit in $\A_\infty$ of any sequence $(b_n)_{n\in\N}$ with $b_n\in\targetsettunnel{f(n)}{a}{\Lip_\infty(a)}$ for all $n\in\N$, we also conclude that
    \begin{align*}
      \Lip_\infty(\pi(a))
      &\leq \liminf_{n\rightarrow\infty}\Lip_\infty(b_n) && \text{ by lower semicontinuity of $\Lip_\infty$,}\\
      &\leq \liminf_{n\rightarrow\infty} M\cdot\Lip_n(b) && \text{ since $\Lip_\infty\leq M\cdot\Lip_n$ for all $n\in\N$,} \\
      &\leq M\cdot\Lip_\infty(a) && \text{ since $\Lip_n(b) \leq\Lip_\infty(a)$, as $b\in\targetsettunnel{f(n)}{a}{\Lip_\infty(a)}$ } \text.
    \end{align*}
    \medskip
    
    Let $a,a' \in \dom{\Lip_\infty}$. Let $t \in \R$. Since $\targetsettunnel{f(n)}{a}{l} + t\cdot\targetsettunnel{f(n)}{a'}{l}\subseteq\targetsettunnel{f(n)}{a+ta'}{(1+|t|)l}$ for all $n\in\N$ by \cite[Corollary 4.5]{Latremoliere13b}, we immediately conclude that $\{\pi(a)\} + t\cdot\{\pi(a')\} \subseteq \{\pi(a+ta')\}$, i.e. $\pi$ is linear. A similar argument shows that $\pi$ is a Jordan-Lie morphism over $\dom{\Lip_\infty}$, using \cite[Proposition 4.8]{Latremoliere13b}.
    
    As a linear map $\pi$ with $\norm{\pi(a)}{\A_\infty} = \norm{a}{\A_\infty}$ for all $a\in\dom{\Lip_\infty}$, we can uniquely extend $\pi$ to $\sa{\A_\infty}$ as a uniformly continuous map over $\sa{\A_\infty}$; this map is of course again a Jordan-Lie morphism from $\sa{\A_\infty}$ to $\sa{\A_\infty}$ and an isometry.
    
    A straightforward argument shows that we can uniquely extent $\pi$ to a continuous Jordan-Lie algebra endomorphism of $\A_\infty$, and thus $\pi$ thus extended is a unital $\ast$-endo\-mor\-phism with $\Lip_\infty\circ\pi \leq \Lip_\infty$ over $\dom{\Lip_\infty}$.
    
    We already know that $\pi$ is an isometry on $\sa{\A_\infty}$ and a $\ast$-morphism, so it is injective on $\A_\infty$: if $\pi(a) = 0$ then $\pi(\Re a) = 0$ so $\Re a = 0$, and $\pi(\Im a) = 0$ so $\Im a = 0$, and thus $a=0$. In particular, since $\pi$ is now an injective $\ast$-morphism, it is an isometry on $\A_\infty$ (rather than just $\sa{\A_\infty}$).
  \end{claimproof}
 
  \begin{claim}\label{uniform-claim}
    For all $\varepsilon > 0$, there exists $N\in\N$ such that for all $n\geq N$, and for all $a\in\dom{\Lip_\infty}$ with $\Lip_\infty(a)\leq 1$, we have
    \begin{equation*}
      \Haus{\A_\infty}\left( \{\pi(a)\}, \targetsettunnel{f(n)}{a}{1} \right) < \varepsilon \text.
    \end{equation*}
  \end{claim}
  
  \begin{claimproof}{uniform-claim}   
    Let $\varepsilon > 0$. Fix $\mu \in \StateSpace(\A_\infty)$. The set
    \begin{equation*}
      B \coloneqq \left\{ a \in \dom{\Lip_\infty} : \Lip_\infty(a) \leq 1, \mu(a) = 0 \right\}
    \end{equation*}
    is compact in $\sa{\A_\infty}$ by Theorem (\ref{tt-thm}). Therefore, there exists a finite subset $F\subseteq B$ such that $\Haus{\A_\infty}(F,B) < \frac{\varepsilon}{4}$. Since $F$ is finite, by Claim (\ref{JL-morphism-claim}), there exists $N\in\N$ such that, for all $a\in F$ and for all $n\geq N$, we have $\Haus{\A_\infty}\left(\{\pi(a)\}, \targetsettunnel{f(n)}{a}{\Lip_\infty(a)}\right) < \frac{\varepsilon}{4}$. Moreover, there exists $N' \in \N$ such that, if $n\geq N'$, then $\tunnelextent{\tau_n} < \frac{\varepsilon}{4}$.
    
    Now, let $n\geq \max\{N,N'\}$, $a \in B$ and $b \in \targetsettunnel{f(n)}{a}{1}$. There exists $a' \in F$ such that $\norm{a-a'}{\A_\infty} < \frac{\varepsilon}{4}$. Let $b' \in \targetsettunnel{f(n)}{a'}{1}$. By \cite[Corollary 4.5]{Latremoliere13b}, we compute the following expression:
    \begin{align*}
      \norm{\pi(a)-b}{\A_\infty}
      &\leq \norm{\pi(a)-\pi(a')}{\A_\infty} + \norm{\pi(a') - b'}{\A_\infty} + \norm{b'-b}{\A_\infty} \\
      &\leq \underbracket[1pt]{\norm{\pi(a-a')}{\A_\infty}}_{\pi \text{ is linear}} + \underbracket[1pt]{\frac{\varepsilon}{4}}_{\text{by choice of }N} + \underbracket[1pt]{\norm{a-a'}{\A_\infty} + \tunnelextent{\tau_n} }_{\text{by \cite{Latremoliere13b}}} \\
      &\leq 2\norm{a-a'}{\A_\infty} + \frac{\varepsilon}{4} + \frac{\varepsilon}{4} \\
      &\leq \frac{\varepsilon}{2} + \frac{\varepsilon}{4} + \frac{\varepsilon}{4} = \varepsilon \text.
    \end{align*}
    
    We thus have proven our uniform convergence claim over $B$. Let now $a\in \dom{\Lip_\infty}$. Then of course, $a-\mu(a)\unit \in B$, since $\Lip_\infty(a-\mu(a)\unit) \leq \Lip_\infty(a) + \Lip_\infty(\mu(a)\unit) = \Lip_\infty(a) \leq 1$ (in fact, $\Lip_\infty(a)=\Lip_\infty(a-\mu(a)\unit)$). If $b \in \targetsettunnel{f(n)}{a}{1}$ then $b-\mu(a)\unit \in \targetsettunnel{f(n)}{a-\mu(a)\unit}{1}$ by construction, and thus $\norm{\pi(a)-b}{\A_\infty} = \norm{\pi(a-\mu(a)\unit)-(b-\mu(a)\unit)}{\A_\infty} < \varepsilon$.

    Thus, as claimed, $\Haus{\A_\infty}\left(\{\pi(a)\}, \targetsettunnel{f(n)}{a}{1}\right) < \varepsilon$ for all $n\geq\max\{N,N'\}$ and for all $a\in B$.
  \end{claimproof}
  
  \begin{claim}\label{main-property-claim}
    For all $\varepsilon > 0$, there exists $N\in\N$ such that, if $n\geq N$, then
    \begin{itemize}
    \item $\forall a \in \dom{\Lip_\infty} \quad \exists b \in \dom{\Lip_n}: \quad \Lip_n(b)\leq \Lip_\infty(a) \text{ and }\norm{\pi(a)-b}{\A_\infty} < \varepsilon\Lip_\infty(a)$,
    \item $\forall b \in \dom{\Lip_n} \quad \exists a \in \dom{\Lip_\infty}: \quad \Lip_\infty(a) \leq \Lip_n(b) \text{ and }\norm{\pi(a)-b}{\A_\infty} < \varepsilon\Lip_n(b)$.  
    \end{itemize}
  \end{claim}
  
  \begin{claimproof}{main-property-claim}
    Let $\varepsilon > 0$. Let $N\in\N$ be chosen as in Claim (\ref{uniform-claim}), so that for all $a\in\dom{\Lip_\infty}$ with $\Lip_\infty(a)\leq 1$, and for all $n\geq N$, we have $\Haus{\A_\infty}(\{\pi(a)\},\targetsettunnel{f(n)}{a}{1})<\varepsilon$. Let now $n\geq N$.
    
    If $a\in \dom{\Lip_\infty}\setminus\R\unit_{\A_\infty}$, and if $b \in \targetsettunnel{f(n)}{a}{\Lip_\infty(a)}$, then $\Lip_\infty(a) > 0$, $\Lip_n(b)\leq\Lip_\infty(a)$, and $\frac{b}{\Lip_\infty(a)} \in \targetsettunnel{f(n)}{\frac{a}{\Lip_\infty(a)}}{1}$ and thus $\norm{\pi\left(\frac{a}{\Lip_\infty(a)}\right)-\frac{b}{\Lip_\infty(a)}}{\A_\infty} < \varepsilon$. So $\norm{\pi(a)-b}{\A_\infty} < \varepsilon \Lip_\infty(a)$, as needed.
    
    Now, let $b \in \dom{\Lip_n}\setminus\R\unit_\A$. Let $b' = \frac{b}{\Lip_n(b)}$, so $\Lip_n(b') = 1$. Let $a' \in \targetsettunnel{\tau_{f(n)}^{-1}}{b'}{1}$, so in particular $\Lip_\infty(a')\leq 1$. By symmetry, $b' \in \targetsettunnel{f(n)}{a'}{1}$. Therefore, $\norm{\pi(a')-b'}{\A_\infty} < \varepsilon$. Hence, letting $a = \Lip_n(b)a'$, we conclude that $\norm{\pi(a)-b}{\A_\infty} \leq \Lip_n(b)\varepsilon$ and $\Lip_\infty(a)\leq \Lip_n(b)$, as desired.
    
    Last, it is immediate that since $\pi(1)=1$, our claim holds whenever $\Lip_\infty(a) = 0$ or $\Lip_n(b) = 0$, i.e., for any $a,b \in \R1$.    
  \end{claimproof}
  
  \begin{claim}\label{automorphism-claim}
    The map $\pi$ constructed in Claim (\ref{JL-morphism-claim}) is a $\ast$-automorphism.
  \end{claim}
  
  \begin{claimproof}{automorphism-claim}  
    The map isometry of $\A_\infty$, hence it is a $\ast$-monomorphism of $\A_\infty$, via Claim (\ref{JL-morphism-claim}),
    
    Now, let $b \in \bigcup_{n\in\N} \dom{\Lip_n}$, so $b \in \dom{\Lip_m}$ for some $m\in\N$. Thus $b \in \dom{\Lip_\infty}$ by assumption. Let $l = \Lip_m(b)$. By assumption, $\Lip_\infty(b) \leq M \Lip_m(b) = M l $. Let $\varepsilon > 0$ and let $N \in \N$ given by Claim (\ref{main-property-claim}). Since $\Lip_n(b)\leq M\Lip_\infty(b) \leq M^2 l$, for all $n\geq \max\{N,m\}$, and there exists $a_n\in \A_\infty$ with $\norm{\pi(a_n)-b}{\A_\infty} < \varepsilon M^2 l$ (and $\Lip_\infty(a)\leq \Lip_n(b)$, which we do not need for this claim). As $\varepsilon > 0$ was arbitrary, the element $b$ lies in the closure of the range of $\pi$. Since $\A_\infty$ is complete and $\pi$ is an isometry, the range of $\pi$ is closed, and we now have shown that the range of $\pi$ is a closed set containing the total subspace $\bigcup_{n\in\N}\dom{\Lip_n}$ of $\A_\infty$; consequently, $\pi$ is a surjection as well. Thus as claimed, $\pi$ is a $\ast$-automorphism of $\A_\infty$.
    
    Moreover, by construction, for all $a\in\dom{\Lip_\infty}$, as noted in Claim (\ref{singleton-claim}), we have $\Lip_\infty(\pi(a))\leq M \Lip_\infty(a)$ --- in particular, $\pi(a) \in \dom{\Lip_\infty}$. So $\pi(\dom{\Lip_\infty})\subseteq\dom{\Lip_\infty}$ and thus $\pi$ is a Lipschitz morphism.    
  \end{claimproof} 

  This concludes the proof of our theorem.
\end{proof} 

\begin{remark}
  Limits, for the propinquity, are unique up to full quantum isometry. Therefore, the appearance of some map $\pi$ in Theorem (\ref{main-thm}) is to be expected. However, the map $\pi$ in Theorem (\ref{main-thm}) is quite a bit more general than a full quantum isometry --- in fact, it need not be Lipschitz for us to use Proposition (\ref{bridge-builder-prop}) --- even though Theorem (\ref{main-thm}) shows that it can always be chosen to be so. The map $\pi$ is really used here as a tool to construct a special kind of bridge. In general, the function $\pi$ is not expected to be unique: if $\Lip_n$ is just the restriction to $\A_n$ of $\Lip_\infty$ for all $n\in\N$, and if $\theta$ is a full quantum isometry of $(\A_\infty,\Lip_\infty)$, then $\pi\circ\theta$ can be used in place of $\pi$, of course. The situation is more delicate when $\Lip_n$ varies, but there will usually be many maps $\pi$ if there is one.
\end{remark}

Theorem (\ref{main-thm}) characterizes the convergence of inductive sequences in the sense of the propinquity, under the condition of uniform equivalence of the Lipschitz seminorms on the sequence. The condition of uniform equivalence of Lipschitz seminorms is in essence our compatibility condition between the Lipschitz seminorms and the inductive limit structure in Theorem (\ref{main-thm}): using the notation of Theorem (\ref{main-thm}), as seen in \cite{Latremoliere16b}, under the hypothesis that $\dom{\Lip_n}=\A_n\cap\dom{\Lip_\infty}$, the Lipschitz seminorms $\Lip_n$ and $\Lip_\infty$ are equivalent for each $n\in\N$, and we require, in the assumptions of Theorem (\ref{main-thm}), that we want this equivalence be uniform. This leads us to several natural questions: does convergence of $(\A_n,\Lip_n)_{n\in\N}$ imply some uniform equivalence of the Lipschitz seminorms $\Lip_n$ ($n\in\Nbar$) (i.e. is our assumption redundant)? Does the existence of a bridge builder imply uniform equivalence of the Lipschitz seminorms? Does convergence of an inductive limit for the propinquity imply the existence of a bridge builder without the assumption of uniform equivalence of the Lipschitz seminorms? Moreover, does the convergence of $(\A_n,\Lip_n)_{n\in\N}$ to $(\A_\infty,\Lip_\infty)$ for the propinquity imply the convergence of $(\A_n,\Lip_k)_{k\geq n}$ to $(\A_n,\Lip_\infty)$ for a fixed $n\in\N$, for the propinquity?

We now will show with two examples that all of the above questions have negative answers, so there is no obvious generalization of Theorem (\ref{main-thm}). First, we see that it is possible to have convergence for the propinquity of an inductive sequence of {\qcms s}, using the identity as a bridge builder, and yet, not have uniform equivalence of the Lipschitz seminorms.

\begin{example}
  \begin{figure}    \begin{tikzpicture}
      \draw (-0.5,0)--(6.5,0);
      \draw (0,-0.1)--(0,0.1) node[below=6pt]{\tiny $0$};
      \draw[thick,blue] (0,0)--(4,0) node[above,midway]{\tiny $\mathsf{dist}$};
      \draw (6,-0.1)--(6,0.1) node[below=6pt]{\tiny $1$};
      \draw[thick,red] (4,0)--(6,0) node[above,midway]{\tiny $n\cdot\mathsf{dist}$};
      \draw (4,-0.1)--(4,0.1) node[below=6pt]{\tiny $1-n^{-2}$};

      \draw (-0.5,-1)--(6.5,-1);
      \draw (0,-1.1)--(0,-0.9) node[below=6pt]{\tiny $0$};
      \draw[thick,blue] (0,-1)--(4.5,-1) node[above,midway]{\tiny $\mathsf{dist}$};
      \draw (6,-1.1)--(6,-0.9) node[below=6pt]{\tiny $1$};
      \draw[thick,red] (4.5,-1)--(6,-1) node[above,midway]{\tiny $(n+1)\cdot\mathsf{dist}$};
      \draw (4.5,-1.1)--(4.5,-0.9) node[below=6pt]{\tiny $1-(n+1)^{-2}$};

      \draw[->] (2,-2) node[left=8pt]{\tiny $\cdots$} -- (4,-2) node[midway,above]{\tiny $n\rightarrow\infty$};
      
      \draw (-0.5,-3)--(6.5,-3);
      \draw (0,-3.1)--(0,-2.9) node[below=6pt]{\tiny $0$};
      \draw[thick,blue] (0,-3)--(6,-3) node[above,midway]{\tiny $\mathsf{dist}$};
      \draw (6,-3.1)--(6,-2.9) node[below=6pt]{\tiny $1$};
      \draw[snake=brace, blue] (6,-3.6)--(0,-3.6) node[below=3pt,midway]{\tiny $X$ with $\mathsf{dist}:x,y\in X\mapsto|x-y|$};
    \end{tikzpicture}
    \caption{Approximating $[0,1]$ with itself by modifying the metric on a small interval at the end (red)}
  \end{figure}
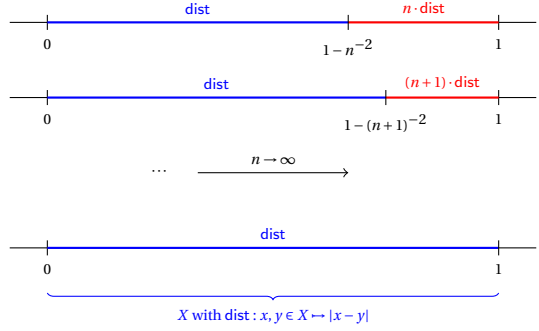
  
  Let $X = [0,1]$ with its usual metric. If $Y\subseteq X$ with at least two points, then we set $\Lip_Y(f) = \sup\left\{\frac{|f(x)-f(y)|}{|x-y|} : x\neq y, x,y \in Y \right\}$ for all $f \in C(X)$, allowing for $\infty$. For each $n\in\N$, and for all $f \in C(X)$, we set:
  \begin{equation*}
    \Lip_n(f) = \Lip_{\left[0,1-\frac{1}{n^2}\right]}(f) + \frac{1}{n}\Lip_{\left[1-\frac{1}{n^2},1\right]}(f),
  \end{equation*}
  allowing again for $\infty$.

  Let
  \begin{equation*}
    f_n : x \in [0,1] \longmapsto
    \begin{cases}
      0\quad  &\text{ if $x \leq 1-\frac{1}{n^2}$}, \\
      x-(1-\frac{1}{n^2})\quad &\text{ otherwise.}
    \end{cases}
  \end{equation*}

  By construction, $\Lip_{[0,1]}(f_n) = 1$ for all $n\in\N$. On the other hand, $\Lip_n(f) = 0 + \frac{1}{n}\cdot 1 = \frac{1}{n}$. So there does not exists $M > 0$ such that $\Lip_{[0,1]}\leq M \Lip_n$ on the common domain of these Lipschitz seminorms (the algebra of Lipschitz functions for the usual metric).

  We now prove that $(C(X),\Lip_n)_{n\in\N}$ converges for the propinquity to $(C(X),\Lip_{[0,1]})$ --- this could be done here just as easily by proving the convergence for the Gromov-Hausdorff distance of $X$ with a sequence of distances which agree with the usual distance on $[0,1-\frac{1}{n^2}]$ and is a dilation by a factor $n$ of the usual distance on $[1-\frac{1}{n^2},1]$, but we will keep with our functional analytic perspective here.

  We thus define, for all $n\in\N$, and for all $f, g$ Lipschitz functions over $[0,1]$ with its usual metric:
  \begin{equation*}
    \TLip_n(f,g) \coloneqq \max\left\{ \Lip_{[0,1]}(f), \Lip_n(g), (n+1) \norm{f-g}{C(X)} \right\} \text.
  \end{equation*}

  Let $f \in C(X)$ with $\Lip_{[0,1]}(f) = 1$. Then $\Lip_n(f) \leq 1+\frac{1}{n}$. From this, we see that
  \begin{equation*}
    \TLip_n \left(f,\frac{1}{1+\frac{1}{n}}f - \frac{1}{n+1}f(0)\right)  \leq \max\left\{1,\frac{n+1}{n+1}\norm{f-f(0)\unit}{C(X)} \right\} \leq 1 \text.
  \end{equation*}
  Let now $g \in C(X)$ with $\Lip_n(g) = 1$. Thus $\Lip_{\left[0,1-\frac{1}{n^2}\right]}(g) \leq 1$ and $\Lip_{\left[1-\frac{1}{n^2},1\right]}(g) \leq n$. In particular, for all $x \in [1-\frac{1}{n^2},1]$, we have $\left|g(x)-g\left(1-\frac{1}{n^2}\right)\right| < n |x-1+\frac{1}{n^2}| \leq \frac{1}{n}$.

  Let $h \in C(X)$ defined by $h(x)=g(x)$ if $x\in \left[0,1-\frac{1}{n^2}\right]$, and $h(x) = g\left(1-\frac{1}{n^2}\right)$ otherwise. By construction, $\Lip_{[0,1]}(h) \leq 1$ and $\norm{g-h}{C(X)} < \frac{1}{n}$. Thus $\TLip_n(h,g) = 1 = \Lip_n(g)$.

  Therefore, $(C(X)\oplus C(X),\TLip_n,p_1,p_2)$, with $p_1:(f,g)\in C(X)\oplus C(X) \mapsto f$ and $p_2:(f,g)\in C(X)\oplus C(X)\mapsto g$, is easily seen to be a tunnel whose extent is at most $\frac{1}{n}$ (the method is analogous to Proposition (\ref{bridge-builder-prop})).

  Hence $(C(X),\Lip_n)_{n\in\N}$ converges to $(C(X),\Lip_{[0,1]})_{n\in\N}$ for the propinquity. Moreover, the identity map satisfies Condition (2) of Theorem (\ref{main-thm}). Nonetheless, there is no $M > 0$ such that $\forall n \in \N \quad \Lip_{[0,1]}\leq M \Lip_n$ on the common domain of these seminorms. So convergence in the propinquity does not imply uniform equivalence of the Lipschitz seminorms, even when working with a fixed, Abelian C*-algebra.
\end{example}

Now, we can also ask whether convergence for the propinquity of an inductive sequence, implies the existence of a bridge builder, and as we shall see in the next example, this is not the case: once again, convergence occurs without uniform equivalence of Lipschitz seminorms (and we prove that we have neither uniform dominance or uniform domination using both examples). Moreover, we see that $(\A_n,\Lip_m)_{m\geq n}$ does not converge to $(\A_n,\Lip_\infty)$ in this case, for any $n\in\N$.

\begin{example}\label{Nbar-ex}
  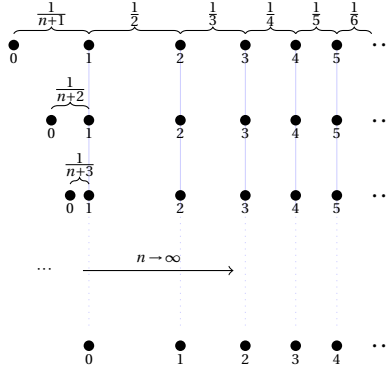
\begin{figure}
    \begin{tikzpicture}

      \foreach \x [count=\y from 2] in {1,2,...,5}
      {
        \draw[very thin, blue!20!white] ({3*ln(\x+1)},0)--({3*ln(\x+1)},-2.2);
        \draw[blue!20!white,dotted] ({3*ln(\x+1)},-2.2)--({3*ln(\x+1)},-3.8);
        \fill({3*ln(\x+1)},0) circle(2pt) node[below]{\tiny $\x$};
        \draw[snake=brace] ({3*ln(\x+1)},0.15)--({3*ln(\x+2)},0.15) node[above,midway]{\tiny $\frac{1}{{\y}}$};
      }
      \fill(1.08,0) circle(2pt) node[below]{\tiny $0$};
      \node at (6,0) {$\ldots$};
      \draw[snake=brace] (1.08,0.15)--(2.08,0.15) node[above,midway]{\tiny $\frac{1}{n+1}$};
      
      \foreach \x in {1,2,...,5}
      \fill({3*ln(\x+1)},-1) circle(2pt) node[below]{\tiny $\x$};
      \fill(1.58,-1) circle(2pt) node[below]{\tiny $0$};
      \node at (6,-1) {$\ldots$};
      \draw[snake=brace] (1.58,-0.85)--(2.08,-0.85) node[above,midway]{\tiny $\frac{1}{n+2}$};
      
      \foreach \x in {1,2,...,5}
      \fill({3*ln(\x+1)},-2) circle(2pt) node[below]{\tiny $\x$};
      \fill(1.83,-2) circle(2pt) node[below]{\tiny $0$};
      \node at (6,-2) {$\ldots$};
      \draw[snake=brace] (1.83,-1.85)--(2.08,-1.85) node[above,midway]{\tiny $\frac{1}{n+3}$};

      \draw[->] (2,-3) node[left=8pt]{\tiny $\cdots$} -- (4,-3) node[midway,above]{\tiny $n\rightarrow\infty$};

      \foreach \x in {0,1,...,4}
      \fill({3*ln(\x+2)},-4) circle(2pt) node[below]{\tiny $\x$};
      \node at (6,-4) {$\ldots$};
    \end{tikzpicture}
    \caption{Approximating $\Nbar$ by itself, by merging the first two points at $\infty$}
  \end{figure}
  
  Let $\A_\infty$ be the C*-algebra of convergent sequences with values in $\C$. For each $n \in \N$, let $\A_n = \left\{ (x_k)_{k\in\N} : (x_k)_{k\geq n} \text{ is constant } \right\}$, so $\A_n$ is a C*-subalgebra of $\A_\infty$ sharing the unit $(1)_{n\in\N}$ of $\A_\infty$.  

  For all $n \in \N$, and for all $(x_k)_{k\in\N} \in \A_n$, we set
  \begin{equation*}
    \Lip_n((x_k)_{k\in\N}) \coloneqq \sup\left\{ \frac{|x_p-x_q|}{|\varphi_n(p)-\varphi_n(q)|} : p,q \in \N,p\neq q \right\}
  \end{equation*}
  where:
  \begin{equation*}
    \varphi_n :m \in \N \mapsto
    \begin{cases}
      \frac{1}{m} \text{ if $m>0$,}\\
      1+\frac{1}{n} \text{ if $m = 0$.}
    \end{cases}
  \end{equation*}
  Of course, $\Lip_n$ is indeed a seminorm on the finite dimensional C*-subalgebra $\A_n$ of $\A_\infty$.
  
  We also set $\Lip_\infty((x_k)_{k\in\N}) = \sup\left\{ \frac{|x_p-x_q|}{\left|\frac{1}{p+1}-\frac{1}{q+1}\right|} : p,q \in \N, p\neq q\right\}$ for all $(x_k)_{k\in\N} \in \A_\infty$, allowing for the value $\infty$. Of course, $\bigcup_{n\in\N}\A_n\subseteq\dom{\Lip_\infty}$.
  
  Now, let
  \begin{equation*}
    x : n \in \N \mapsto
    \begin{cases}
      1 \text{ if $n = 0$,}\\
      0 \text{ otherwise.}
    \end{cases}
  \end{equation*}
  By construction, $\Lip_\infty(x) = 2$, yet $\Lip_n(x) = n$. So there is no $M > 0$ such that, for all $n\in\N$, the inequality $M\Lip_n \leq \Lip_\infty$ on $\dom{\Lip_n}$ holds.

  On the other hand, $\lim_{n\rightarrow\infty} \dpropinquity{}((\A_n,\Lip_n),(\A_\infty,\Lip_\infty)) = 0$. Indeed, let $\pi:(x_k)_{k\in\N}\mapsto (x_0,x_0,x_1,x_2,x_3,\ldots) \in \A_\infty$, $\B = \pi(\A_\infty)$, and let $\theta:(x_k)_{k\in\N}\in \B \mapsto (x_{k+1})_{k\in \N}\in\A_\infty$ --- of course, $\theta$ is a $\ast$-isomorphism from $\B$ onto $\A_\infty$ such that $\pi=\theta^{-1}$. We define $\Lip_\B(\pi(x)) = \Lip_\infty(x)$ for all $x\in\dom{\Lip_\infty}$. This way, $\pi$ is easily checked to be a full quantum isometry from $(\A_\infty,\Lip_\infty)$ to $(\B,\Lip_\B)$.

  Let $\varepsilon > 0$ and let $N\in\N$ be such that if $n\geq N$, then $\frac{1}{n+1}<\frac{\varepsilon}{2}$. If $x = (x_k)_{k\in \N}$ with $\Lip_\infty(x) \leq 1$, and if $l = \lim_{s\rightarrow\infty} x_s$, then by construction,
  \begin{equation*}
    \frac{|x_k-l|}{\frac{1}{k+1}} = \lim_{s\rightarrow\infty}\frac{|x_k-x_s|}{\frac{1}{k+1}-\frac{1}{s+1}} \leq 1
  \end{equation*}
  so $|x_k - l|\leq\frac{1}{k+1} < \frac{\varepsilon}{2}$ for all $k\geq N$. Therefore, if $k\geq N$ then $|x_k - x_N| < \varepsilon$.

	Now, let $n\geq N$. Let $\D_n = \A_n\oplus\B$, and for all $(a,b) \in \dom{\A_n}\oplus\dom{\B}$, we set:
  \begin{equation*}
    \TLip_n(a,b) \coloneqq \max\left\{ \Lip_n(a),\Lip_\B(b), \frac{1}{\varepsilon}\norm{\pi(a)-b}{\B} \right\} \text.
  \end{equation*}
  
  We also set $p_n : (a,b)\in\D_n\mapsto a\in\A_n$ and $q_n : (a,b)\in\D_n\mapsto \theta(b) \in \A_\infty$. We are now going to prove that $\tau_n \coloneqq(\D_n,\TDN_n,p_n,q_n)$ is indeed a tunnel from $(\A_n,\Lip_n)$ to $(\A_\infty,\Lip_\infty)$. 
  
  Let $a\coloneqq(x_k)_{k\in\N}\in\dom{\Lip_\infty}$ with $\Lip_\infty(a)=1$, and let
  \begin{equation*}
    a' \coloneqq (x_0,x_0,x_1,x_2,\ldots,x_{N-1},x_N,x_N,x_N\ldots) \in \A_n\text.
  \end{equation*}
  By construction, $\Lip_n(a') \leq 1$ and $\norm{\pi(a)-a'}{\A_\infty}<\varepsilon$ by our choice of $N$. Also by construction, $\Lip_\B(\pi(a))=\Lip_\infty(a)=1$. Thus $\TLip_n(a',\pi(a)) \leq \Lip_\infty(a)=1$. So, we have shown that, for any $a\in\dom{\Lip_\infty}$ with $\Lip_\infty(a)=1$, there exists an element $d\coloneqq(a',\pi(a)) \in \D_n$ such that $\TDN_n(d)=1=\Lip_\infty(a)$ and $q_n(d) = \theta(\pi(a)) = a$. Therefore, the map $q_n$ is indeed a quantum isometry from $(\D_n,\TLip_n)$ to $(\A_\infty,\Lip_\infty)$.

  Let now $a = (x_k)_{k\in\N} \in \dom{\Lip_n}$ with $\Lip_n(a) = 1$. By definition, $|x_1-x_0|\leq \frac{1}{n} < \varepsilon$. Let
  \begin{equation*}
    b = (x_1,x_1,x_2,x_3,x_4,\ldots)\text.
  \end{equation*}
  By construction, $b \in \dom{\Lip_\B}$ with $\Lip_\B(b)\leq \Lip_n(b)$, and $\norm{a-b}{\A_\infty} = |x_1-x_0| < \varepsilon$. Thus again $\TLip_n(a,b) = \Lip_n(b)$. So $p_n : (a,b)\in\D_n\mapsto a \in \A_n$ is a quantum isometry. Therefore, $(\D_n,\TLip_n,p_n,q_n)$ is indeed a tunnel from $(\A_n,\Lip_n)$ to $(\A_\infty,\Lip_\infty)$. We now compute an upper bound on its extent.

  Let $\varphi\in\StateSpace(\D_n)$ be a state of $\D_n$. If we set $\mu:a\in\A_n\mapsto \varphi(a,\pi(a))$, then $\mu\in\StateSpace(\A_n)$ is again a state of $\A_n$. If $(a,b)\in\dom{\TLip_n}$ with $\TLip_n(a,b)\leq 1$, then
  \begin{align*}
    |\varphi(a,b)-\mu\circ p_n(a,b)|
    &=|\varphi(a,b) - \varphi(a,\pi(a))| \\
    &=|\varphi(0,b-\pi(a))| \\
    &\leq \norm{b-\pi(a)}{\A_\infty} < \varepsilon \text,
  \end{align*}
  so indeed $\Haus{\Kantorovich{\TLip_n}}(\StateSpace(\D_n),p_n^\ast\StateSpace(\A_n)) < \varepsilon$.
  
  On the other hand, let $\nu : a \in \A_\infty \mapsto \varphi'(a,\pi(a))$ where $\varphi'$ is an extension of $\varphi$ to a state of $\A_\infty\oplus\B$ by the Hahn-Banach theorem. Once again, it is immediate that $\Kantorovich{\TLip_n}(\varphi,\nu\circ q_n) < \varepsilon$. So $\Haus{\Kantorovich{\TLip_n}}(\StateSpace(\D),q_n^\ast\StateSpace(\A_\infty)) < \varepsilon$.

  Thus, for all $n\geq N$, the extend of $\tunnelextent{\tau_n}$ is at most $\varepsilon$. We conclude:
  \begin{equation*}
    \lim_{n\rightarrow\infty}\dpropinquity{}((\A_n,\Lip_n),(\A_\infty,\Lip_\infty)) = 0 \text.
  \end{equation*}

  However, for any fixed $p\in\N$, it is easy to check, by a similar method, that 
  \begin{equation*}
  \lim_{n\rightarrow\infty} \dpropinquity{}((\A_p,\Lip_n),(\A_{p-1},\Lip_\infty)) = 0\text,
  \end{equation*}
   and since $\dim\A_{p-1}<\dim\A_p$, the sequence $(\A_p,\Lip_n)_{n\geq p}$ does not converge to $(\A_p,\Lip_\infty)$.

  The map $\pi$ we have used here is not surjective. In fact, there is no bridge builder in this case. Indeed, assume that we have a unital $\ast$-morphism $\pi : \A_\infty\rightarrow\A_\infty$ such that for all $\varepsilon > 0$, there exists $N_\pi(\varepsilon)\in\N$ with the property that if $n\geq N_\pi(\varepsilon)$, and if $a\in \dom{\Lip_\infty}$, then there exists $b \in \dom{\Lip_n}$ with $\Lip_n(b)\leq \Lip_\infty(a)$ and $\norm{\pi(a)-b}{\A_\infty} < \frac{\varepsilon}{2} \Lip_\infty(a)$. 
  
  Fix $a \in\dom{\Lip_\infty}$ with $\Lip_\infty(a) = 1$. Let $\varepsilon > 0$ and let $n \geq N_\pi(\varepsilon)$ such that $\frac{1}{n}<\frac{\varepsilon}{2}$. Define $(y_k)_{k\in\N}\coloneqq\pi(a)$. Then there exists $b\coloneqq(b_k)_{k\in\N} \in \dom{\Lip_n}$ such that $\Lip_n(b)\leq 1$ and $\norm{\pi(a)-b}{\A_\infty}<\frac{\varepsilon}{2}$. By definition of $\Lip_n$, we thus conclude that $|b_1 - b_0| \leq \frac{1}{n} < \frac{\varepsilon}{2}$.  Thus, $|y_1-y_0| < \varepsilon$. As $\varepsilon > 0$ is arbitrary, we conclude that $y_1 = y_0$. Thus $\pi$ can never be surjective --- in fact, it is valued in $\B$. So no bridge builder exists for this example.
\end{example}

\medskip

As seen in Example (\ref{Nbar-ex}), convergence of $(\A_n,\Lip_n)_{n\in\N}$ to $(\A_\infty,\Lip_\infty)$ for the propinquity does not imply the convergence of $(\A_n,\Lip_p)_{p\in\N}$ to $(\A_n,\Lip_\infty)$. We have the following immediate consequence of our work:
\begin{corollary}
  Let $\A_\infty$ be a unital separable C*-algebra, such that $\A_\infty = \closure{\bigcup_{n\in\N} \A_n}$, where $(\A_n)_{n\in\N}$ is an increasing (for $\subseteq$) sequence of C*-subalgebras of $\A_\infty$, with the unit of $\A_\infty$ in $\A_0$. For each $n\in\Nbar$, let $\Lip_n$ be a Lipschitz seminorm on $\A_n$. If there exists a bridge builder $\pi : \A_\infty\rightarrow\A_\infty$ for $((\A_n,\Lip_n)_{n\in\N},(\A_\infty,\Lip_\infty))$ such that $\pi(\A_n)\subseteq\A_n$ for each $n\in\N$, then for all $n \in \Nbar$,
  \begin{equation*}
    \lim_{\substack{p\rightarrow\infty \\ p \geq n}} \dpropinquity{}((\A_n,\Lip_p),(\A_n,\Lip_\infty)) = 0\text,
  \end{equation*}
  and $\lim_{n\rightarrow\infty}\spectralpropinquity{}((\A_n,\Lip_n),(\A_\infty,\Lip_\infty)) = 0$.
\end{corollary}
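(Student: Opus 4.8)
The plan is to derive both limits from Proposition (\ref{bridge-builder-prop}) and a close reading of its proof. The second one needs nothing new: the hypothesis says exactly that a bridge builder for $((\A_n,\Lip_n)_{n\in\N},(\A_\infty,\Lip_\infty))$ exists, so Proposition (\ref{bridge-builder-prop}) applies as it stands. For the first, fix $n\in\N$ (the case $n=\infty$ is vacuous) and write $\pi_n\coloneqq\pi|_{\A_n}$; by the hypothesis $\pi(\A_n)\subseteq\A_n$, and since a bridge builder is a $\ast$-automorphism of $\A_\infty$, the map $\pi_n$ is a unital injective $\ast$-endomorphism of $\A_n$ with $\pi_n(\A_n)=\pi(\A_n)$ a unital C*-subalgebra of $\A_n$. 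Note that $\pi_n$ need not be onto, so it is not literally a bridge builder for the constant sequence $\A_n\subseteq\A_n\subseteq\cdots$ and Proposition (\ref{bridge-builder-prop}) cannot be quoted verbatim; instead one re-runs its proof.

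Concretely, given $\varepsilon>0$, choose $N\geq n$ from the bridge-builder property of $\pi$ so that for $p\geq N$ both conditions of Definition (\ref{bridge-builder-def}) hold with parameter $\varepsilon$, and for $p\geq N$ set $\D_p\coloneqq\A_n\oplus\A_n$ with
\[
  \TLip_p(a,b)\coloneqq\max\Bigl\{\Lip_\infty(a),\ \Lip_p(b),\ \tfrac{1}{\varepsilon}\norm{\pi(a)-b}{\A_n}\Bigr\}
\]
on $(\dom{\Lip_\infty}\cap\sa{\A_n})\oplus(\dom{\Lip_p}\cap\sa{\A_n})$, with $\psi_p(a,b)\coloneqq a$, $\theta_p(a,b)\coloneqq b$, and $\tau_p\coloneqq(\D_p,\TLip_p,\psi_p,\theta_p)$; here $\Lip_\infty$ and $\Lip_p$ are understood restricted to $\A_n$, as in the statement. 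That $(\D_p,\TLip_p)$ is a {\qcms} is checked as in Proposition (\ref{bridge-builder-prop}), except that, $\pi_n$ being possibly non-surjective, one bounds $\norm{b}{\A_n}$ on the relevant slice via $\norm{\pi(a)}{\A_n}=\norm{a}{\A_n}$ (itself controlled by Theorem (\ref{norm-Lip-bound-thm})) and Corollary (\ref{boundedL-cor}), in place of the $\pi^{-1}$ used there. The extent estimate is likewise adapted: for $\varphi\in\StateSpace(\A_n\oplus\A_n)$ the functional $a\in\A_n\mapsto\varphi(a,\pi(a))$ is a state of $\A_n$ (it is $\varphi$ composed with the unital $\ast$-morphism $a\mapsto(a,\pi(a))$, well defined because $\pi(\A_n)\subseteq\A_n$); it corresponds under $\pi_n\colon\A_n\to\pi(\A_n)$ to a state of $\pi(\A_n)$, which extends to a state $\mu'$ of $\A_n$ with $\mu'(\pi(a))=\varphi(a,\pi(a))$ for all $a\in\A_n$, and since $\TLip_p(a,b)\leq1$ forces $\norm{\pi(a)-b}{\A_n}\leq\varepsilon$, one gets $\lvert\varphi(a,b)-\mu'(\theta_p(a,b))\rvert\leq\lvert\varphi(0,b-\pi(a))\rvert+\lvert\mu'(\pi(a)-b)\rvert\leq2\varepsilon$; this replaces the $\pi^{-1}$-based state of Proposition (\ref{bridge-builder-prop}), while the $\psi_p$-side is exactly as there. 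Hence $\tunnelextent{\tau_p}\leq2\varepsilon$, and as $\varepsilon>0$ is arbitrary and $\dpropinquity{}((\A_n,\Lip_p),(\A_n,\Lip_\infty))\leq\tunnelextent{\tau_p}$ by Definition (\ref{propinquity-def}), the first limit follows.

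The step that is not a transcription of Proposition (\ref{bridge-builder-prop}), and the main obstacle, is verifying that $\psi_p$ and $\theta_p$ are genuine quantum isometries (Definition (\ref{quantum-iso-def})) — i.e. that the quotient of $\TLip_p$ under $\psi_p$ is $\Lip_\infty|_{\A_n}$ and under $\theta_p$ is $\Lip_p|_{\A_n}$. Since $\TLip_p(a,b)\geq\Lip_\infty(a)$ and $\TLip_p(a,b)\geq\Lip_p(b)$ always, this reduces to a filtration-compatible form of the bridge-builder conditions: given $a\in\dom{\Lip_\infty}\cap\sa{\A_n}$ one needs a witness $b$ with $\Lip_p(b)\leq\Lip_\infty(a)$ and $\norm{\pi(a)-b}{\A_p}<\varepsilon\Lip_\infty(a)$ lying in $\A_n$, and symmetrically. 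The global condition of Definition (\ref{bridge-builder-def}) only furnishes $b\in\dom{\Lip_p}\subseteq\sa{\A_p}$; because $\pi(a)\in\A_n$ — this is exactly where $\pi(\A_n)\subseteq\A_n$ enters — such a $b$ is within $\varepsilon\Lip_\infty(a)$ of $\A_n$, and applying a conditional expectation $\Exp_n^p\colon\A_p\to\A_n$ that is C*-contractive and does not increase $\Lip_p$ yields $\Exp_n^p(b)\in\dom{\Lip_p}\cap\sa{\A_n}$ with $\Lip_p(\Exp_n^p(b))\leq\Lip_p(b)\leq\Lip_\infty(a)$ and $\norm{\pi(a)-\Exp_n^p(b)}{\A_n}=\norm{\Exp_n^p(\pi(a)-b)}{\A_p}\leq\norm{\pi(a)-b}{\A_p}<\varepsilon\Lip_\infty(a)$. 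Such conditional expectations are precisely what makes $(\A_n,\Lip_p)$ a quantum metric subspace of $(\A_p,\Lip_p)$ — already implicit in the statement, without which $(\A_n,\Lip_p)$ need not be a {\qcms} — and they are available in all the examples treated in this paper (AF algebras, twisted group C*-algebras of increasing unions of discrete groups). With this in hand the quantum-isometry checks are routine and the argument above concludes.
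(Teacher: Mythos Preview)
The paper's proof is two sentences: it asserts that the restriction $\pi|_{\A_n}$ is a bridge builder for $((\A_n,\Lip_p)_{p\geq n},(\A_n,\Lip_\infty))$ and then invokes Proposition~(\ref{bridge-builder-prop}). You have, in effect, scrutinized that assertion and found it wanting on two counts: $\pi|_{\A_n}$ need not be surjective (so it is not literally a $\ast$-automorphism of $\A_n$, as Definition~(\ref{bridge-builder-def}) requires), and the witnesses supplied by the global bridge-builder property lie in $\dom{\Lip_p}\subseteq\sa{\A_p}$ or $\dom{\Lip_\infty}\subseteq\sa{\A_\infty}$, not a priori in $\A_n$. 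Both observations are correct, and the paper's proof does not address them.

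Your remedy --- rerunning the tunnel construction and pushing the witnesses into $\A_n$ via $\Lip_p$-contractive conditional expectations $\Exp_n^p:\A_p\to\A_n$ --- is the right idea and is exactly what is available in the paper's applications (AF algebras, twisted group C*-algebras over unions of subgroups). But, as you yourself note, this is an additional hypothesis not present in the corollary's statement; without it the very object $(\A_n,\Lip_p)$ is not guaranteed to be a {\qcms}. So your argument does not prove the corollary as literally stated either: rather, you have isolated the missing hypothesis and shown that under it the conclusion holds. That is more than the paper does.

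Two small remarks. First, the second limit in the statement uses $\spectralpropinquity{}$, which is surely a typographical slip for $\dpropinquity{}$ (no spectral triples are in play, and Proposition~(\ref{bridge-builder-prop}) yields only $\dpropinquity{}$); you were right to treat it as such. Second, your handling of the extent on the $\theta_p$-side --- extending the state from $\pi(\A_n)$ to $\A_n$ in place of using $\pi^{-1}$ --- is a clean substitute and yields $2\varepsilon$ rather than $\varepsilon$, which is harmless.
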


\begin{proof}
  This follows by observing that the restriction of $\pi$ to $\A_n$ is a bridge builder for $((\A_n,\Lip_p)_{p\geq n},(\A_n,\Lip_\infty))$. Our result then follows from Proposition (\ref{bridge-builder-prop}).
\end{proof}

\section{Convergence of Inductive Sequences of Metric Spectral Triples for the Spectral Propinquity}

We now study the convergence of certain families of metric spectral triples for the spectral propinquity \cite{Latremoliere18g}, whose construction we will recall below. We thus begin this section with the definition of a spectral triple, due to Connes, and the foundational concept for noncommutative Riemannian geometry.

\begin{definition}[{\cite{Connes,Connes89}}]\label{spectral-triples-def}
  A \emph{spectral triple} $(\A,\Hilbert,\Dirac)$ is given by a unital C*-algebra $\A$ of bounded linear operators on a Hilbert space $\Hilbert$, and a self-adjoint operator $\Dirac$ defined on some dense subspace $\dom{\Dirac}$ of $\Hilbert$, such that:
  \begin{enumerate}
  \item $\left\{ a \in \A : a\cdot \dom{\Dirac} \subseteq \dom{\Dirac}, [\Dirac,a] \text{ is bounded } \right\}$ is a dense $\ast$-algebra in $\A$,
  \item $\Dirac$ has compact resolvent.
  \end{enumerate}

  The operator $\Dirac$ is referred to as the Dirac operator of the spectral triple.
\end{definition}

\subsection{Preliminaries: The Spectral Propinquity}\label{SpProp-sec}

The spectral propinquity is a distance, up to unitary equivalence, on the class of metric spectral triples.
\begin{notation}
  If $T : D\subseteq E \rightarrow F$ is a linear operator defined from a dense subspace $D$ of a normed vector space $E$ to a normed vector space $F$, then we write:
  \begin{equation*}
    \opnorm{T}{F}{E} \coloneqq \sup\left\{ \norm{T \xi}{F} : \xi \in D, \norm{\xi}{E} \leq 1 \right\}
  \end{equation*}
  allowing for the value $\infty$. If $F=E$, then $\opnorm{T}{F}{E}$ is simply denoted by $\opnorm{T}{}{E}$.
\end{notation}

\begin{definition}\label{metric-spectral-triple-def}
  A {spectral triple} $(\A,\Hilbert,\Dirac)$ is \emph{metric} if the Connes extended pseudo-distance, defined on the state space $\StateSpace(\A)$ of $\A$ by:
  \begin{equation*}
    \Kantorovich{\Dirac} :\varphi,\psi \in \StateSpace(\A) \mapsto \sup\left\{|\varphi(a)-\psi(a)|:a\,\dom{\Dirac}\subseteq\dom{\Dirac} \text{ and }\opnorm{[\Dirac,a]}{}{\Hilbert} \leq 1 \right\}
  \end{equation*}
  is in fact a metric on $\StateSpace(\A)$, which  induces the weak-$^\ast$ topology on $\StateSpace(\A)$.
\end{definition}

As soon as a spectral triple is metric, it induces a structure of {\qcms} on its underlying C*-algebra in a natural manner.
\begin{proposition}[{\cite[Proposition 1.10]{Latremoliere18g}}]\label{spectral-cqms-prop}
  Let $(\A,\Hilbert,\Dirac)$ be a spectral triple. We set:
  \begin{equation*}
    \dom{\Lip_\Dirac} \coloneqq \left\{ a \in \sa{\A} : a\,\dom{\Dirac}\subseteq\dom{\Dirac} \text{ and }[\Dirac,a]\text{ is bounded } \right\}
  \end{equation*}
  and  for all  $a \in \dom{\Lip_\Dirac} $:
  \begin{equation*}
   \quad \Lip_\Dirac(a) \coloneqq \opnorm{[\Dirac,a]}{}{\Hilbert} \text.
  \end{equation*}

  The spectral triple $(\A,\Hilbert,\Dirac)$ is metric if, and only if, $(\A,\Lip_\Dirac)$ is a {\qcms}.
\end{proposition}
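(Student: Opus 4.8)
The plan is to reduce the statement to a single identification plus four routine verifications of the axioms of Definition (\ref{qcms-def}). The pivotal observation is that, for \emph{any} spectral triple $(\A,\Hilbert,\Dirac)$, the Connes distance $\Kantorovich{\Dirac}$ of Definition (\ref{metric-spectral-triple-def}) equals $\Kantorovich{\Lip_\Dirac}$ on $\StateSpace(\A)$. The inequality $\Kantorovich{\Lip_\Dirac}\leq\Kantorovich{\Dirac}$ is immediate since every $a\in\dom{\Lip_\Dirac}$ is admissible in the supremum defining $\Kantorovich{\Dirac}$. For the reverse inequality, given $\varphi,\psi\in\StateSpace(\A)$ and $a$ with $a\,\dom{\Dirac}\subseteq\dom{\Dirac}$ and $\opnorm{[\Dirac,a]}{}{\Hilbert}\leq 1$, choose $\lambda\in\T$ with $\lambda(\varphi(a)-\psi(a))=|\varphi(a)-\psi(a)|$ and set $b\coloneqq\Re(\lambda a)\in\dom{\Lip_\Dirac}$. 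Using that $\Dirac$ is self-adjoint, so that $[\Dirac,a^\ast]=-[\Dirac,a]^\ast$, one gets $\Lip_\Dirac(b)\leq\opnorm{[\Dirac,\lambda a]}{}{\Hilbert}=\opnorm{[\Dirac,a]}{}{\Hilbert}\leq 1$, while $\varphi(b)-\psi(b)=\Re\big(\lambda(\varphi(a)-\psi(a))\big)=|\varphi(a)-\psi(a)|$; hence the two constructions coincide.

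Granting this, the implication ``$(\A,\Lip_\Dirac)$ is a {\qcms}'' $\Rightarrow$ ``$(\A,\Hilbert,\Dirac)$ is metric'' is immediate: condition (2) of Definition (\ref{qcms-def}) says exactly that $\Kantorovich{\Lip_\Dirac}=\Kantorovich{\Dirac}$ is a metric inducing the weak-$^\ast$ topology on $\StateSpace(\A)$, which is Definition (\ref{metric-spectral-triple-def}). For the converse, assume $(\A,\Hilbert,\Dirac)$ is metric and verify the four axioms for $(\A,\Lip_\Dirac)$. Writing $\mathcal{B}$ for the dense $\ast$-subalgebra of Definition (\ref{spectral-triples-def}), one has $\dom{\Lip_\Dirac}=\sa{\A}\cap\mathcal{B}$, which is a dense Jordan--Lie subalgebra of $\sa{\A}$: density because $\mathcal{B}$ is dense and $\ast$-closed (so $a=\lim_n\Re b_n$ with $\Re b_n\in\dom{\Lip_\Dirac}$), and closure under $\Re(ab),\Im(ab)$ because $\mathcal{B}$ is an algebra. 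Axiom (2) is the hypothesis combined with the identification above. Axiom (1): if $a=a^\ast$ and $\Lip_\Dirac(a)=0$, then $\Lip_\Dirac(ta)=0$ for all $t\in\R$, so $\Kantorovich{\Dirac}(\varphi,\psi)\geq|t|\,|\varphi(a)-\psi(a)|$ for all $\varphi,\psi\in\StateSpace(\A)$; since $\StateSpace(\A)$ is weak-$^\ast$ compact the metric $\Kantorovich{\Dirac}$ is bounded, forcing $\varphi(a)=\psi(a)$ for all states and hence $a\in\R\unit$. Axiom (3), with $\Omega=1$ and $\Omega'=0$: from the Leibniz rule $[\Dirac,xy]=[\Dirac,x]y+x[\Dirac,y]$ on $\dom{\Dirac}$ one obtains $\opnorm{[\Dirac,xy]}{}{\Hilbert}\leq\Lip_\Dirac(x)\norm{y}{\A}+\norm{x}{\A}\Lip_\Dirac(y)$, and averaging this with the analogous bound for $[\Dirac,yx]$ yields the $(1,0)$-Leibniz inequality for $\Lip_\Dirac(\Re(ab))$ and $\Lip_\Dirac(\Im(ab))$ when $a,b\in\sa{\A}$ (with $\Re(ab),\Im(ab)\in\dom{\Lip_\Dirac}$ by the Jordan--Lie closure just established).

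The one genuinely technical step, which I would carry out last, is axiom (4): closedness of $\{a\in\dom{\Lip_\Dirac}:\Lip_\Dirac(a)\leq 1\}$ in $\A$, i.e.\ lower semicontinuity of $\Lip_\Dirac$. Suppose $a_k\to a$ in norm with $a_k\in\dom{\Lip_\Dirac}$ and $\Lip_\Dirac(a_k)\leq 1$; then $a=a^\ast$. For $\xi,\eta\in\dom{\Dirac}$ one has $\langle[\Dirac,a_k]\eta,\xi\rangle=\langle a_k\eta,\Dirac\xi\rangle-\langle\Dirac\eta,a_k\xi\rangle$, which converges as $k\to\infty$ (by norm convergence $a_k\to a$) to $q(\eta,\xi)\coloneqq\langle a\eta,\Dirac\xi\rangle-\langle\Dirac\eta,a\xi\rangle$, with $|q(\eta,\xi)|\leq\norm{\eta}{\Hilbert}\norm{\xi}{\Hilbert}$. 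Thus $q$ extends to a bounded sesquilinear form of norm at most $1$, represented by a bounded operator $T$ with $\opnorm{T}{}{\Hilbert}\leq 1$. The resulting identity $\langle a\eta,\Dirac\xi\rangle=\langle T\eta+a\Dirac\eta,\xi\rangle$ for all $\xi\in\dom{\Dirac}$ shows $a\eta\in\dom{\Dirac^\ast}=\dom{\Dirac}$ with $[\Dirac,a]\eta=T\eta$, so $a\in\dom{\Lip_\Dirac}$ and $\Lip_\Dirac(a)=\opnorm{T}{}{\Hilbert}\leq 1$. (Note that axioms (3) and (4) use only the self-adjointness of $\Dirac$, not the metric hypothesis nor compactness of the resolvent; the metric hypothesis enters solely through axioms (1) and (2).) This completes the verification, and with it the equivalence. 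The main obstacle is precisely this last semicontinuity argument, where one must use self-adjointness of $\Dirac$ to promote the limiting sesquilinear form into a genuine commutator with $a\,\dom{\Dirac}\subseteq\dom{\Dirac}$.
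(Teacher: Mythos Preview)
The paper does not prove this proposition; it is quoted from \cite[Proposition 1.10]{Latremoliere18g} and used as background. Your proof is correct and follows the standard route taken in that reference: identify $\Kantorovich{\Dirac}$ with $\Kantorovich{\Lip_\Dirac}$ via the self-adjointification trick, then verify the axioms of Definition (\ref{qcms-def}), with the only nontrivial point being the lower-semicontinuity of $\Lip_\Dirac$ (axiom (4)), which you handle by the sesquilinear-form argument exploiting $\dom{\Dirac^\ast}=\dom{\Dirac}$.
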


The construction of the spectral propinquity begins with the following observation. Recall from  \cite{Latremoliere18g} that if $(\A,\Hilbert,\Dirac)$ is a metric spectral triple, and if we set
\begin{itemize}
 \item for all $\xi \in \dom{\Dirac} $:
\begin{equation}\label{Dirac-D-norm-eq}
 \quad \CDN_\Dirac(\xi)\coloneqq\norm{\xi}{\Hilbert} + \norm{\Dirac\xi}{\Hilbert} \text,
\end{equation}
\item 	$\dom{\Lip_\Dirac} \coloneqq \left\{ a\in\sa{\A} : a\,\dom{\Dirac}\subseteq\dom{\Dirac}\text{, }[\Dirac,a] \text{ is bounded } \right\}$ 
\item  for all $a \in \dom{\Lip_\Dirac}$: 
\begin{equation*}
  \quad \Lip_\Dirac(a) \coloneqq \opnorm{[\Dirac,a]}{}{\Hilbert} \text,
\end{equation*}
\end{itemize}
then 
\begin{equation*}
  \mcc{\A}{\Hilbert}{\Dirac} \coloneqq \left(\Hilbert,\CDN_\Dirac,\A,\Lip_\Dirac,\C,0\right)
\end{equation*}
is an example of a \emph{metrical C*-correspondence}, in the following sense:

\begin{definition}
  An $\A$-$\B$-\emph{$C^\ast$-correspondence} $(\module{M},\A,\B)$, for two C*-algebras $\A$ and $\B$, is a right Hilbert module $\module{M}$ over $\B$ (whose $\B$-valued inner product is denoted by $\inner{\cdot}{\cdot}{\module{M}}$), together with a unital $\ast$-morphism from $\A$ to the C*-algebra of adjoinable $\B$-linear operators over $\module{M}$. 
\end{definition}

\begin{definition}[{\cite[Definition 2.2]{Latremoliere18g}}]\label{mcc-def}
  An \emph{$(\Omega,\Omega',\Omega_{\mathrm{mod}}, \Omega_{\mathrm{inner}})$-metrical C*-correspondence} $(\module{M},\CDN,\A,\Lip,\B,\SLip)$, where $\Omega,\Omega_{\mathrm{inner}} \geq 1$, $\Omega_{\mathrm{mod}}\geq 2$, and $\Omega'\geq 0$, is given by two {$(\Omega,\Omega')$-\qcms s} $(\A,\Lip)$ and $(\B,\SLip)$, an $\A$-$\B$ C*-correspondence $(\module{M},\A,\B)$, and a norm $\CDN$ defined on a dense $\C$-subspace $\dom{\TDN}$ of $\module{M}$, such that
  \begin{enumerate}
  \item $\forall \omega\in\dom{\CDN} \quad \CDN(\omega)\geq\norm{\omega}{\module{M}}\coloneqq \sqrt{\norm{\inner{\omega}{\omega}{\module{M}}}{\B}}$,
  \item $\{ \omega\in\dom{\CDN} : \CDN(\omega)\leq 1\}$ is compact in $(\module{M},\norm{\cdot}{\module{M}})$,
  \item for all $a\in \dom{\Lip}$ and $\omega\in \dom{\TDN}$,
    \begin{equation*}
      \CDN(a\omega)\leq \Omega_{\mathrm{mod}}(\norm{a}{\A}+\Lip(a))\CDN(\omega) \text,
    \end{equation*}
  \item for all $\omega,\eta\in\dom{\CDN}$,
    \begin{equation*}
      \max\{\SLip(\Re\inner{\omega}{\eta}{\module{M}}),\SLip(\Im\inner{\omega}{\eta}{\module{M}})\} \leq \Omega_{\mathrm{inner}} \CDN(\omega) \CDN(\eta) \text.
    \end{equation*}
  \end{enumerate}
  In particular, the norm $\CDN$ is called a \emph{D-norm}.
\end{definition}

\begin{convention}\label{FKGH-convention}
  In this work, we \emph{fix} $\Omega_{\mathrm{mod}}\geq 2$ and $\Omega_{\mathrm{inner}}\geq 1$ all throughout the paper. \emph{All} {\qcms s} will be assumed to be in the class of $(\Omega,\Omega')$-{\qcms s} and all metrical C*-correspondences will be assume to be in the class of $(\Omega,\Omega',\Omega_{\mathrm{mod}}, \Omega_{\mathrm{inner}})$-metrical C*-correspondences, unless otherwise specified.
\end{convention}
Note that the compactness condition in Definition (\ref{mcc-def}) borrows and extends on Theorem (\ref{tt-thm}).

The importance of Definition (\ref{mcc-def}) is that one can extend the propinquity to {\gMVB s} as follows. First, we employ a natural notion of morphism between {\gMVB s}.

\begin{definition}[{\cite[Definition 2.13]{Latremoliere18g}}]
  For each $j \in \{1,2\}$, let
  \begin{equation*}
    \mathds{M}_j = \left( \module{M}_j,\CDN_j,\A_j,\Lip_j,\B_j,\SLip_j \right)
  \end{equation*}
  be a metrical C*-correspondence. 

  A \emph{metrical quantum isometry} $(\Pi,\pi,\theta)$ from $\mathds{M}_1$ to $\mathds{M}_2$ is a given by:
  \begin{enumerate}
  \item a continuous, surjective $\C$-linear map $\Pi : \module{M}_1 \rightarrow\module{M}_2$,
  \item a quantum isometry $\pi : (\A_1,\Lip_1) \rightarrow (\A_2,\Lip_2)$,
  \item a quantum isometry $\theta: (\B_1,\SLip_1) \rightarrow (\B_2,\SLip_2)$,
  \end{enumerate}
  such that
  \begin{enumerate}
  \item $\forall a \in \A \quad \forall \omega \in \module{M}_1 \quad \Pi(a\omega) = \pi(a)\Pi(\omega)$,
  \item $\forall b \in \B \quad \forall \omega \in \module{M}_2 \quad \Pi(\omega\cdot b) = \Pi(\omega)\theta(b)$,
  \item $\forall \omega,\eta\in \module{M}_1 \quad \theta(\inner{\omega}{\eta}{\module{M}_1}) = \inner{\Pi(\omega)}{\Pi(\eta)}{\module{M}_2}$,
  \item $\Pi(\dom{\CDN_1})\subseteq\dom{\CDN_2}$ and, for all $\omega\in\dom{\CDN_2}$, the equality $\CDN_2(\omega) = \inf\left\{ \CDN_1(\eta) : \eta\in\dom{\CDN_1},\Pi(\eta)=\omega \right\}$.
  \end{enumerate}
\end{definition}

The definition of a distance between {\gMVB s}, called the \emph{metrical propinquity}, relies on a notion of isometric embedding called a tunnel, and is defined as follows.

\begin{definition}[{\cite[Definition 2.19]{Latremoliere18g}}]\label{mcc-tunnel-def}
  Let $\mathds{M}_1$ and $\mathds{M}_2$ be two metrical C*-correspondences. A \emph{(metrical) tunnel} $\tau = (\mathds{J},\Pi_1,\Pi_2)$ from $\mathds{M}_1$ to $\mathds{M}_2$ is a triple given by a metrical C*-correspondence $\mathds{J}$, and for each $j\in\{1,2\}$, a metrical quantum isometry $\Pi_j : \mathds{J}\mapsto \mathds{M}_j$.
\end{definition}

\begin{remark}
  It is important to note that our tunnels involve $(\Omega,\Omega',\Omega_{\mathrm{mod}}, \Omega_{\mathrm{inner}})$-C*-metrical correspondences only (as per Convention (\ref{FKGH-convention})). We will dispense calling our tunnels $(\Omega,\Omega',\Omega_{\mathrm{mod}}, \Omega_{\mathrm{inner}})$-tunnels, to keep our notation simple, but it should be stressed that \emph{fixing} $(\Omega,\Omega',\Omega_{\mathrm{mod}}, \Omega_{\mathrm{inner}})$ and staying within the class of $(\Omega,\Omega',\Omega_{\mathrm{mod}}, \Omega_{\mathrm{inner}})$-C*-metrical correspondences is crucial to obtain a metric from tunnels.
\end{remark}

We now proceed by defining the extent of a metrical tunnel; remarkably this only involves our previous notion of extent of a tunnel between {\qcms s}.

\begin{definition}[{\cite[Definition 2.21]{Latremoliere18g}}]\label{mcc-extent-def}
  Let $\mathds{M}_j = (\module{M}_j,\CDN_j,\A_j,\Lip_j,\B_j,\SLip_j)$ be a metrical C*-correspondence, for each $j \in \{1,2\}$. Let $\tau = (\mathds{P},(\Pi_1,\pi_1,\theta_1),(\Pi_2,\pi_2,\theta_2))$ be a metrical tunnel from $\mathds{M}_1$ to $\mathds{M}_2$, with $\mathds{P} = (\module{P},\TDN,\D,\Lip_\D,\alg{E},\Lip_{\alg{E}})$.
  
  The \emph{extent} $\tunnelextent{\tau}$ of a metrical tunnel $\tau$ is
  \begin{equation*}
    \tunnelextent{\tau} \coloneqq \max\left\{ \tunnelextent{\D,\Lip_\D,\pi_1,\pi_2}, \tunnelextent{\alg{E},\TLip_{\alg{E}},\theta_1,\theta_2} \right\} \text.
  \end{equation*}
\end{definition}

Given two metric spectral triples, we can thus either take the Gromov-Hausdorff distance between their underlying {\qcms s}, or take the metrical propinquity \cite{Latremoliere16c,Latremoliere18d} between the metrical C*-correspondence they define, which is defined as the infimum of the extent of every possible metrical tunnels between them. However, the spectral propinquity involves our work on the geometry of quantum dynamics \cite{Latremoliere18a,Latremoliere18b,Latremoliere18g} as well. We recall the construction of the spectral propinquity; the new quantity called the $\varepsilon$-magnitude was introduced in \cite[Definition 3.31]{Latremoliere18g}, but is simpler to express for spectral triples, based on \cite{Latremoliere22}.

\begin{definition}[{\cite[Theorem 3.6]{Latremoliere22}}]\label{magnitude-def}
  Let $(\A_1,\Hilbert_1,\Dirac_1)$ and $(\A_2,\Hilbert_2,\Dirac_2)$ be two metric spectral triples. Let
  \begin{equation*}
    \tau \coloneqq \left( \underbracket[1pt]{\left(\module{P},\TDN,\D,\Lip_\D,\alg{E},\SLip\right)}_{\text{metrical C*-correspondence}}, \underbracket[1pt]{\left(\Pi_1,\pi_1,\theta_1\right)}_{\text{metrical quantum isometry}}, \underbracket[1pt]{\left(\Pi_2,\pi_2,\theta_2\right)}_{\text{metrical quantum isometry}} \right)
  \end{equation*}
  be a metrical tunnel from $\mcc{\A_1}{\Hilbert_1}{\Dirac_1}$ to $\mcc{\A_2}{\Hilbert_2}{\Dirac_2}$, We define the \emph{$\varepsilon$-magnitude} $\tunnelmagnitude{\tau}{\varepsilon}$ of $\tau$ as the maximum of the extent $\tunnelextent{\tau}$ of $\tau$, and the \emph{$\varepsilon$-reach} of $\tau$, which is the number:
  \begin{equation}
    \sup_{\substack{\xi\in\dom{\Dirac_j} \\ \CDN_j(\xi)\leq 1}} \; \operatorname*{\inf\vphantom{p}}_{\substack{\eta\in\dom{\Dirac_k} \\ \CDN_k(\eta)\leq 1}}\;  \sup_{\substack{\omega\in\dom{\TDN} \\ \TDN(\omega)\leq 1 \\ 0\leq t \leq \frac{1}{\varepsilon}}} \left|\inner{\exp(i t \Dirac_j)\xi}{\Pi_j(\omega)}{\Hilbert_j} - \inner{\exp(i t \Dirac_k)\eta}{\Pi_k(\omega)}{\Hilbert_k} \right| \text,
  \end{equation}
  for $\{j,k\}=\{1,2\}$.
\end{definition}

\begin{definition}[{\cite[Definition 4.2]{Latremoliere18g}}]\label{spectral-propinquity-def}
  The \emph{spectral propinquity} between two metric spectral triples $(\A_1,\Hilbert_1,\Dirac_1)$ and $(\A_2,\Hilbert_2,\Dirac_2)$ is
  \begin{equation*}
    \begin{split}
      \spectralpropinquity{}((\A_1,\Hilbert_1,\Dirac_1),
      &(\A_2,\Hilbert_2,\Dirac_2)) \coloneqq \\
      &\inf\Bigg\{ \frac{\sqrt{2}}{2}, \varepsilon > 0 : \tunnelmagnitude{\tau}{\varepsilon} < \varepsilon \text{ for $\tau$ a tunnel} \\
      &\text{from $\mcc{\A_1}{\Hilbert_1}{\Dirac_1}$ to $\mcc{\A_2}{\Hilbert_2}{\Dirac_2}$} \Bigg\} \text.
    \end{split}
  \end{equation*}
\end{definition}

The key property of the spectral propinquity is that, for any two metric spectral triples $(\A_1,\Hilbert_1,\Dirac_1)$ and $(\A_2,\Hilbert_2,\Dirac_2)$, we have the following equivalence:
\begin{equation*}
  \spectralpropinquity{}((\A_1,\Hilbert_1,\Dirac_1),(\A_2,\Hilbert_2,\Dirac_2)) = 0
\end{equation*}
if, and only if, there exists a unitary $U : \Hilbert_1 \rightarrow \Hilbert_2$ such that
\begin{itemize}
\item $U\dom{\Dirac_1} = \dom{\Dirac_2}$,
\item $U\Dirac_1 = \Dirac_2 U$ on $\dom{\Dirac_1}$,
\item $a\in\A_1\mapsto U a U^\ast$ is a $\ast$-isomorphism from $\A_1$ onto $\A_2$.
\end{itemize}

A nontrivial example of convergence in the sense of the spectral propinquity is provided in \cite{Latremoliere21a} with the approximation of spectral triples on quantum tori by spectral triples of certain matrix algebras known as fuzzy tori. These examples include many examples of previously informally stated convergences in mathematical physics, dealing with matrix models and their limits as the dimension of the algebra grows to infinity. Such examples are a major motivation for the construction of the spectral propinquity. Another example on fractals is presented in \cite{Latremoliere20a}. Moreover, convergence for the spectral propinquity implies convergence of the spectra of the Dirac operators and, in an appropriate sense, the convergence of the bounded functional calculi of these operators, among other properties. Of course, convergence for the spectral propinquity implies convergence of the underlying {\qcms s} for the propinquity. In this paper, we will construct new examples of convergence for new spectral triples defined over noncommutative solenoids and over Bunce-Deddens algebras, seen as limits of spectral triples. 

\subsection{Preliminaries: Inductive Limits of Spectral Triples}

While the spectral propinquity allows the discussion of convergence of spectral triples defined on vastly different C*-algebras, there are certain more restricted situations where the C*-algebras of a sequence of spectral triples may be related in a manner compatible with the spectral triples. In \cite{Floricel19}, a simple notion of inductive limit for spectral triples is introduced, based on the following encoding of such a compatibility via a natural, and rigid, notion of morphism between spectral triples.

\begin{definition}[{\cite{Floricel19}}]\label{iso-isomorphism-spectral-triple-def}
  An isometric morphism $(\pi,S)$ from $(\A_1,\Hilbert_1,\Dirac_1)$ to $(\A_2,\Hilbert_2,\Dirac_2)$ is given by a unital $\ast$-morphism $\pi : \A_1\rightarrow\A_2$ and a linear isometry $S : \Hilbert_1 \rightarrow \Hilbert_2$ such that:
  \begin{enumerate}
  \item $\pi(\dom{\Lip_1})\subseteq\dom{\Lip_2}$,
  \item $S\dom{\Dirac_1}\subseteq\dom{\Dirac_2}$ and $S\Dirac_1=\Dirac_2 S$ on $\dom{\Dirac_1}$,
  \item $\forall a \in \A_1\quad S a = \pi(a) S$.
  \end{enumerate}
\end{definition}

Since $S$ is a linear isometry, $\Hilbert_1$ can be identified with the closed subspace $S\Hilbert_1$ of $\Hilbert_2$ via $S$ at no cost in our definition. In that case, $\Dirac_1$ is only defined on $\Hilbert_1\subseteq\Hilbert_2$, and we simply require that $\Dirac_1$ is the restriction of $\Dirac_2$ to $\dom{\Dirac_1}$.

We also note that if $\pi(a) = 0$ for some $a\in\A_1$, then $\pi(a)S = S a = 0$. Since $S$ is an isometry, $a=0$. So $\pi$ is actually automatically a $\ast$-monomorphism, and we thus can also identify $\A_1$ with the C*-subalgebra $\pi(\A_1)$ of $\A_2$, since Definition (\ref{iso-isomorphism-spectral-triple-def}) ensures that $a\Hilbert_1\subseteq\Hilbert_1$ and $[\Dirac_1,a]$ is identified with $P[\Dirac_2,\pi(a)]P = P[\Dirac_2,\pi(a)]=[\Dirac_2,\pi(a)]P$ where $P$ is the orthogonal projection of $\Hilbert_2$ onto $\Hilbert_1$. Furthermore, since $\pi$ is unital, the unit of $\A_2$ is contained in $\A_1$ with this identification.

An inductive sequence of spectral triples, as defined in \cite{Floricel19}, with a somewhat more involved notation, is simply a sequence of the form $((\A_n,\Hilbert_n,\Dirac_n),(\pi_n,S_n))_{n\in\N}$ where $(\A_n,\Hilbert_n,\Dirac_n)$ is a spectral triple and $(\pi_n,S_n)$ is an isometric morphism from $(\A_n,\Hilbert_n,\allowbreak \Dirac_n)$ to $(\A_{n+1},\Hilbert_{n+1},\Dirac_{n+1})$, for each $n\in\N$. As we have seen above, we can identify such a sequence with one of the following type, which we will take as our notion of inductive limit of spectral triples.

\begin{definition}\label{ind-limit-spectral-triple-def}
  Let $\A_\infty = \closure{\bigcup_{n\in\N}\A_n}$ be a C*-algebra which is the closure of an increasing sequence of C*-subalgebras $(\A_n)_{n\in\N}$ in $\A_\infty$, with the unit of $\A_\infty$ in $\A_0$. A spectral triple $(\A_\infty,\Hilbert_\infty,\Dirac_\infty)$ is the inductive limit of a sequence $(\A_n,\Hilbert_n,\Dirac_n)_{n\in\N}$ of spectral triples when:
  \begin{enumerate}
  \item $\Hilbert_\infty = \closure{\bigcup_{n\in\N}}\Hilbert_n$, where each $\Hilbert_n$ is a Hilbert subspace of $\Hilbert_\infty$,
  \item for each $n\in\N$, the restriction of $\Dirac_\infty$ to $\dom{\Dirac_n}$ is $\Dirac_n$,
  \item for each $n\in\N$, the subspace $\Hilbert_n$ is reducing for $\A_n$, which is equivalent to $\A_n \Hilbert_n \subseteq \Hilbert_n$.
  \end{enumerate}
\end{definition}
We note, using the notation of Definition (\ref{ind-limit-spectral-triple-def}), that the operator which, to any $\xi \in \bigcup_{n\in\N}\dom{\Dirac_n}$, associates $\Dirac_n\xi$ whenever $\xi \in \dom{\Dirac_n}$ for any $n\in\N$, is indeed well-defined, and shown in \cite{Floricel19} to be essentially self-adjoint, so $\Dirac_\infty$ is the closure of this operator.

For our purpose, the following result from \cite{Floricel19} will play an important role.
\begin{theorem}[{\cite[Theorem 3.1, partial]{Floricel19}}]\label{Floricel-thm}
  If $(\A_n,\Hilbert_n,\Dirac_n)_{n\in\N}$ is an inductive sequence of spectral triples converging to a \emph{spectral triple} $(\A_\infty,\Hilbert_\infty,\Dirac_\infty)$, then for any $\C$-valued continuous function $f \in C_0(\R)$ which vanishes at infinity, the sequence $(P_n f(\Dirac_n) P_n)_{n\in\N}$ converges to $f(\Dirac_\infty)$ in norm.
\end{theorem}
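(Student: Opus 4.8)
The plan is to reduce the statement to the case of the resolvent functions $r_z\colon t\mapsto (t-z)^{-1}$ with $z\in\C\setminus\R$, handle these by an exact computation, and then propagate back to all of $C_0(\R)$ by density. Throughout, write $P_n$ for the orthogonal projection of $\Hilbert_\infty$ onto $\Hilbert_n$; since $(\Hilbert_n)_{n\in\N}$ is increasing with dense union in $\Hilbert_\infty$, we have $P_n\to\mathrm{id}_{\Hilbert_\infty}$ in the strong operator topology, hence $(\mathrm{id}-P_n)\to 0$ strongly. Note also that for any bounded Borel $g$ on $\R$, the operator $g(\Dirac_n)$ acts on $\Hilbert_n$, so $g(\Dirac_n)P_n$ has range in $\Hilbert_n$ and therefore $P_n g(\Dirac_n)P_n = g(\Dirac_n)P_n$; I henceforth work with $g(\Dirac_n)P_n$.

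First I would treat $f=r_z$. The key observation is that $(\Dirac_n-z)^{-1}P_n$ and $(\Dirac_\infty-z)^{-1}$ \emph{agree exactly on $\Hilbert_n$}: for $\zeta\in\dom{\Dirac_n}$, Definition (\ref{ind-limit-spectral-triple-def})(2) gives $(\Dirac_\infty-z)\zeta=(\Dirac_n-z)\zeta\in\Hilbert_n$, and since $\Dirac_n-z$ maps $\dom{\Dirac_n}$ onto $\Hilbert_n$ (as $z\notin\R$ and $\Dirac_n$ is self-adjoint), applying $(\Dirac_\infty-z)^{-1}$ and $(\Dirac_n-z)^{-1}P_n$ both return $\zeta$ on every element of $\Hilbert_n$. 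Thus $R_n\coloneqq(\Dirac_n-z)^{-1}P_n-(\Dirac_\infty-z)^{-1}$ satisfies $R_n P_n=0$; running the same argument for $\bar z$ and using that $\Dirac_n$, $P_n$, $\Dirac_\infty$ are self-adjoint (so $[(\Dirac_n-z)^{-1}P_n]^\ast=(\Dirac_n-\bar z)^{-1}P_n$) gives $R_n^\ast P_n=0$, hence $P_n R_n=0$, so $R_n=(\mathrm{id}-P_n)R_n(\mathrm{id}-P_n)$. Since $P_n(\mathrm{id}-P_n)=0$ forces $(\mathrm{id}-P_n)(\Dirac_n-z)^{-1}P_n(\mathrm{id}-P_n)=0$, we get $R_n=-(\mathrm{id}-P_n)(\Dirac_\infty-z)^{-1}(\mathrm{id}-P_n)$, whence $\|R_n\|\leq\|(\Dirac_\infty-z)^{-1}(\mathrm{id}-P_n)\|$. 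As $\Dirac_\infty$ has compact resolvent (it is a spectral triple) and $(\mathrm{id}-P_n)\to 0$ strongly with $(\mathrm{id}-P_n)$ self-adjoint, the right-hand side tends to $0$ (a compact operator post-composed with a self-adjoint, strongly null sequence converges in norm, via precompactness of the image of the unit ball under the adjoint). Hence $(\Dirac_n-z)^{-1}P_n\to(\Dirac_\infty-z)^{-1}$ in operator norm.

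Next I would propagate this to the $\ast$-subalgebra $\mathcal{A}$ of $C_0(\R)$ generated by $\{r_z:z\in\C\setminus\R\}$. For bounded Borel $g,h$ one has $[g(\Dirac_n)P_n][h(\Dirac_n)P_n]=(gh)(\Dirac_n)P_n$, because $h(\Dirac_n)P_n$ already takes values in $\Hilbert_n$, where $P_n$ acts as the identity and the functional calculus is multiplicative; so for $g\in\mathcal{A}$, written as a polynomial with zero constant term in finitely many $r_{z_1},\dots,r_{z_k}$, the operator $g(\Dirac_n)P_n$ is that same polynomial evaluated at the uniformly bounded operators $(\Dirac_n-z_j)^{-1}P_n$, whose norm limits are the $(\Dirac_\infty-z_j)^{-1}$. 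Norm-continuity of polynomials on bounded sets then yields $g(\Dirac_n)P_n\to g(\Dirac_\infty)$ in norm for every $g\in\mathcal{A}$. Finally, $\mathcal{A}$ is self-adjoint, separates the points of $\R$, and vanishes nowhere, so by the Stone--Weierstrass theorem (locally compact form) $\mathcal{A}$ is dense in $C_0(\R)$; given $f\in C_0(\R)$ and $\varepsilon>0$, choosing $g\in\mathcal{A}$ with $\|f-g\|_\infty<\varepsilon$ and using $\|h(\Dirac_n)\|\leq\|h\|_\infty$, $\|h(\Dirac_\infty)\|\leq\|h\|_\infty$ gives $\limsup_{n\to\infty}\|P_n f(\Dirac_n)P_n-f(\Dirac_\infty)\|\leq 2\varepsilon$, and $\varepsilon\to 0$ finishes the argument.

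The hard part is getting \emph{norm} convergence rather than merely strong resolvent convergence: strong convergence is comparatively easy and does not even use the compact resolvent hypothesis, whereas a general strongly convergent sequence of bounded operators with compact limit need not converge in norm. The device that overcomes this is the exact-agreement-on-$\Hilbert_n$ identity of the second paragraph, which compresses the error operator $R_n$ entirely onto $\Hilbert_n^\perp$ and exhibits it as a two-sided cutoff of the single compact operator $(\Dirac_\infty-z)^{-1}$; norm convergence then follows purely from compactness of the limiting resolvent together with $P_n\to\mathrm{id}_{\Hilbert_\infty}$ strongly.
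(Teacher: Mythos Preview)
The paper does not give its own proof of this theorem: it is quoted verbatim as a result of Floricel--Ghorbanpour \cite{Floricel19} and used as a black box in the proof of Theorem~(\ref{main-spec-thm}). So there is nothing in the paper to compare against.

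That said, your argument is correct and is essentially the standard route to such a statement. The key step---showing that the error $R_n=(\Dirac_n-z)^{-1}P_n-(\Dirac_\infty-z)^{-1}$ vanishes on $\Hilbert_n$ from both sides, hence equals $-(\mathrm{id}-P_n)(\Dirac_\infty-z)^{-1}(\mathrm{id}-P_n)$---is exactly what turns strong resolvent convergence into norm convergence via the compact-resolvent hypothesis, and you identified this correctly. The propagation to all of $C_0(\R)$ by Stone--Weierstrass and a $2\varepsilon$ estimate is routine. One small cosmetic point: when you invoke ``polynomials with zero constant term'' in the resolvents, you are implicitly using that the $\ast$-algebra generated by $\{r_z\}$ inside $C_0(\R)$ automatically has no unit, which is fine but could be said explicitly.
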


This section is concerned with the question: if a spectral triple is an inductive limit of spectral triples, then what additional assumptions should be made to get a more geometric convergence, specifically in the sense of the spectral propinquity? In order to make sense of this question, we will work with metric spectral triples, which give rise to {\qcms s}, and lie within the realm of noncommutative metric geometry.

\subsection{Main result}

The notion of inductive limit of spectral triples is simpler to define than the spectral propinquity but only applies to rather narrow examples --- it is not applicable to fuzzy and quantum tori \cite{Latremoliere21a} or the fractals in \cite{Latremoliere20a}. It is certainly interesting to wonder how much metric information from the spectral triples are continuous with respect to the inductive limit process. In this section, we establish a sufficient condition for the convergence, in the sense of the spectral propinquity, of a sequence of metric spectral triples which already converges to a metric spectral triple in the categorical sense. This sufficient condition is simply the existence of an appropriate bridge builder which is also a full quantum isometry. Thus, the main difficulty in establishing convergence for the spectral propinquity, in this context, reduces to proving metric convergence for the propinquity using adequate tunnels.

\begin{theorem}\label{main-spec-thm}
  Let $(\A_\infty,\Hilbert_\infty,\Dirac_\infty)$ be a metric spectral triple which is the inductive limit of a sequence of metric spectral triples $(\A_n,\Hilbert_n,\Dirac_n)$, in the sense of Definition (\ref{ind-limit-spectral-triple-def}). For each $n\in\Nbar$, let
  \begin{equation*}
    \dom{\Lip_n} \coloneqq \left\{a\in\sa{\A_n} : a\,\dom{\Dirac_n}\subseteq\dom{\Dirac_n} \text{ and }[\Dirac_n,a]\text{ is bounded} \right\}\text,
  \end{equation*}
  and  for all $a\in\dom{\Lip_n} $, define
  \begin{equation*}
   \quad \Lip_n(a) \coloneqq \opnorm{[\Dirac_n,a]}{}{\Hilbert_n} \text.
  \end{equation*}

  If there exists a full quantum isometry $\pi : (\A_\infty,\Lip_\infty) \rightarrow (\A_\infty,\Lip_\infty)$ which is also a bridge builder for $((\A_n,\Lip_n)_{n\in\N},(\A_\infty,\Lip_\infty))$, then
  \begin{equation*}
    \lim_{n\rightarrow\infty} \spectralpropinquity{}((\A_n,\Hilbert_n,\Dirac_n),(\A_\infty,\Hilbert_\infty,\Dirac_\infty)) = 0 \text.
  \end{equation*}
\end{theorem}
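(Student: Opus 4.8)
The plan is to bound $\spectralpropinquity{}$ from above by constructing, for an arbitrary fixed $\varepsilon>0$ and all sufficiently large $n$, a metrical tunnel $\tau_n$ from $\mcc{\A_n}{\Hilbert_n}{\Dirac_n}$ to $\mcc{\A_\infty}{\Hilbert_\infty}{\Dirac_\infty}$ with $\tunnelmagnitude{\tau_n}{\varepsilon}<\varepsilon$; letting $\varepsilon\to0$ then gives the conclusion through Definition (\ref{spectral-propinquity-def}). By Definition (\ref{magnitude-def}), $\tunnelmagnitude{\tau_n}{\varepsilon}$ is the maximum of the extent $\tunnelextent{\tau_n}$ and the $\varepsilon$-reach of $\tau_n$. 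Since the base algebra of every $\mcc{\A}{\Hilbert}{\Dirac}$ is $\C$ with the zero seminorm, Definition (\ref{mcc-extent-def}) shows $\tunnelextent{\tau_n}$ is just the extent of the underlying {\qcms} tunnel, which I will keep small exactly as in Proposition (\ref{bridge-builder-prop}) using that $\pi$ is a bridge builder. The $\varepsilon$-reach, which records the Dirac dynamics $t\mapsto\exp(it\Dirac)$, is where the inductive-limit hypothesis enters, through Theorem (\ref{Floricel-thm}).

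First I would fix $\delta\in(0,\varepsilon)$ and, following the proof of Proposition (\ref{bridge-builder-prop}), take for $n$ large the {\qcms} tunnel on $\A_\infty\oplus\A_n$ with seminorm $\TLip_n(a,b)=\max\{\Lip_\infty(a),\Lip_n(b),\delta^{-1}\norm{\pi(a)-b}{\A_\infty}\}$ and the two coordinate projections as quantum isometries; its extent is at most $\delta$. I would then upgrade this to a metrical tunnel by adjoining module data: using $\Hilbert_n\subseteq\Hilbert_\infty$ and $\Dirac_\infty|_{\dom{\Dirac_n}}=\Dirac_n$ from Definition (\ref{ind-limit-spectral-triple-def}), one equips a Hilbert module over $\A_\infty\oplus\A_n$ with a D-norm assembled from $\CDN_{\Dirac_\infty}$, $\CDN_{\Dirac_n}$ and the distance in $\Hilbert_\infty$, and takes the two coordinate-type maps as metrical quantum isometries onto $\mcc{\A_\infty}{\Hilbert_\infty}{\Dirac_\infty}$ and $\mcc{\A_n}{\Hilbert_n}{\Dirac_n}$. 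The hypothesis that $\pi$ is a \emph{full} quantum isometry, rather than merely a bridge builder, is what makes this construction coherent: it allows matching the $\A_\infty$- and $\A_n$-module structures through $\pi$ while keeping $\Lip_\infty\circ\pi=\Lip_\infty$, so that the module maps are genuinely compatible with the quantum isometries and the D-norm restricts as a quotient norm. The remaining axioms of Definition (\ref{mcc-def}) (the Leibniz-type and compactness conditions) are inherited from the two metrical C*-correspondences, and by Definition (\ref{mcc-extent-def}) the extent of the metrical tunnel $\tau_n$ so obtained is still at most $\delta<\varepsilon$.

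It then remains to bound the $\varepsilon$-reach of $\tau_n$. The unit ball of $\CDN_{\Dirac_\infty}$ is norm-compact by Definition (\ref{mcc-def}), so a finite-net argument reduces matters to a single $\xi\in\dom{\Dirac_\infty}$ with $\CDN_{\Dirac_\infty}(\xi)\leq1$. Since $\Dirac_\infty$ is the closure of $\bigcup_n\Dirac_n$, the spaces $\dom{\Dirac_n}$ are graph-norm dense in $\dom{\Dirac_\infty}$, so one can choose $\eta_n\in\dom{\Dirac_n}$ with $\CDN_{\Dirac_\infty}(\xi-\eta_n)\to0$ and, after a harmless rescaling, $\CDN_{\Dirac_n}(\eta_n)=\CDN_{\Dirac_\infty}(\eta_n)\leq1$; this $\eta_n$ is the element realizing the infimum in the definition of the $\varepsilon$-reach. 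Then $\norm{\exp(it\Dirac_\infty)\xi-\exp(it\Dirac_\infty)\eta_n}{\Hilbert_\infty}=\norm{\xi-\eta_n}{\Hilbert_\infty}\to0$ uniformly in $t$, and — since $\TDN(\omega)\leq1$ confines $\Pi_1(\omega)$ and $\Pi_2(\omega)$ to bounded sets — the $\varepsilon$-reach will tend to $0$ once one shows $\sup_{0\leq t\leq 1/\varepsilon}\norm{\exp(it\Dirac_n)\eta_n-P_n\exp(it\Dirac_\infty)\eta_n}{\Hilbert_n}\to0$, where $P_n$ is the orthogonal projection of $\Hilbert_\infty$ onto $\Hilbert_n$. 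This is the main obstacle: the unitary groups $\exp(it\Dirac_n)$ are \emph{not} restrictions of $\exp(it\Dirac_\infty)$, so one must upgrade the norm convergence $P_nf(\Dirac_n)P_n\to f(\Dirac_\infty)$ for $f\in C_0(\R)$ from Theorem (\ref{Floricel-thm}) into convergence of the unitary groups, uniformly for $t\in[0,1/\varepsilon]$, on the bounded-energy vectors $\eta_n$, and reconcile it with the projections $P_n$ — a Trotter--Kato type argument, exploiting that $\CDN_{\Dirac_\infty}(\eta_n)\leq1$ bounds $\norm{\Dirac_n\eta_n}{\Hilbert_n}$ and $\norm{\Dirac_\infty\eta_n}{\Hilbert_\infty}$ uniformly in $n$ and that $\eta_n\in\Hilbert_n$ with $\Dirac_\infty|_{\dom{\Dirac_n}}=\Dirac_n$ permits passage between $\Hilbert_n$ and $\Hilbert_\infty$. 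Granting this, $\tunnelmagnitude{\tau_n}{\varepsilon}<\varepsilon$ for all large $n$, and the theorem follows.
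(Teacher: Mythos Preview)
Your overall architecture matches the paper's: build the metrical tunnel on $\module{M}_n=\Hilbert_\infty\oplus\Hilbert_n$ over $\D_n=\A_\infty\oplus\A_n$, with D-norm $\TDN_n(\xi,\eta)=\max\{\CDN_\infty(\xi),\CDN_n(\eta),\tilde\varepsilon^{-1}\norm{\xi-\eta}{\Hilbert_\infty}\}$ and action $(a,b)\triangleleft(\xi,\eta)=(\pi(a)\xi,b\eta)$; the role you assign to the full-isometry hypothesis is exactly how the paper uses it (the quantum isometry on the $\A_\infty$-side becomes $\pi\circ\psi_n$). One omission: the base of the metrical tunnel is $\C\oplus\C$ with seminorm $\QLip(z,w)=\tilde\varepsilon^{-1}|z-w|$, not $\C$, so by Definition (\ref{mcc-extent-def}) the extent of $\Upsilon_n$ is the maximum of the qcms-tunnel extent and $\tilde\varepsilon$, not merely the former; this is harmless but should be stated.

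Your diagnosis of the ``main obstacle'' is incorrect, however. The inductive-limit hypothesis forces each $\Hilbert_n$ to \emph{reduce} $\Dirac_\infty$: from $S\Dirac_n=\Dirac_{n+1}S$ in Definition (\ref{iso-isomorphism-spectral-triple-def}) and self-adjointness one gets $\Dirac_nS^\ast=S^\ast\Dirac_{n+1}$, hence $P_n$ commutes with every $\Dirac_m$ ($m\geq n$) on $\dom{\Dirac_m}$, and this passes to the closure $\Dirac_\infty$. Consequently $\exp(it\Dirac_\infty)$ restricted to $\Hilbert_n$ equals $\exp(it\Dirac_n)$ exactly, and for $\eta_n\in\Hilbert_n$ your quantity $\sup_t\norm{\exp(it\Dirac_n)\eta_n-P_n\exp(it\Dirac_\infty)\eta_n}{\Hilbert_n}$ is identically zero. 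No Trotter--Kato argument is needed. The paper records precisely ``$U_n^t\xi=U_\infty^t\xi$'' for $\xi\in\Hilbert_n$ and uses it to dispatch the $n\to\infty$ half of the reach in one line.

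With that false obstacle removed, your route for the $\infty\to n$ half --- graph-norm approximate $\xi$ by $\eta_n\in\dom{\Dirac_n}$, rescale so $\CDN_n(\eta_n)\leq1$, then use $U_n^t\eta_n=U_\infty^t\eta_n$ --- actually works directly: with $\TDN_n(\omega)\leq1$ one gets $|\inner{U_\infty^t\xi}{\Pi_n\omega}{\Hilbert_\infty}-\inner{U_n^t\eta_n}{\Theta_n\omega}{\Hilbert_n}|\leq\norm{\xi-\eta_n}{\Hilbert_\infty}+\tilde\varepsilon$, uniformly in $t$, and compactness of the $\CDN_\infty$-unit ball makes this uniform in $\xi$ via the finite net you mention. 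The paper takes a more elaborate route here: it sets $\eta=\nu_k(\Dirac_n)P_n\xi$ and invokes Theorem (\ref{Floricel-thm}) applied to $u^t\nu_k\in C_0(\R)$, together with a finite net $F'\subseteq[0,1/\varepsilon]$ in time and Lipschitz-in-$t$ control from $\CDN\leq1$, to bound $\norm{U_n^t\nu_k(\Dirac_n)P_n\xi-U_\infty^t\xi}{\Hilbert_\infty}$. So your outlined argument, once the nonexistent obstacle is dropped, is in fact more direct than the paper's on this point and does not require Theorem (\ref{Floricel-thm}) in the reach estimate at all.
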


\begin{proof}
  Fix $\varepsilon > 0$. By Proposition (\ref{bridge-builder-prop}), the sequence $(\A_n,\Lip_n)_{n\in\N}$ converges to $(\A_\infty,\Lip_\infty)$ for the propinquity. More specifically, set, for convenience, $\tilde\varepsilon=\frac{\varepsilon}{2} > 0$. Let $N_\pi\in\N$ be given so that, for all $n\geq N_\pi$, we have:
  \begin{itemize}
  	\item $\forall a \in \dom{\Lip_\infty} \quad \exists b \in \dom{\Lip_n} :\quad \Lip_n(b)\leq\Lip_\infty(a) \text{ and }\norm{\pi(a)-b}{\A_\infty} < \tilde{\varepsilon}\Lip_\infty(a)$,	
\item $\forall b \in \dom{\Lip_n} \quad \exists a \in \dom{\Lip_\infty} :\quad \Lip_\infty(a)\leq\Lip_n(b)	 \text{ and }\norm{\pi(a)-b}{\A_\infty} < \tilde{\varepsilon}\Lip_n(b)$.
  \end{itemize}

  For each $n\in\N$, we constructed in Proposition (\ref{bridge-builder-prop}) a tunnel $\tau_n = (\D_n,\TLip_n,\psi_n,\theta_n)$ with $\D_n = \A_\infty\oplus\A_n$, and for all $(a,b) \in \dom{\Lip_\infty}\oplus\dom{\Lip_n}$,
  \begin{equation*}
    \TLip_n(a,b) \coloneqq \max\left\{ \Lip_\infty(a),\Lip_n(b),\frac{1}{\tilde{\varepsilon}}\norm{\pi(a)-b}{\A_\infty}\right\}\text,
  \end{equation*}
  while $\psi_n : (a,b) \in \D_n\mapsto a$, $\theta_n:(a,b)\in\D_n\mapsto b$. We proved that $\tunnelextent{\tau_n}<\tilde{\varepsilon}$. It is immediate, since $\pi$ is a full quantum isometry, that $\tau'_n \coloneqq(\D_n,\TLip_n,\pi\circ\psi_n,\theta_n)$ is also a tunnel with the same extent as $\tau_n$.

  \medskip

  For each $n\in\Nbar$ and for all $\xi \in \dom{\Dirac_n}$, we define 
  \begin{equation*}
  \CDN_n(\xi) \coloneqq \norm{\xi}{\Hilbert_n} + \norm{\Dirac_n\xi}{\Hilbert_n}\text,
  \end{equation*}
   following Expression (\ref{Dirac-D-norm-eq}).
  
  Now, since $\CDN_\infty$ is a D-norm, the set $X_\infty = \left\{ \xi \in \dom{\Dirac_\infty} : \CDN_\infty(\xi) \leq 1 \right\}$ is compact in $\Hilbert_\infty$. Thus, there exists a finite subset $F\subseteq X_\infty$ of $X_\infty$ such that $\Haus{\Hilbert_\infty}(X_\infty,F)<\frac{\tilde{\varepsilon}}{3}$. As $\Dirac_\infty$ is the closure of an operator on $\bigcup_{n\in\N}\Hilbert_n$ by \cite{Floricel19}, for any $\xi \in F$, there exists a sequence $(\xi_n)_{n\in\N}$, with $\xi_n\in\bigcup_{j\in\N}\Hilbert_j$ for all $n\in\N$, such that $\lim_{n\rightarrow\infty}\xi_n = \xi$, and $\lim_{n\rightarrow\infty}\Dirac_\infty\xi_n = \Dirac_\infty\xi$. Since $F$ is finite, there exists $N_F\in\N$ such that if $n\geq N_F$ and $\xi \in F$, then $\norm{\xi-\xi_n}{\Hilbert_\infty} < \frac{\tilde{\varepsilon}}{3}$ and $\norm{\Dirac_\infty\xi-\Dirac_\infty\xi_n}{\Hilbert_\infty} < \frac{\tilde{\varepsilon}}{3}$. Again by Definition (\ref{ind-limit-spectral-triple-def}), we also have $\Dirac_\infty \xi_n = \Dirac_n \xi_n$.

  Fix $n\in\N, n \geq N\coloneqq \max\{N_\pi,N_F\}$. Let $\module{M}_n \coloneqq \Hilbert_\infty \oplus \Hilbert_n$, seen as a $\D_n$-$(C\oplus\C)$ C*-correspondence, with the C*-correspondence structure:
  \begin{equation*}
    \forall (a,b) \in \D_n \quad \forall (\xi,\eta) \in \module{M}_n \quad (a,b)\triangleleft(\xi,\eta) \coloneqq (\pi(a)\xi,b\eta)\text,
  \end{equation*}
  and
  \begin{equation*}
    \forall (\xi,\eta),(\xi',\eta') \in \module{M}_n \quad \inner{(\xi,\xi')}{(\eta,\eta')}{n} \coloneqq \left( \inner{\xi}{\xi'}{\Hilbert_\infty}, \inner{\eta}{\eta'}{\Hilbert_n} \right) \in \C\oplus\C \text{,}
  \end{equation*}
  while
  \begin{equation*}
    \forall (t,s) \in \C\oplus\C \quad \forall (\xi,\eta) \in \module{M}_n \quad (\xi,\eta)\cdot(t,s) \coloneqq (t\xi,s\eta)\text.
  \end{equation*}
  We note that here, $\C^2$ is the C*-algebra of $\C$-valued functions over a two points set, and in particular, the norm of $(z,w)\in\C^2$ is $\max\{|z|,|w|\}$.
  
  We then define, for all $(\xi,\eta)\in\dom{\Dirac_\infty}\oplus\dom{\Dirac_n}$:
  \begin{equation*}
    \TDN_n(\xi,\eta) \coloneqq \max\left\{ \CDN_\infty(\xi), \CDN_n(\eta), \frac{1}{\tilde{\varepsilon}} \norm{\xi-\eta}{\Hilbert_\infty} \right\} \text,
  \end{equation*}
  while we also set
  \begin{equation*}
    \QLip : (z,w) \in \C\oplus \C \mapsto \frac{1}{\tilde{\varepsilon}}|z-w| \text.
  \end{equation*}

  It is immediate to see that $\QLip$ is a Lipschitz seminorm on $\C\oplus\C$ (it is, in fact, the Lipschitz seminorm for the metric on the two point set which places these two points exactly $\tilde{\varepsilon}$ apart).

  Now, we check that $\TDN_n$ is a $D$-norm. Of course, for all $(\xi,\eta)\in\module{M}_n$:
  \begin{equation*}
    \TDN_n(\xi,\eta)\geq\max\{\CDN_\infty(\xi),\CDN_n(\eta)\} \geq \max\left\{\norm{\xi}{\Hilbert_\infty},\norm{\eta}{\Hilbert_n}\right\} = \norm{(\xi,\eta)}{\module{M}_n}\text.
  \end{equation*}
  
  We observe that
  \begin{equation*}
  \begin{split}
    \{(\xi,\eta)\in\module{M}_n:\TDN_n(\xi,\eta)\leq 1\} &\subseteq \\
    \{\xi\in\dom{\Dirac_\infty}:&\CDN_\infty(\xi)\leq 1\}\times \{\eta\in\dom{\Dirac_n}:\CDN_n(\eta)\leq 1\}\text,
  \end{split}
\end{equation*}
  the latter set being compact as a product of two compact sets -- since $\CDN_n$ and $\CDN_\infty$ are indeed D-norms. Since in addition, $\TDN_n$ is lower semicontinuous over $\module{M}_n$ as the maximum of three lower semicontinuous functions over this space, the unit ball of $\TDN_n$ is indeed closed, hence compact, in $\module{M}_n$ (which is complete). 
  We now check the Leibniz inequalities. If $(a,b)\in\dom{\TLip_n}$ and $(\xi,\eta)\in\dom{\TDN_n}$, then we compute:
  \begin{align*}
    \norm{(a,b)\triangleleft(\xi,\eta)}{\Hilbert_\infty}
    &=\norm{\pi(a)\xi-b\eta}{\Hilbert_\infty} \\
    &\leq \norm{\pi(a)-b}{\A_\infty}\norm{\xi}{\Hilbert_\infty} + \norm{b}{\A_\infty} \norm{\xi-\eta}{\Hilbert_\infty} \\
    &\leq \tilde{\varepsilon}\TLip_n(a,b) \CDN_n(\xi) + \norm{(a,b)}{\D_n} \tilde{\varepsilon} \TDN_n(\xi,\eta) \\
    &\leq \tilde{\varepsilon}\left(\TLip_n(a,b) + \norm{(a,b)}{\D_n}\right)\TDN_n(\xi,\eta) \text.
  \end{align*}
  From this, it follows that for all $(a,b)\in\dom{\TLip_n}$ and for all $(\xi,\eta)\in\dom{\TDN_n}$,
  \begin{equation*}
    \TDN_n((a,b)\triangleleft(\xi,\eta)) \leq \left(\TLip_n(a,b) + \norm{(a,b)}{\D_n}\right)\TDN_n(\xi,\eta) \text.
  \end{equation*}
  
  On the other hand, if $(\xi,\eta),(\xi',\eta')\in\dom{\TDN_n}$, we have:
  \begin{align*}
    \QLip(\inner{(\xi,\eta)}{(\xi',\eta')}{\module{M}_n})
    &= \frac{1}{\tilde{\varepsilon}}\left|\inner{\xi}{\xi'}{\Hilbert_\infty} - \inner{\eta}{\eta'}{\Hilbert_\infty}\right| \\
    &\leq \frac{1}{\tilde{\varepsilon}}\left( \left|\inner{\xi-\eta}{\xi'}{\Hilbert_\infty}\right| + \left|\inner{\eta}{\xi'-\eta'}{\Hilbert_\infty}\right| \right) \\
    &\leq \frac{1}{\tilde{\varepsilon}}\left( \norm{\xi-\eta}{\Hilbert_\infty} \norm{\xi'}{\Hilbert_\infty} + \norm{\eta}{\Hilbert_\infty} \norm{\xi'-\eta'}{\Hilbert_\infty} \right) \\
    &\leq \TDN_n(\xi,\eta)\norm{\xi'}{\Hilbert_\infty} + \norm{\eta}{\Hilbert_\infty}\TDN_n(\xi',\eta') \\
    &\leq 2 \TDN_n(\xi,\eta)\TDN_n(\xi',\eta') \text.
  \end{align*}

  \medskip
  
  We now define the maps: 
  
  \begin{equation*} \Pi_n:(\xi,\eta)\in\module{M}_n\mapsto \xi \in \Hilbert_\infty, \text{ and }\Theta_n : (\xi,\eta)\in\module{M}_n \mapsto \eta\in\Hilbert_n.\end{equation*} Our goal is to show that
  \begin{equation*}
    \Upsilon_n \coloneqq \left( \mathds{M}_n, (\Pi_n,\pi\circ\psi_n), (\Theta_n,\theta_n) \right) \text{ where }\mathds{M}_n \coloneqq (\module{M}_n,\TDN_n,\D_n,\TLip_n,\C\oplus\C,\QLip)
  \end{equation*}
  is a metrical tunnel, using Definition (\ref{mcc-tunnel-def}).

  By construction, $\Pi_n(a\cdot\xi,b\cdot\eta) = \pi(a)\xi = \pi\circ\psi_n(a,b)\Pi_n(\xi,\eta)$ and $\Theta_n(a\cdot \xi, b\cdot \eta) = b\eta = \theta_n(a,b)\Theta_n(\xi,\eta)$, for all $(a,b)\in\D_n$ and $(\xi,\eta)\in\module{M}_n$.
  
  Now, let $\xi \in \Hilbert_\infty$ with $\CDN_\infty(\xi) = 1$. By construction of $F$, there exists $\xi' \in F$ such that $\norm{\xi-\xi'}{\Hilbert_\infty}<\frac{\tilde{\varepsilon}}{3}$. By our choice of $N$, there exists $\eta (=\xi'_n) \in \Hilbert_n$ such that $\CDN_n(\eta)\leq 1+\frac{\tilde{\varepsilon}}{3}$ and $\norm{\xi'-\eta}{\Hilbert_\infty}<\frac{\tilde{\varepsilon}}{3}$. Let $\chi = \frac{1}{1+\frac{\tilde{\varepsilon}}{3}}\eta \in \Hilbert_n$, so that $\CDN_n(\chi)\leq 1$. Moreover,
  \begin{align*}
    \norm{\xi-\chi}{\Hilbert_\infty}
    &\leq \norm{\xi-\eta}{\Hilbert_\infty} + \frac{\frac{\tilde{\varepsilon}}{3}}{1+\frac{\tilde{\varepsilon}}{3}}\norm{\eta}{\Hilbert_\infty} \\
    &\leq \norm{\xi-\eta}{\Hilbert_\infty} + \frac{\frac{\tilde{\varepsilon}}{3}}{1+\frac{\tilde{\varepsilon}}{3}}\CDN_n(\eta) \\
    &\leq \norm{\xi-\xi'}{\Hilbert_\infty} + \norm{\xi'-\eta}{\Hilbert_\infty} + \frac{\tilde{\varepsilon}}{3} \\
    &< \tilde{\varepsilon} \text.
  \end{align*}
  Thus $\TDN_n(\xi,\chi) = 1$, and therefore, $(\Pi_n,\pi\circ\psi_n)$ is indeed a metrical quantum isometry.

  Now, let $\eta\in\Hilbert_n$. By construction, $\Dirac_\infty\eta=\Dirac_n\eta$, so $\CDN_\infty(\eta)=\CDN_n(\eta)$. Therefore, $\TDN_n(\eta,\eta) = \CDN_n(\eta)$. Again, we conclude that $(\Theta_n,\theta_n)$ is a metrical quantum isometry as well.

  Therefore, $\Upsilon_n$ is a metrical tunnel. It is immediate, of course, that the canonical surjections from $\C\oplus\C$ to $\C$ are quantum isometries --- the only Lipschitz seminorm on $\C$ being the $0$ function. So $\Upsilon_n$ is a metrical tunnel.

  \medskip
  
  We now compute the extent of $\Upsilon_n$. It is, by Definition (\ref{mcc-extent-def}), the maximum of the extent of the tunnel $\tau'_n$, which is at most $\tilde{\varepsilon}$, and the extent of the tunnel $(\C,0) \xleftarrow{} (\C\oplus\C,\QLip) \xrightarrow{} (\C,0)$, which is immediately computed to be $\tilde{\varepsilon}$. So the extent of $\Upsilon_n$ is $\tilde{\varepsilon}$ as well.

  Therefore, for all $n\geq N$, we have
  \begin{equation*}
    \dmetpropinquity{}((\Hilbert_n,\CDN_n,\A_n,\Lip_n,\C,0),(\Hilbert_\infty,\CDN_\infty,\A_\infty,\Lip_\infty,\C,0)) \leq \tunnelextent{\Upsilon_n} = \tilde{\varepsilon} < \varepsilon \text,
  \end{equation*}
  and therefore,
  \begin{equation*}
    \lim_{n\rightarrow\infty}\dmetpropinquity{}((\Hilbert_n,\CDN_n,\A_n,\Lip_n,\C,0),(\Hilbert_\infty,\CDN_\infty.\A_\infty,\Lip_\infty,\C,0)) = 0\text.
  \end{equation*}

  \medskip
  
  It remains to compute an upper bound for the $\varepsilon$-reach of our tunnels $\Upsilon_n$ (see Definition \ref{magnitude-def}). We will once again use our finite set $F$ with $\Haus{\Hilbert_\infty}(F,X_\infty) < \frac{\tilde{\varepsilon}}{3}$ where $X_\infty$ is the closed unit ball of $\CDN_\infty$.
  
  Now, let $(v_k)_{k\in\N}$ be a sequence of continuous functions on $\R$ vanishing at $\infty$, valued in $[0,1]$, and converging pointwise to $1$ over $\R$. Therefore, $(v_k(\Dirac_\infty))_{k\in\N}$ converges to $\Dirac_\infty$ in the strong operator topology. Since $F$ is finite, there exists $k \in \N$ such that,  for all $\xi\in F $
  \begin{equation}\label{F-eq}
   \quad \norm{v_k(\Dirac_\infty)\xi-\xi}{\Hilbert_\infty} < \frac{\tilde{\varepsilon}}{12} \text.
  \end{equation}

  We identify, from now on, $\Dirac_n$ with the linear operator on $\Hilbert_\infty$ whose restriction to $\Hilbert_n$ is $\Dirac_n$, and whose restriction to $\Hilbert_n^\perp$ is $0$; thus $\dom{\Dirac_n}$ is replaced with $\dom{\Dirac_n}\oplus\Hilbert_n^\perp$. We denote by $P_n$ the orthogonal projection of $\Hilbert_\infty$ onto $\Hilbert_n$, so that $P_n\Dirac_n = \Dirac_nP_n=\Dirac_n$ on $\dom{\Dirac_n}$.
  
  For each $t \in [0,\infty)$, let $u^t: s \in \R \mapsto \exp(its)$, and for each $n\in\Nbar$, we denote $u^t(\Dirac_n)$ by $U_n^t$.

  Fix $t\in\R$. The function $u^t v_k$ is continuous over $\R$ and vanishes at infinity. By Theorem (\ref{Floricel-thm}), since $(\A_\infty,\Hilbert_\infty,\Dirac_\infty)$ is a spectral triple, and the inductive limit of the sequence $(\A_n,\Hilbert_n,\Dirac_n)_{n\in\N}$ of spectral triples, the sequence of operators $(P_nu^t v_k (\Dirac_n)P_n)_{n\in\N}$ converges in norm to $u^t v_k(\Dirac_\infty)$. Moreover, $u^tv_k(\Dirac_n)P_n = P_nu^tv_k(\Dirac_n)P_n$ for all $n\in\N$ by construction.
  
  Let $F'$ be a finite subset of the compact set $\left[0,\frac{1}{\varepsilon}\right]$ such that $\Haus{\R}(F',\left[0,\frac{1}{\varepsilon}\right]) < \frac{\tilde{\varepsilon}}{12}$. Since $F'$ is finite, there exists $N_\nu \in \N$ such that if $n\geq N_\nu$, then for all $t \in F' $:
  \begin{equation}\label{Fprime-eq}
     \quad \opnorm{U_n^t (v_k(\Dirac_n))P_n - U_\infty^t (v_k(\Dirac_\infty))}{}{\Hilbert_\infty} < \frac{\tilde{\varepsilon}}{12}\text.
  \end{equation}

  Let $n\in\Nbar$. Now, we note that if $\xi \in \dom{\CDN_n}$ with $\CDN_n(\xi)\leq 1$, then for all $s < t \in \R$:
  \begin{align*}
    \norm{U_n^t\xi - U_n^s\xi}{\Hilbert_n}
    &\leq \int_s^t \norm{\frac{d}{dr}U_n^r\xi}{\Hilbert_n} \, dr \\
    &\leq \int_s^t \norm{U_n^r\Dirac_n\xi}{\Hilbert_n} \, dr \\
    &\leq |s-t| \text.
  \end{align*}
  Thus, for all $s,t \in \R$ and $\xi \in \dom{\CDN_n}$ with $\CDN_n(\xi)\leq 1$, we have
  \begin{equation*}
    \norm{U_n^t\xi-U_n^s\xi}{\Hilbert_n} \leq |s-t| \text.
  \end{equation*}
 Now, let $n\geq N' \coloneqq \max\{N_\nu,N_F\}$. Since $\Dirac_n$ and $P_n$ commute, if $\xi \in X_\infty$, then $\CDN_n(\xi)\leq\CDN_\infty(\xi)$ and:
  \begin{align*}
    \CDN_n(\nu_k(\Dirac_n)P_n\xi)
    &=\norm{v_k(\Dirac_n)P_n \xi}{\Hilbert_\infty} + \norm{\Dirac_n v_k(\Dirac_n) P_n\xi}{\Hilbert_\infty} \\
    &\leq \norm{v_k}{C_0(\R)} \opnorm{P_n}{}{\Hilbert_\infty}\CDN_n(\xi) \leq 1 \text. 
  \end{align*}

  For all $\xi \in X_\infty$ and $t \in \left[0,\frac{1}{\tilde{\varepsilon}}\right]$, let $s \in F'$ and $\xi' \in F$ such that $|s-t|<\frac{\tilde{\varepsilon}}{12}$, $\norm{\xi-\xi'}{\Hilbert_\infty}<\frac{\tilde{\varepsilon}}{3}$, then:
  \begin{align*}
    \norm{U_n^t \nu_k(\Dirac_n) P_n \xi - U_\infty^t\xi}{\Hilbert_\infty}
    &\leq \underbracket[1pt]{\norm{U_n^t\nu_k(\Dirac_n) P_n\xi - U_n^s\nu_k(\Dirac_n)P_n\xi}{\Hilbert_\infty}}_{\leq |t-s|<\frac{\tilde{\varepsilon}}{12} \text{ since }\CDN_n(\nu_k(\Dirac_n)P_n\xi)\leq 1} \\
    &\quad + \underbracket[1pt]{\norm{U_n^s\nu_k(\Dirac_n)P_n \xi - U_\infty^s\nu_k(\Dirac_\infty)\xi}{\Hilbert_\infty}}_{\leq\opnorm{U_n^s\nu_k(\Dirac_n)P_n -U_\infty^s\nu_k(\Dirac_\infty)}{}{\Hilbert_\infty}<\frac{\tilde{\varepsilon}}{12} \text{ by Eq. (\ref{Fprime-eq})}} \\
    &\quad + \underbracket[1pt]{\norm{U_\infty^s\nu_k(\Dirac_\infty)(\xi-\xi')}{\Hilbert_\infty}}_{\leq\norm{\xi-\xi'}{\Hilbert_\infty}<\frac{\tilde{\varepsilon}}{3} \text{ by Eq. (\ref{F-eq})}} \\
    &\quad + \underbracket[1pt]{\norm{U_\infty^s \nu_k(\Dirac_\infty)\xi' - U_\infty^s \xi'}{\Hilbert_\infty}}_{\leq\norm{\nu_k(\Dirac_\infty)\xi'-\xi'}{\Hilbert_\infty}<\frac{\tilde{\varepsilon}}{12}} \\
    &\quad + \underbracket[1pt]{\norm{U_\infty^s\xi' - U_\infty^t \xi'}{\Hilbert_\infty}}_{\leq |s-t|<\frac{\tilde{\varepsilon}}{12}} \\
    &\quad + \underbracket[1pt]{\norm{U_\infty^t\xi' - U_\infty^t\xi}{\Hilbert_\infty}}_{\leq\norm{\xi-\xi'}{\Hilbert_\infty}<\frac{\tilde{\varepsilon}}{3}} \\
    &< \tilde{\varepsilon} \text.
  \end{align*}

  Let $\xi \in \dom{\Dirac_\infty}$ with $\CDN_\infty(\xi)\leq 1$. Let $n\geq N'$, and set $\eta = \nu_k(\Dirac_\infty)P_n\xi$. For all $t\in\left[0,\frac{1}{\tilde{\varepsilon}}\right]$, we have, $\eta \in \dom{\Dirac_n}$ and $\CDN_n(\eta)\leq 1$. Therefore,
  \begin{align*}
    \inf_{\substack{\eta\in\dom{\CDN_n} \\ \CDN_n(\eta)\leq 1}}&\sup_{\substack{\omega\in\dom{\TDN_n} \\ \TDN_n(\omega)\leq 1}}\left|\inner{U_n^t\eta}{\Theta_n(\omega)}{\Hilbert_n} - \inner{U_\infty^t\xi}{\Pi_n(\omega)}{\Hilbert_\infty}\right|
    \\ 
    &\leq \sup_{\substack{\omega\in\dom{\TDN_n} \\ \TDN_n(\omega)\leq 1}} \left|\inner{U_n^t\nu_k(\Dirac_n)P_n \xi}{\Theta_n(\omega)}{\Hilbert_n} - \inner{U_\infty^t \xi}{\Pi_n(\omega)}{\Hilbert_\infty}\right| \\
    &\leq \sup_{\substack{\omega\in\dom{\TDN_n} \\ \TDN_n(\omega)\leq 1}}\left(\norm{U_n^t\nu_k(\Dirac_n)\xi - U_\infty^t\xi}{\Hilbert_\infty}\norm{\omega}{\module{M}_n} + \norm{U_\infty^t\xi}{\Hilbert_\infty}\underbracket[1pt]{\norm{\Theta_n(\omega)-\Pi_n(\omega)}{\Hilbert_\infty}}_{<\tilde{\varepsilon} \text{ since }\TDN_n(\omega)\leq 1}\right) \\
    &< \tilde{\varepsilon} + \tilde{\varepsilon} = \varepsilon \text.
  \end{align*}

  Now, take $\xi \in \Hilbert_n$, with $\CDN_n(\xi)\leq 1$. By construction, $\norm{\Dirac_\infty \xi}{\Hilbert_\infty} = \norm{\Dirac_n \xi}{\Hilbert_n}$, and $U_n^t \xi = U_\infty^t \xi$. So for all $\xi \in \dom{\CDN_n}$ with $\CDN_n(\xi)\leq 1$, we have, for all $t\in\R$:
  \begin{multline*}
    \inf_{\substack{\eta\in\dom{\Dirac_\infty} \\ \CDN_\infty(\eta)\leq 1}} \sup_{\substack{\omega\in\dom{\TDN_n} \\ \TDN_n(\omega)\leq 1}} \left|\inner{U_\infty^t\eta}{\Pi_n(\omega)}{\Hilbert_\infty} - \inner{U_n^t\xi}{\Theta_n(\omega)}{\Hilbert_n}\right| \\
    \leq \left|\inner{U_\infty^t\xi}{\Pi_n(\omega)}{\Hilbert_\infty} - \inner{U_n^t\xi}{\Theta_n(\omega)}{\Hilbert_n}\right|  = 0 < \varepsilon
    \text.
  \end{multline*}

  Therefore, for all $n\geq \max\{N,N'\}$, the $\varepsilon$-reach of $\Upsilon_n$ is no more than $\varepsilon$, and thus the $\varepsilon$-magnitude $\tunnelmagnitude{\Upsilon_n}{\varepsilon}$ of $\Upsilon_n$ is no more than $\varepsilon$ (by Definition (\ref{magnitude-def})). Therefore  for all $n \geq N$:
  \begin{equation*}
    \spectralpropinquity{}((\A_n,\Hilbert_n,\Dirac_n),(\A_\infty,\Hilbert_\infty,\Dirac_\infty)) \leq \tunnelmagnitude{\Upsilon_n}{\varepsilon} < \varepsilon \text,
  \end{equation*}
  and thus
  \begin{equation*}
    \lim_{n\rightarrow\infty}  \spectralpropinquity{}((\A_n,\Hilbert_n,\Dirac_n),(\A_\infty,\Hilbert_\infty,\Dirac_\infty)) = 0\text,
  \end{equation*}
  as claimed.
\end{proof}

\begin{remark}
  A corollary of Theorem (\ref{main-spec-thm}) is that we obtain convergence for the bounded continuous functional calculus for the Dirac operators from the work in \cite{Latremoliere22}, which extends Theorem (\ref{Floricel-thm}).
\end{remark}

\section{Even Spectral Triples on Twisted Group $C^*$-algebras}\label{group-sec}

We now apply our results of the previous sections to the construction of  inductive limits of  spectral triples for the spectral propinquity  on twisted C*-algebras of discrete groups endowed with length functions. In particular we will prove in this section our third main theorem, Theorem (\ref{main-group-thm}).
Our approach introduces new metric spectral triples on certain twisted group C*-algebras which generalize the  
related, though distinct,  past constructions using length functions over discrete groups such as the ones in \cite{FarsiPacker22}. Our main applications would be  the construction of new spectral triples over noncommutative solenoids and some Bunce-Deddens algebras. In particular,  we shall prove that the noncommutative solenoids spectral triples  are limits of spectral triples over quantum 2-tori for the spectral propinquity. We will start with detailing in the next two subsections  some background material that will be used to state and prove our main result.

\subsection{Discrete Groups, Proper Length Functions, 2-Cocycles, and Classical Spectral Triples. }

Let $G_\infty$ be a discrete group, and let $\sigma$ be a $2$-cocycle over $G_\infty$. Let $\lambda$ be the left regular $\sigma$-projective representation of $G_\infty$ on $\ell^2(G_\infty)$, defined by, for all $g \in G_\infty $ and for all $\xi \in \ell^2(G_\infty)$:
\begin{equation*}
   \quad \lambda(g)\xi : h \in G_\infty \longmapsto \sigma(g,g^{-1}h)\xi(g^{-1}h) \text.
\end{equation*}
Of course, each operator $\lambda(g)$ is unitary for each $g \in G_\infty$. Let $C_{\mathrm{red}}^\ast(G_\infty,\sigma)$ be the reduced C*-algebra of $G_\infty$ twisted by $\sigma$, i.e. the C*-algebra of operators on $\ell^2(G_\infty)$ generated by $\left\{\lambda(g) : g \in G_\infty \right\}$. For any $f \in \ell^1(G_\infty)$, the operator $\lambda(f)$ on $\ell^2(G_\infty)$ is defined as $\sum_{g \in G_\infty}f(g)\lambda(g)$ --- it is easily checked that $\opnorm{\lambda(f)}{}{\ell^2(G_\infty)} \leq \norm{f}{\ell^1(f)}$. The reduced group C*-algebra $C_{\mathrm{red}}^\ast(G)$ is, in particular, $C_{\mathrm{red}}^\ast(G_\infty,1)$.

In \cite{Connes89}, Connes introduced spectral triples $(C^\ast_{\mathrm{red}}(G_\infty),\ell^2(G_\infty),M_{\mathds{L}})$ using any proper length function $\mathds{L}$ over $G_\infty$, where $M_{\mathds{L}}$ is the operator of multiplication by $\mathds{L}$, defined on its natural domain in the Hilbert space $\ell^2(G_\infty)$. Connes proved that $\opnorm{[M_{\mathds{L}},\lambda(g)]}{}{\ell^2(G_\infty)} = \mathds{L}(g)$ --- which immediately follows from the triangle inequality and the fact that $[M_{\mathds{L}},\lambda(g)]\delta_e \allowbreak = \mathds{L}(g)\sigma(g,1)\delta_g$, where, for all $g \in G_\infty$:
\begin{equation*}
   \quad \delta_g : h \in G_\infty \mapsto \begin{cases} 1 \text{ if $g=h$, } \\ 0 \text{ otherwise.} \end{cases}
\end{equation*}
It then follows that for the $\ast$-algebra $C_c(G_\infty)$ of $\C$-valued functions with finite support, we obtain the inequality,  for all $f \in C_c(G_\infty)$:
\begin{equation}\label{D-upper-bound-eq}
  \quad \opnorm{\left[M_{\mathds{L}},\lambda(g)\right]}{}{\ell^2(G_\infty)} \leq \sum_{g \in G_\infty} |f(g)| \mathds{L}(g) \text,
\end{equation}
since $\lambda(g)$ is unitary for all $g \in G_\infty$.

Note that by construction, for the multiplication operator by $\mathds{L}$ to have compact resolvent, the spectral projection of this operator on any compact interval must have finite rank. Thus, in particular, the set $\{ \delta_h \in \ell^2(G_\infty) : \mathds{L}(h) \leq r \}$ must be finite for all $r\geq 0$. In other words, all closed balls in $G_\infty$ for $\mathds{L}$ must be finite, i.e., $\mathds{L}$ must indeed be proper.

\medskip

However, natural length functions on $G_\infty$ may not be proper, or even give the discrete topology. An example of this situation is given when $G_\infty$ is the additive group $\left(\Z\left[\frac{1}{p}\right]\right)^2$ where:
\begin{equation*}
  \Z\left[\frac{1}{p}\right] \coloneqq \left\{ \frac{a}{p^n} : n \in \N, a \in \Z \right\} \text,
\end{equation*}
and where $p \in \N$ is prime. It is natural to regard $\Z\left[\frac{1}{p}\right]$ as a subgroup of $\Q$, and thus equip it with the induced length function from the usual absolute value on $\Q$ (see Figure (\ref{dyadic-Q-pic})). However, this length function is not proper --- and induces a non-discrete topology. We moreover note that $\Z\left[\frac{1}{p}\right]=\bigcup_{n\in\N}\frac{1}{p^n}\Z$, and we would like to capture this inductive limit structure metrically; while the sequence $\left(\frac{1}{p^n}\Z\right)_{n\in\N}$ converges to $\Z\left[\frac{1}{p}\right]$ for the Hausdorff distance induced by $|\cdot|$, we can not apply this observation directly to the associated twisted C*-algebras since $|\cdot|$ does not define a spectral triple using Connes' methods.

\medskip

Let us discuss this situation by returning to a general discrete group $G_\infty$ and some $2$-cocycle $\sigma$ on $G_\infty$. We now assume that we are given a strictly  increasing sequence $(G_n)_{n\in\N}$ of subgroups of $G_\infty$ such that $G_\infty = \bigcup_{n\in\N} G_n$ --- in fancier terms, $G_\infty$ is the inductive limit of the sequence of groups $(G_n)_{n\in\N}$, which we identify with a sequence of subgroups of $G_\infty$ from now on. We also identify $\sigma$ with its restriction to $G_n$ for all $n\in\N$.

We now have a conundrum. If we choose a proper length function $\mathds{L}$ on $G_\infty$, then, since $G_\infty = \bigcup_{n\in\N}G_n$ with $(G_n)_{n\in\N}$ increasing, any finite subset of $G_\infty$ is contained in some $G_N$ (and thus in all $G_n$ with $n\geq N$). This implies that $(G_n)_{n\in\N}$ converges to $G_\infty$ for the pointed Gromov-Hausdorff distance for proper metric space, where we always use $1$ as our base point, and the metrics are induced by $\mathds{L}$ (see \cite{Gromov81}). On the other hand, as soon as $G_\infty$ is infinite --- which is the only interesting case to consider when $G_\infty$ is the union of countably many groups, otherwise of course $G_\infty$ is just $G_n$ for $n$ large enough --- not only the diameter of $G_\infty$ is infinite --- it can not be a closed ball as these are finite --- but the subgroups $G_n$ are not close to $G_\infty$ for the Hausdorff distance induced by $\mathds{L}$ in general. So, we can define the spectral triples $(C^\ast_{\mathrm{red}}(G_n,\sigma),\ell^2(G_n),M_{\mathds{L}})$ as before since $\mathds{L}$ is proper, but in general, there is no apparent reason why $\opnorm{[M_{\mathds{L}},a]}{}{\ell^2(G_\infty)}$ is particularly close to $\opnorm{[M_{\mathds{L}},a]}{}{\ell^2(G_n)}$ for $a \in C^\ast_{\mathrm{red}}(G_n,\sigma)$.

\medskip

On the other hand, there may be length functions on $G_\infty$ for which $(G_n)_{n\in\N}$ does converge in the Hausdorff distance for these length functions, but these length functions are not proper whenever $G_\infty$ is infinite. We are thus led to build a new type of spectral triples which combine these two apparently opposite situations: one where we do not know how to build a spectral triple using a non-proper length with otherwise good metric properties for our purpose, and one with a proper length function which has bad metric property. The following construction is inspired, but different from \cite{FarsiPacker22}, where a proper length function is constructed as a sum of a non-proper length function with a $p$-norm.

\subsection{The Spectral Triples}
\label{spectral-triples-section}

We now define our new spectral triples on a particular type of twisted group C*-algebras, which are the subject of our main third theorem, Theorem (\ref{main-group-thm}), and its corollaries.

  From now on we assume that  $G_\infty$ is a discrete group  endowed with a $2$-cocycle $\sigma$ with values in $\T \coloneqq \{z\in\C:|z|=1\}$, and that $G_\infty$ is  the union of   a  strictly increasing sequence for inclusion,  $(G_n)$, of subgroups of $G_\infty$ such that $G_\infty = \bigcup_{n\in\N} G_n$. 

  We also assume that we are given a length function $\mathds{L}_H$ on $G_\infty$,  whose restriction to each $G_n$ is proper for each $n\in\N$, and with the property that
  \begin{equation}\label{L-H-eq}
    \lim_{n\rightarrow\infty}\Haus{\mathds{L}_H}(G_\infty,G_n) = 0 \text.
  \end{equation}
  In addition we require  that we are  given a strictly increasing unbounded function $\mathrm{scale} : \N \rightarrow[0,\infty)$, together with  $\mathds{F} : G_\infty \rightarrow [0,\infty)$ such that for all $g \notin G_0$:
  \begin{equation*}
    \mathds{F}(g) = \mathrm{scale}(\min\{ n \in \N : g \in G_n\}),
  \end{equation*}
  while $\mathds{F}$ restricted to $G_0$ satisfies:
  \begin{itemize}
  \item $\forall g \in G_0 \quad \mathds{F}(g) = \mathds{F}(g^{-1})$,
  \item $\forall g,h \in G_0 \quad \mathds{F}(gh) \leq \max\{\mathds{F}(g),\mathds{F}(h)\}$,
  \item $\forall g \in G_0 \quad \mathds{F}(g) \in [0,\mathrm{scale}(0)]$,
  \item $\mathds{F}(1) = 0$.
  \end{itemize}

 Clearly,  the above assumptions  provide us with many length functions on $G_\infty$ and $G_n$; we will use them in our spectral triples constructions.
 
One of our  main examples  for this section will be the noncommutative solenoids, whose fundamental components are  described below.
We will give more details on this example later in this work. 
 
 \begin{example}\label{nc-solenoid-ex}
 	Let $d \geq 2$ and $p$ a prime number. Let $G_\infty = \left(\Z\left[\frac{1}{p}\right]\right)^d$, and let $G_n = \left(\frac{1}{p^n}\Z\right)^d$ for all $n\in\N$. We note that $G_\infty = \bigcup_{n\in\N}G_n$. We can then choose $\mathds{L}_H$ to be the restriction of any norm on $\R^d$, and  $\mathrm{scale}:n \in \N \rightarrow p^n \in [0,\infty)$, so that:
 	\begin{equation*}
 	\mathds{F} : g\in G_\infty\mapsto \mathrm{scale}\left(\min\left\{n\in\N : g \in \left(\frac{1}{p^n}\Z\right)^d \right\}\right)\text.
 	\end{equation*}

 	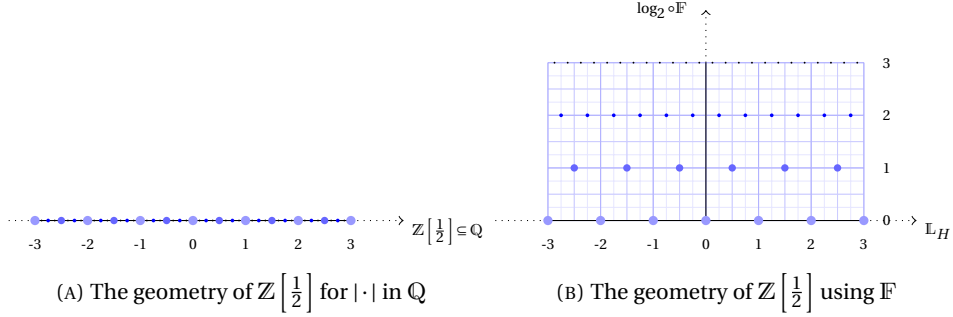
\begin{figure}
 		\centering
 		\subcaptionbox{%
 			The geometry of $\Z\left[\frac{1}{2}\right]$ for $|\cdot|$ in $\Q$}[0.49\textwidth]{
 			\begin{tikzpicture}[scale=0.7]
 			\draw (-3,0)--(3,0);
 			\draw[dotted] (-3.5,0)--(-3,0);
 			\draw[dotted,->] (3,0)--(4,0) node[below=4pt,right]{\tiny $\Z\left[\frac{1}{2}\right]\subseteq \Q$};
 			\foreach \x in {-2.875,-2.75,...,2.875}
 			\fill (\x,0) circle [radius=0.5pt];
 			\foreach \x in {-2.75,-2.5,...,2.75}
 			\fill[blue] (\x,0) circle [radius=1pt];
 			\foreach \x in {-2.5,-2,...,2.5}
 			\fill[blue!60!white] (\x,0) circle [radius=2pt];
 			\foreach \x in {-3,-2,...,3}
 			{
 				\fill[blue!40!white] (\x,0) circle [radius=2.5pt];
 				\draw (\x,0) node[below=4pt]{\tiny \x};
 			}
 			\end{tikzpicture}
 		}%
 		\hfill
 		\subcaptionbox{%
 			The geometry of $\Z\left[\frac{1}{2}\right]$ using $\mathds{F}$}[0.49\textwidth]{
 			\begin{tikzpicture}[scale=0.7]
 			\draw[very thin, blue!10,step=0.25] (-3,0) grid (3,3);
 			\draw[very thin, blue!20,step=0.5] (-3,0) grid (3,3);
 			\draw[thin,blue!30] (-3,0) grid (3,3);
 			
 			\draw (-3,0)--(3,0);
 			\draw[dotted] (-4,0)--(-3,0);
 			\draw[dotted,->] (3,0)--(4,0) node[below=4pt,right]{\tiny $\mathds{L}_H$};
 			\draw (0,0)--(0,3);
 			\draw[dotted,->] (0,3)--(0,4) node[left=4pt]{\tiny $\log_2\circ\mathds{F}$};
 			
 			\foreach \x in {0,1,2,3}
 			\draw (3,\x) node[right=4pt]{\tiny \x};
 			
 			\foreach \x in {-3,-2,...,3}
 			{
 				\fill[blue!40!white] (\x,0) circle [radius=2.5pt];
 				\draw (\x,0) node[below=4pt]{\tiny \x};
 			}
 			
 			\foreach \x in {-2.5,-1.5,...,2.5}
 			{
 				\fill[blue!60!white] (\x,1) circle [radius=2pt];
 			}
 			
 			\foreach \x in {-2.75,-1.75,...,2.75}
 			{
 				\fill[blue] (\x,2) circle [radius=1pt];
 				\fill[blue] (\x+0.5,2) circle [radius=1pt];
 			}
 			
 			\foreach \x in {-2.875,-1.875,...,2.875}
 			{
 				\fill (\x,3) circle [radius=0.5pt];
 				\fill (\x+0.25,3) circle [radius=0.5pt];
 				\fill (\x+0.5,3) circle [radius=0.5pt];
 				\fill (\x+0.75,3) circle [radius=0.5pt];
 			}
 			\end{tikzpicture}
 		}
 		
 		\caption{The geometry of $\Z\left[\frac{1}{2}\right]$}\label{dyadic-Q-pic}
 	\end{figure}
 	
 \end{example}

  Now, for any function $f : G_n \rightarrow \C$, we denote by $M_f$ the operator of multiplication by $f$ on the subspace:
  \begin{equation*}
    \dom{M_f} \coloneqq \left\{ \xi \in \ell^2(G_n) : (h\in G_n \mapsto f(h)\xi(h)) \in \ell^2(G_n) \right\}
  \end{equation*}
  of $\ell^2(G_n)$. Of course, $M_f$ is bounded by $\norm{f}{C(G_n)}$ if $f$ is bounded, and unbounded otherwise; nonetheless $\dom{M_f}$ always contains $C_c(G_n)$ and thus is always dense in $\ell^2(G_n)$.

  Let $E$ be a finite dimensional Hilbert space with inner product $\inner{\cdot}{\cdot}{E}$ and $\dim E \in 2\N\setminus\{0\}$, and let $\mathfrak{c}$ be a $\ast$-representation of the Clifford algebra of $\C^2$ on $E$. Let $\gamma_1 = \mathfrak{c}\left(\begin{pmatrix} 1 \\ 0 \end{pmatrix}\right)$ and $\gamma_2 = \mathfrak{c}\left(\begin{pmatrix} 0 \\ 1 \end{pmatrix}\right)$. For our purpose, we record that  for all $j,k \in \{1,2\}$:
\begin{equation*}
  \quad \gamma_j\gamma_k + \gamma_k \gamma_j = \begin{cases} 2 \text{ if $j=k$,} \\ 0 \text{ otherwise.}\end{cases} \text.
\end{equation*}

\begin{remark}
  There is no particular reason to restrict ourselves  to $E = \C^2$, though it is the natural choice. In this case, we can choose the usual Weyl matrices:
  \begin{equation*}
    \gamma_1 = \begin{pmatrix} 1 & 0 \\ 0 & -1 \end{pmatrix} \text{ and }\gamma_2 = \begin{pmatrix} 0 & 1 \\ 1 & 0 \end{pmatrix}
  \end{equation*}
  as the most natural choice for our construction.
\end{remark}

For each $n\in\Nbar \coloneqq \N\cup\{\infty\}$, we identify the Hilbert space $\ell^2(G_n,E)$ of $E$-valued functions over $G_n$ (with inner product $\inner{\xi}{\eta}{\ell^2(G_n,E)} \coloneqq \sum_{g \in G_n} \inner{\xi(g)}{\eta(g)}{E}$) with $\ell^2(G_n)\otimes E$. We then let
  \begin{equation*}
    \dom{\Dirac_n}\coloneqq \left\{ \xi \in \ell^2(G_n,E) : (\mathds{L}_H(g)\gamma_1\xi(g)+\mathds{F}(g)\gamma_2\xi(g))_{g\in G_n} \in \ell^2(G_n,E) \right\}
  \end{equation*}
  and on $\dom{\Dirac_n}$, we define the Dirac operator:
  \begin{equation}\label{eq:def-Dirac}
  \Dirac_n \coloneqq M_{\mathds{L}_H} \otimes \gamma_1 + M_{\mathds{F}} \otimes \gamma_2\text.
\end{equation}

\bigskip

We now prove that $(C^\ast(G_n,\sigma),\ell^2(G_n)\otimes E,\Dirac_n)$, as defined above, are  indeed spectral triples, for all $n\in\Nbar$. A first step is the computation of the domain of our Dirac operators of Equation \eqref{eq:def-Dirac}. To do so, we will need the following lemma. Recall that a norm $\norm{\cdot}{\R^2}$ on $\R^2$ is \emph{monotone} when it is increasing with respect to the product order on $\R^2$; the most important such norm for our purpose will be the max norm $x,y \in \R^2\mapsto\norm{(x, y)}{\infty} = \max\{|x|,|y|\}$; we also note that we will often write elements of $\R^d$ as simple $d$-tuples.

\medskip

\begin{lemma}\label{length-lemma}
  With the notation and   assumptions of this section,  the following identities hold. 
  
  \begin{enumerate}
 \item  For all $ g \in G_\infty $: 
  \begin{equation*}
    \quad \mathds{F}(g^{-1}) = \mathds{F}(g) \text;
  \end{equation*}
\item   For all $g,h \in G_\infty$: 
  \begin{equation*}
     \mathds{F}(gh) \leq \max\left\{ \mathds{F}(g) , \mathds{F}(h)\right\} \leq \mathds{F}(g) + \mathds{F}(h) \text.
  \end{equation*}
   \end{enumerate}
  Moreover, if $\norm{\cdot}{\R^2}$ is any monotone norm on $\R^2$, then $g \in G_\infty \mapsto \norm{(\mathds{L}_H(g), \mathds{F}(g))}{\R^2}$ is a proper, unbounded length function over $G_\infty$.
\end{lemma}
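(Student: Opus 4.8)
The plan is to verify the claimed properties of $\mathds{F}$ by a case analysis based on the smallest index $n$ with $g\in G_n$, and then to deduce the length-function properties of $g\mapsto\norm{(\mathds{L}_H(g),\mathds{F}(g))}{\R^2}$ from the corresponding properties of $\mathds{L}_H$ and $\mathds{F}$ together with monotonicity of the norm.

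First I would prove $\mathds{F}(g^{-1})=\mathds{F}(g)$. If $g\in G_0$, this is one of the assumed properties of $\mathds{F}\restriction_{G_0}$. If $g\notin G_0$, then since each $G_n$ is a subgroup, $g\in G_n$ if and only if $g^{-1}\in G_n$, so $\min\{n : g\in G_n\}=\min\{n : g^{-1}\in G_n\}$, and the definition $\mathds{F}(g)=\mathrm{scale}(\min\{n : g\in G_n\})$ gives the equality immediately. Next, for the submultiplicative-type inequality $\mathds{F}(gh)\leq\max\{\mathds{F}(g),\mathds{F}(h)\}$, the second inequality $\max\{\mathds{F}(g),\mathds{F}(h)\}\leq\mathds{F}(g)+\mathds{F}(h)$ is trivial since $\mathds{F}\geq 0$, so only the first needs work. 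Write $m=\min\{n : g\in G_n\}$ and $k=\min\{n : h\in G_n\}$, and set $j=\max\{m,k\}$. If $j\geq 1$, then both $g$ and $h$ lie in $G_j$ (the $G_n$ are increasing), hence $gh\in G_j$, so $\min\{n : gh\in G_n\}\leq j$; since $\mathrm{scale}$ is increasing, $\mathds{F}(gh)=\mathrm{scale}(\min\{n:gh\in G_n\})\leq\mathrm{scale}(j)=\max\{\mathds{F}(g),\mathds{F}(h)\}$ (using $\mathds{F}(g)=\mathrm{scale}(m)$, $\mathds{F}(h)=\mathrm{scale}(k)$ when $m,k\geq1$, and when say $m=0$ we use $\mathds{F}(g)\in[0,\mathrm{scale}(0)]\leq\mathrm{scale}(j)$). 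If $j=0$, i.e. $g,h\in G_0$, then $gh\in G_0$ and the assumed ultrametric inequality on $G_0$ gives $\mathds{F}(gh)\leq\max\{\mathds{F}(g),\mathds{F}(h)\}$ directly. The only subtlety here is bookkeeping the case $m=0<k$ or $k=0<m$: one uses $\mathds{F}(g)\le \mathrm{scale}(0)\le\mathrm{scale}(j)$ in place of an exact equality, which still yields the bound.

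Finally, for the last assertion, fix a monotone norm $\norm{\cdot}{\R^2}$ and put $\mathds{L}(g)\coloneqq\norm{(\mathds{L}_H(g),\mathds{F}(g))}{\R^2}$. Symmetry $\mathds{L}(g^{-1})=\mathds{L}(g)$ follows from $\mathds{L}_H(g^{-1})=\mathds{L}_H(g)$ (a length function is symmetric) and part (1). For the triangle inequality, from $\mathds{L}_H(gh)\leq\mathds{L}_H(g)+\mathds{L}_H(h)$ and $\mathds{F}(gh)\leq\mathds{F}(g)+\mathds{F}(h)$ (part (2)) we get, componentwise, $(\mathds{L}_H(gh),\mathds{F}(gh))\leq(\mathds{L}_H(g),\mathds{F}(g))+(\mathds{L}_H(h),\mathds{F}(h))$ in the product order, so monotonicity of $\norm{\cdot}{\R^2}$ gives $\mathds{L}(gh)\leq\norm{(\mathds{L}_H(g)+\mathds{L}_H(h),\mathds{F}(g)+\mathds{F}(h))}{\R^2}\leq\mathds{L}(g)+\mathds{L}(h)$ by the ordinary triangle inequality in $\R^2$. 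Also $\mathds{L}(1)=\norm{(0,0)}{\R^2}=0$, and $\mathds{L}(g)=0$ forces $\mathds{L}_H(g)=0$ hence $g=1$ (so $\mathds{L}$ is a genuine length function, not merely a pseudo-length). Properness: since $\norm{\cdot}{\R^2}$ is a norm, there is $c>0$ with $\norm{(x,y)}{\R^2}\geq c\max\{|x|,|y|\}\geq c|x|$; hence $\{g : \mathds{L}(g)\leq r\}\subseteq\{g : \mathds{L}_H(g)\leq r/c\}$, and we must argue the latter is finite. This is where the one genuine obstacle lies: $\mathds{L}_H$ is only assumed proper on each $G_n$ separately, not on $G_\infty$. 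To resolve it, use instead the bound $\norm{(x,y)}{\R^2}\geq c|y|$, so $\{g : \mathds{L}(g)\leq r\}\subseteq\{g : \mathds{F}(g)\leq r/c\}$; since $\mathrm{scale}$ is strictly increasing and unbounded, $\mathds{F}(g)\leq r/c$ together with $g\notin G_0$ forces $g\in G_N$ for the largest $N$ with $\mathrm{scale}(N)\leq r/c$, and $\{g\in G_0:\mathds{F}(g)\le r/c\}\subseteq G_0\subseteq G_N$ too, so $\{g:\mathds{L}(g)\leq r\}\subseteq G_N$; then $\{g\in G_N : \mathds{L}(g)\leq r\}\subseteq\{g\in G_N : \mathds{L}_H(g)\leq r/c\}$, which is finite because $\mathds{L}_H\restriction_{G_N}$ is proper. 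Unboundedness of $\mathds{L}$ is then immediate since $G_\infty$ is infinite (being a strictly increasing union) while each ball is finite. This completes the proof.
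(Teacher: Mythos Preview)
Your proof is correct and follows essentially the same approach as the paper: the same case analysis on membership in $G_0$ for parts (1) and (2), and the same use of monotonicity of the norm for the triangle inequality in part (3). The only cosmetic difference is that for properness the paper first treats the max norm $\norm{\cdot}{\infty}$ directly (observing $\{g:\mathds{L}_\infty(g)\le\mathrm{scale}(n)\}=\{g\in G_n:\mathds{L}_H(g)\le\mathrm{scale}(n)\}$) and then transfers to an arbitrary monotone norm by equivalence, whereas you invoke norm equivalence from the outset; the underlying idea is identical. One tiny bookkeeping point: in your case $j\ge 1$ you write $\mathds{F}(gh)=\mathrm{scale}(\min\{n:gh\in G_n\})$, which is only the defining formula when $gh\notin G_0$; if $gh\in G_0$ you should instead use $\mathds{F}(gh)\le\mathrm{scale}(0)\le\mathrm{scale}(j)$, which still gives the desired bound.
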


\begin{proof}
  Let $g \in G_\infty$, and let $n\in\N$ be the unique natural number such that $\mathds{F}(g) = \mathrm{scale}(n)$, or $n=0$ if $\mathds{F}(g)<\mathrm{scale}(0)$. If $n=0$ then $\mathds{F}(g) = \mathds{F}(g^{-1})$ by assumption. If $n > 0$, then $g \in G_n$ and $g \notin G_p$ for $p<n$; therefore, $g^{-1} \in G_n$ and $g^{-1} \notin G_p$ if $p<n$; hence, $\mathds{F}(g^{-1}) = \mathrm{scale}(n) = \mathds{F}(g)$. 

  Now, take $h \in G_\infty$. Again, let $m \in \N$ be uniquely defined by $\mathds{F}(h) = \mathrm{scale}(m)$ or $m = 0$ otherwise. Let $k = \max\{m,n\}$. Thus $g, h \in G_k$ and therefore, $g h \in G_k$. First, if $g,h,gh \in G_0$, then $\mathds{F}(gh)\leq\max\{\mathds{F}(g),\mathds{F}(h)\}$ by assumption on $\mathds{F}$. Otherwise, $k > 0$, and we simply observe that either $g h \in G_0$ and then $\mathds{F}(gh)\leq \mathrm{scale}(0) < \mathrm{scale}(k)$, or $gh \notin G_0$, and again $\mathds{F}(gh)\leq\mathrm{scale}(k)$; either way we observe:
  \begin{equation*}
    \mathds{F}(gh) \leq \mathrm{scale}(k) = \mathrm{scale}(\max\{n,m\}) = \max\{\mathrm{scale}(n),\mathrm{scale}(m)\} = \max\{\mathds{F}(g),\mathds{F}(h)\} \text.
  \end{equation*}
  Fix a monotone norm $\norm{\cdot}{\R^2}$ on $\R^2$ and let
  \begin{equation*}
    \mathds{L} : g\in G_\infty \longmapsto \norm{(\mathds{L}_H(g), \mathds{F}(g))}{\R^2} \text.
  \end{equation*}
  It is then immediate to check that if $g,h \in G_\infty$, then, since $\norm{\cdot}{\R^2}$ is monotone:
  \begin{multline*}
    \norm{(\mathds{L}_H(g h),\mathds{F}(g h)) }{\R^2} \leq \norm{(\mathds{L}_H(g) + \mathds{L}_H(h) , \mathds{F}(g) + \mathds{F}(h))}{\R^2} \\ \leq \norm{(\mathds{L}_H(g), \mathds{F}(g) )}{\R^2} + \norm{( \mathds{L}_H(h), \mathds{F}(h) )}{\R^2} \text.
  \end{multline*}
  Moreover $\norm{( \mathds{L}_H(g^{-1}) , \mathds{F}(g^{-1}) )}{\R^2} = \norm{( \mathds{L}_H(g) , \mathds{F}(g) )}{\R^2}$ for all $g\in G_\infty$.

  Finally, if $\norm{( \mathds{L}_H(g), \mathds{F}(g) )}{\R^2} = 0$, then $\mathds{L}_H(g) = 0$, which in turns implies $g = 1$. On the other hand, $\mathds{F}(1) = 0$ and $\mathds{L}_H(1) = 0$, so $\mathds{L}(1) = 0$. Thus as claimed, $\mathds{L}$ is a length function on $G_\infty$.

  \medskip

  Now, let be more specific in our choice of $\norm{\cdot}{\R^2}$, and fix it to be the usual max norm $\norm{\cdot}{\infty}$; we then rename our length $\mathds{L}_\infty$; so
  \begin{equation*}
    \mathds{L}_\infty(g) \coloneqq \max\left\{\mathds{L}_H(g),\mathds{F}(g) \right\} \text.
  \end{equation*}
 Fix  $n \in \N$. By definition, the following equality between closed balls hold:
  \begin{equation*}
    \left\{ g \in G_\infty : \mathds{L}(g) \leq \mathrm{scale}(n)\right\} = \left\{ g \in G_n : \mathds{L}_H(g) \leq \mathrm{scale}(n) \right\} \text.
  \end{equation*}
Since $\mathds{L}_H$ is proper on $G_n$, this set is finite. So $\mathds{L}$ is indeed proper on $G_\infty$.

 By assumption, the function $\mathrm{scale}$ is unbounded on $\N$ and, for all $n\in \N$, there exists $g \in G_\infty\setminus G_n$ (since $(G_n)_{n\in\N}$ is assumed to be strictly increasing), i.e. $\mathds{F}(g)\geq \mathrm{scale}(n)$, so $\mathds{L}$ is unbounded.

  \medskip

  We now return to a general monotone norm $\norm{\cdot}{\R^2}$ on $\R^2$. Since all norms on $\R^2$ are equivalent, there exists $c > 0$ such that $\frac{1}{c}\norm{\cdot}{\infty}\leq\norm{\cdot}{\R^2} \leq c \norm{\cdot}{\infty}$. Therefore,
  \begin{equation*}
    \frac{1}{c} \mathds{L}_\infty \leq \mathds{L} \leq c \mathds{L}_\infty \text.
  \end{equation*}
  It is now easy to check that $\mathds{L}$ is again proper and unbounded on $G_\infty$. This concludes our proof.
\end{proof}

\begin{remark}
  It is quite natural to simply set $\mathds{F}(g) = \mathrm{scale}(0)$ for all $g \in G_0\setminus\{1\}$. The difference between such a choice of $\mathds{F}$, vs any other $\mathds{F}'$, which meets our assumptions over $G_0$, is a bounded perturbation. We refer to \cite{Latremoliere15b} for a discussion on bounded perturbations of spectral triples from the metric perspective.
\end{remark}

As seen in the above discussion, the above length function $\mathds{L}_H$  will not be proper, so it won't define a spectral triple by itself, however $\mathds{L}$ is proper, and thus can be used to define a spectral triple on $C^\ast(G_\infty,\sigma)$. However, we take a slightly different route by working with what we shall prove is an even spectral triple, replacing the linear geometry of $G_\infty$  with a sort of ``two-dimensional'' geometry (see Figure (\ref{dyadic-Q-pic}) for the noncommutative solenoid case). 
\medskip

We now prove that in the above hypotheses we can indeed define spectral triples. We begin with a computation of the domain of the proposed Dirac operators defined in   Equation \eqref{eq:def-Dirac}.

\begin{lemma}\label{gamma-norm-lemma}
 With the notation and   assumptions of this section, the following assertion holds; for all $\xi \in E$ and for all  $ a,b \in \R $:
  \begin{equation*}
     \quad \norm{(a\gamma_1+b\gamma_2)\xi}{E}^2 = \left(a^2+b^2\right)\norm{\xi}{E}^2 \text.
  \end{equation*}
In particular, for all $n\in\Nbar$, the domain $\dom{\Dirac_n}$ of the Dirac operator ${\Dirac_n}$ is given by
  \begin{equation*}
    \left\{ \xi \in \ell^2(G_n,E) : \sum_{g \in G_n} (\mathds{L}_H(g)^2 + \mathds{F}(g)^2)\norm{\xi(g)}{E}^2 < \infty \right\} \text.
  \end{equation*}
\end{lemma}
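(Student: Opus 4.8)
The plan is to prove the pointwise identity first, since it is a statement purely about the finite-dimensional Hilbert space $E$ and the Clifford generators $\gamma_1,\gamma_2$, and then to feed it into the definition of $\dom{\Dirac_n}$ to read off the stated description of the domain.

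For the pointwise identity, I would fix $\xi \in E$ and $a,b \in \R$ and simply expand the inner product:
\begin{align*}
  \norm{(a\gamma_1+b\gamma_2)\xi}{E}^2 &= \inner{(a\gamma_1+b\gamma_2)\xi}{(a\gamma_1+b\gamma_2)\xi}{E} \\
  &= a^2\inner{\gamma_1\xi}{\gamma_1\xi}{E} + ab\inner{\gamma_1\xi}{\gamma_2\xi}{E} + ab\inner{\gamma_2\xi}{\gamma_1\xi}{E} + b^2\inner{\gamma_2\xi}{\gamma_2\xi}{E}\text.
\end{align*}
Here I use that $a,b$ are real, so there is no complex conjugation to worry about in pulling the scalars out. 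Since $\mathfrak{c}$ is a $\ast$-representation of the Clifford algebra of $\C^2$, each $\gamma_j$ is self-adjoint, and the relations recorded just before the lemma give $\gamma_j^2 = 1$ and $\gamma_1\gamma_2 + \gamma_2\gamma_1 = 0$. Thus $\inner{\gamma_j\xi}{\gamma_j\xi}{E} = \inner{\xi}{\gamma_j^2\xi}{E} = \norm{\xi}{E}^2$ for $j\in\{1,2\}$, while the cross terms combine to $ab\,\inner{\xi}{(\gamma_1\gamma_2+\gamma_2\gamma_1)\xi}{E} = 0$ (using self-adjointness of $\gamma_1$ and $\gamma_2$ to move them across the inner product). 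This yields $\norm{(a\gamma_1+b\gamma_2)\xi}{E}^2 = (a^2+b^2)\norm{\xi}{E}^2$.

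For the domain computation, recall that $\Dirac_n = M_{\mathds{L}_H}\otimes\gamma_1 + M_{\mathds{F}}\otimes\gamma_2$, so for $\xi \in \ell^2(G_n,E)$ the formal image $\Dirac_n\xi$ is the function $g \mapsto \mathds{L}_H(g)\gamma_1\xi(g) + \mathds{F}(g)\gamma_2\xi(g)$. By definition $\xi \in \dom{\Dirac_n}$ precisely when this function lies in $\ell^2(G_n,E)$, i.e. when $\sum_{g\in G_n}\norm{\mathds{L}_H(g)\gamma_1\xi(g)+\mathds{F}(g)\gamma_2\xi(g)}{E}^2 < \infty$. Applying the pointwise identity with $a = \mathds{L}_H(g)$, $b = \mathds{F}(g)$, $\xi = \xi(g)$ (all real since $\mathds{L}_H,\mathds{F}$ are real-valued), each summand equals $(\mathds{L}_H(g)^2+\mathds{F}(g)^2)\norm{\xi(g)}{E}^2$, and the condition becomes exactly $\sum_{g\in G_n}(\mathds{L}_H(g)^2+\mathds{F}(g)^2)\norm{\xi(g)}{E}^2 < \infty$, as claimed.

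There is essentially no serious obstacle here: the only points requiring a little care are (i) confirming that the scalars $\mathds{L}_H(g),\mathds{F}(g)$ are genuinely real so that expanding the norm-square introduces no conjugates, which is automatic since length functions take values in $[0,\infty)$, and (ii) being precise that "the domain of $\Dirac_n$" means the maximal domain on which the formal expression lands back in $\ell^2(G_n,E)$ — which is exactly how $\dom{\Dirac_n}$ was defined in the paragraph preceding Equation \eqref{eq:def-Dirac}, so no subtlety about closures or essential self-adjointness enters at this stage. The lemma is a preparatory computation, and the proof is a direct expansion.
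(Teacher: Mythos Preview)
Your proof is correct and follows essentially the same approach as the paper: expand the inner product, use self-adjointness of $\gamma_1,\gamma_2$ and the Clifford anticommutation relations to collapse the cross terms, then feed the pointwise identity into the definition of $\dom{\Dirac_n}$. The paper is slightly terser on the domain step (it simply says the computation ``follows immediately''), but your added detail is exactly what is intended.
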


\begin{proof}
  Let $\xi \in E$. The following identity holds for all $a,b\in\R$:
  \begin{align*}
    \norm{a \gamma_1\xi + b\gamma_2\xi}{E}^2
    &= a^2 \inner{\gamma_1\xi}{\gamma_1\xi}{E} + ab\inner{\gamma_1\xi}{\gamma_2\xi}{E} + ab\inner{\gamma_2\xi}{\gamma_1\xi}{E} + b^2 \inner{\gamma_2\xi}{\gamma_2\xi}{E} \\
    &= a^2\inner{\gamma_1^2\xi}{\xi}{E} + ab\inner{(\gamma_1\gamma_2+\gamma_2\gamma_1)\xi}{\xi}{E} + b^2\inner{\gamma_2^2\xi}{\xi}{E} \\
    &= (a^2+b^2)\norm{\xi}{E}^2 \text.
  \end{align*}
  as claimed.

  The computation of the $\dom{\Dirac_n}$, for all $n\in\N$, follows immediately.
\end{proof}

We now prove that our Dirac operators are indeed self-adjoint with compact resolvent, and that they can be used to define spectral triples. We also establish some useful estimates which will later allow us to prove that our construction gives metric spectral triples over noncommutative solenoids.

\begin{definition}\label{G-notation}
  If $a$ is a bounded operator on $\ell^2(G_\infty)$, we denote by $a^\circ$ the operator $a\otimes 1_E$ acting on $\ell^2(G_\infty,E)$. We also define the representation $\lambda$ of $C^\ast(G_\infty,\sigma)$ on $\ell^2(G_\infty,E)$ by setting $\lambda_E \coloneqq \lambda\otimes 1_E$, so for all $f \in C_c(G_\infty)$, we have $\lambda(f)^\circ \coloneqq \lambda_E(f)$. Moreover
  \begin{enumerate}
\item For each $n\in\Nbar$, define:
\begin{equation*}
  \dom{\Lip_n} \coloneqq \left\{ a \in \sa{C^\ast_{\mathrm{red}}(G_n,\sigma)} : a^\circ \dom{\Dirac_n}\subseteq\dom{\Dirac_n} \text{ and }[\Dirac_n,a^\circ]\text{ is bounded} \right\}.
\end{equation*}
\item For all $a \in \dom{\Lip_n}$ define: 
\begin{equation*}
 \Lip_n(a) \coloneqq \opnorm{[\Dirac_n,a^\circ]}{}{\ell^2(G_n,E)} \text.
\end{equation*}
\end{enumerate}
\end{definition}

We conclude this subsection by proving that we indeed defined even spectral triples, and lay the groundwork for our third main theorem in the next subsection. Recall that, by Lemma
(\ref{length-lemma}), $\mathds{L}_H + \mathds{F}$ is proper and unbounded on $G_\infty$.

\begin{lemma}\label{G-lemma}
   With the notation and assumptions  of this section,  for any fixed  $n\in\Nbar$, the ordered triple \[(C^\ast_{\mathrm{red}}(G_n,\sigma),\ell^2(G_n,E),\Dirac_n)\] 
   is an even spectral triple, where the grading on $\ell^2(G_n,E)$ is given by $1_{\ell^2(G_n)}\otimes i \gamma_1\gamma_2$. 
   Moreover, for all $a\in \dom{\Lip_n}$:
  \begin{equation*}
    \opnorm{[M_{\mathds{L}_H},a]}{}{\ell^2(G_n)} \leq \opnorm{[\Dirac_n,a^\circ]}{}{\ell^2(G_n,E)},
  \end{equation*}
  together with:
  \begin{equation*}
    \opnorm{[M_{\mathds{F}},a]}{}{\ell^2(G_n)} \leq \opnorm{[\Dirac_n,a^\circ]}{}{\ell^2(G_n,E)}\text.
  \end{equation*}
In particular, if we define $\mathds{L}\coloneqq \mathds{L}_H + \mathds{F}$, then for all $n \in \Nbar$ and  all $a \in \dom{\Lip_n}$: 
  \begin{equation*}
    \opnorm{[M_{\mathds{L}},a]}{}{\ell^2(G_n)} \leq 2\opnorm{[\Dirac_n,a^\circ]}{}{\ell^2(G_n,E)}\text.
  \end{equation*}
If, for any $n\in\Nbar$, the spectral triple $(C^\ast_{\mathrm{red}}(G_n,\sigma),\ell^2(G_n),\Dirac_{\mathds{L}})$ is metric, then so is $(C^\ast_{\mathrm{red}}(G_n,\sigma),\allowbreak \ell^2(G_n,E),\Dirac_n)$.
\end{lemma}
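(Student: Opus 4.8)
The plan is to verify the spectral triple axioms by exploiting the fiberwise structure of $\Dirac_n$ over $G_n$, to read off the evenness from the Clifford relations, to extract the multiplication operators $M_{\mathds{L}_H}$ and $M_{\mathds{F}}$ from $\Dirac_n$ by conjugating with the unitary $1_{\ell^2(G_n)}\otimes\gamma_1$, and finally to transfer metricity through the resulting norm comparison together with Theorem (\ref{tt-thm}).

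First I would establish that $(C^\ast_{\mathrm{red}}(G_n,\sigma),\ell^2(G_n,E),\Dirac_n)$ is an even spectral triple. Writing $\ell^2(G_n,E)=\bigoplus_{g\in G_n}E$, the operator $\Dirac_n$ acts fiberwise as $\bigoplus_g D_g$ with $D_g\coloneqq\mathds{L}_H(g)\gamma_1+\mathds{F}(g)\gamma_2$ a bounded self-adjoint operator on the finite-dimensional space $E$ (the $\gamma_j$ being self-adjoint since $\mathfrak{c}$ is a $\ast$-representation). By Lemma (\ref{gamma-norm-lemma}), $D_g^2=(\mathds{L}_H(g)^2+\mathds{F}(g)^2)1_E$ and $\dom{\Dirac_n}$ is exactly the maximal domain $\{\xi:\sum_g\|D_g\xi(g)\|^2<\infty\}$ of the direct sum, so $\Dirac_n$ is self-adjoint. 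Since $(1+\Dirac_n^2)^{-1}=\bigoplus_g(1+\mathds{L}_H(g)^2+\mathds{F}(g)^2)^{-1}1_E$ and, by Lemma (\ref{length-lemma}), $g\mapsto\sqrt{\mathds{L}_H(g)^2+\mathds{F}(g)^2}$ is a proper length function on $G_\infty$ whose restriction to $G_n$ is again proper, these scalars tend to $0$ along $G_n$ with finite multiplicity, so $\Dirac_n$ has compact resolvent. For $g\in G_n$ the twisted translation $\lambda(g)^\circ=\lambda(g)\otimes 1_E$ preserves $\dom{\Dirac_n}$ (using $\mathds{L}_H(gh)\leq\mathds{L}_H(g)+\mathds{L}_H(h)$ and $\mathds{F}(gh)\leq\mathds{F}(g)+\mathds{F}(h)$ from Lemma (\ref{length-lemma})), and $[\Dirac_n,\lambda(g)^\circ]=[M_{\mathds{L}_H},\lambda(g)]\otimes\gamma_1+[M_{\mathds{F}},\lambda(g)]\otimes\gamma_2$ is bounded because Connes' computation — applied to $\mathds{L}_H$ and to $\mathds{F}$, the latter satisfying $|\mathds{F}(h)-\mathds{F}(g^{-1}h)|\leq\mathds{F}(g)$ by the ultrametric inequality in Lemma (\ref{length-lemma}) — gives $\opnorm{[M_{\mathds{L}_H},\lambda(g)]}{}{\ell^2(G_n)}\leq\mathds{L}_H(g)$ and $\opnorm{[M_{\mathds{F}},\lambda(g)]}{}{\ell^2(G_n)}\leq\mathds{F}(g)$. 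Hence $\lambda(C_c(G_n))$ is a dense $\ast$-subalgebra contained in $\dom{\Lip_n}$, and the triple is a spectral triple; it is even with grading $\Gamma\coloneqq 1_{\ell^2(G_n)}\otimes i\gamma_1\gamma_2$, since the Clifford relations $\gamma_j^2=1$, $\gamma_1\gamma_2=-\gamma_2\gamma_1$, $\gamma_j^\ast=\gamma_j$ give $\Gamma^\ast=\Gamma$, $\Gamma^2=1$, $\Gamma$ preserves $\dom{\Dirac_n}$, $\Gamma$ commutes with the representation $\lambda_E=\lambda\otimes 1_E$ (which acts on the first tensor factor), and $\Gamma\Dirac_n+\Dirac_n\Gamma=0$.

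For the norm inequalities, the unitary $1\otimes\gamma_1$ commutes with each $a^\circ=a\otimes 1_E$ and preserves $\dom{\Dirac_n}$, and on $\dom{\Dirac_n}$ the Clifford relations give $(1\otimes\gamma_1)\Dirac_n(1\otimes\gamma_1)=M_{\mathds{L}_H}\otimes\gamma_1-M_{\mathds{F}}\otimes\gamma_2$; averaging yields $M_{\mathds{L}_H}\otimes\gamma_1=\frac{1}{2}\big(\Dirac_n+(1\otimes\gamma_1)\Dirac_n(1\otimes\gamma_1)\big)$. For $a\in\dom{\Lip_n}$, taking commutators with $a^\circ$ — conjugation by $1\otimes\gamma_1$ leaving the commutator norm unchanged — gives $[M_{\mathds{L}_H},a]\otimes\gamma_1=\frac{1}{2}\big([\Dirac_n,a^\circ]+(1\otimes\gamma_1)[\Dirac_n,a^\circ](1\otimes\gamma_1)\big)$, so $\opnorm{[M_{\mathds{L}_H},a]}{}{\ell^2(G_n)}\leq\opnorm{[\Dirac_n,a^\circ]}{}{\ell^2(G_n,E)}$ since $\|\gamma_1\|=1$; the identical argument with $M_{\mathds{F}}\otimes\gamma_2=\frac{1}{2}\big(\Dirac_n-(1\otimes\gamma_1)\Dirac_n(1\otimes\gamma_1)\big)$ handles $\mathds{F}$. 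One also checks that for $a\in\dom{\Lip_n}$ the commutators $[M_{\mathds{L}_H},a]$ and $[M_{\mathds{F}},a]$ are genuinely defined and bounded on the dense subspace $C_c(G_n)$, which follows from $a^\circ\dom{\Dirac_n}\subseteq\dom{\Dirac_n}$ and $\mathds{L}_H,\mathds{F}\leq\sqrt{\mathds{L}_H^2+\mathds{F}^2}$. Since $M_{\mathds{L}}=M_{\mathds{L}_H}+M_{\mathds{F}}$, the bound $\opnorm{[M_{\mathds{L}},a]}{}{\ell^2(G_n)}\leq 2\opnorm{[\Dirac_n,a^\circ]}{}{\ell^2(G_n,E)}$ follows by the triangle inequality.

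Finally, to transfer metricity: if $(C^\ast_{\mathrm{red}}(G_n,\sigma),\ell^2(G_n),\Dirac_{\mathds{L}})$ is metric, then by Proposition (\ref{spectral-cqms-prop}) the seminorm $\Lip_{\mathds{L}}\coloneqq\opnorm{[M_{\mathds{L}},\cdot]}{}{\ell^2(G_n)}$ makes $C^\ast_{\mathrm{red}}(G_n,\sigma)$ a {\qcms}; the inequality above shows $\dom{\Lip_n}\subseteq\dom{\Lip_{\mathds{L}}}$ with $\Lip_{\mathds{L}}\leq 2\Lip_n$, so $\{a\in\dom{\Lip_n}:\Lip_n(a)=0\}\subseteq\{a:\Lip_{\mathds{L}}(a)=0\}=\R1$, and for a fixed state $\mu$ the set $\{a\in\dom{\Lip_n}:\Lip_n(a)\leq 1,\mu(a)=0\}$ is contained in the set $\{a\in\dom{\Lip_{\mathds{L}}}:\Lip_{\mathds{L}}(a)\leq 2,\mu(a)=0\}$, which is totally bounded by Theorem (\ref{tt-thm}); a second application of Theorem (\ref{tt-thm}) shows $\Kantorovich{\Lip_n}$ metrizes the weak-$^\ast$ topology on $\StateSpace(C^\ast_{\mathrm{red}}(G_n,\sigma))$. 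As the Leibniz inequality (with $\Omega=1$, $\Omega'=0$) and closedness of the unit ball hold automatically for the Lipschitz seminorm of a spectral triple, $(C^\ast_{\mathrm{red}}(G_n,\sigma),\Lip_n)$ is a {\qcms}, i.e. $(C^\ast_{\mathrm{red}}(G_n,\sigma),\ell^2(G_n,E),\Dirac_n)$ is metric, again by Proposition (\ref{spectral-cqms-prop}). The step I expect to be the main obstacle is the careful handling of the unbounded operators — proving $\Dirac_n$ is self-adjoint (not merely symmetric) on the prescribed domain, which rests squarely on the fiberwise description and Lemma (\ref{gamma-norm-lemma}), and the ancillary fact that $[M_{\mathds{L}_H},a]$ and $[M_{\mathds{F}},a]$ are bounded for every $a\in\dom{\Lip_n}$; everything else is routine Clifford-algebra manipulation and direct appeals to Theorems (\ref{tt-thm}), (\ref{length-lemma}), and (\ref{gamma-norm-lemma}).
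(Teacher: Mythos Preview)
Your proposal is correct and follows essentially the same route as the paper. The only cosmetic differences are that the paper establishes self-adjointness by explicitly constructing the compact operator $K=(M_{\mathds{L}_H^2+\mathds{F}^2+1})^{-1/2}\otimes 1_E$ and showing the ranges of $\Dirac_n\pm i$ are full (rather than invoking the general direct-sum fact), writes the key Clifford identity as the anticommutator $(1\otimes\gamma_1)[\Dirac_n,a^\circ]+[\Dirac_n,a^\circ](1\otimes\gamma_1)=2[M_{\mathds{L}_H},a]\otimes 1_E$ (which is algebraically identical to your conjugate-and-average), and cites \cite[Lemma 1.10]{Rieffel98a} for the metricity transfer instead of appealing to Theorem (\ref{tt-thm}) directly.
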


\begin{proof}
  We will start by showing that, for any fixed  $n \in \Nbar$,  $\Dirac_n$ is self-adjoint with compact resolvent.  Fixed  any $n\in \Nbar$, note that the domain of $\Dirac_n$ contains all finitely supported functions in $\ell^2(G_n,E)$ and is therefore dense. Moreover, since $\gamma_1$ and $\gamma_2$ are self-adjoint, if $\xi,\eta\in\dom{\Dirac_n}$, it follows that:
  \begin{align*}
    \inner{\Dirac_n\xi}{\eta}{\ell^2(G_n,E)}
    &=\sum_{g\in G_n} \inner{\left(\mathds{L}_H(g)\gamma_1+\mathds{F}(g)\gamma_2\right)\xi}{\eta}{E} \\
    &=\sum_{g\in G_n} \inner{\xi}{\left(\mathds{L}_H(g)\gamma_1+\mathds{F}(g)\gamma_2\right)\eta}{E} \\
    &= \inner{\xi}{\Dirac_n\eta}{\ell^2(G_n,E)},
  \end{align*}
  so $\Dirac_n$ is also a symmetric operator.
By using Lemma (\ref{gamma-norm-lemma}),  we now note that:
  \begin{equation*}
    \dom{\Dirac_n^2} = \left\{ \xi \in \ell^2(G_n,E) : \sum_{g \in G_n} \left(\mathds{L}_H(g)^2 + \mathds{F}(g)^2\right)^2 \norm{\xi(g)}{E}^2 < \infty \right\}
  \end{equation*}
  and, over $\dom{\Dirac_n^2}$, the Clifford algebra relations imply:
  \begin{equation*}
    \Dirac_n^2 + 1 = \left(M_{\mathds{L}_H}^2 + M_{\mathds{F}}^2 + 1 \right)\otimes 1_E \text.
  \end{equation*}
Now  define an operator $K$ on $\ell^2(G_n,E)$ by setting, for all $\xi \in \ell^2(G_n,E)$:
  \begin{equation*}
    K \xi : g \in G_n \mapsto \frac{1}{\sqrt{\mathds{L}_H(g)^2 + \mathds{F}(g)^2 + 1}} \xi(g)\text.
  \end{equation*}
  By construction, $K$ is positive. Moreover, if $n\in\N$, then $\mathds{L}_H$ restricted to $G_n$ is proper and $\mathds{F}$ is bounded over $G_n$ by our hypotheses, so $K$ is compact.
  If $n=\infty$, by our hypotheses, for all $r \geq 0$, the set $\{ g \in G_\infty : \mathds{F}(g) \leq r \}$ is a subset of $G_k$ for some $k\in\N$. Since $\mathds{L}_H$ is proper on $G_k$, the set $\{ g \in G_\infty : \mathds{L}_H ^2(g) + \mathds{F}^2(g) \leq r \}$ is finite. Thus, the eigenspaces of $K$ are all finite dimensional. It follows easily that $K$ is compact, as well.

  In any case, i.e., for all $n \in \Nbar$,   $(\Dirac_n^2+1)K^2\xi = \xi$ for all $\xi \in \ell^2(G_n,E)$, while $K^2(\Dirac_n^2+1)\xi = \xi$ for all $\xi \in \dom{\Dirac_n^2}$, as seen by a direct computation; in particular, we note that $K\ell^2(G_n,E) = \dom{\Dirac_n}$ by construction.

  By Lemma (\ref{gamma-norm-lemma}), for all $\xi \in \ell^2(G_n,E)$, we obtain:
  \begin{align*}
    \sum_{g\in G_n} &\norm{\Dirac_n K\xi(g)}{E}^2 \\
    &= \sum_{g\in G_n} \norm{\frac{\mathds{L}_H(g)}{\sqrt{\mathds{L}_H(g)^2+\mathds{F}(g)^2+1}} (\gamma_1\xi(g)) + \frac{\mathds{F}(g)}{\sqrt{\mathds{L}_H(g)^2+\mathds{F}(g)^2+1}} (\gamma_2\xi(g)) }{E}^2 \\
    &= \sum_{g\in G_n} \frac{\mathds{L}_H(g)^2 + \mathds{F}(g)^2}{\mathds{L}_H(g)^2 + \mathds{F}(g)^2 + 1} \norm{\xi(g)}{E}^2 \\
    &\leq \norm{\xi}{\ell^2(G_n,E)}^2 \text.
  \end{align*}
  Thus, $\Dirac_n K$ is bounded, of norm at most $1$. Consequently, $(\Dirac_n \pm i)K$ is also bounded on $\ell^2(G_n,E)$.
Therefore, $(\Dirac \pm i)K^2$ is compact. It follows that  $\Dirac \pm i$ both have compact inverse $(\Dirac \mp i) K^2$. Specifically for our purpose, if $\xi \in \ell^2(G_n,E)$, then:
  \begin{equation*}
    (\Dirac_n+i) \left( (\Dirac_n-i)K^2\right)\xi = (\Dirac_n^2+1)K^2\xi = \xi \text.
  \end{equation*}
  Therefore, the range of $\Dirac_n+i$ is $\ell^2(G_n,E)$. Similarly, the range of $\Dirac_n-i$ is also $\ell^2(G_n,E)$. As $\Dirac_n$ is also a symmetric operator defined on a dense domain, we conclude by \cite[Sec. VIII.2]{ReedSimon} and \cite[Lemma 2.48]{Latremoliere21a} that $\Dirac_n$ is indeed self-adjoint, with compact resolvent (since the inverse of $\Dirac_n + i$ is the compact $(\Dirac_n-i)K^2$). 

\medskip

 We will now verify the commutator spectral triples condition. Note that if $g \in G_n$, then
  \begin{equation*}
    \opnorm{[\Dirac_n,\lambda_E(g)]}{}{\ell^2(G_n,E)}\leq \mathds{L}_H(g) + \mathds{F}(g) = \mathds{L}(g) \text.  
  \end{equation*}
  Therefore, if $f \in C_c(G_n)$, then the operator $[\Dirac_n,\lambda_E(f)]$ is bounded, and in fact,
  \begin{equation*}
    \opnorm{[\Dirac_n,\lambda_E(f)]}{}{\ell^2(G_n,E)} \leq \sum_{g \in G_n} |f(g)| \mathds{L}(g) \text.
  \end{equation*}
We conclude that  $(C^\ast_{\mathrm{red}}(G_\infty,\sigma),\ell^2(G_\infty),\Dirac)$ is a spectral triple for all $n\in\Nbar$.

  \medskip

 We will now prove that our spectral triple is metric. Let $a \in \dom{\Lip_n}$ for some $n\in\Nbar$. We then note that,
  \begin{equation*}
    (1\otimes\gamma_1)[\Dirac_n,a^\circ] + [\Dirac_n,a^\circ](1\otimes \gamma_1) = [M_{\mathds{L}_H},a]\otimes 2,
  \end{equation*}
  which implies:
  \begin{align*}
    \opnorm{[M_{\mathds{L}_H},a]}{}{\ell^2(G_n)}
    &\leq \frac{1}{2} \opnorm{(1\otimes\gamma_1)[\Dirac_n,a^\circ] + [\Dirac_n,a^\circ](1\otimes \gamma_1)}{}{\ell^2(G_n,E)} \\
    &\leq \frac{1}{2}\left( \opnorm{(1\otimes\gamma_1)[\Dirac_n,a^\circ]}{}{\ell^2(G_n,E)} + \opnorm{[\Dirac_n,a^\circ](1\otimes \gamma_1)}{}{\ell^2(G_n,E)}\right) \\
    &\leq \frac{1}{2} \left(\opnorm{[\Dirac_n,a^\circ]}{}{\ell^2(G_n,E)} + \opnorm{[\Dirac_n,a^\circ]}{}{\ell^2(G_n,E)}\right) \\
    &=\opnorm{[\Dirac_n,a^\circ]}{}{\ell^2(G_n,E)} \text.
  \end{align*}
The same reasoning, with $1\otimes\gamma_2$ in place of $1\otimes \gamma_1$, leads to
  \begin{equation*}
    \opnorm{[M_{\mathds{F}},a]}{}{\ell^2(G_n)} \leq \opnorm{[\Dirac_n,a^\circ]}{}{\ell^2(G_n,E)} \text.
  \end{equation*}
 Therefore, for all $a\in\dom{\Lip_n}$, we obtain:
  \begin{equation*}
    \opnorm{[M_{\mathds{L}},a]}{}{\ell^2(G_n)} \leq \opnorm{[M_{\mathds{L}_F},a]}{}{\ell^2(G_n)} + \opnorm{[M_\mathds{F},a]}{}{\ell^2(G_n)} \leq 2 \opnorm{[\Dirac_n,a^\circ]}{}{\ell^2(G_n,E)} \text.
  \end{equation*}

  In particular, if $(C^\ast_{\mathrm{red}}(G_n,\sigma),\ell^2(G_n),\Dirac_{\mathds{L}})$ is a metric spectral triple, then, by \cite[Lemma 1.10]{Rieffel98a}, so is $(C^\ast_{\mathrm{red}}(G_n,\sigma),\ell^2(G_n),\Dirac_n)$.

  Finally, we will show that our spectral triples  are in fact even, with   grading given by $1_{\ell^2(G_n)}\otimes \gamma$ where $\gamma \coloneqq i \gamma_1\gamma_2$. By construction, $\gamma^2$ is the identity, and $\gamma^\ast=\gamma$, so $\gamma$ is a self-adjoint unitary; therefore so is $1_{\ell^2(G_n)}\otimes\gamma$, which splits $\ell^2(G_n,E)$ in its two spectral subspaces for $1$ and $-1$, in such a way that $\lambda_E$ commutes with $1\otimes\gamma$, while $\Dirac_n(1\otimes\gamma) = -(1\otimes\gamma)\Dirac_n$. So $(C^\ast_{\mathrm{red}}(G_n,\sigma),\ell^2(G_n,E),\Dirac_n)$ is an even spectral triple. 
\end{proof}

\begin{remark}
  With the notation of Lemma (\ref{G-lemma}), we note that for each finite $n\in\N$, the spectral triple $(C^\ast_{\mathrm{red}}(G_n),\ell^2(G_n,E),\Dirac_n)$ is, in some sense, a bounded perturbation of the odd spectral triple $(C^\ast_{\mathrm{red}}(G_n),\ell^2(G_n),M_{\mathds{L}})$, since $\mathds{F}$ is bounded on $G_n$. The situation is quite different when $n=\infty$, of course.
\end{remark}

\begin{remark}
  Suppose $\rho$ is some other $2$-cocycle of $G_\infty$, which is equivalent to $\sigma$, i.e., for some function $f : G_\infty\rightarrow\T$, the following holds:
  \begin{equation*}
    \forall g,h \in G_\infty \quad \rho(g,h) = f(g)f(h)\overline{f(gh)} \sigma(g,h) \text.
  \end{equation*}
  The operator $M_f$ is then a unitary which intertwines the left regular $\sigma$ and $\rho$ projective representation of $G_\infty$. Thus, $(\mathrm{Ad}M_f)^\circ$ implements a *-isomorphism from $\lambda_E(C^\ast(G_\infty,\rho))$ onto $\lambda_E(C^\ast(G_\infty,\sigma))$. Furthermore, $M_{f}^\circ$ commutes with $\Dirac_\infty$. Therefore,  the spectral triples $(C^\ast(G_\infty,\sigma),\ell^2(G_\infty,E),\Dirac_\infty)$ and $(C^\ast(G_\infty,\rho),\ell^2(G_\infty,E),\Dirac_\infty)$ are unitarily equivalent. In particular, whenever one is metric, so is the other, and then they are at distance zero from each others for the spectral propinquity. 
\end{remark}

\subsection{Main result}

We begin this section by making some basic identifications that will be used throughout the rest of the paper. We will use the notation introduced  in the above sections. Fixed $n \in \N$, the C*-algebra $C_{\mathrm{red}}^\ast(G_n,\sigma)$ is technically the closure, in the operator norm, of the linear span of the operators $\lambda_{n}(g)$ defined on $\ell^2(G_n)$ by $\lambda_{n}(g)\xi : h \in \ell^2(G_n) \mapsto \sigma(g,g^{-1}h)\xi(g^{-1}h)$. On the other hand, since $G_n\subseteq G_\infty$, we obtain a different unitary $\sigma$-projective representation of $G_n$, via the restriction of the $\sigma$-projective representation $\lambda$ of $G_\infty$ to $G_n$ on $\ell^2(G_\infty)$, giving us an alternative C*-algebra generated by $\{\lambda(h) : h \in G_n \}$.
If $S\subseteq G_\infty$ is any nonempty subset of $G_\infty$, we identify the space  $\ell^2(S)$ with
\begin{equation*}
 	 \{ \xi \in \ell^2(G_\infty) : \forall g \in G_\infty\setminus S\quad \xi(g) = 0 \}\text.
\end{equation*}
Let $Q_n\subseteq G_\infty$ be a subset of $G_\infty$ such that every right coset of $G_n$ in $G_\infty$ is of the form $G_n k$ for a unique $k \in Q_n$. Of course,
\begin{equation*}
  \ell^2(G_\infty) = \overline{\bigoplus}_{k \in Q_n} \ell^2(G_n k) \text,
\end{equation*}
where $\overline{\oplus}$ is the Hilbert sum, i.e. the closure of the direct sum.

Now, we set, for all $k\in G_\infty$ and $\xi \in \ell^2(G_\infty)$:
\begin{equation}\label{right-rep-eq}
  \rho(k) \xi : h \in \ell^2(G_\infty) \mapsto \sigma(h k,k^{-1}) \xi(h k)\text.
\end{equation}
Thus defined, $\rho$ is the right regular $\breve{\sigma}$-projective representation of $G_\infty$ on $\ell^2(G_\infty)$, where $\breve{\sigma}:g,h\in G_\infty\mapsto\sigma(h^{-1},g^{-1})$ is indeed a $2$-cocycle of $G_\infty$.

\begin{remark}
  If the $2$-cocycle $\sigma$ is normalized, i.e. $\sigma(g,g^{-1})=1$ for all $g\in G_\infty$, then $\breve{\sigma}=\overline{\sigma}$; we will however not need to work with normalized cocycles here.
\end{remark}

Since $\sigma$ is a $2$-cocycle, we obtain, for all $g,h,k \in G_\infty$ and $\xi \in \ell^2(G_\infty)$:
\begin{align*}
  \lambda(g)\rho(k)\xi(h)
  &= \sigma(g,g^{-1}h)\rho(k)\xi(g^{-1}h) \\
  &= \sigma(g,g^{-1}h)\sigma(g^{-1}hk,k^{-1})\xi(g^{-1}hk) \\
  &= \sigma(\underbracket[1pt]{g}_{\eqqcolon x},\underbracket[1pt]{g^{-1}h k}_{\eqqcolon y} \; \underbracket[1pt]{k^{-1}}_{\eqqcolon z})\sigma(\underbracket[1pt]{g^{-1}h k}_{=y},\underbracket[1pt]{k^{-1}}_{= z})\xi(g^{-1}hk) \\
  &= \sigma(\underbracket[1pt]{g}_{=x}, \underbracket[1pt]{g^{-1}h k}_{=y})\sigma(\underbracket[1pt]{h k}_{=xy},\underbracket[1pt]{k^{-1}}_{=z}) \xi(g^{-1}h k) \\
  &= \sigma(hk,k^{-1}) (\lambda(g)\xi) (hk) \\
  &= \rho(k)\lambda(g)\xi(h) \text.
\end{align*}
Therefore, $\lambda(g)$ and $\rho(k)$ commute, for all $g,k \in G_\infty$. 
It is moreover immediate that $\rho(k^{-1})$ maps $\ell^2(G_n k)$ onto $\ell^2(G_n)$. 

We now define the unitary $V$ from $\ell^2(G_\infty) = \overline{\bigoplus}_{k \in Q_n}\ell^2(G_n k)$ to $\overline{\bigoplus}_{k \in Q_n}\ell^2(G_n)$ by setting, for all $\xi = (\xi_k)_{k \in Q_n} \in \overline{\bigoplus}_{k\in Q_n}\ell^2(G_n k)$:
\begin{equation*}
  V\xi = \left(\rho(k^{-1})\xi_k\right)_{k\in Q_n} \; \in \; \overline{\bigoplus}_{k \in Q_n} \ell^2(G_n) \text.
\end{equation*}
By construction, $V$ is unitary, and moreover, for any $g \in G_n$:
\begin{equation*}
  V \lambda(g) V^\ast (\xi_k)_{k \in Q_n} = (\lambda_{n}(g)\xi_k)_{k\in Q_n} \text.
\end{equation*}
Thus, $\mathrm{Ad} V$ is a $\ast$-isomorphism from the C*-subalgebra generated by $\{\lambda(g) : g \in G_n \}$ and the C*-algebra $C^\ast_{\mathrm{red}}(G_n,\sigma)$ which maps $\lambda(g)$ to $\lambda_{n}(g)$ for all $g \in G_n$.

From now on, we thus identify $C_{\mathrm{red}}^\ast(G_n,\sigma)$ with the C*-algebra generated by $\{\lambda(g):g\in G_n\}$ in $C_{\mathrm{red}}^\ast(G_\infty,\sigma)$ and work exclusively in the latter. We will also identify $\ell^2(G_n,E)$ with $\left\{\xi\in\ell^2(G_\infty,E) : \forall g \in G_\infty\setminus G_n \quad \xi(g) = 0 \right\}$.

To complete our picture, we also identify $\Dirac_n$ with the operator defined for $\xi = \xi_n + \xi_n^\perp$, with $\xi_n \in \ell^2(G_n,E)^2$ and $\xi_n^\perp \in \ell^2(G_n.E) = \overline{\bigoplus}_{k\in Q_n\setminus G_n} \ell^2(G_n k,E)$, by $\Dirac_n \xi = \Dirac_n \xi_n \in \ell^2(G_n,E)$. We then observe that if $P_n$ is the orthogonal projection from $\ell^2(G_\infty)$ onto $\ell^2(G_n)$, we have, for all $\xi \in \dom{\Dirac_n}$ and for all $g \in G_\infty$:
\begin{align*}
  P_n^\circ \Dirac_n \xi(g)
  &= \begin{cases}
    (\mathds{L}_H(g)\otimes\gamma_1+\mathds{F}(g)\otimes\gamma_2)\xi(g) \text{ if $g \in G_n$, }\\
    0 \text{ otherwise, }
  \end{cases}\\
  &= \Dirac_\infty P_n^\circ \xi (g) \text.
\end{align*}
We thus have shown that $P_n^\circ\dom{\Dirac_n}\subseteq \dom{\Dirac_\infty}$ and $P_n^\circ \Dirac_n = \Dirac_\infty P_n^\circ$. Moreover, $P_n^\circ\Dirac_\infty P_n^\circ= \Dirac_n$ and thus, for all $a\in \dom{\Lip_n}$ we compute the following expression, using the fact that $[P_n,a]=0$,:
\begin{align*}
  P_n^\circ[\Dirac_\infty,a^\circ]P_n^\circ
  &= P_n^\circ \Dirac_\infty a^\circ P_n^\circ - P_n^\circ a^\circ \Dirac_\infty P_n^\circ \\
  &= P_n^\circ \Dirac_\infty P_n^\circ a^\circ - a^\circ P_n^\circ \Dirac_\infty P_n^\circ \\
  &= \Dirac_n a^\circ - a^\circ \Dirac_n.
\end{align*}
So we have, for all $a \in \dom{\Lip_\infty}$:
\begin{equation}\label{Ln-less-Linfty-eq}
  \Lip_n(a) = \opnorm{[\Dirac_n,a^\circ]}{}{\ell^2(G_n,E)} = \opnorm{P_n^\circ [\Dirac_\infty,a^\circ]P_n^\circ}{}{\ell^2(G_\infty,E)} \leq \Lip_\infty(a) \text.
\end{equation}

With all of the above  identifications, we thus have a natural unital $\ast$-morphism from $C^\ast_{\mathrm{red}}(G_n,\sigma)$ into $C^\ast_{\mathrm{red}}(G_\infty,\sigma)$ which becomes just the natural inclusion, and
\begin{equation*}
  \lambda(g)\ell^2(G_n k) \subseteq \ell^2(G_n k)
\end{equation*}
for each $g\in G_n$ and $k \in G_\infty$. By linearity and continuity, we conclude that if $a \in C^\ast_{\mathrm{red}}(G_n,\sigma)$, then $a\ell^2(G_n k) \subseteq \ell^2(G_n k)$ for all $k \in G_\infty$. We also note that $[\Dirac_\infty,a^\circ]\ell^2(G_n k,E)\subseteq \ell^2(G_n k,E)$ for all $k \in G_\infty$ and $a \in \dom{\Lip_n}$.

We will work for the rest of this section with the above identifications and their basic properties without further mention.

\medskip

Our main theorem in this section involves, in particular, a strong result about the convergence of some of the {\qcms s} induced by our spectral triples: namely, we obtain some convergence in the sense of the \emph{Lipschitz distance}.

The \emph{Lipschitz distance} $\LipschitzD$, extended to noncommutative metric geometry in \cite{Latremoliere16b}, is defined between any two {\qcms s} $(\A,\Lip_\A)$ and $(\B,\Lip_\B)$, by
\begin{multline*}
  \LipschitzD((\A,\Lip_\A),(\B,\Lip_\B)) \coloneqq \\
  \inf\left\{ \ln(k) : \exists\pi:(\A,\Lip_\A)\rightarrow(\B,\Lip_\B) \text{ Lipschitz *-isomorphism} \text{ with }\frac{1}{k}\Lip_\A\leq\Lip_\B\circ\pi\leq k \Lip_\A \right\}\text,
\end{multline*}
with the convention that $\inf\emptyset=\infty$. Thus $\LipschitzD$ is finite only between {\qcms s} built over isomorphic C*-algebras. As shown in \cite{Latremoliere16b}, the Lipschitz distance dominates the Gromov-Hausdorff propinquity; in fact, closed balls for the Lipschitz distance are compact in the propinquity.

In particular, if $\A$ is a unital C*-algebra, and if $\Lip_1$ and $\Lip_2$ are two Lipschitz seminorms over $\A$ with the same domain, then the identity is bi-Lipschitz, and we do obtain, by definition:
\begin{equation*}
  \LipschitzD((\A,\Lip_1),(\A,\Lip_2)) \leq \ln(C) \text{ if }\frac{1}{C}\Lip_1\leq\Lip_2\leq C\Lip_1 \text.
\end{equation*}

\medskip

We now prove our main result about inductive limits of discrete groups and the convergence, for the spectral propinquity, of their spectral triples. Note that below we use the notation established in Definition (\ref{G-notation}).

\begin{theorem}\label{main-group-thm} With the notation and assumptions of Subsection \ref{spectral-triples-section}, if  \[(C^\ast_{\mathrm{red}}(G_n,\sigma),\ell^2(G_n,E),\Dirac_n)\] 
  is a \emph{metric} spectral triple for all $n\in\Nbar$,  and if
  \begin{equation*}
    \left\{a\in\dom{\Lip_n}:\Lip_n(a)\leq 1\right\} = \closure{\left\{ a\in C_c(G_n):\Lip_n(a)\leq 1 \right\}} \text,
  \end{equation*}
  then 
  \begin{equation*}
    \lim_{n\rightarrow\infty} \spectralpropinquity{}\left((C^\ast_{\mathrm{red}}(G_n,\sigma),\ell^2(G_n,E), \Dirac_n),(C^\ast_{\mathrm{red}}(G_\infty,\sigma),\ell^2(G_\infty,E),\Dirac_\infty)\right) = 0 \text.
  \end{equation*}

  Moreover, for any fixed $k\in\N$, the sequence $(C^\ast_{\mathrm{red}}(G_k,\sigma),\Lip_n)_{n\geq k}$ converges in the Lipschitz distance $\LipschitzD$ to the {\qcms} $(C^\ast_{\mathrm{red}}(G_k,\sigma),\Lip_\infty)$.
\end{theorem}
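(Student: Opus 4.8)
The plan is to apply Theorem~(\ref{main-spec-thm}) with $\pi$ taken to be the identity automorphism of $C^\ast_{\mathrm{red}}(G_\infty,\sigma)$. Since $\Lip_\infty\circ\mathrm{id}=\Lip_\infty$, the identity is a full quantum isometry of $(C^\ast_{\mathrm{red}}(G_\infty,\sigma),\Lip_\infty)$, and the spectral triples $(C^\ast_{\mathrm{red}}(G_n,\sigma),\ell^2(G_n,E),\Dirac_n)$ already form an inductive sequence of metric spectral triples in the sense of Definition~(\ref{ind-limit-spectral-triple-def}) by the identifications made above (in particular $\ell^2(G_n,E)$ reduces $C^\ast_{\mathrm{red}}(G_n,\sigma)$, $\Dirac_\infty$ restricts to $\Dirac_n$, and $P_n^\circ\Dirac_\infty P_n^\circ=\Dirac_n$). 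So everything reduces to showing that the identity is a \emph{bridge builder} for $((C^\ast_{\mathrm{red}}(G_n,\sigma),\Lip_n)_{n\in\N},(C^\ast_{\mathrm{red}}(G_\infty,\sigma),\Lip_\infty))$. Two earlier facts will be used repeatedly: Expression~(\ref{Ln-less-Linfty-eq}) gives $\Lip_n\leq\Lip_\infty$ on $\dom{\Lip_\infty}$ for every $n\in\N$, and Lemma~(\ref{G-lemma}) gives $\opnorm{[M_{\mathds{L}_H},a]}{}{\ell^2(G_n)}\leq\Lip_n(a)$ and $\opnorm{[M_{\mathds{F}},a]}{}{\ell^2(G_n)}\leq\Lip_n(a)$ for all $n\in\Nbar$ and $a\in\dom{\Lip_n}$.

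The core of the argument, and the step I expect to be the main obstacle, is the estimate: \emph{there are $C_n\geq 1$ with $\lim_{n\to\infty}C_n=1$ such that $b\in\dom{\Lip_\infty}$ and $\Lip_\infty(b)\leq C_n\Lip_n(b)$ for every $n\in\N$ and every $b\in\dom{\Lip_n}$}. By the density hypothesis $\{a\in\dom{\Lip_n}:\Lip_n(a)\leq 1\}=\closure{\{a\in C_c(G_n):\Lip_n(a)\leq 1\}}$ and lower semicontinuity of $\Lip_\infty$, it suffices to prove this for $b\in C_c(G_n)$, which lies in $\dom{\Lip_\infty}$ automatically. For such $b$, decompose $\ell^2(G_\infty,E)=\overline{\bigoplus}_{k\in Q_n}\ell^2(G_nk,E)$ along the right $G_n$-cosets; since $b^\circ$ and $\Dirac_\infty$ preserve each summand, $[\Dirac_\infty,b^\circ]$ is block diagonal, and the right $\breve{\sigma}$-projective representation $\rho$ of Expression~(\ref{right-rep-eq}), which commutes with $\lambda$ and acts on each summand as a diagonal unitary composed with a translation, identifies the $k$-th block with $[\Dirac^{(k)},b^\circ]$ on $\ell^2(G_n,E)$, where $\Dirac^{(k)}=M_{\mathds{L}_H(\,\cdot\, k)}\otimes\gamma_1+M_{\mathds{F}(\,\cdot\, k)}\otimes\gamma_2$. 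If $k$ represents the coset $G_n$ itself, this block has norm exactly $\Lip_n(b)$; if $k\notin G_n$, say $k\in G_m\setminus G_{m-1}$ with $m>n$, then by Lemma~(\ref{length-lemma}) one has $gk\in G_m\setminus G_{m-1}$ for all $g\in G_n$, so $g\mapsto\mathds{F}(gk)$ is constant on $G_n$ and $[\Dirac^{(k)},b^\circ]=[M_{\mathds{L}_H(\,\cdot\, k)}\otimes\gamma_1,b^\circ]$. Writing $\delta_n\coloneqq\Haus{\mathds{L}_H}(G_\infty,G_n)$ and picking, via Expression~(\ref{L-H-eq}), some $k'\in G_n$ with $\mathds{L}_H(k^{-1}k')\leq\delta_n$, so that $\opnorm{M_{\mathds{L}_H(\,\cdot\, k)}-M_{\mathds{L}_H(\,\cdot\, k')}}{}{\ell^2(G_n)}\leq\delta_n$, and using $[M_\phi,b]=[M_\phi,b-t\cdot 1]$ ($t\in\R$), the translation invariance of the commutator inside $G_n$, Lemma~(\ref{G-lemma}) and Theorem~(\ref{norm-Lip-bound-thm}),
\[
\opnorm{[\Dirac^{(k)},b^\circ]}{}{\ell^2(G_n,E)}\leq\Lip_n(b)+2\delta_n\inf_{t\in\R}\norm{b-t\cdot 1}{C^\ast_{\mathrm{red}}(G_\infty,\sigma)}\leq\Lip_n(b)+2\delta_n\,\Lip_\infty(b)\,\qdiam{C^\ast_{\mathrm{red}}(G_\infty,\sigma)}{\Lip_\infty}.
\]
Taking the supremum over $k\in Q_n$ gives $\Lip_\infty(b)\leq\Lip_n(b)+2\delta_n\,\Lip_\infty(b)\,\qdiam{C^\ast_{\mathrm{red}}(G_\infty,\sigma)}{\Lip_\infty}$, and rearranging (legitimate once $2\delta_n\,\qdiam{C^\ast_{\mathrm{red}}(G_\infty,\sigma)}{\Lip_\infty}<1$) yields the claim with $C_n=\bigl(1-2\delta_n\,\qdiam{C^\ast_{\mathrm{red}}(G_\infty,\sigma)}{\Lip_\infty}\bigr)^{-1}$, which tends to $1$ since $\delta_n\to 0$ by Expression~(\ref{L-H-eq}) and $\qdiam{C^\ast_{\mathrm{red}}(G_\infty,\sigma)}{\Lip_\infty}<\infty$. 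What makes this delicate is exactly the interplay of the two lengths: the ``ultrametric-type'' behaviour of $\mathds{F}$ kills the $\mathds{F}$-part of the Dirac commutator on every coset of $G_n$ other than $G_n$, while the Hausdorff convergence along $\mathds{L}_H$ controls what remains.

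I would then verify the two defining properties of a bridge builder for $\pi=\mathrm{id}$, with $\varepsilon>0$ and a fixed state $\mu$ of $C^\ast_{\mathrm{red}}(G_\infty,\sigma)$ (restricted to each $C^\ast_{\mathrm{red}}(G_n,\sigma)$ as needed). For the first: the set $\{a\in\dom{\Lip_\infty}:\Lip_\infty(a)\leq 1,\mu(a)=0\}$ is compact by Theorem~(\ref{tt-thm}), hence has a finite $\tfrac{\varepsilon}{3}$-dense subset, whose members may — by the density hypothesis for $\Lip_\infty$ — be taken $\tfrac{\varepsilon}{3}$-close to finitely supported self-adjoint elements of $\Lip_\infty$-norm at most $1$ and $\mu$-value $0$; these finitely many elements all lie in $C^\ast_{\mathrm{red}}(G_N,\sigma)$ for some $N$, and then for $n\geq N$ any $a$ is matched, after subtracting $\mu(a)\cdot 1$ and rescaling by $\Lip_\infty(a)$, to the nearest such element $b\in\dom{\Lip_n}$, for which $\Lip_n(b)\leq\Lip_\infty(a)$ by Expression~(\ref{Ln-less-Linfty-eq}) and $\norm{\pi(a)-b}{C^\ast_{\mathrm{red}}(G_\infty,\sigma)}<\varepsilon\Lip_\infty(a)$. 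For the second: given $b\in\dom{\Lip_n}$ put $a\coloneqq\tfrac{1}{C_n}(b-\mu(b)\cdot 1)+\mu(b)\cdot 1$; then $\Lip_\infty(a)=\tfrac{1}{C_n}\Lip_\infty(b)\leq\Lip_n(b)$ by the core estimate, while by Theorem~(\ref{norm-Lip-bound-thm}) in $C^\ast_{\mathrm{red}}(G_\infty,\sigma)$, $\norm{\pi(a)-b}{C^\ast_{\mathrm{red}}(G_\infty,\sigma)}=\bigl(1-\tfrac{1}{C_n}\bigr)\norm{b-\mu(b)\cdot 1}{C^\ast_{\mathrm{red}}(G_\infty,\sigma)}\leq(C_n-1)\,\qdiam{C^\ast_{\mathrm{red}}(G_\infty,\sigma)}{\Lip_\infty}\,\Lip_n(b)$, which drops below $\varepsilon\Lip_n(b)$ for $n$ large. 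This makes $\mathrm{id}$ a bridge builder, and Theorem~(\ref{main-spec-thm}) gives $\lim_{n\to\infty}\spectralpropinquity{}\bigl((C^\ast_{\mathrm{red}}(G_n,\sigma),\ell^2(G_n,E),\Dirac_n),(C^\ast_{\mathrm{red}}(G_\infty,\sigma),\ell^2(G_\infty,E),\Dirac_\infty)\bigr)=0$.

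Finally, for the Lipschitz-distance assertion I would fix $k\in\N$ and argue that, for $n\geq k$, the identity $\Dirac_k=P_k^\circ\Dirac_n P_k^\circ$ gives $\Lip_k(a)\leq\Lip_n(a)$ on $\dom{\Lip_n}\cap C^\ast_{\mathrm{red}}(G_k,\sigma)$; combining this with $\Lip_n\leq\Lip_\infty\leq C_n\Lip_n$ from the core estimate — which, together with Expression~(\ref{Ln-less-Linfty-eq}) and its level-$k$ analogue, also shows $\Lip_n$ and $\Lip_\infty$ have the same domain on $C^\ast_{\mathrm{red}}(G_k,\sigma)$ — the identity is a bi-Lipschitz $\ast$-isomorphism witnessing $\LipschitzD\bigl((C^\ast_{\mathrm{red}}(G_k,\sigma),\Lip_n),(C^\ast_{\mathrm{red}}(G_k,\sigma),\Lip_\infty)\bigr)\leq\ln C_n\to 0$, as $k$ is fixed and $C_n\to 1$.
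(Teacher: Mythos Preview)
Your proposal is correct and follows essentially the same route as the paper: take $\pi=\mathrm{id}$, decompose $\ell^2(G_\infty,E)$ along right $G_n$-cosets, observe that $\mathds{F}$ is constant on each nontrivial coset so only the $\mathds{L}_H$-part survives there, and control that part using the Hausdorff convergence $\delta_n\to 0$ to obtain $\Lip_\infty\leq C_n\Lip_n$ with $C_n\to 1$; the two bridge-builder directions and the Lipschitz-distance statement then follow exactly as you indicate.

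The only cosmetic difference is in how the $\mathds{L}_H$-block is estimated. The paper chooses the coset representatives $k\in Q_n$ themselves to satisfy $\mathds{L}_H(k)<\varepsilon/C^2$ and uses the commutator identity $[M_{\mathds{L}_H},b]\rho(k)=[M_{\mathds{L}_H},\rho(k)]b+\rho(k)[M_{\mathds{L}_H},b]-b[M_{\mathds{L}_H},\rho(k)]$ together with $\opnorm{[M_{\mathds{L}_H},\rho(k)]}{}{}\leq\mathds{L}_H(k)$. You instead conjugate first to $\ell^2(G_n,E)$, producing $M_{\mathds{L}_H(\cdot k)}$, and then perturb to $M_{\mathds{L}_H(\cdot k')}$ for some $k'\in G_n$ with $\mathds{L}_H(k'^{-1}k)\leq\delta_n$, invoking the right regular action of $G_n$ (your ``translation invariance'') to reduce $[M_{\mathds{L}_H(\cdot k')},b]$ to $[M_{\mathds{L}_H},b]$. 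Both yield the same inequality $\Lip_\infty(b)\leq\Lip_n(b)+2\delta_n\,\qdiam{C^\ast_{\mathrm{red}}(G_\infty,\sigma)}{\Lip_\infty}\,\Lip_\infty(b)$ and hence the same $C_n$; your packaging is arguably a bit cleaner since everything lives on $\ell^2(G_n)$ after the first conjugation.
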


\begin{proof}
  We shall check that the identity automorphism of $C^\ast_{\mathrm{red}}(G_\infty,\sigma)$ satisfies the hypothesis of Theorem (\ref{main-spec-thm}).

  Obviously, the identity is a full quantum isometry of $(C^\ast_{\mathrm{red}}(G_\infty,\sigma),\Lip_\infty)$.
  
 Let $C = 2\qdiam{C^\ast(G_\infty,\sigma)}{\Lip_\infty}$ --- note that since $G_\infty \neq\{1\}$, we have $C > 0$. Let $\mathrm{tr} : a \in C^\ast_{\mathrm{red}}(G_\infty,\sigma)\mapsto \inner{a\delta_1}{\delta_1}{\ell^2(G_\infty)}$; $\mathrm{tr}$ is a tracial state of $C^\ast(G_\infty,\sigma)$ which maps $a\in C_c(G_\infty)$ to $a(1)$.

 \medskip
  
  Fix $\varepsilon \in \left(0,\frac{C}{2}\right)$. Since $(C^\ast_{\mathrm{red}}(G_\infty,\sigma),\Lip_\infty)$ is a {\qcms} by assumption, the set $X_\infty \coloneqq \{ a \in \dom{\Lip_\infty} : \Lip_\infty(a)\leq 1, \mathrm{tr}(a) = 0 \}$ is compact. Thus, there exists a finite $\varepsilon$-dense subset $X_\infty^\varepsilon\subseteq X_\infty$. Since $X_\infty = \closure{\{a\in C_c(G_\infty):\Lip_\infty(a)\leq 1, \mathrm{tr}(a) = 0\}}$, we can moreover assume that $X_\infty^\varepsilon\subseteq C_c(G_\infty)$ as well.

Since $X_\infty^\varepsilon$ is finite and each of its element has finite support, there exists a finite subset $S\subseteq G_\infty$ which contains the support of all the elements in $X_\infty^\varepsilon$. Since $G_\infty=\bigcup_{n\in\N} G_n$ and $(G_n)_{n\in\N}$ is increasing, there exists $N_1 \in \N$ such that, for all $n\geq N_1$, we have $S\subseteq G_n$. Thus $X_\infty^\varepsilon \subseteq C_c(G_n)$. Moreover, by Expression (\ref{Ln-less-Linfty-eq}), we also obtain $\Lip_n(a) \leq \Lip_\infty(a)$ for all $a\in X_\infty^\varepsilon$.

  In summary,
  \begin{equation*}
    \forall a \in X_\infty \quad \exists  b \in X_\infty^\varepsilon\subseteq C_c(G_n)\subseteq C^\ast_{\mathrm{red}}(G_n,\sigma): \quad \norm{a-b}{C^\ast_{\mathrm{red}}(G_\infty, \sigma)} < \varepsilon \text{ and }\Lip_n(a) \leq \Lip_\infty(a)\leq 1 \text.
  \end{equation*}
If $a\in\dom{\Lip_\infty}$, then there exists $b \in X_\infty^\varepsilon$ such that $\norm{a-\mathrm{tr}(a) - b}{C^\ast_{\mathrm{red}}(G_\infty,\sigma)} < \varepsilon$. Of course, $b + \mathrm{tr}(a) \in C^\ast_{\mathrm{red}}(G_n,\sigma)$ and $\Lip_n(b+\mathrm{tr}(a)) = \Lip_n(b) \leq 1$.
By homogeneity, it follows that for all $a\in \dom{\Lip_\infty}$, and for all $n\geq N_1$, there exists $b \in \dom{\Lip_n}$ such that $\norm{a-b}{C^\ast_{\mathrm{red}}(G_\infty,\sigma)} < \varepsilon \Lip_\infty(a)$ and $\Lip_n(b)\leq \Lip_\infty(a)$.

  \medskip
  
 Now, using our assumption of Equation (\ref{L-H-eq}), there exists $N_2\in\N$, with $N_2\geq N_1$, such that
  \begin{equation*}
    \Haus{\mathds{L}_H}(G_\infty,G_n) < \frac{\varepsilon}{C^2}\text.
  \end{equation*}

  For each right coset $c$ of $G_n$ in $G_\infty$, let $k \in c$. Since the distance for $\mathds{L}_H$ from $k \in G_\infty$ to $G_n$ is strictly less than $\frac{\varepsilon}{C^2}$, there exists $g \in G_n$ such that $\mathds{L}_H(g^{-1}k) < \frac{\varepsilon}{C^2}$. Setting $k_c = g^{-1}k$, we have by definition of right cosets that $c = G_n k_c$. Therefore, there exists a subset $Q_n\subseteq G_\infty$ of $G_\infty$ such that,  if $k \in Q_n$ then $\mathds{L}_H(k)<\frac{\varepsilon}{C^2}$, and if $c$ is a right coset of $G_n$ in $G_\infty$, then there exists a unique $k \in Q_n$ such that $c = G_n k$.

  \medskip
  
  Let $n\geq N_2$ and let $b \in C_c(G_n)\subseteq C^\ast_{\mathrm{red}}(G_n, \sigma)$ with $b(1) = \mathrm{tr}(b) = 0$. Note that $b\in \dom{\Lip_\infty}\cap\dom{\Lip_n}$ so, in particular, both $\Lip_n(b)$ and $\Lip_\infty(b)$ are \emph{finite}.

 We thus have $\ell^2(G_\infty) = \overline{\oplus}_{k \in Q_n} \ell^2(G_n k)$, where $\overline\oplus$ is the Hilbert sum (the closure of the sum). If $h \in G_n$, then, by definition of a right coset, $\lambda(h) \ell^2(G_n k) \subseteq\ell^2(G_n k)$ for all $k \in Q_n$. As $\Dirac_\infty\left(\dom{\Dirac_n}\right) \subseteq\ell^2(G_n k,E)$ as well for all $k \in Q_n$, we conclude that $[\Dirac_\infty,b^\circ]\left(\ell^2(G_n k,E)\right)\subseteq\ell^2(G_n k,E)$ --- i.e. $b$, $\Dirac_\infty$ and its commutators are all block diagonal in this decomposition of $\ell^2(G_\infty)$. It follows that
  \begin{equation}\label{L-sup-L-eq}
    \opnorm{[\Dirac_\infty,b^\circ]}{}{\ell^2(G_\infty,E)} = \sup_{k \in Q_n} \opnorm{[\Dirac_n,b^\circ]}{}{\ell^2(G_n k,E)},
  \end{equation}
  allowing for any of the above norm to be infinite.
                       
  Now, the restriction of $\Dirac_\infty$ to $\dom{\Dirac_n}$ is exactly $\Dirac_n$, so:
  \begin{equation*}
    \opnorm{[\Dirac_\infty,b^\circ]}{}{\ell^2(G_n,E)} = \opnorm{[\Dirac_n,b^\circ]}{}{\ell^2(G_n,E)} = \Lip_n(b)\text.
  \end{equation*}

  Now, fix $k \in Q_n$ and $k\notin G_n$.  By assumption, and using repeatedly that $(G_p)_{p\in\N}$ is increasing, we observe that $\mathds{F}(gk) = \mathds{F}(k)$ for all $g \in G_n$: Lemma (\ref{length-lemma}) implies that $\mathds{F}(gk)\leq \mathds{F}(k)$ since $\mathds{F}(g)\leq n < \mathds{F}(k)$; on the other hand, if $p \in \{0,\ldots,m-1\}$ where $\mathds{F}(k)=\mathrm{scale}(m)$, noting that $m>0$ since $k\notin G_0$, then $g k \in G_p$ implies $k = g^{-1} g k \in G_n$, which is a contradiction; hence $\mathds{F}(kg) = \mathrm{scale}(m)$, as claimed.

  Therefore, the operator $M_{\mathds{F}}$ is constant on $\ell^2(G_n k)$, and thus $[M_{\mathds{F}},b] = 0$ on $\ell^2(G_n k)$. So,
  \begin{equation*}
    \opnorm{[\Dirac_\infty,b^\circ]}{}{\ell^2(G_n k,E)} = \opnorm{[M_{\mathds{L}_H},b]}{}{\ell^2(G_n k)} \text.
  \end{equation*}

  We will use the $\breve{\sigma}$-projective right representation of $G_\infty$ on $\ell^2(G_\infty)$, as defined in Expression (\ref{right-rep-eq}). By construction, the restriction of $\rho(k)$ to $\ell^2(G_n)$ (which we will keep denoting by $\rho(k)$) is a unitary onto $\ell^2(G_n k)$ (with inverse the restriction to $\ell^2(G_n k)$ of its adjoint $\rho(k)^\ast=\rho(k^{-1})$). Therefore,
  \begin{equation}\label{eq:comm}
    \opnorm{[M_{\mathds{L}_H},b]}{}{\ell^2(G_n k)} = \opnorm{[M_{\mathds{L}_H},b]\rho(k)}{\ell^2(G_n k)}{\ell^2(G_n)} \text.
  \end{equation}

  Next, a simple computation shows (like with $\lambda$) that the unitary $\rho(k)$ maps $\dom{M_{\mathds{L}_H}}$ to itself, and for all $\xi \in \ell^2(G_\infty)$ and $h \in G_\infty$:
  \begin{equation*}
    [M_{\mathds{L}_H},\rho(k)] \xi(h) = (\mathds{L}_H(h)-\mathds{L}_H(hk))\sigma(hk,k^{-1})\xi(hk)
  \end{equation*}
  so $\opnorm{[M_{\mathds{L}_H},\rho(k)] \xi}{}{\ell^2(G_n)} \leq \sup_{h \in G_n} |\mathds{L}_H(h)-\mathds{L}_H(hk)| \norm{\xi}{\ell^2(G_n)} \leq \mathds{L}_H(k) \norm{\xi}{\ell^2(G_n)}$. Choosing $\xi = \delta_1$, we obtain
  \begin{equation}\label{Drho-eq}
    \opnorm{[M_{\mathds{L}_H},\rho(k)]}{}{\ell^2(G_n)} = \mathds{L}_H(k) \text.
  \end{equation}
  
  Now, since $\rho(k)$ commutes with $\lambda(g)$ for all $g \in G_\infty$, we conclude $[b,\rho(k)] = 0$, and thus, on $\dom{M_{\mathds{L}_H}}$
  \begin{align}\label{Dbrho-eq}
      [M_{\mathds{L}_H},b]\rho(k)
      &= M_{\mathds{L}_H} b \rho(k) - b M_{\mathds{L}_H}  \rho(k) \\
      &= M_{\mathds{L}_H} \underbracket[1pt]{\rho(k) b}_{[b,\rho(k)]=0} - b \rho(k) M_{\mathds{L}_H} - b [M_{\mathds{L}_H},\rho(k)] \nonumber \\
      &= [M_{\mathds{L}_H},\rho(k)] b + \rho(k) M_{\mathds{L}_H} b - \underbracket[1pt]{\rho(k) b}_{[\rho(k),b]=0} M_{\mathds{L}_H} - b [M_{\mathds{L}_H},\rho(k)] \nonumber \\
      &= [M_{\mathds{L}_H},\rho(k)] b + \rho(k) [M_{\mathds{L}_H}, b] - b [M_{\mathds{L}_H},\rho(k)] \text. \nonumber
  \end{align}
  Therefore, by Equation \eqref{eq:comm},
  \begin{align*}
    \;&\; \opnorm{[\Dirac_\infty,b^\circ]}{}{\ell^2(G_n k,E)}\\
    &= \opnorm{[M_{\mathds{L}_H},b]\rho(k)}{\ell^2(G_n k)}{\ell^2(G_n)} \\
    &\leq \underbracket[1pt]{\opnorm{[M_{\mathds{L}_H},\rho(k)] b}{\ell^2(G_n k)}{\ell^2(G_n)} + \opnorm{\rho(k) [M_{\mathds{L}_H}, b]}{\ell^2(G_n k)}{\ell^2(G_n)} + \opnorm{ b [M_{\mathds{L}_H},\rho(k)]}{\ell^2(G_n k)}{\ell^2(G_n)}}_{\text{by Eq. (\ref{Dbrho-eq}) }} \\
    &\leq \underbracket[1pt]{\opnorm{[M_{\mathds{L}_H},\rho(k)]}{\ell^2(G_n k)}{\ell^2(G_n)}}_{\leq \mathds{L}_H(k) \text{ by Eq. (\ref{Drho-eq})}}\norm{b}{C^\ast(G_n,\sigma)} + \underbracket[1pt]{\opnorm{\rho(k)}{\ell^2(G_n k)}{\ell^2(G_n)}}_{=1 \text{ as $\rho(k)$ is unitary}} \quad  \underbracket[1pt]{\opnorm{[M_{\mathds{L}_H},b]}{}{\ell^2(G_n)}}_{\leq \Lip_n(b) \text{ by Lemma (\ref{G-lemma})}} \\
    &\quad + \norm{b}{C^\ast(G_n,\sigma)} \underbracket[1pt]{\opnorm{[M_{\mathds{L}_H},\rho(k)]}{\ell^2(G_n k)}{\ell^2(G_n)}}_{\leq \mathds{L}_H(k) \text{ by Eq. (\ref{Drho-eq})}} \\
    &\leq \mathds{L}_H(k) \underbracket[1pt]{\norm{b}{C^\ast_{\mathrm{red}}(G_n,\sigma)}}_{\leq \frac{C}{2} \Lip_\infty(b)} + \Lip_n(b) + \norm{b}{C^\ast_{\mathrm{red}}(G_n,\sigma)} \mathds{L}_H(k) \\
    &\leq \frac{\varepsilon}{C^2} \frac{C}{2} \Lip_\infty(b) + \Lip_n(b) + \frac{C}{2} \Lip_\infty(b) \frac{\varepsilon}{C^2}  \\
    &\leq \Lip_n(b) + \frac{\varepsilon}{C}\Lip_\infty(b) \text.
  \end{align*}
  By Expression (\ref{L-sup-L-eq}), we thus get
  \begin{equation*}
    \Lip_\infty(b) \leq \Lip_n(b) + \frac{\varepsilon}{C}\Lip_\infty(b) \text.
  \end{equation*}

  Therefore, we have shown that since $\varepsilon\in\left(0,\frac{C}{2}\right)$,
  \begin{equation}\label{Lip-n-infty-eq}
    \forall b\in C_c(G_n) \quad \mathrm{tr}(b) =0 \implies \Lip_\infty(b) \leq \frac{1}{1-\frac{\varepsilon}{C}}\Lip_n(b) \text.
  \end{equation}

  Now, let $b \in C_c(G_n)$. We then easily compute:
  \begin{equation*}
    \Lip_\infty(b) = \Lip_\infty(b - \mathrm{tr}(b)1) \leq \frac{1}{1-\frac{\varepsilon}{C}}\Lip_n(b-\mathrm{tr}(b)1) = \frac{1}{1-\frac{\varepsilon}{C}}\Lip_n(b) \text.
  \end{equation*}
  
  Now, let $a \in \dom{\Lip_n}$ with $\Lip_n(a)\leq 1$. By assumption, there exists a sequence $(a_k)_{k\in\N}$ converging in $C^\ast_{\mathrm{red}}(G_n,\sigma)$ to $a$ such that $\Lip_n(a_k)\leq 1$ and $a_k \in C_c(G_n)$ for all $k \in \N$.  We thus have, by lower semicontinuity of $\Lip_n$, and Expression (\ref{Lip-n-infty-eq}):
  \begin{align*}
    \Lip_\infty(a)\leq\liminf_{k\rightarrow\infty}\Lip_\infty(a_k) \leq \frac{1}{1-\frac{\varepsilon}{C}}\liminf_{k\rightarrow\infty}\Lip_n(a_k) \leq \frac{1}{1-\frac{\varepsilon}{C}}\text.
  \end{align*}
  Thus, we have shown that, for all $n\geq N$, if $a\in \dom{\Lip_n}$, then $a \in \dom{\Lip_\infty}$, and moreover,
  \begin{equation*}
    \forall a\in\dom{\Lip_n} \quad \Lip_\infty(a) \leq \frac{1}{1-\frac{\varepsilon}{C}}\Lip_n(a) \text.
  \end{equation*}

  It is immediate by construction that $\Lip_n\leq  \Lip_\infty$ on $\dom{\Lip_n}$. Thus we have proven that for all $n\geq N$ and $k\geq n$, we have $\Lip_k\leq\Lip_\infty\leq\frac{1}{1-\frac{\varepsilon}{C}}\Lip_k(a)$. As a byproduct of this, we have shown that $\lim_{k\rightarrow\infty}\LipschitzD((C^\ast(G_n,\sigma),\Lip_k),(C^\ast(G_n,\sigma),\Lip_\infty)=0$.

  We now pause to note that, thanks to our identifications discussed prior to this theorem, and the observation that $\dom{\Lip_n}\subseteq\dom{\Lip_\infty}$ which we have just now established, $(C^\ast_{\mathrm{red}}(G_n,\sigma),\ell^2(G_n)\otimes E,\Dirac_n)_{n\in\N}$ is an inductive sequence of spectral triples in the sense of \cite{Floricel19}, where the $\ast$-morphisms from $C^\ast_{\mathrm{red}}(G_n,\sigma)$ to $C^\ast_{\mathrm{red}}(G_{n+1},\sigma)$ and the linear isometry from $\ell^2(G_n)$ to $\ell^2(G_{n+1})$ are just the inclusion maps. Moreover $(C^\ast_{\mathrm{red}}(G_\infty,\sigma),\ell^2(G_\infty,E),\Dirac_\infty)$ is indeed the inductive limit of this system.

  We now note that since $\Lip_\infty\leq\left(\frac{1}{1-\frac{\varepsilon}{C}}\right)\Lip_n$ and  $\varepsilon\in\left(0,\frac{C}{2}\right)$, we have
  \begin{equation*}
    \qdiam{C^\ast_{\mathrm{red}}(G_n,\sigma)}{\Lip_n} \leq \left(\frac{1}{1-\frac{\varepsilon}{C}}\right) \qdiam{C^\ast_{\mathrm{red}}(G_\infty,\sigma)}{\Lip_\infty} = \frac{C^2}{2(C-\varepsilon)} \leq C \text.
  \end{equation*}

  Let $b\in \dom{\Lip_n}$, and let $a=\left(1-\frac{\varepsilon}{C}\right)b \in\dom{\Lip_\infty}$. We then compute:
  \begin{align*}
    \norm{b-a}{\C^\ast_{\mathrm{red}}(G_\infty,\sigma)}
    &=\norm{b-\left(1-\frac{\varepsilon}{C}\right)b}{C^\ast_{\mathrm{red}}(G_\infty, \sigma)}\\
    &\leq \frac{\varepsilon}{C} \norm{b}{C^\ast_{\mathrm{red}}(G_\infty,\sigma)} \\
    &\leq \frac{\varepsilon}{C} \qdiam{C^\ast_{\mathrm{red}}(G_n,\sigma)}{\Lip_n}\Lip_n(b) \\
    &\leq \frac{\varepsilon}{C} C \, \Lip_n(b) = \varepsilon\,  \Lip_n(b)\text,
  \end{align*}
  while
  \begin{equation*}
    \Lip_\infty(a) = \Lip_\infty\left(\left(1-\frac{\varepsilon}{C}\right)b\right) \leq \frac{1}{1-\frac{\varepsilon}{C}} \Lip_n\left(\left(1-\frac{\varepsilon}{C}\right)b\right) = \Lip_n(b) \text.
  \end{equation*}

  Hence, if $n\geq N_2$, then:
  \begin{itemize}
  \item $\forall a \in \dom{\Lip_\infty} \quad \exists b \in \dom{\Lip_n}: \quad \Lip_n(b)\leq\Lip_\infty(a) \text{ and }\norm{b-a}{C^\ast_{\mathrm{red}}(G_\infty,\sigma)}<\varepsilon\,  \Lip_{\infty}(a)$,
  \item $\forall b \in \dom{\Lip_n} \quad\exists a \in \dom{\Lip_\infty}: \quad \Lip_\infty(a)\leq \Lip_n(b) \text{ and }\norm{a-b}{C^\ast_{\mathrm{red}}(G_\infty,\sigma)} < \varepsilon\,  \Lip_n(b)$.
  \end{itemize}
  Therefore, by Theorem (\ref{main-spec-thm}), we conclude that
  \begin{equation*}
    \lim_{n\rightarrow\infty} \spectralpropinquity{}((C^\ast_{\mathrm{red}}(G_n,\sigma),\ell^2(G_n,E),\Dirac_n),(C^\ast_{\mathrm{red}}(G_\infty,\sigma),\ell^2(G_\infty,E),\Dirac_\infty)) = 0 \text,
  \end{equation*}
  as claimed.
\end{proof}

We now wish to apply Theorem (\ref{main-group-thm}) to the family in Example (\ref{nc-solenoid-ex}), as well as to the Bunce-Deddens algebras. Thus, we shall now focus on Abelian groups.

So from now on we assume   that $G_\infty$ is Abelian. Therefore we will employ the additive notation for the groups $G_n$ ($n\in\Nbar$).
Since Abelian groups are amenable, we will also from now on identify $C^\ast_{\mathrm{red}}(G_n,\sigma)$ with $C^\ast(G_n,\sigma)$ for all $n\in\Nbar$.

\medskip

A key condition for Theorem (\ref{main-group-thm}) is always met when working with Abelian groups, as seen in the following lemma.

\begin{lemma}\label{domain-lemma}
  With the assumptions and notation of Subsection (\ref{spectral-triples-section}), for any $n\in\Nbar$, if $G_n$ is Abelian, then we have that
  \begin{equation*}
    \quad \left\{ a \in \dom{\Lip_n} : \Lip_n(a) \leq 1 \right\} = \closure{\left\{ a\in C_c(G_n) :\Lip_n(a) \leq 1 \right\} }\text.
  \end{equation*}
\end{lemma}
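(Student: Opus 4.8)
The plan is to prove the nontrivial inclusion $\closure{\{a \in C_c(G_n) : \Lip_n(a) \leq 1\}} \supseteq \{a \in \dom{\Lip_n} : \Lip_n(a) \leq 1\}$, since the reverse inclusion is automatic (the unit ball of $\Lip_n$ is closed in $C^\ast(G_n,\sigma)$ by the axioms of a {\qcms}). So, given $a \in \dom{\Lip_n}$ with $\Lip_n(a) \leq 1$, I would produce a sequence of finitely supported elements approximating $a$ in norm while keeping their $\Lip_n$-seminorm at most $1$. The natural candidate is a Fej\'er-type averaging: since $G_n$ is Abelian (hence amenable), $C^\ast(G_n,\sigma) = C^\ast_{\mathrm{red}}(G_n,\sigma)$, and we have a canonical faithful trace $\mathrm{tr}$ with associated conditional-expectation-style Fourier coefficients $\widehat{a}(g) = \mathrm{tr}(\lambda(g)^\ast a)$ for $g \in G_n$. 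The idea is to convolve $a$ with a net of finitely supported, normalized positive-definite functions $(\varphi_i)$ on $G_n$ — a F\o lner/Fej\'er net — obtaining $a_i \coloneqq \sum_{g} \varphi_i(g)\widehat{a}(g)\lambda(g)$ (equivalently, $a_i = m_{\varphi_i}(a)$ for the associated Herz--Schur/Fourier multiplier), which is finitely supported, converges to $a$ in norm, and satisfies $\Lip_n(a_i) \leq \Lip_n(a)$ because such a multiplier is a completely positive, unital, trace-preserving contraction that also contracts the commutator with $\Dirac_n$.

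First I would set up the multiplier map. For a finitely supported positive-definite normalized function $\varphi : G_n \to \C$ with $\varphi(0)=1$, the map $m_\varphi : \lambda(g) \mapsto \varphi(g)\lambda(g)$ extends to a unital completely positive map on $C^\ast(G_n,\sigma)$ — this is standard for twisted group C*-algebras of amenable groups (the cocycle does not obstruct it, as $m_\varphi$ is a Schur multiplier on matrix coefficients in the left regular representation). The key computation is that $m_\varphi$ interacts well with $\Dirac_n = M_{\mathds{L}_H}\otimes\gamma_1 + M_{\mathds{F}}\otimes\gamma_2$: since $[\Dirac_n, \lambda(g)^\circ]$ scales $\delta_h$-components in a way that is diagonal in the group variable, and $m_\varphi$ acts by the fixed scalar $\varphi(g)$ on the $\lambda(g)$ component, one checks $[\Dirac_n, m_\varphi(a)^\circ] = (m_\varphi \text{ applied in the appropriate sense to }) [\Dirac_n,a^\circ]$; more precisely, the commutator $[\Dirac_n, a^\circ]$, written in its Fourier-type expansion, has its $g$-component scaled by $\varphi(g)$, so passing $m_\varphi$ through the commutator yields a completely bounded map of cb-norm $\leq 1$ (being a UCP map tensored with identity), whence $\Lip_n(m_\varphi(a)) = \opnorm{[\Dirac_n, m_\varphi(a)^\circ]}{}{\ell^2(G_n,E)} \leq \opnorm{[\Dirac_n,a^\circ]}{}{\ell^2(G_n,E)} = \Lip_n(a)$.

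Next I would invoke amenability of $G_n$ to get a sequence (the group is countable, being contained in $G_\infty$, so a sequence suffices) of finitely supported normalized positive-definite functions $\varphi_k \to 1$ pointwise with the associated multipliers $m_{\varphi_k} \to \mathrm{id}$ in the point-norm topology on $C^\ast(G_n,\sigma)$ — this is exactly the statement that amenable groups have the (metric) approximation property witnessed by Fej\'er kernels, and it passes to twisted C*-algebras. Then $a_k \coloneqq m_{\varphi_k}(a)$ lies in $C_c(G_n)$ (finite support since $\varphi_k$ has finite support and the Fourier expansion of $a$ is supported there after truncation), satisfies $\Lip_n(a_k) \leq 1$, and $\norm{a_k - a}{C^\ast(G_n,\sigma)} \to 0$; possibly I need to replace $a_k$ by $a_k/\max\{1,\Lip_n(a_k)\}$ but since $\Lip_n(a_k)\le 1$ this is unnecessary. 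For $n = \infty$ the same argument applies verbatim since $G_\infty$ is also Abelian and amenable. The main obstacle I anticipate is the verification that $m_\varphi$ genuinely does not increase $\Lip_n$ — i.e. the interchange of the multiplier with the unbounded commutator $[\Dirac_n, \cdot]$. This needs to be done carefully on a core (e.g. on $C_c(G_n)$ first, then by lower semicontinuity of $\Lip_n$ and norm-density) rather than by a naive manipulation of unbounded operators; I would handle it by checking the identity on the dense subalgebra where everything is a finite sum, and then extending using that $\Lip_n$ is lower semicontinuous and $m_\varphi$ is norm-continuous.
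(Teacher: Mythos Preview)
Your approach is correct and is essentially the Fourier-dual of the paper's proof. The paper works with the dual action $\beta$ of the compact Pontryagin dual $\widehat{G_n}$ on $C^\ast(G_n,\sigma)$, implemented by the multiplication unitaries $v^z\xi(g)=\overline{z(g)}\xi(g)$ on $\ell^2(G_n,E)$, and averages $a\mapsto\int_{\widehat{G_n}}\varphi_k(z)\,\beta^z(a)\,d\mu(z)$ against Fej\'er kernels $\varphi_k$ on $\widehat{G_n}$; this operation is exactly your Herz--Schur multiplier $m_{\widehat{\varphi_k}}$. The advantage of the paper's framing is that the $\Lip_n$-contraction step --- the one you flag as the main obstacle --- becomes immediate: since $\Dirac_n$ is a multiplication operator it commutes with each $v^z$, so each $\beta^z$ is a full quantum isometry ($\Lip_n\circ\beta^z=\Lip_n$), and averaging isometries against a probability measure together with lower semicontinuity of $\Lip_n$ gives $\Lip_n(\beta^{\varphi_k}(a))\leq\Lip_n(a)$ directly, with no need to analyze how the multiplier acts on the commutator. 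Your route requires the additional (true, but less transparent) observation that the associated Schur multiplier extends contractively to all of $B(\ell^2(G_n,E))$ --- which, once unwound, is precisely the averaging of the unitary conjugations by $v^z$ anyway.
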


\begin{proof}
  Fix $n\in\Nbar$.  Since $\Lip_n$ is lower semicontinuous, we get
  \begin{equation*}
    \closure{\left\{a\in\dom{\Lip_n}\cap C_c(G_n) : \Lip_n(a)\leq 1\right\}} \subseteq \left\{ a \in \dom{\Lip_n} : \Lip_n(a) \leq 1 \right\}\text.
  \end{equation*}
  We now prove that when $G_n$ is Abelian, the converse inclusion holds.

  Let $\widehat{G_n}$ be the Pontryagin dual of $G_n$ (we will use the multiplicative notation for $\widehat{G_n}$). The dual action $\beta$ of $\widehat{G_n}$ on $C^\ast(G_n,\sigma)$ is unitarily implemented by defining, for each $z\in \widehat{G_n}$, the unitary $v^z$ of $\ell^2(G_n,E)$ which is given by,  for all $\xi \in \ell^2(G_n)\otimes E$:
\begin{equation*}
 \quad v^z \xi : g \in G_n \longmapsto \overline{z(g)} \xi(g) ( \, = z(-g)\xi(g) )\text.
\end{equation*}

It is easily checked that $z\in \widehat{G_n}\mapsto v^z$ is a unitary representation of $\widehat{G_n}$. We then note that:
\begin{equation*}
  \forall z \in \widehat{G_n} \quad  v^z \lambda_E(g) (v^z)^\ast = \beta^z \lambda_E(g) \text.
\end{equation*}

By construction, $\Dirac_n$ commutes with $v^z$ for all $z \in \widehat{G_n}$, so $\beta$ acts by full quantum isometries on $(C^\ast(G_n,\sigma),\Lip_n)$.

Let $\mu$ be the Haar probability measure on $\widehat{G_n}$. As seen in \cite[Lemma 3.1]{Latremoliere05},\cite[Theorem 8.2]{Rieffel00}, there exists a sequence $(\varphi_k)_{k\in\N}$ of non-negative functions over $\widehat{G_n}$, each obtained as a linear combination of characters of $\widehat{G_n}$ (i.e. of the form $z\in \widehat{G_n} \mapsto z(g)$ for some $g \in G_n$, by Pontryagin duality), such that $\int_{\widehat{G_n}}\varphi_k \,d\mu = 1$ for all $k\in \N$, and $(\varphi_k)_{k\in\N}$ converges, in the sense of distributions, to the Dirac measure at $1 \in \widehat{G_n}$, i.e., for all $f \in C(\widehat{G_n})$,
\begin{equation*}
  \lim_{k\rightarrow\infty} \int_{\widehat{G_n}} f(z) \, \varphi_k(z)d\mu(z) = f(1) \text.
\end{equation*}
We define, for each $k \in \N$, the continuous linear endomorphism:
\begin{equation*}
  \beta^{\varphi_k}: a \in C^\ast(G_n,\sigma) \mapsto \int_{\widehat{G_n}} \beta^z(a) \, \varphi_k(z)d\mu(z) \text,
\end{equation*}
acting on $C^\ast(G_n,\sigma)$. Since the dual action is strongly continuous, we conclude that, for all $a \in C^\ast(G_n,\sigma)$:
\begin{equation*}
   \quad \lim_{k\rightarrow\infty} \norm{\beta^{\varphi_k}(a) - a}{C^\ast(G_n)} = 0 \text.
\end{equation*}

Since $\Lip_n$ is lower semicontinuous, $\varphi_k \geq 0$ and $\int_{\widehat{G_\infty}}\varphi_k\,d\mu=1$ for all $k\in\N$, and $\beta$ acts by quantum isometries, we also get, for all $a\in\dom{\Lip_n}$,
\begin{equation*}
  \Lip_n\left(\beta^{\varphi_k}(a)\right) \leq \int_{\widehat{G_n}}\varphi(z) \Lip_{n}(a) \,d\mu(z) = \Lip_n(a) \text.
\end{equation*}
As a quick digression, lower semicontinuity also implies that $\Lip_n(a) \leq \liminf_{k\rightarrow\infty} \Lip_n(\beta^{\varphi_k(a)})$, so altogether we have shown that $\Lip_n(a) = \liminf_{k\rightarrow\infty}\Lip_n(\beta^{\varphi_k}(a)))$.

For each $k \in \N$, as $\varphi_k$ is a linear combination of characters of $\widehat{G_n}$, there exists a finite subset $F\subseteq G_n$ and a function $t : F\rightarrow\C$ such that $\varphi_k : z \in \widehat{G_n} \mapsto \sum_{g \in F} t(g) z(g)$; the range of $\beta^{\varphi_k}$ is then the finite dimensional subspace of $C_c(G_n)$ consisting of the functions supported on $F$. For our purpose, the main observations here are that, given $a\in \dom{\Lip_n}$, and $\varepsilon > 0$, there exists $K \in \N$ such that if $k\geq K$, then $\norm{a-\beta^{\varphi_k}(a)}{C^\ast(G_n,\sigma)} < \varepsilon$ and $\Lip_{n}(\beta^{\varphi_k}(a))\leq\Lip_n(a)$. In particular, again since $\Lip_n$ is lower semi-continuous, it follows that:
\begin{equation}\label{fejer-eq}
  \left\{ a\in\dom{\Lip_\Dirac}: \Lip_\Dirac(a) \leq 1 \right\} = \closure{\left\{ a \in \dom{\Lip_\Dirac}\cap C_c(G_n) : \Lip_\Dirac(a) \leq 1 \right\}} \text,
\end{equation}
as claimed.
\end{proof}

\begin{remark}
  With  the notation of the proof of Lemma (\ref{domain-lemma}), fix $\varphi\in\StateSpace(C^\ast(G_n,\sigma))$. Since, for all $k \in \N$, we have $\int_{\widehat{G_n}}\varphi^k\,d\mu=1$, we conclude that $\beta^{\varphi_k}$ is a unital map, and thus
\begin{multline*}
  \sup\left\{ \norm{a-\beta^{\varphi_k}(a) }{C^\ast(G_n)} : a\in\dom{\Lip_n}, \Lip_n(a)\leq 1 \right\} \\
  = \sup\left\{ \norm{a-\beta^{\varphi_k}(a) }{C^\ast(G_n)} : a\in\dom{\Lip_n}, \Lip_n(a)\leq 1, \mu(a) = 0 \right\} 
\end{multline*}
where the second supremum is indeed finite since $X = \{a\in\dom{\Lip_n}:\Lip_n(a)\leq 1,\mu(a) = 0\}$ is compact and we take the supremum of a continuous function over this set. In fact, Arzel\`a-Ascoli theorem can be applied here to prove that the convergence of $(\beta^{\varphi_k})_{k\in\N}$ to the identity on $X$ is uniform, though we here offer a simple $\frac{\varepsilon}{3}$-type argument. First, note that for all $a,b \in C^\ast(G_\infty)$, and for all $k\in\N$,
  \begin{equation*}
    \norm{\beta^{\varphi_k}(a)-\beta^{\varphi_k}(b)}{C^\ast(G_\infty)} \leq \int_{\widehat{G_\infty}} \norm{a-b}{C^\ast(G_\infty)} \, \varphi^k(z) d\mu(z) = \norm{a-b}{C^\ast(G_\infty)} \text.
  \end{equation*}

  Moreover, for all $\varepsilon > 0$, there exists a finite $\frac{\varepsilon}{3}$-dense subset $X_\varepsilon$ of $X$; as $X_\varepsilon$ is finite, there exists $K\in\N$ such that, for all $k\geq K$ and for all $a\in X_\varepsilon$, then $\norm{a-\beta^{\varphi_k}(a)}{C^\ast(G_\infty)} < \frac{\varepsilon}{3}$, as seen above; therefore for all $k\geq K$, we have
  \begin{align*}
    \norm{a-\beta^{\varphi_k}(a)}{C^\ast(G_\infty)}
    &\leq \norm{a-a'}{C^\ast(G_\infty)} + \norm{a'-\beta^{\varphi_k}(a')}{C^\ast(G_\infty)} + \norm{\beta^{\varphi_k}(a' - a)}{C^\ast(G_\infty)} \\
    < \frac{\varepsilon}{3} + \frac{\varepsilon}{3} + \frac{\varepsilon}{3} = \varepsilon \text.
  \end{align*}
  This proves that indeed, $(\beta^{\varphi_k})_{k\in\N}$ converges \emph{uniformly} to the identity over $X$.
\end{remark}

\medskip

We will prove that some of the spectral triples introduced in Subsection (\ref{spectral-triples-section}) are metric by invoking a property central to the work in \cite{Rieffel15b,LongWu21}, called \emph{bounded doubling}, which we now recall in the formulation of \cite{LongWu21}.

\begin{definition}[\cite{Rieffel15b,LongWu21}]
  A proper length function $\mathds{L}$ on a discrete group $G$ satisfies the \emph{bounded doubling property} when there exists $\theta>1$ and $c>0$ such that, for all $r \geq 1$:
  \begin{equation*}
    \left|\left\{g\in G : \mathds{L}(g) \leq \theta \cdot r\right\}\right| \leq c \left| \left\{ g \in G : \mathds{L}(g)\leq r \right\} \right| \text.
  \end{equation*}
\end{definition}

The bounded doubling property indeed ensures the following result.
\begin{lemma}\label{metric-spectral-triples-lemma}
  The spectral triples constructed in Subsection (\ref{spectral-triples-section}) are metric if the proper length function $\mathds{L}\coloneqq\max\{\mathds{L}_H,\mathds{F}\}$ has the bounded doubling property.
\end{lemma}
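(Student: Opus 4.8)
The plan is to reduce the statement, by means of Lemma (\ref{G-lemma}), to the metric property of the ``scalar'' spectral triples $(C^\ast_{\mathrm{red}}(G_n,\sigma),\ell^2(G_n),M_{\mathds{L}_H+\mathds{F}})$ for every $n\in\Nbar$, and then to invoke the results of \cite{Rieffel15b,LongWu21} on bounded doubling. Since $\max\{\mathds{L}_H,\mathds{F}\}\leq\mathds{L}_H+\mathds{F}\leq 2\max\{\mathds{L}_H,\mathds{F}\}$, the length functions $\mathds{L}=\max\{\mathds{L}_H,\mathds{F}\}$ and $\mathds{L}_H+\mathds{F}$ are bi-Lipschitz on $G_\infty$ and on each $G_n$, so one has the bounded doubling property if and only if the other does; hence the hypothesis of the lemma is equivalent to bounded doubling of $\mathds{L}_H+\mathds{F}$. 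By Lemma (\ref{G-lemma}), whenever $(C^\ast_{\mathrm{red}}(G_n,\sigma),\ell^2(G_n),M_{\mathds{L}_H+\mathds{F}})$ is metric, so is the even triple $(C^\ast_{\mathrm{red}}(G_n,\sigma),\ell^2(G_n,E),\Dirac_n)$; and by Lemma (\ref{length-lemma}), $\mathds{L}_H+\mathds{F}$ is a proper unbounded length function on $G_\infty$ whose restriction to any subgroup $G_n$ is again a proper length function, so each of these scalar triples is at least a well-defined spectral triple.

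For $n=\infty$ I would simply apply the theorem of \cite{LongWu21} (which extends \cite{Rieffel15b}): the twisted group C*-algebra of a discrete group equipped with a proper length function of bounded doubling is a compact quantum metric space, so $(C^\ast_{\mathrm{red}}(G_\infty,\sigma),\ell^2(G_\infty),M_{\mathds{L}_H+\mathds{F}})$ is metric, and Lemma (\ref{G-lemma}) upgrades this to the metric property of $(C^\ast_{\mathrm{red}}(G_\infty,\sigma),\ell^2(G_\infty,E),\Dirac_\infty)$.

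The remaining work is to see that bounded doubling of $\mathds{L}$ on $G_\infty$ descends to its restriction to each finite $G_n$. The tool here is the ball computation already made inside the proof of Lemma (\ref{length-lemma}): for $\mathrm{scale}(n)\leq r<\mathrm{scale}(n+1)$ one has $\{g\in G_\infty:\mathds{L}(g)\leq r\}=\{g\in G_n:\mathds{L}_H(g)\leq r\}=\{g\in G_n:\mathds{L}(g)\leq r\}$, because on $G_n$ the function $\mathds{F}$ is bounded by $\mathrm{scale}(n)\leq r$, while $\mathds{F}(g)>r$ forces $g\notin G_n$. On the radii for which $\theta r$ still stays below $\mathrm{scale}(n+1)$ this identity transfers the $G_\infty$-doubling inequality directly to $G_n$, and the finitely many small radii are absorbed by the crude bound $|\{g\in G_n:\mathds{L}(g)\leq\theta r\}|\leq|\{g\in G_n:\mathds{L}(g)\leq\mathrm{scale}(n+1)\}|<\infty$. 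The delicate part is the large radii $r\geq\mathrm{scale}(n+1)/\theta$: there $\mathds{L}|_{G_n}$ coincides with $\mathds{L}_H|_{G_n}$ (since $\mathds{F}|_{G_n}$ is bounded), so it suffices that $\mathds{L}_H|_{G_n}$ itself has bounded doubling, which the $G_\infty$-doubling no longer forces on its own but which is transparent in every situation to which the lemma is applied --- in Example (\ref{nc-solenoid-ex}) $G_n\cong\Z^d$ with $\mathds{L}_H$ the restriction of a norm, and for the Bunce-Deddens groups $G_n$ is a lattice-by-finite group with $\mathds{L}_H$ again of norm type, so bounded doubling of $\mathds{L}_H|_{G_n}$ is immediate. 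Granting bounded doubling of $\mathds{L}|_{G_n}$, \cite{LongWu21} gives that $(C^\ast_{\mathrm{red}}(G_n,\sigma),\ell^2(G_n),M_{\mathds{L}_H+\mathds{F}})$ is metric, and assembling over all $n\in\Nbar$ and applying Lemma (\ref{G-lemma}) yields the metric property of every $(C^\ast_{\mathrm{red}}(G_n,\sigma),\ell^2(G_n,E),\Dirac_n)$, which is the claim. Thus the conceptual content sits entirely in \cite{Rieffel15b,LongWu21} and Lemma (\ref{G-lemma}); the single genuine obstacle is making the descent of bounded doubling from $G_\infty$ to each finite $G_n$ watertight at all radii, and this is exactly the place where the boundedness of $\mathds{F}$ on $G_n$ and the ball identity above must be used, rather than treated as bookkeeping.
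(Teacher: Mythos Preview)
Your approach is essentially the paper's: invoke \cite{Rieffel15b,LongWu21} for the scalar spectral triple built from a bounded-doubling length, then upgrade to $\Dirac_n$ via the comparison inequality in Lemma~(\ref{G-lemma}). Two minor differences: you work with $\mathds{L}_H+\mathds{F}$ and pass to $\max\{\mathds{L}_H,\mathds{F}\}$ by bi-Lipschitz equivalence, which is cleaner than the paper's direct claim that $\opnorm{[M_{\max\{\mathds{L}_H,\mathds{F}\}},\cdot]}{}{\ell^2(G_n)}\leq\Lip_n$ (the paper does not justify that inequality, whereas Lemma~(\ref{G-lemma}) only gives it for the sum, with a factor $2$); and the paper closes up from $C_c(G_n)$ to $\dom{\Lip_n}$ via Lemma~(\ref{domain-lemma}) and Theorem~(\ref{tt-thm}), while you quote the final implication of Lemma~(\ref{G-lemma}) directly.

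The substantive point is the descent of bounded doubling from $G_\infty$ to each $G_n$. You are right that this is where the real work lies, and your case analysis (balls agree below $\mathrm{scale}(n{+}1)$; above that threshold one needs bounded doubling of $\mathds{L}_H|_{G_n}$) is accurate. But you then concede that the $G_\infty$ hypothesis does not force this and fall back on ``it holds in all applications.'' That leaves the lemma \emph{as stated} unproved. It is worth noting that the paper's own proof simply applies \cite{Rieffel15b,LongWu21} to each $G_n$ without verifying bounded doubling of $\mathds{L}|_{G_n}$, so the gap you have isolated is present there too --- the paper just does not flag it. If you want to close this cleanly within the stated hypothesis, one option is to argue only for $n=\infty$ via \cite{LongWu21}, and then for finite $n$ use that $\Lip_n\leq\Lip_\infty$ on $\dom{\Lip_\infty}\cap\sa{C^\ast(G_n,\sigma)}$ together with the total boundedness already obtained on $G_\infty$ restricted to $C_c(G_n)$; this avoids needing bounded doubling on $G_n$ altogether but requires the inequality $\Lip_n\leq\Lip_\infty$, which the paper proves only later (Expression~(\ref{Ln-less-Linfty-eq})).
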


\begin{proof}
  We note that Lemma (\ref{length-lemma}) proves that $\mathds{L}$ is a proper unbounded length function.
  
  By \cite{Rieffel15b,LongWu21}, since all our groups are Abelian hence nilpotent, for any $\mu \in \StateSpace(C^\ast(G_n),\sigma)$, the set
  \begin{equation*}
    \left\{ a \in C_c(G_n) : \opnorm{[M_{\mathds{L}},a]}{}{\ell^2(G_n)}\leq 1, \mu(a) = 0 \right\}
  \end{equation*}
  is totally bounded. Since $\opnorm{[M_{\mathds{L}},\cdot]}{}{\ell^2(G_n)} \leq \Lip_n$ on $C_c(G_n)$, we thus conclude that
  \begin{equation*}
    \left\{ a \in C_c(G_n) : \Lip_n(a) \leq 1, \mu(a) = 0 \right\} \subseteq \left\{ a \in C_c(G_n) : \opnorm{[M_{\mathds{L}},a]}{}{\ell^2(G_n)}\leq 1, \mu(a) = 0 \right\}
  \end{equation*}
  and thus $\left\{ a \in C_c(G_n) : \Lip_n(a) \leq 1, \mu(a) = 0 \right\}$ is also totally bounded. By Lemma (\ref{domain-lemma}), we also have:
  \begin{equation*}
    \{a\in\dom{\Lip_n} : \Lip_n(a)\leq 1, \mu(a) = 0\} =\closure{\left\{ a \in C_c(G_n) : \Lip_n(a) \leq 1, \mu(a) = 0 \right\}}
  \end{equation*}
  so $\{a\in\dom{\Lip_n}:\Lip_n(a)\leq 1,\mu(a) = 0\}$ is compact. Thus by Theorem (\ref{tt-thm}), $\Lip_n$ is a Lipschitz seminorm, i.e. our spectral triples are metric.
\end{proof}

We are now ready to establish the following theorem.

\begin{theorem}\label{main-Abelian-group-thm}
  Let $G = \bigcup_{n\in\N} G_n$ be an Abelian discrete group, arising  as the union of a strictly increasing sequence $(G_n)_{n\in\N}$ of subgroups of $G$. Let $\sigma$ be a $2$-cocycle of $G$ and  $\mathds{L}_H$ a length function on $G$ such that
  \begin{equation*}
    \lim_{n\rightarrow\infty} \Haus{\mathds{L}_H}(G_n,G) = 0,
  \end{equation*}
  and whose restriction to $G_n$ is proper for all $n\in\N$.  Assume $\mathrm{scale} : \N \rightarrow [0,\infty)$  is a strictly increasing, unbounded function such that, if we set
  \begin{equation*}
    \mathds{F} : g \in G \longmapsto \mathrm{scale}(\min\{n\in\N : g \in G_n\})
  \end{equation*}
  then the proper length function $\mathds{L} \coloneqq \max\{ \mathds{L}_H, \mathds{F} \}$ has the \emph{bounded doubling property}.

  Then, for any Hermitian space $E$,
  \begin{equation*}
    \lim_{n\rightarrow\infty} \spectralpropinquity{}((C^\ast(G,\sigma),\ell^2(G)\otimes E,\Dirac),(C^\ast(G_n,\sigma),\ell^2(G_n)\otimes E,\Dirac_n)) = 0 \text,
  \end{equation*}
  where
  \begin{itemize}
  \item $\Dirac = M_{\mathds{L}_H}\otimes\gamma_1 + M_{\mathds{F}}\otimes\gamma_2$ on $\left\{ \xi \in \ell^2(G)\otimes E : \sum_{g\in G} (\mathds{L}_H(g)^2 + \mathds{F}(g)^2)\norm{\xi(g)}{E}^2 < \infty \right\}$, with $\gamma_1,\gamma_2$ unitaries of $E$ such that, for all $j,k \in \{1,2\}$:
    \begin{equation*}
      \gamma_j\gamma_k + \gamma_k\gamma_j = \begin{cases} 2 \text{ if $j=k$,} \\ 0 \text{ otherwise.} \end{cases}
    \end{equation*}
  \item $\ell^2(G_n)\otimes E$ is identified with the subspace of $G_n$-supported vectors in $\ell^2(G)\otimes E$,
  \item $\Dirac_n$ is the restriction of $\Dirac$ to $\dom{\Dirac}\cap\left(\ell^2(G_n)\otimes E\right)$,
  \item $C^\ast(G,\sigma)$ and $C^\ast(G_n,\sigma)$ act via their left regular $\sigma$-projective representations.
  \end{itemize}
\end{theorem}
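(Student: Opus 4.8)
The plan is to verify that the hypotheses of Theorem~(\ref{main-group-thm}) are met, since that theorem delivers the conclusion almost verbatim. First I would assemble the dictionary between the abstract set-up of Subsection~(\ref{spectral-triples-section}) and the data given here: we are handed $G = \bigcup_n G_n$ Abelian, a length $\mathds{L}_H$ proper on each $G_n$ with $\lim_n \Haus{\mathds{L}_H}(G_n,G) = 0$, a strictly increasing unbounded $\mathrm{scale}$, and $\mathds{F}(g) = \mathrm{scale}(\min\{n : g\in G_n\})$. Since every nontrivial $g$ lies outside $G_0$ may fail, I would first remark that the restriction of $\mathds{F}$ to $G_0$ trivially satisfies the four bullet conditions in Subsection~(\ref{spectral-triples-section}): indeed $\mathds{F}$ is constant equal to $\mathrm{scale}(0)$ off $\{1\}$ on $G_0$ if $0 = \min\{n : g \in G_n\}$, but more carefully, for $g\in G_0$ we have $\mathds{F}(g) = \mathrm{scale}(0) \in [0,\mathrm{scale}(0)]$, $\mathds{F}$ is symmetric and ultrametric-dominated there, and $\mathds{F}(1) = \mathrm{scale}(0)$ unless we adopt the convention $\mathds{F}(1) = 0$; the Remark after Lemma~(\ref{length-lemma}) notes this discrepancy is a bounded perturbation, so I would simply set $\mathds{F}(1) = 0$ and keep $\mathds{F} = \mathrm{scale}(0)$ on $G_0\setminus\{1\}$, which meets all four bullets. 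Thus the construction of the even spectral triples $(C^\ast(G_n,\sigma),\ell^2(G_n)\otimes E, \Dirac_n)$ via Equation~\eqref{eq:def-Dirac} and Lemma~(\ref{G-lemma}) applies for all $n\in\Nbar$, with $E$ any even-dimensional Hermitian space equipped with a Clifford representation giving the anticommutation relations for $\gamma_1,\gamma_2$.

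Next I would establish that these spectral triples are all \emph{metric}. This is exactly Lemma~(\ref{metric-spectral-triples-lemma}): since $\mathds{L} = \max\{\mathds{L}_H,\mathds{F}\}$ has the bounded doubling property by hypothesis, and since by Lemma~(\ref{length-lemma}) $\mathds{L}$ is a proper unbounded length function on $G$ (and on each $G_n$), the results of \cite{Rieffel15b,LongWu21} show that $\{a\in C_c(G_n) : \opnorm{[M_{\mathds{L}},a]}{}{\ell^2(G_n)}\leq 1,\ \mu(a) = 0\}$ is totally bounded for any state $\mu$ and any $n\in\Nbar$; combined with Lemma~(\ref{G-lemma}) (which gives $\opnorm{[M_{\mathds{L}},a]}{}{\ell^2(G_n)}\leq 2\Lip_n(a)$, hence containment of the relevant sets) and with Lemma~(\ref{domain-lemma}) (which, because $G_n$ is Abelian, identifies $\{a\in\dom{\Lip_n} : \Lip_n(a)\leq 1\}$ with the closure of its compactly-supported part), Theorem~(\ref{tt-thm}) yields that $\Lip_n$ is a Lipschitz seminorm, i.e.\ the triple is metric. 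Here I should be mildly careful that \cite{Rieffel15b,LongWu21} may be stated for a length with bounded doubling rather than for $\mathds{L}$ restricted to a subgroup $G_n$; since $\mathds{L}$ restricted to $G_n$ equals $\max\{\mathds{L}_H|_{G_n}, \mathds{F}|_{G_n}\}$ and $\mathds{F}$ is bounded on $G_n$, it is a bounded perturbation of $\mathds{L}_H|_{G_n}$, which is proper; one can check the doubling property is inherited, or invoke that the finite-dimensional case $n<\infty$ is automatic since $C^\ast(G_n,\sigma)$ need not be finite-dimensional but $\mathds{L}_H|_{G_n}$ proper already suffices for the classical Connes triple to be metric when doubling holds for the ambient $\mathds{L}$. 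This is the step I expect to require the most care in exposition, though no new idea.

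Finally, with all triples metric and with the closure condition $\{a\in\dom{\Lip_n}:\Lip_n(a)\leq 1\} = \closure{\{a\in C_c(G_n):\Lip_n(a)\leq 1\}}$ supplied by Lemma~(\ref{domain-lemma}), I would simply invoke Theorem~(\ref{main-group-thm}) to conclude
\begin{equation*}
  \lim_{n\rightarrow\infty}\spectralpropinquity{}\bigl((C^\ast(G_n,\sigma),\ell^2(G_n)\otimes E,\Dirac_n),(C^\ast(G,\sigma),\ell^2(G)\otimes E,\Dirac)\bigr) = 0\text,
\end{equation*}
noting that the identifications made just before Theorem~(\ref{main-group-thm}) (of $C^\ast(G_n,\sigma)$ with the subalgebra of $C^\ast(G,\sigma)$ generated by $\{\lambda(g):g\in G_n\}$, of $\ell^2(G_n)\otimes E$ with the $G_n$-supported subspace of $\ell^2(G)\otimes E$, and of $\Dirac_n$ with $P_n^\circ \Dirac P_n^\circ$) make $(C^\ast(G,\sigma),\ell^2(G)\otimes E,\Dirac)$ literally the inductive limit of the sequence in the sense of Definition~(\ref{ind-limit-spectral-triple-def}), so that the symmetry of $\spectralpropinquity{}$ gives the statement as written. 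The amenability of the Abelian group $G$ justifies writing $C^\ast(G,\sigma)$ for $C^\ast_{\mathrm{red}}(G,\sigma)$ throughout, as already observed in the text. No genuinely new obstacle arises beyond the bookkeeping around $G_0$ and the transfer of the doubling property to subgroups.
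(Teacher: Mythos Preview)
Your proposal is correct and follows essentially the same route as the paper: invoke Lemma~(\ref{metric-spectral-triples-lemma}) for the metric property, Lemma~(\ref{domain-lemma}) for the closure condition on the unit $\Lip_n$-balls, and then apply Theorem~(\ref{main-group-thm}). The extra care you take around the convention $\mathds{F}(1)=0$ and around whether bounded doubling of $\mathds{L}$ on $G_\infty$ transfers to each $G_n$ is more explicit than the paper's treatment (the paper's Lemma~(\ref{metric-spectral-triples-lemma}) simply asserts the Rieffel--Long--Wu result for every $G_n$), but these are exactly the bookkeeping points one would want to spell out, not a different strategy.
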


\begin{proof}
	Our theorem follows from Theorem (\ref{main-group-thm}).
 We first note that  Lemma (\ref{metric-spectral-triples-lemma}) proves that all our spectral triples are metric.  By Lemma (\ref{domain-lemma}), since $G_\infty$ is Abelian, we conclude that, for all $n\in\Nbar$,
  \begin{equation*}
    \left\{a\in\dom{\Lip_n}:\Lip_n(a)\leq 1\right\} = \closure{\left\{a\in C_c(G_n) : \Lip_n(a)\leq 1\right\}} \text.
  \end{equation*}
  Since  all hypotheses of Theorem (\ref{main-group-thm}) are met, the result follows.
\end{proof}

In particular, for the noncommutative solenoids of Example (\ref{nc-solenoid-ex}), we obtain the following.

\begin{corollary}\label{main-cor}
  Fix a prime number $p \in \N$ and $d \in \N\setminus\{0,1\}$. For each $n\in\N$, let
  \begin{equation*}
    G_n \coloneqq \left( \frac{1}{p^n} \Z \right)^d
  \end{equation*}
  and
  \begin{equation*}
    G_\infty \coloneqq \left( \Z\left[\frac{1}{p}\right] \right)^d \text.
  \end{equation*}

  Fix a $2$-cocycle $\sigma$ on $G_\infty$ such that $\forall g \in G_\infty \quad \sigma(g,-g) = 1$.
  
  Let $\mathds{L}_H$ be the restriction to $G_\infty$ of some norm on $\R^2$. We define $\mathds{F}$ by setting, for all $g \in G_\infty$:
  \begin{equation*}
    \mathds{F}(g) \coloneqq \min\left\{ p^n : g \in \left(\frac{1}{p^n}\right)^d \right\}\text.
  \end{equation*}
  
  Let $E$ be an even dimensional hermitian space, with $\gamma_1,\gamma_2$ be two unitaries on $E$ such that, for all $j,k\in\{1,2\}$:
  \begin{equation*}
    \gamma_j \gamma_k + \gamma_k \gamma_j =
    \begin{cases}
      2 \text{ if $j=k$,} \\
      0 \text{ otherwise.}
    \end{cases}
  \end{equation*}
  
  If we define, for all $n\in\Nbar$, the operator
  \begin{equation*}
    \Dirac_n \coloneqq M_{\mathds{L}_H}\otimes\gamma_1 + M_{\mathds{F}}\otimes\gamma_2 \text{ on }\dom{\Dirac_n}
  \end{equation*}
  on the domain
  \begin{equation*}
    \dom{\Dirac_n} \coloneqq \left\{ \xi \in \ell^2(G_n,E) : \sum_{g \in G_n} (\mathds{L}_H(g)^2 + \mathds{F}(g)^2)\norm{\xi(g)}{E}^2 < \infty \right\} \text,
  \end{equation*}
  then, for all $n\in \Nbar$, the triple $(C^\ast(G_n,\sigma),\ell^2(G_n,E),\Dirac_n)$ is a metric spectral triple, and:
  \begin{equation*}
    \lim_{n\rightarrow\infty}\spectralpropinquity{}((C^\ast(G_n,\sigma),\ell^2(G_n,E),\Dirac_n),(C^\ast(G_\infty,\sigma),\ell^2(G_\infty,E),\Dirac_\infty)) = 0 \text.
  \end{equation*}
  Moreover, for each $n \in \N$, the sequence $(C^\ast(G_n,\sigma),\Lip_k)_{k\geq n}$ of {\qcms s} converge to $(C^\ast(G_n,\sigma),\Lip_\infty)$ in the Lipschitz distance.
\end{corollary}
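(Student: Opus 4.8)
The plan is to check that the data of the corollary is a special case of Theorem (\ref{main-Abelian-group-thm}), whose hypotheses concern only the groups $G_n$ and the length functions $\mathds{L}_H$ and $\mathds{F}$; once they are verified, the limit for $\spectralpropinquity{}$ is immediate, the assertion that each $(C^\ast(G_n,\sigma),\ell^2(G_n,E),\Dirac_n)$ is metric follows from Lemma (\ref{metric-spectral-triples-lemma}), and the Lipschitz distance statement is inherited from the \emph{moreover} clause of Theorem (\ref{main-group-thm}) that underpins Theorem (\ref{main-Abelian-group-thm}).

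First I would dispose of the structural hypotheses. The sequence $G_n=\left(\frac{1}{p^n}\Z\right)^d$ is a strictly increasing sequence of subgroups of the Abelian discrete group $G_\infty=\left(\Z\left[\frac{1}{p}\right]\right)^d$ with $G_\infty=\bigcup_{n\in\N}G_n$, since $p\geq 2$; the map $n\mapsto p^n$ is strictly increasing and unbounded, and $\mathds{F}$ is precisely $g\mapsto\mathrm{scale}(\min\{n : g\in G_n\})$ (with the convention $\mathds{F}(1)=0$). The length function $\mathds{L}_H$, being the restriction to $G_\infty$ of a norm on $\R^d$, is proper on each lattice $G_n$. For the convergence $\lim_{n\to\infty}\Haus{\mathds{L}_H}(G_n,G_\infty)=0$, one direction is trivial because $G_n\subseteq G_\infty$, and for the other it suffices to observe that each coordinate of an arbitrary $g\in G_\infty$ lies within $\frac{1}{2p^n}$ of $\frac{1}{p^n}\Z$, so that $\dist(g,G_n)=O(p^{-n})$ uniformly in $g$ (all norms on $\R^d$ being equivalent).

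The one substantive point is that $\mathds{L}\coloneqq\max\{\mathds{L}_H,\mathds{F}\}$ has the bounded doubling property. Here is the intended argument. For $r\geq 1$ put $m(r)=\lfloor\log_p r\rfloor\geq 0$; then $\mathds{F}(g)\leq r$ if and only if $g\in G_{m(r)}$, so, writing $B(r)=\left\{g\in G_\infty:\mathds{L}(g)\leq r\right\}$, we have $B(r)=\left\{g\in G_{m(r)}:\mathds{L}_H(g)\leq r\right\}$, i.e.\ the set of points of the lattice $\left(\frac{1}{p^{m(r)}}\Z\right)^d$ lying in the $\mathds{L}_H$-ball of radius $r$. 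Since $p^{m(r)}\in(r/p,r]$, comparing $\mathds{L}_H$ with the max norm up to a fixed factor gives constants $c_1,c_2>0$, depending only on $p$, $d$ and the chosen norm, such that $c_1 r^{2d}\leq|B(r)|\leq c_2 r^{2d}$ for all $r\geq 1$. Taking $\theta=p$, one then has $|B(\theta r)|\leq c_2(\theta r)^{2d}\leq\frac{c_2\theta^{2d}}{c_1}\,|B(r)|$ for all $r\geq 1$, which is the bounded doubling property.

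With all the hypotheses of Theorem (\ref{main-Abelian-group-thm}) verified, that theorem yields the stated limit; Lemma (\ref{metric-spectral-triples-lemma}) gives that each triple is metric, and Theorem (\ref{main-group-thm}) gives the Lipschitz distance convergence of $(C^\ast(G_n,\sigma),\Lip_k)_{k\geq n}$ to $(C^\ast(G_n,\sigma),\Lip_\infty)$. The main obstacle is the lattice-point count for $|B(r)|$: one must keep track of two competing scales --- the geometric radius $r$ carried by $\mathds{L}_H$ and the $p$-adic scale $p^{m(r)}\asymp r$ carried by $\mathds{F}$ --- and see that their product stays within fixed multiplicative bounds of $r^2$ uniformly in $r$, since this uniformity is exactly what makes the doubling constant independent of $r$. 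Everything else is routine bookkeeping with the identifications fixed before Theorem (\ref{main-group-thm}).
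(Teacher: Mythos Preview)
Your proposal is correct and follows essentially the same approach as the paper: verify the hypotheses of Theorem~(\ref{main-Abelian-group-thm}), with the only nontrivial step being the bounded doubling property of $\mathds{L}=\max\{\mathds{L}_H,\mathds{F}\}$, established by counting lattice points of $\left(\frac{1}{p^{m}}\Z\right)^d$ in a norm-ball. The paper computes the ball cardinality exactly at the discrete radii $r=p^n$ for the max norm and obtains $|B[p^{n+1}]|\leq p^{2d}|B[p^n]|$, then passes to a general norm by equivalence; you instead argue directly for all $r\geq 1$ via the two-sided growth estimate $|B(r)|\asymp r^{2d}$, which is an equivalent (and arguably cleaner) packaging of the same count.
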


\begin{proof}
  We first establish the bounded doubling property of certain related length functions. 
  
  Fix a prime number $p$ and $d \geq 2$. For all $g \in G_\infty$, let
  \begin{equation*}
    \mathds{L}'(g) = \max\left\{ \norm{g}{\R^d}, p^{\min\left\{ n \in \N : g \in \left(\frac{1}{p^n}\Z\right)^d\right\}} \right\}\text,
  \end{equation*}
  where the norm we choose on $\R^d$ for this proof is the max norm. By Lemma (\ref{length-lemma}), the function $\mathds{L}'$ is an unbounded proper length function. By Lemma (\ref{G-lemma}), we have that $\opnorm{[M_{\mathds{L}'},\cdot]}{}{\ell^2(G_n)} \leq \Lip_n$ on $C(G_n)$ for all $n\in\Nbar$. By \cite{Connes89}, the triple $(C^\ast(G_n,\sigma),\ell^2(G_n), M_{\mathds{L}'})$ is a spectral triple.

  Assume $\mathds{L}'(g) \leq p^n$. Since $g \in \left(\frac{1}{p^n}\Z\right)^d$, we can write $g = \left(\frac{a_j}{p^n}\right)_{1\leq j\leq d}$ for $a_1,\ldots,a_d \in \Z$. Since $\norm{g}{\R^d}\leq p^n$, we also have $a_1,\ldots,a_d \in [-p^{2n}, p^{2n}]$. Conversely, if $g = \left(\frac{a_j}{p^{n}}\right)_{1\leq j\leq d}$ with $-p^{2n} \leq a_j \leq p^{2n}$ for all $j\in\{1,\ldots,d\}$, then $\mathds{L}'(g) \leq p^n$ by definition. Hence, the closed ball of center $(0,0)$ and radius $p^n$ has cardinal $(2 p^{2n} + 1)^d$.

  Consequently:
  \begin{align*}
    \left|\left\{g\in G_\infty : \mathds{L}'(g) \leq p^{n+1}\right\}\right|
    &= (2 p^{2n+2} + 1)^d \\
    &\leq ( 2 p^{2n+2} + p^2)^d \\
    &= p^{2d} (2 p^{2n} + 1)^d \\
    &\leq p^{2d} \left|\left\{ g \in G_\infty : \mathds{L}'(g) \leq p^n \right\}\right| \text.
  \end{align*}

  Therefore, $\mathds{L}'$ is a proper unbounded length with the bounded doubling property.

  Let $\mathds{L}_H$ be any norm on $\R^d$. Since all the norms on $\R^d$ are equivalent, there exists $C>0$ such that $\frac{1}{C} \mathds{L}_H\leq\norm{\cdot}{\R^d} \leq C \mathds{L}_H$. Then
  \begin{equation*}
    \frac{1}{C} (\max\{\mathds{L}_H,\mathds{F}\}) \leq \mathds{L}' \leq C \max\left\{\mathds{L}_H,\mathds{F}\right\}\text.
  \end{equation*}

  Therefore,
  \begin{equation*}
    \left|\left\{g\in G_\infty : \max\left\{\mathds{L}_H(g),\mathds{F}(g)\right\} \leq p^{n+1}\right\} \right|\leq C^2 p^{2d}\left|\left\{g\in G_\infty : \max\left\{\mathds{L}_H(g),\mathds{F}(g)\right\} \leq p^{n}\right\}\right|\text.
  \end{equation*}

  Write $\mathds{L} \coloneqq \max\{\mathds{L}_H,\mathds{F}\}$ on $C_c(G_n)$. We thus have shown that $\mathds{L}$, which is unbounded and proper by Lemma (\ref{length-lemma}), also has the bounded doubling property.

  By Lemma (\ref{domain-lemma}), since $G_\infty$ is Abelian, we conclude that
  \begin{equation*}
    \forall n \in \Nbar \quad \left\{a\in\dom{\Lip_n}:\Lip_n(a)\leq 1\right\} = \closure{\left\{a\in C_c(G_n) : \Lip_n(a)\leq 1\right\}} \text.
  \end{equation*}
  Thus, our corollary follows from Theorem (\ref{main-group-thm}).
\end{proof}

We can choose somewhat different length functions over $\left(\Z\left[\frac{1}{p}\right]\right)^d$, by varying not only $\mathds{L}_H$,  but also $\mathds{F}$. For instance, Corollary (\ref{main-cor}) remains valid if we replace $\mathds{F}$ by $\mathds{F}':(g_1,\ldots,g_d)\in G_\infty\mapsto \max_{j=1}^d |g_j|_p$, where $|\cdot|_p$ is now the $p$-norm. The resulting length function $\max\{\mathds{L}_H,\mathds{F}'\}$ has the bounded doubling property, as seen by applying \cite[Proposition 3.17]{FarsiPacker22} up to an equivalence of metrics. We also note that for this construction to give us something different from Corollary (\ref{main-cor}), we require that $\mathds{L}_H(g) < \mathds{F}'(g)$ for at least one $g \in \Z^d\setminus\{0\}$. In general, the difference is only up to a bounded perturbation of the underlying Dirac operator.

\bigskip

Another interesting family of C*-algebras to which our work applies are certain \emph{Bunce-Deddens algebras}.

\begin{notation}
  Let $\mathscr{P}$ be the set of all sequences $(\alpha_n)_{n\in\N}$ of nonzero natural numbers such that $\frac{\alpha_{n+1}}{\alpha_n}$ is a prime number for all $n\in\N$.
\end{notation}

\begin{notation}
  For any integer $m \in \Z$, we denote the quotient group $\faktor{\Z}{m\Z}$ simply by $\faktor{\Z}{m}$.
\end{notation}

\begin{notation}
  Let $\alpha\coloneqq(\alpha_n)_{n\in\N} \in \mathscr{P}$. If $n \in \N$, then $\alpha_n$ divides $\alpha_{n+1}$, and thus the map
  \begin{equation*}
    \rho_n: (m \mod \alpha_{n+1}) \in \faktor{\Z}{\alpha_{n+1}} \rightarrow \left(m \mod \alpha_n \right) \in \faktor{\Z}{\alpha_n}
  \end{equation*}
  where $x \mod y$ is the equivalence class of $x \in \Z$ modulo $y \in \Z\setminus\{0\}$, is a well-defined surjective group morphism. The projective limit of the projective sequence
  \begin{equation*}
    \faktor{\Z}{\alpha_0} \quad \xleftarrow{\ \ \rho_0\ \ }\quad \faktor{\Z}{\alpha_1} \quad\xleftarrow{\ \ \rho_1\ \ }\quad \faktor{\Z}{\alpha_2} \quad\xleftarrow{\ \  \rho_2\ \ }\quad \faktor{\Z}{\alpha_3} \quad\xleftarrow{\ \  \rho_4\ \ }\quad \cdots
  \end{equation*}
  is denoted by $\faktor{\Z}{\alpha}$. By construction, we observe that:
  \begin{equation*}
    \faktor{\Z}{\alpha} = \left\{ (z_n)_{n\in\N} \in \prod_{j=0}^\infty\faktor{\Z}{\alpha_n}: \rho_n(z_{n+1}) = z_n \right\} \text.
  \end{equation*}
  We endow $\faktor{\Z}{\alpha}$ with its topology as a projective space of compact spaces, i.e. with the topology induced by the product topology on $\prod_{j=0}^\infty\faktor{\Z}{\alpha_n}$, which is compact by Tychonoff theorem.
\end{notation}

We identify, for any $m\in\N\setminus\{0\}$, the Pontryagin dual $\dualZmod{m}$ of $\faktor{\Z}{m}$ with the subgroup of $\T$ of $m$-th roots of unity in the obvious manner --- while of course, $\faktor{\Z}{m}$ is self-dual, this identification will be helpful to our presentation. The Pontryagin dual $\Z(\alpha)\coloneqq \dualZmod{\alpha}$ of $\faktor{\Z}{\alpha}$ is thus, by contravariant functoriality, the limit of the inductive sequence:
\begin{equation*}
  \dualZmod{\alpha_0} \quad \xrightarrow{\ \  j_0 \ \ } \quad  \dualZmod{\alpha_1} \quad  \xrightarrow{\ \  j_1 \ \ } \  \dualZmod{\alpha_2} \quad  \xrightarrow{\ \  j_2\ \ }\quad  \dualZmod{\alpha_3} \quad  \xrightarrow{\ \  j_3 \ \ } \quad \cdots 
\end{equation*}
where $j_1$,$j_2$,$\ldots$, are simply the injection maps. Of course, by construction:
\begin{equation*}
  \Z(\alpha) = \left\{ \zeta \in \T : \exists n \in \N \quad \zeta^{\alpha_n} = 1 \right\}\text,
\end{equation*}
where $\T = \{ u \in \C : |u| = 1 \}$ is the circle group; moreover $\Z(\alpha)$ is a discrete group as the dual of a compact group (i.e. we \emph{do not} endow it with the topology inherited as a subset of $\T$).

The Pontryagin duality pairing between $\Z(\alpha)$ and its dual $\faktor{\Z}{\alpha}$ is given for all $\zeta\in Z(\alpha)$ and for all $z\coloneqq(z_n)_{n\in\N} \in \faktor{\Z}{\alpha}$ by $\zeta^z \coloneqq \lim_{n\rightarrow\infty} \zeta^{z_n}$, noting that the sequence $(\zeta^{z_n})_{n\in\N}$ is eventually constant, by construction.

In the special case when $\alpha=(p,p^2,p^3,\ldots)$, the group $\Z(\alpha)$ is the Pr{\"u}fer group $\Z(p^\infty)$ and the group $\faktor{\Z}{\alpha}$ is the group $\Z_p$ of $p$-adic integers.

\begin{lemma}\label{Zalpha-bounded-doubling-lemma}
  Let $\alpha\coloneqq(\alpha_n)_{n\in\N} \in \mathscr{P}$. Let $\mathds{L}_H$ be a length function over the circle group $\T$ restricted to $\Z(\alpha)$ such that $\lim_{n\rightarrow\infty} \Haus{\mathds{L}_H}\left(\dualZmod{\alpha_n}\; ,\Z(\alpha)\right) = 0$.
  
  For all $\zeta\in \Z(\alpha)$, we define
  \begin{equation*}
    \mathds{F}(\zeta) \coloneqq \min\left\{ p \in \N : \zeta^p = 1 \right\} \text.
  \end{equation*}
  
  Let $\norm{\cdot}{\R^2}$ be any monotone norm on $\R^2$. The function
  \begin{equation*}
    \mathds{L} : \zeta\in \Z(\alpha) \mapsto \norm{( \mathds{L}_H(\zeta) , \mathds{F}(\zeta) )}{\R^2}
  \end{equation*}
  is a proper unbounded length function over $\Z(\alpha)$.

  Moreover, $\mathds{L}$ has the bounded doubling property if, and only if, the sequence $\left(\frac{\alpha_{n+1}}{\alpha_n}\right)_{n\in\N}$ is bounded.
\end{lemma}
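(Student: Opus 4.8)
\textbf{Proof plan for Lemma \ref{Zalpha-bounded-doubling-lemma}.}

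The plan is to treat the three assertions (proper, unbounded, length function; then the bounded-doubling equivalence) in sequence, reducing everything to counting points in closed balls. First I would verify that $\mathds{F}$ behaves like the $\mathds{F}$ of Subsection \ref{spectral-triples-section}: writing $G_\infty = \Z(\alpha) = \bigcup_{n\in\N} \dualZmod{\alpha_n}$ with $G_n = \dualZmod{\alpha_n}$ a strictly increasing sequence of finite subgroups (strictly increasing precisely because $\frac{\alpha_{n+1}}{\alpha_n}$ is always a prime, hence $>1$), one checks $\mathds{F}(\zeta) = \min\{\alpha_n : \zeta \in \dualZmod{\alpha_n}\}$, so $\mathrm{scale}(n) = \alpha_n$ is strictly increasing and unbounded, and $\mathds{F}$ restricted to $G_0 = \dualZmod{\alpha_0}$ is constantly $\alpha_0$ on nonidentity elements (taking $\mathds{F}(1)=0$), which trivially satisfies the four conditions on $G_0$ in Subsection \ref{spectral-triples-section}. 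Together with the hypothesis $\lim_n \Haus{\mathds{L}_H}(\dualZmod{\alpha_n},\Z(\alpha)) = 0$ and the fact that $\mathds{L}_H$ is automatically proper on each finite group $G_n$, all the standing assumptions of that subsection hold, so Lemma \ref{length-lemma} applies verbatim and yields that $\mathds{L} = \norm{(\mathds{L}_H,\mathds{F})}{\R^2}$ is a proper, unbounded length function on $\Z(\alpha)$ for any monotone norm. This disposes of the first half of the statement.

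For the bounded-doubling equivalence, since all norms on $\R^2$ are equivalent I may replace $\mathds{L}$ by $\mathds{L}_\infty \coloneqq \max\{\mathds{L}_H, \mathds{F}\}$ (bounded doubling is preserved under passing to a uniformly equivalent length, as already used in the proof of Lemma \ref{length-lemma} and in Corollary \ref{main-cor}). The key computation is the size of the closed ball $B(r) \coloneqq \{\zeta \in \Z(\alpha) : \mathds{L}_\infty(\zeta) \leq r\}$. As in the proof of Lemma \ref{length-lemma}, for $r = \alpha_n$ one has the exact identity $B(\alpha_n) = \{\zeta \in \dualZmod{\alpha_n} : \mathds{L}_H(\zeta) \leq \alpha_n\} = \dualZmod{\alpha_n}$, the last equality because $\mathds{L}_H$ is a length function on the compact group $\T$, hence bounded (by its diameter, a fixed constant), and $\alpha_n \to \infty$; so $|B(\alpha_n)| = \alpha_n$ for all $n$ large enough. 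More generally, for arbitrary $r \geq 1$, choosing $n$ with $\alpha_n \leq r < \alpha_{n+1}$, I would sandwich $|B(r)|$ between $|B(\alpha_n)| = \alpha_n$ and $|B(\alpha_{n+1})| = \alpha_{n+1}$ (for $n$ large), so $|B(r)|$ is, up to the bounded factor coming from small $n$, comparable to $\alpha_n$ where $\alpha_n$ is the largest term of the sequence not exceeding $r$.

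Now suppose $\frac{\alpha_{n+1}}{\alpha_n} \leq P$ for all $n$. Given $\theta > 1$, pick $\theta = P$ say (or any fixed value), and for $r \geq 1$ let $\alpha_n$ be the largest term $\leq r$; then $\theta r \leq P r < P \alpha_{n+1} \leq \alpha_{n+2}$ (using $\frac{\alpha_{n+2}}{\alpha_{n+1}} \leq P$... one adjusts the constant), so $|B(\theta r)| \leq |B(\alpha_{n+2})| = \alpha_{n+2} \leq P^2 \alpha_n \leq P^2 |B(\alpha_n)| \leq P^2 |B(r)|$ for $r$ past a threshold, and absorbing the finitely many small $r$ into the constant gives bounded doubling with $c = P^2$. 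Conversely, if $\frac{\alpha_{n+1}}{\alpha_n}$ is unbounded, fix any $\theta > 1$ and any $c > 0$; choose $n$ with $\frac{\alpha_{n+1}}{\alpha_n} > c$ and (using that $\alpha_n \to \infty$, so eventually $\mathds{L}_H$-diameter of $\T$ is $< \alpha_n$) so that $|B(r)| = \alpha_n$ for $r \in [\alpha_n, \alpha_{n+1})$; take $r = \alpha_{n+1}/\theta$, which is $\geq \alpha_n$ once $\alpha_{n+1}/\alpha_n \geq \theta$, so $|B(r)| = \alpha_n$, while $\theta r = \alpha_{n+1}$ gives $|B(\theta r)| = \alpha_{n+1} > c\,\alpha_n = c\,|B(r)|$, contradicting the doubling inequality. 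Hence bounded doubling fails. \textbf{The main obstacle} I anticipate is bookkeeping the "for $n$ large enough" thresholds cleanly: the exact equality $|B(\alpha_n)| = \alpha_n$ only holds once $\alpha_n$ exceeds the $\mathds{L}_H$-diameter of $\T$, and the bounded-doubling definition quantifies over all $r \geq 1$, so I must either argue that finitely many small radii only perturb the constant $c$, or (cleaner) note that the doubling property for a proper length is insensitive to behavior on any bounded set and state a short lemma to that effect before running the estimates.
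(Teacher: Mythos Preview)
Your plan is essentially the paper's: invoke Lemma~\ref{length-lemma} for the first assertion, pass to the max norm by equivalence of norms, and count $|B(\alpha_n)| = \alpha_n$ to sandwich arbitrary radii between consecutive $\alpha_n$. You are in fact more careful than the paper about the threshold past which $\alpha_n$ exceeds the $\mathds{L}_H$-diameter of $\T$; the paper simply asserts $|B[\alpha_d]| = \alpha_d$ for every $d$ without comment.

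That said, both your proposal and the paper's proof share a genuine gap at the very first step. You write ``one checks $\mathds{F}(\zeta) = \min\{\alpha_n : \zeta \in \dualZmod{\alpha_n}\}$,'' but $\mathds{F}(\zeta)$ as defined in the statement is the \emph{order} of $\zeta$, and these differ in general: with $\alpha = (2,6,30,\ldots) \in \mathscr{P}$ the element $e^{2\pi i/3}$ has order $3$ while the smallest $\alpha_n$ killing it is $6$. Consequently Lemma~\ref{length-lemma} does not apply to this $\mathds{F}$, and in fact $\mathds{L}$ need not be a length function at all: in the same $\Z(\alpha)$, taking $\zeta = e^{2\pi i/5}$ (order $5$), $\eta = e^{2\pi i/6}$ (order $6$), and $\mathds{L}_H$ the arc length, one gets $\mathds{L}(\zeta\eta) \geq \mathds{F}(\zeta\eta) = 30 > 11 = \mathds{L}(\zeta) + \mathds{L}(\eta)$. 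The lemma and your argument become correct if $\mathds{F}$ is instead taken to be $\min\{\alpha_n : \zeta^{\alpha_n} = 1\}$, which is what the paper's surrounding discussion and introduction actually use; with that replacement your plan goes through unchanged.
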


\begin{proof}
  First, it is easy to see that, for all $\zeta \in \Z(\alpha)\setminus\{1\}$,
  \begin{equation*}
    \mathds{F}(\zeta) = p^{\min\left\{ n \in \N : \zeta\in\dualZmod{\alpha_n}\right\}} \text,
  \end{equation*}
  while $\mathds{F}(1) = 0$. Therefore, by Lemma (\ref{length-lemma}), we already know that $\mathds{L}$ is a proper unbounded length function on $\Z(\alpha)$.

  For now, let us assume $\norm{\cdot}{\R^2}$ is the max norm.
  
  For any $\rho>0$, we write $B[\rho]$ the cardinality of the closed ball centered at $(1,0) \in \Z(\alpha)\times\Z$ of radius $\rho$. For any $d \in \N$, we compute the following expression:
  \begin{equation*}
    B\left[\alpha_d\right]
    =\left|\left\{ \zeta\in \Z(\alpha) : \mathds{L}(\zeta) \leq \alpha_d \right\}\right| = \left|\left\{ \zeta \in\dualZmod{\alpha_d} \right\}\right| 
    = \alpha_d \text.
  \end{equation*}

  Now, let $R \geq 1$. Then, there exists $d \in \N$ such that $\alpha_d \leq R \leq \alpha_{d+1}$. We note that since $B[R]\leq B[\alpha_{d+1}]<\infty$, our length function $\mathds{L}$ is indeed proper; we also note that since $B[R]\geq B[\alpha_d] = \alpha_d\geq 2^d$, the length function $\mathds{L}$ is also unbounded.

  Now, assume that $M \coloneqq \sup_{n\in\N}\frac{\alpha_{n+1}}{\alpha_n} < \infty$. We then compute:
  \begin{align*}
    B[2 R] \leq B[2 \alpha_{d+1}]
    &\leq B[\alpha_{d+2}] = \alpha_{d+2} \\
    &= \frac{\alpha_{d+2}}{\alpha_{d+1}}\frac{\alpha_{d+1}}{\alpha_d} \alpha_d \leq M^2 \alpha_d = M^2 B[\alpha_d] \leq M^2 B[R] \text. 
  \end{align*}
  Therefore, our length $\mathds{L}$ has the bounding doubling property. Now, if we allow for a different choice of monotone norm for $\norm{\cdot}{\R^2}$, then, as all norms on $\R^2$ are equivalent, the resulting length function still has the property of bounded doubling.

  Now, assume instead that $\sup_{n\in\N}\frac{\alpha_{n+1}}{\alpha_n} = \infty$. Let $n \in \N$, and let $r_n = \alpha_{n+1} / 2$. We then note, using our above computation, that
  \begin{equation*}
    B[2r_n] = \alpha_{n+1} = \frac{\alpha_{n+1}}{\alpha_n} \cdot  B[r_n] \text,
  \end{equation*}
  and thus $\frac{\alpha_{n+1}}{\alpha_n} = \frac{B[2r_n]}{B[r_n]}$ for all $n\in\N$; therefore, our length $\mathds{L}$ does not actually have the bounded doubling property. 
\end{proof}

\begin{corollary}
  Let $\alpha = (\alpha_n)_{n\in\N}$ be a sequence of nonzero natural numbers such that $\left(\frac{\alpha_{n+1}}{\alpha_n}\right)_{n\in\N}$ is a bounded sequence of prime numbers, and let
  \begin{equation*}
    \Z(\alpha) \coloneqq \left\{ \zeta \in \C : \exists n \in \N \quad \zeta^{\alpha_n} = 1 \right\}\text.
  \end{equation*}
  Define:
  \begin{equation*}
    G_\infty \coloneqq \Z(\alpha)\times \Z \text{ and }\forall n \in \N \quad G_n \coloneqq \dualZmod{\alpha_n}\times \Z \text,
  \end{equation*}
  i.e. $G_n = \{ (\zeta,z) \in G_\infty : z\in \Z, \zeta^{\alpha_n}=1 \}$. Let $\sigma$ be a $2$-cocycle of $G_\infty$.

  Let $\mathds{L}_Z$ be the restriction of any continuous length function on $\T$ to $\Z(\alpha)$, and  define $\mathds{L}_H : (u,z)\in G_\infty\mapsto \mathds{L}_Z(u) + |z|$. 
  
  For all $\zeta \in \Z(\alpha)$, set:
  \begin{equation*}
    \mathds{F}(\zeta) \coloneqq \min\{ n \in \N : u^n = 1 \} \text.
  \end{equation*}

  Let $E$ be a Hermitian vector space, and let $\gamma_1$,$\gamma_2$ be unitaries such that $\gamma_1\gamma_2 = -\gamma_2\gamma_1$ and $\gamma_1^2 = \gamma_2^2 = 1_E$.

  If we set, for all $n\in\Nbar$,
  \begin{equation*}
    \Dirac_n \coloneqq M_{\mathds{L}_H}\otimes\gamma_1 + M_{\mathds{F}}\otimes\gamma_2 \text,
  \end{equation*}
  then for all $n\in\N$, the spectral triple $(C^\ast(G_n,\sigma),\ell^2(G_n)\otimes E,\Dirac_n)$ is metric, and
  \begin{equation*}
    \lim_{n\rightarrow\infty} \spectralpropinquity{}\left( \left( C^\ast(G_n,\sigma), \ell^2(G_n)\otimes E, \Dirac_n\right), \left( C^\ast(\Z(\alpha)\times \Z,\sigma), \ell^2(\Z(\alpha)\times \Z)\otimes E, \Dirac_\infty\right)\right) = 0 \text.
  \end{equation*}
\end{corollary}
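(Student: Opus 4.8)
The plan is to verify that the hypotheses of Theorem~(\ref{main-Abelian-group-thm}) are satisfied by the data $(G_\infty,(G_n)_{n\in\N},\sigma,\mathds{L}_H,\mathrm{scale},\mathds{F})$ described in the statement, and then simply invoke that theorem. The group $G_\infty = \Z(\alpha)\times\Z$ is Abelian and discrete, and $G_\infty = \bigcup_{n\in\N} G_n$ where $G_n = \dualZmod{\alpha_n}\times\Z$ forms a strictly increasing sequence of subgroups, since the sequence $\left(\frac{\alpha_{n+1}}{\alpha_n}\right)_{n\in\N}$ consists of primes and hence $\dualZmod{\alpha_n}\subsetneq\dualZmod{\alpha_{n+1}}$ for each $n$. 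The cocycle $\sigma$ is given, and $\mathds{L}_H : (u,z)\mapsto \mathds{L}_Z(u) + |z|$ is a length function on $G_\infty$, being the sum of (restrictions of) length functions on the two factors.

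First I would check that $\mathds{L}_H$ restricted to $G_n$ is proper: a closed ball in $G_n$ for $\mathds{L}_H$ of radius $r$ is contained in $\{(u,z) : \mathds{L}_Z(u)\leq r, |z|\leq r\}$, and since $\dualZmod{\alpha_n}$ is finite and $\{z\in\Z : |z|\leq r\}$ is finite, this set is finite. Next I would verify the Hausdorff convergence $\lim_{n\to\infty}\Haus{\mathds{L}_H}(G_n,G_\infty) = 0$. Since $G_n\subseteq G_\infty$, this amounts to showing that every $(u,z)\in G_\infty$ is within $\varepsilon$ (for $\mathds{L}_H$) of $G_n$ once $n$ is large, uniformly; as the $\Z$-coordinate is shared, this reduces to showing $\lim_{n\to\infty}\Haus{\mathds{L}_Z}(\dualZmod{\alpha_n},\Z(\alpha)) = 0$, which is exactly the hypothesis needed to apply Lemma~(\ref{Zalpha-bounded-doubling-lemma}). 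I should note that the hypothesis in the corollary that $\mathds{L}_Z$ is the restriction of a continuous length function on $\T$ is what guarantees this: $\Z(\alpha)$ is dense in $\T$, and $\bigcup_n \dualZmod{\alpha_n} = \Z(\alpha)$, so the $\dualZmod{\alpha_n}$ become $\Haus{\mathds{L}_Z}$-dense in $\Z(\alpha)$; this density argument, together with compactness of $\T$ and uniform continuity of the length function, is the step requiring a little care.

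Then I would identify $\mathrm{scale}$ and $\mathds{F}$: the corollary sets $\mathds{F}(\zeta) = \min\{n\in\N : \zeta^n = 1\}$ on $\Z(\alpha)$, extended to $G_\infty$ by $\mathds{F}(u,z) = \mathds{F}(u)$; by the first lines of the proof of Lemma~(\ref{Zalpha-bounded-doubling-lemma}), for $\zeta\neq 1$ one has $\mathds{F}(\zeta) = p_\zeta^{\min\{n:\zeta\in\dualZmod{\alpha_n}\}}$ where this matches the pattern $\mathds{F}(g) = \mathrm{scale}(\min\{n : g\in G_n\})$ with $\mathrm{scale}(n) = \alpha_n$ (or a compatible strictly increasing unbounded function — here I need to be slightly careful that $\mathds{F}$ as literally defined agrees up to the structural requirements, which it does on $\Z(\alpha)$, the $G_0$-conditions being automatic). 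Since $\left(\frac{\alpha_{n+1}}{\alpha_n}\right)_{n\in\N}$ is bounded by hypothesis, Lemma~(\ref{Zalpha-bounded-doubling-lemma}) gives that $\mathds{L} := \max\{\mathds{L}_H,\mathds{F}\}$ has the bounded doubling property.

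Having checked all the hypotheses, Theorem~(\ref{main-Abelian-group-thm}) applied with $E$ the given Hermitian space, $\gamma_1,\gamma_2$ the given unitaries, and $\Dirac_n = M_{\mathds{L}_H}\otimes\gamma_1 + M_{\mathds{F}}\otimes\gamma_2$, yields both that each $(C^\ast(G_n,\sigma),\ell^2(G_n)\otimes E,\Dirac_n)$ is a metric spectral triple and that $\lim_{n\to\infty}\spectralpropinquity{}\bigl((C^\ast(G_n,\sigma),\ell^2(G_n)\otimes E,\Dirac_n),(C^\ast(\Z(\alpha)\times\Z,\sigma),\ell^2(\Z(\alpha)\times\Z)\otimes E,\Dirac_\infty)\bigr) = 0$, which is the claim. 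The main obstacle, such as it is, is the bookkeeping in the Hausdorff-convergence step — making sure the chosen $\mathds{L}_Z$ genuinely produces $\Haus{\mathds{L}_Z}$-approximation of $\Z(\alpha)$ by the finite subgroups $\dualZmod{\alpha_n}$, which uses the density of roots of unity of order $\alpha_n$ in the circle as $n$ grows together with the boundedness (or at least growth) of $\alpha_n$; everything else is a direct translation of the corollary's data into the language of Subsection~(\ref{spectral-triples-section}) and an appeal to the already-established theorems.
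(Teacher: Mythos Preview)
Your overall strategy is the same as the paper's: verify the hypotheses of Theorem~(\ref{main-Abelian-group-thm}) and invoke it. The checks on the group structure, properness of $\mathds{L}_H$ on each $G_n$, and Hausdorff convergence are fine (for the latter, the paper argues exactly as you suggest: continuity of $\mathds{L}_Z$ on the compact $\T$ means the $\Haus{\mathds{L}_Z}$-topology is the Vietoris topology for the usual one, and since $\alpha_n\to\infty$ the $\alpha_n$-th roots of unity become dense in $\T$, hence in $\Z(\alpha)$).

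There is, however, a genuine gap in your bounded-doubling step. Lemma~(\ref{Zalpha-bounded-doubling-lemma}) is stated and proved for the length $\zeta\mapsto\norm{(\mathds{L}_Z(\zeta),\mathds{F}(\zeta))}{\R^2}$ on $\Z(\alpha)$ \emph{alone}; it says nothing about $G_\infty=\Z(\alpha)\times\Z$. When you write ``Lemma~(\ref{Zalpha-bounded-doubling-lemma}) gives that $\mathds{L}:=\max\{\mathds{L}_H,\mathds{F}\}$ has the bounded doubling property'', you are silently asserting bounded doubling on the product, which the lemma does not supply. You still need to combine bounded doubling on $\Z(\alpha)$ (from the lemma) with bounded doubling on $\Z$ (for $|\cdot|$, which is elementary) to obtain bounded doubling for $(\zeta,z)\mapsto\mathds{L}_Z(\zeta)+\mathds{F}(\zeta)+|z|$, or equivalently for $\max\{\mathds{L}_H,\mathds{F}\}$, on $G_\infty$. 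The paper handles exactly this passage by invoking \cite[Proposition~3.7]{FarsiPacker22}, which is a product-type result for bounded doubling. The step is not deep --- a direct ball-counting argument for the product length would also work --- but it is a step, and as written your proof skips it.
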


\begin{proof}
A straightforward computation shows that $|\cdot|$ is proper with the bounded doubling property.

By \cite[Proposition 3.7]{FarsiPacker22} applied to the proper unbounded lengths $|\cdot|$ and $\mathds{L}_Z$, we conclude that $\mathds{L}\coloneqq (\zeta,z)\in G_\infty \mapsto \mathds{L}_Z(\zeta) + \mathds{F}(\zeta) + |m|$ has the bounded doubling property.

Since $\mathds{L}_Z$ is continuous on $\T$, it induces the usual topology on $\T$ (as a subset of $\C$). Therefore, the topology of the Hausdorff distance $\Haus{\mathds{L}_H}$ is the Vietoris topology for the usual topology of $\T$, and thus the same as the topology induced by $\Haus{\T}$, when $\T$ is endowed with the restriction of the usual metric on $\C$. It then follows that:
\begin{equation*}
  \lim_{n\rightarrow\infty}\Haus{\mathds{L}_H}\left(\dualZmod{\alpha_n},\Z(\alpha)\right) = 0\text.
\end{equation*}

As all the other assumptions are now met, we conclude that our corollary holds, by Theorem (\ref{main-Abelian-group-thm}).
\end{proof}

The map
\begin{equation*}
  \varpi : z \in \Z \mapsto (z \mod \alpha_n)_{n\in\N} \in \faktor{\Z}{\alpha}
\end{equation*}
is an injective *-morphism of group with dense range. Now, we define the following automorphism of $\Z(\alpha)$:
\begin{equation*}
  \tau : u \in \Z(\alpha) \mapsto u+\varpi(1) \text.
\end{equation*}
The C*-crossed-product $C(\Z(\alpha))\rtimes_\tau \Z$ is the Bunce-Deddens algebra associated to the ``supernatural'' number
\begin{equation*}
  \mathbf{n}\coloneqq \left(p^{|\{n\in\N:\frac{\alpha_{n+1}}{\alpha_n}=p\}|}\right)_{p \text{ prime }}\text.
\end{equation*}
It is also *-isomorphic to $C^\ast(\Z(\alpha)\times Z,\sigma)$, as defined above, when $\sigma$ is the $2$-cocycle defined by setting, for all $(\zeta,z), (\eta,y) \in G_\infty$:
\begin{equation*}
  \sigma((\zeta,z),(\eta,y)) \coloneqq \eta^z  \text.
\end{equation*}

Indeed, this isomorphism can be obtained by using \cite{Packer89}. We begin with the observation that Bunce-Deddens algebras \cite{BunceDeddens75} are C*-crossed products \cite{Riedel82,Exel94}. Now, let us briefly explain the construction of this isomorphism. Since the natural inclusion $j : \Z(\alpha)\rightarrow\T$ is a character of $\Z(\alpha)$, it is given by the pairing with an element in $\faktor{\Z}{\alpha}$; this element is precisely our $\varpi(1)$ defined above. In our case, we note that $\lambda_{(1,1)} \lambda_{\zeta,0} \lambda_{(1,1)}^\ast = \zeta^{-1} \lambda_{\zeta,0}$ for all $\zeta\in\Z(\alpha)$. If $f \in C_c\left(\faktor{\Z}{\alpha}\right)$, we denote its Fourier transform by $\widehat{f}$; specifically
\begin{equation*}
  \widehat{f}: \zeta\in\Z(\alpha)\mapsto \sum_{z \in \faktor{\Z}{\alpha}} f(z) \zeta^{-z} \text.
\end{equation*}
A straightforward computation shows that $\widehat{\tau(f)}(\zeta) = \zeta^{-1} \widehat{f}(\zeta)$. Thus, we conclude that $\lambda_{(1,1)}\lambda(\widehat{f})\lambda_{(1,1)}^\ast = \lambda\left(\widehat{\tau(f)}\right)$. A similar computation invoking the inverse Fourier transform can be done by using the canonical generators of the C*-crossed product $C\left(\faktor{\Z}{\alpha}\right)\rtimes_\tau\Z$. By universality of the C*-crossed-product and the twisted group C*-algebra (here, since our groups are Abelian, these algebras agree with their image by their left regular representations), we conclude the description of our isomorphism.

\medskip

Thus, we have constructed metric spectral triples over Bunce-Deddens algebra for bounded supernatural numbers, and these triples are limits of sequences of metric spectral triples for the spectral propinquity.

In particular, $C^\ast(\Z(\alpha)\times\Z,\sigma)$ is seen to be the inductive limit \emph{ and }the limit for the propinquity, with the quantum metrics described here, of the C*-algebras $C^\ast\Bigg(\dualZmod{\alpha_n}\allowbreak\times \Z,\sigma\Bigg)$ as $n\in\N$ approaches $\infty$. Notably, $C^\ast\left(\dualZmod{\alpha_n}\times \Z,\sigma\right)$ is actually *-isomorphic to the C*-algebra of continuous sections of a vector bundle over the circle $\T$ with fibers the algebras of square $\alpha_n$-matrices. This situation is of course reminiscent of the fact that in particular, Bunce-Deddens algebras are A$\T$ algebras. However, starting from the usual description of Bunce-Deddens algebras as A$\T$ algebras led to difficulties in \cite{Latremoliere20b}, where the quantum metrics on the Bunce-Deddens algebra do not arise from a spectral triple, and the convergence is only proven in the sense of Rieffel's quantum Gromov-Hausdorff distance. Thus, for Bunce-Deddens algebras associated with supernatural numbers consisting of only finitely many prime numbers, we have now constructed metric spectral triples which actually capture their inductive limit structure within our geometric framework. We hope that Theorems (\ref{main-group-thm}) and (\ref{main-Abelian-group-thm}) will prove useful in constructing other examples of metric spectral triples over twisted group C*-algebras for interesting inductive limits of groups.

%%%%%%%%%%%%%%%%%%%%%%%%%%%%%%%%%%%%%%%%%%%%%%%%%%%%%%%%%%%%%%%%%%%%%%%%%%%%%%%%%%%%%%%%%%%%%%%%%%%%%%%%%

\providecommand{\bysame}{\leavevmode\hbox to3em{\hrulefill}\thinspace}
\providecommand{\MR}{\relax\ifhmode\unskip\space\fi MR }
% \MRhref is called by the amsart/book/proc definition of \MR.
\providecommand{\MRhref}[2]{%
  \href{http://www.ams.org/mathscinet-getitem?mr=#1}{#2}
}
\providecommand{\href}[2]{#2}

\vfill

\begin{thebibliography}{10}



\bibitem{Guido17} V. Aiello, D. Guido, T. Isola, \emph{Spectral triples for noncommutative solenoidal spaces from self-coverings,} \textbf{448} (2017), 1378--1412.

\bibitem{Luef21} A. Austad and F. Luef, \emph{Modulation spaces as a smooth structure in noncommutative geometry,}
Banach J. Math. Anal. \textbf{15} (2021) 15--35.

\bibitem{Kaad18}
{K}. {A}guilar and {J}. {K}aad, \emph{The Podle\'s sphere as a spectral metric
  space.}, J. Geom. Phys. \textbf{133} (2018), 260--278.



\bibitem{AgLat} K. Aguilar, F.  Latr\`emoli\`ere, and T. Rainone,  \emph{Bunce-Deddens algebras as quantum Gromov-Hausdorff distance limits of circle algebras},  Integral Equations Operator Theory \textbf{94} (2022), no. 1, Paper No. 2, 42 pp. 


\bibitem{Latremoliere15d}
{K}. Aguilar and {F}. {L}atr\'emoli\`ere, \emph{Quantum ultrametrics on AF
  algebras and the Gromov-Hausdorff propinquity}, Studia Math.
  \textbf{231} (2015), no.~2, 149--194, ArXiv: 1511.07114.


\bibitem{Latremoliere20b}
{K}. {A}guilar, {F}. {L}atr{\'e}moli{\`e}re, and {T}. {R}ainone, \emph{Bunce-Deddens
  algebras as quantum Gromov-Hausorff distance limits of circle algebras},
  Integral Equations Operator Theory \textbf{94} (2022), no.~1, Paper no. 2,
  42pp, ArXiv: 2008.07676.

  
\bibitem{Antonescu04}
{C}. {A}ntonescu and {E}. {C}hristensen, \emph{Spectral triples for af
  {$C^\ast$}-algebras and metrics on the Cantor set}, J. Oper. Theory
  \textbf{56} (2006), no.~1, 17--46, arXiv: 0309044.

\bibitem{BunceDeddens75}
{J}.~{W}. {B}unce and {J}. {D}eddens, \emph{A family of simple
  {$C^\ast$}-algebras related to weighted shift operators}, J. Functional
  Analysis \textbf{19} (1975), 13--24.

\bibitem{Rieffel15b}
{M}. {C}hrist and {M.}~{A.} {R}ieffel, \emph{Nilpotent group
  {$C^\ast$-algebras}-algebras as compact quantum metric spaces}, Canad. Math.
  Bull. \textbf{60} (2017), no.~1, 77--94, arXiv: 1508.00980.
  
\bibitem{Lapidus08}
{E}. {C}hristensen, {C}. {I}van, and {M}. {L}apidus, \emph{Dirac operators and
  spectral triples for some fractal sets built on curves}, Adv. Math.
  \textbf{217} (2008), no.~1, 42--78.

\bibitem{Connes89}
A.~{C}onnes, \emph{Compact metric spaces, {F}redholm modules and
  hyperfiniteness}, Ergodic Theory Dynam. Systems \textbf{9} (1989), no.~2,
  207--220.

\bibitem{Connes}
\bysame, \emph{Noncommutative geometry}, Academic Press, San Diego, 1994.

\bibitem{Dabrowski05}
{L}. {D}abrowski, {G}. {L}andi, {A}. {S}itarz, {W}. van {S}uijlekom, and J.~C.
  {V}arilly, \emph{The Dirac operator on {$SU_q(2)$}}, Comm. Math. Phys.
  \textbf{259} (2005), 729--759.

\bibitem{Davidson}
{K}.~{R}. {D}avidson, $C^*-$\emph{Algebras by example}, Fields Institute
  Monographs, American Mathematical Society, 1996.

\bibitem{Sitarz13}
{L}. D\c{a}browski and {A}. Sitarz, \emph{Curved noncommutative torus and
  {G}au{\ss}-{B}onnet}, J. Math. Phys. \textbf{013518} (2013).

\bibitem{Sitarz15}
\bysame, \emph{An asymmetric noncommutative torus}, SIGMA \textbf{11} (2015),
  11 pages, arXiv: 1406.4645.

\bibitem{Edwards75}
David~A. Edwards, \emph{The structure of superspace}, Studies in topology
  ({P}roc. {C}onf., {U}niv. {N}orth {C}arolina, {C}harlotte, {N}. {C}., 1974;
  dedicated to {M}ath. {S}ect. {P}olish {A}cad. {S}ci.), 1975, p.~121--133.
  \MR{0401069}


\bibitem{Exel94}
  {R}. {E}xel, \emph{The Bunce-Deddens algebras as crossed products by partial automorphisms}, Bol.
Soc. Brasil. Mat. (N.S.) 25 (1994), 173--179.
  
  
\bibitem{FarsiPacker22}
C.~Farsi, T.~Basa~Landry, N.~S.~Larsen, and J.~Packer, \emph{Spectral triples  for noncommutative solenoids and a Wiener's lemma},  (2022), ArXiv:  2212.07470.
 

\bibitem{Floricel19}
R.~Floricel and A.~Ghorbanpour, \emph{On inductive limit spectral triples},
  Proc. Amer. Math. Soc. \textbf{147} (2019), no.~8, 3611--3619, ArXiv:
  1712.09621.

\bibitem{Gabriel16}
{O}. {G}abriel and {M}. {G}rensing, \emph{Ergodic actions and spectral
  triples}, J. Oper. Theory \textbf{76} (2016), no.~2, 307--334.

\bibitem{Gromov81}
M.~{G}romov, \emph{Groups of polynomial growth and expanding maps}, Publ. Math.
  Inst. Hautes {\'E}tudes Sci. \textbf{53} (1981), 53--78.

\bibitem{Gromov}
\bysame, \emph{Metric structures for {R}iemannian and non-{R}iemannian spaces},
  Progress in Mathematics, BirkhÃ€user, 1999.

\bibitem{Hawkins13-2}
{A.} {H}awkins, {A.} {S}kalski, {S.} {W}hite, and {J.} {Z}acharias, \emph{On
  spectral triples on crossed products arising from equicontinuous actions},
  Math. Scand. \textbf{113} (2013), no.~2, 262--291.

\bibitem{Herron}
{D.} {A.} Herron, 
   \emph{Gromov-{H}ausdorff distance for pointed metric spaces},
  {J. Anal.} \textbf{24} (2016), no.~1, 1--38.
     
	
\bibitem{Putnam16}
Antoine {J}ulien and {I}. {P}utnam, \emph{Spectral triples for subshifts}, J.
  Funct. Anal. \textbf{270} (2016), no.~3, 1031--1063, arXiv: 1411.6800.

\bibitem{Kantorovich40}
{L}.~{V}. {K}antorovich, \emph{On one effective method of solving certain
  classes of extremal problems}, Dokl. Akad. Nauk. USSR \textbf{28} (1940),
  212--215.

\bibitem{Kantorovich58}
{L}.~{V}. {K}antorovich and {G}.~{Sh}. {R}ubinstein, \emph{On the space of
  completely additive functions}, Vestnik Leningrad Univ., Ser. Mat. Mekh. i
  Astron. \textbf{13} (1958), no.~7, 52--59, In Russian.

\bibitem{Latremoliere20a}
{T}. {Landry}, {M}. {L}apidus, and {F}. {L}atr{\'e}moli{\`e}re, \emph{Metric
  approximations of the spectral triple on the {S}ierpinki gasket and other
  fractals}, Adv. Math. \textbf{385} (2021), Paper No. 107771, 43 pp.

\bibitem{Lapidus14}
{M}. {L}apidus and {J}. {S}arhad, \emph{Dirac operators and geodesic metric on
  the harmonic \Siep gasket and other fractal sets}, J. Noncommut. Geom.
  \textbf{8} (2014), no.~4, 947--985.

\bibitem{Latremoliere22}
{F}. {L}atr{\'e}moli{\`e}re, \emph{Continuity of the spectrum of Dirac
  operators of spectral triples for the spectral propinquity}, Submitted
  (2022), 49 pages.

\bibitem{Latremoliere05}
{F}. {L}atr{\'e}moli{\`e}re, \emph{Approximation of the quantum tori by finite
  quantum tori for the quantum {G}romov-{H}ausdorff distance}, J. Funct. Anal.
  \textbf{223} (2005), 365--395, math.OA/0310214.

\bibitem{Latremoliere13c}
\bysame, \emph{Convergence of fuzzy tori and quantum tori for the quantum
  Gromov-Hausdorff propinquity: an explicit approach.}, M\"unster J. Math.
  \textbf{8} (2015), no.~1, 57--98, arXiv: math/1312.0069.

\bibitem{Latremoliere15c}
\bysame, \emph{Curved noncommutative tori as {L}eibniz compact quantum metric
  spaces}, J. Math. Phys. \textbf{56} (2015), no.~12, 123503, 16 pages, arXiv:
  1507.08771.

\bibitem{Latremoliere13b}
\bysame, \emph{The dual {G}romov-{H}ausdorff propinquity}, J. Math. Pures
  Appl. \textbf{103} (2015), no.~2, 303--351, arXiv: 1311.0104.

\bibitem{Latremoliere15b}
\bysame, \emph{Quantum metric spaces and the {G}romov-{H}ausdorff propinquity},
  Noncommutative geometry and optimal transport, Contemp. Math., no. 676, Amer.
  Math. Soc., 2015, arXiv: 150604341, p.~47--133.

\bibitem{Latremoliere16b}
\bysame, \emph{Equivalence of quantum metrics with a common domain}, J. Math.
  Anal. Appl. \textbf{443} (2016), 1179--1195, arXiv: 1604.00755.

\bibitem{Latremoliere13}
\bysame, \emph{The quantum {G}romov-{H}ausdorff propinquity}, Trans. Amer.
  Math. Soc. \textbf{368} (2016), no.~1, 365--411.

\bibitem{Latremoliere15}
\bysame, \emph{A compactness theorem for the dual {G}romov-{H}ausdorff
  propinquity}, Indiana Univ. Math. J. \textbf{66} (2017), no.~5, 1707--1753,
  arXiv: 1501.06121.

\bibitem{Latremoliere14}
\bysame, \emph{The triangle inequality and the dual {G}romov-{H}ausdorff
  propinquity}, Indiana Univ. Math. J. \textbf{66} (2017), no.~1, 297--313,
  arXiv: 1404.6633.

\bibitem{Latremoliere18c}
\bysame, \emph{Convergence of {C}auchy sequences for the covariant
  Gromov-Hausdorff propinquity}, J. Math. Anal. Appl. \textbf{469} (2019),
  no.~1, 378--404, arXiv: 1806.04721.

\bibitem{Latremoliere16c}
\bysame, \emph{The modular Gromov-Hausdorff propinquity}, Dissertationes
  Math. \textbf{544} (2019), 1--70, arXiv: 1608.04881.

\bibitem{Latremoliere18a}
\bysame, \emph{Convergence of {H}eisenberg modules for the modular
  Gromov-Hausdorff propinquity}, J. Operator Theory \textbf{84} (2020),
  no.~1, 211--237.

\bibitem{Latremoliere18b}
\bysame, \emph{The covariant {G}romov-{H}ausdorff propinquity}, Studia Math.
  \textbf{251} (2020), no.~2, 135--169, arXiv: 1805.11229.

\bibitem{Latremoliere21a}
\bysame, \emph{Convergence of spectral triples on fuzzy tori to spectral
  triples on quantum tori}, Comm. Math. Phys. \textbf{388} (2021), no.~2,
  1049--1128, arXiv: 2102.03729.

\bibitem{Latremoliere18d}
\bysame, \emph{The dual-modular {G}romov-{H}ausdorff propinquity and
  completeness}, {J}. {N}oncomm. {G}eometry \textbf{115} (2021), no.~1,
  347--398.

\bibitem{Latremoliere18g}
\bysame, \emph{The {G}romov-{H}ausdorff propinquity for metric spectral
  triples}, Adv. Math. \textbf{404} (2022), Paper No. 108393, 56.

\bibitem{Latremoliere16}
{F}. {L}atr{\'e}moli{\`e}re and {J}. {P}acker, \emph{Noncommutative solenoids and the
  {G}romov-{H}ausdorff propinquity}, Proc. Amer. Math. Soc. \textbf{145}
  (2017), no.~5, 1179--1195, arXiv: 1601.02707.

\bibitem{Latremoliere11c}
\bysame, \emph{Noncommutative solenoids}, New York J. Math. \textbf{24A}
  (2018), 155--191, arXiv: 1110.6227.

\bibitem{LongWu21}
B.~Long and W.~Wu, \emph{Twisted bounded-dilation group {$C^\ast$}-algebras as
  {$C^\ast$}-metric algebras}, Sci. China Math. \textbf{64} (2021), no.~3,
  547--572.

\bibitem{Ozawa05}
{N}. {O}zawa and M.~A. {R}ieffel, \emph{Hyperbolic group {$C\sp\ast$}-algebras
  and free product {$C\sp\ast$}-algebras as compact quantum metric spaces},
  Canad. J. of Math. \textbf{57} (2005), 1056--1079, arXiv: math/0302310.

\bibitem{Packer89}
{J}. {P}acker, \emph{Twisted group {$C^\ast$}-algebras corresponding to
  nilpotent discrete groups}, Mathematica Scandinavica \textbf{64} (1989),
  no.~1, 109--122.

\bibitem{ReedSimon}
{M}. {R}eed and {B}. {S}imon, \emph{Functional analysis}, Methods of modern
  Mathematical Physics, Academic Press, San Diego, 1980.

\bibitem{Riedel82}
  {N}. {R}iedel, \emph{Classification of the {$C\sp\ast$}-algebras associated with minimal rotations}, Pacific J.
Math. {\bf 101} (1982), 153--161.
  
\bibitem{Rieffel98a}
M.~A. {R}ieffel, \emph{Metrics on states from actions of compact groups}, Doc.
  Math. \textbf{3} (1998), 215--229, math.OA/9807084.

\bibitem{Rieffel99}
\bysame, \emph{Metrics on state spaces}, Doc. Math. \textbf{4} (1999),
  559--600, math.OA/9906151.

\bibitem{Rieffel00}
\bysame, \emph{{G}romov-{H}ausdorff distance for quantum metric spaces}, Mem.
  Amer. Math. Soc. \textbf{168} (2004), no.~796, 1--65, math.OA/0011063.

\bibitem{Rieffel10c}
\bysame, \emph{{L}eibniz seminorms for "matrix algebras converge to the
  sphere"}, Clay Mathematics Proceedings \textbf{11} (2010), 543--578, arXiv:
  0707.3229.

\end{thebibliography}
\end{document}